\newtheorem{theorem}{Theorem}
\newtheorem{proposition}{Proposition}
\newtheorem{lemma}{Lemma}
\theoremstyle{definition}
\newtheorem{conjecture}{Conjecture}
\theoremstyle{remark}
\newtheorem{remark}{Remark}
\newtheorem{corollary}{Corollary}
\numberwithin{equation}{section}
\newcommand\Eta{\mathbb{U}}
\newcommand\restr[2]{{% we make the whole thing an ordinary symbol
		\left.\kern-\nulldelimiterspace % automatically resize the bar with \right
		#1 % the function
		\vphantom{\big|} % pretend it's a little taller at normal size
		\right|_{#2} % this is the delimiter
}}
\DeclareRobustCommand{\rchi}{{\mathpalette\irchi\relax}}
\newcommand{\irchi}[2]{\raisebox{\depth}{$#1\chi$}} % inner command, used by \rchi
\numberwithin{equation}{section}
\numberwithin{theorem}{section}
\numberwithin{proposition}{section}
\numberwithin{lemma}{section}
\numberwithin{remark}{section}
\numberwithin{exercise}{section}
\numberwithin{corollary}{section}
\newcounter{desccount}
\begin{document}

%------
% Insert the title of your paper and (if necessary)
% a short title for the running head.
%------
\title{Spectral comparison of compound cocycles generated by delay equations in Hilbert spaces}
\titlemark{Spectral comparison of compound cocycles}

%------

%%%% Pls fill in all fields for each author
%%%% Label the authors by their position in the authors' list using {}
%%%% Pls look for the MR Author ID in the metadata.xml file in the source bundle
%%%% ORCID is only to be added if provided by the author
%%%% Abbreviate first names for the running head
\emsauthor{1}{
	\givenname{Mikhail}
	\surname{Anikushin}
	%\mrid{1234567}
	\orcid{0000-0002-7781-8551}}{M.M.~Anikushin}
%%%% Repeat the same fields for each numbered author
%\emsauthor{2}{
%	\givenname{Second}
%	\surname{Contributor}
%	\mrid{}
%	\orcid{}}{S.~Contributor}
%\emsauthor*{3}{
%	\givenname{Someone}
%	\surname{Else}
%	\mrid{}
%	\orcid{}}{S.~Else}

%%%% Please provide detalied address info for each author
%%%% Use the same numbering as for \emsauthor above
%%%% Please look up the ROR ID of your institute here: https://ror.org
\Emsaffil{1}{
	\pretext{}
	\department{Department of Applied Cybernetics}
	\organisation{Faculty of Mathematics and Mechanics, St Petersburg University}
	\rorid{01a2bcd34}
	\address{Universitetskiy prospekt 28}
	\zip{198504}
	\city{Peterhof}
	\country{Russia}
	\posttext{}
	\affemail{demolishka@gmail.com}
	}
%%%% Repeat the same fields for each numbered author
%%%% If some author has multiple affiliations, repeat the fields for each affiliation
%%%% Number the affiliations using {}
	
%% If a corresponding author is needed, use \emsauthor* instead of \emsauthor

%%%% Appendix authors are treated analogously with the only difference 
%%%% that they do not have the abbreviated name field
	
%%%% If a regular author is an Appendix author as well, repeat their data 
%%%% using  \Appendixauthor* instead of \Appendixauthor

%\Appendixauthor*{2}{
%	\givenname{Mikhail}
%%	\surname{Anikushin}
%	\mrid{1234567}
%	\orcid{0000-0001-0002-0003}}
%\Appendixaffil{2}{
%	\pretext{}
%	\department{}
%	\organisation{}
%	\rorid{}
%	\address{}
%	\zip{}
%	\city{}
%	\country{}
%	\posttext{}
%	\affemail{}
%	\furtheremail{}}

%------
% Add MSC 2020 codes according to www.ams.org/msc/msc2020.html.
% Secondary codes (in square brackets) are optional.
%------
\classification[37L15, 37L45, 34K08, 34K35]{37L30}
%------
% Add a list of keywords. Only capitalise those keywords that start with a proper name.
%------
\keywords{delay equations, frequency theorem, compound cocycles, dimension estimates, Lyapunov functionals}

%------
% Insert your abstract.
%------
\begin{abstract}
	We study linear cocycles generated by nonautonomous delay equations in a suitable Hilbert space and their extensions, called compound cocycles, to exterior powers. Using a recent version of the frequency theorem, we develop analytical techniques for comparing spectral properties, such as uniform exponential dichotomies, between such cocycles and semigroups generated by stationary equations. These methods are based on properties related to regularity and structure in PDEs associated with delay equations. In particular, the developed machinery leads to effective robust criteria that guarantee the absence of closed invariant contours on global attractors arising in nonlinear problems and are expected to ensure global stability.
\end{abstract}

\maketitle
\tableofcontents
%------
% INSERT THE BODY OF THE PAPER HERE (except
% acknowledgments, funding info and bibliography)
%------

\section{Introduction}
\subsection{Historical perspective: Lyapunov dimension and effective dimension estimates for delay equations}
In the study of dissipative dynamical systems, structure of attractors takes the spotlight. A classical question of this kind, especially interesting in infinite-dimensions, is related to obtaining effective dimension estimates for global attractors. Although the initial motivation for the problem was concerned with finite-dimensional reduction based on embedding theorems for sets with finite Hausdorff or fractal dimensions (see \cite{RobinsonBook2011, ZelikAttractors2022, Zelik2014}), the volume contraction approach revealed a more relevant dimension-like characteristic called the \textit{Lyapunov dimension}\footnote{More precisely, such a quantity is called the \textit{uniform} (or \textit{global}) \textit{Lyapunov dimension} to distinguish it from the Lyapunov dimension over an ergodic measure. This distinction is not necessary for the present work where only the uniform quantity is considered.}. Roughly speaking, it is determined by the dimension threshold such that infinitesimal volumes of higher dimensions admit uniform exponential decay. It is well known that such a quantity always bounds from above the fractal dimension of the invariant set (or its fibers in the case of cocycles), see \cite{ChepyzhovIlyin2004, KuzReit2020, Temam1997}. However, unlike purely geometric dimensions, it is more robust (namely, upper semicontinuous) and admits computation on the infinitesimal level with the aid of adapted metrics, see \cite{KawanPogromsky2021, Anikushin2023LyapExp}. Even if an exact value of the Lyapunov dimension is known, it often reflects not any geometric dimensions of the global attractor, but rather possible expansions of such dimensions under perturbations of the system; see \cite{TuraevZelik2003} for a nice example. Armed with upper estimates for the Lyapunov dimension, we have a generalized Bendixson criterion which indicates the absence of certain invariant structures on the attractor, see \cite{Smith1986HD, LiMuldowney1995LowBounds}. In particular, conditions which guarantee a uniform decay of two-dimensional volumes provide criteria for the global stability by utilizing the robustness and variants of Pugh's Closing Lemma, see \cite{AnikushinRomanov2023FreqConds, LiMuldowney1996SIAMGlobStab, Smith1986HD}. We refer to the surveys \cite{ZelikAttractors2022, Zelik2014} for more discussions on the theory of attractors and finite-dimensional reduction and to our papers \cite{Anikushin2023LyapExp, AnikushinRomanov2023FreqConds, AnikushinRomanov2024EffEst} for more discussions concerned with effective dimension estimates and the problem of global stability by means of delay equations.

There is a variant of the Lyapunov dimension (generally producing slightly larger values) based on the Kaplan--Yorke formula involving uniform Lyapunov exponents \cite{Anikushin2023LyapExp}. A classical result\footnote{This result holds in a much wider (than it is stated in \cite{KatokSB1980}) generality covering infinite-dimensional noninvertible systems in virtue of the variational principle and the Margulis--Ruelle inequality, see \cite{Thieullen1987, BlumentalYoung2017SRBmeasures}.} of Katok \cite{KatokSB1980} bounds the topological entropy from above via the sum of positive uniform Lyapunov exponents\footnote{Among control theorists, this sum is known as the \textit{restoration entropy}, see \cite{KawanPogromsky2021}.}. Along with the above, this illustrates relations between dimension, entropy, and volume in the context of uniform characteristics.

Here we follow the volume contraction approach which is concerned with obtaining upper estimates for the growth exponents of infinitesimal volumes over an invariant set $\mathcal{Q}$. More rigorously, we are interested in upper estimates for the largest uniform Lyapunov exponent $\lambda_{1}(\Xi)$ of the derivative cocycle $\Xi$ over the invariant set and its extensions $\Xi_{m}$ (called \textit{compound cocycles}) to exterior powers $\mathbb{H}^{\wedge m}$ of a certain Hilbert space $\mathbb{H}$. On the abstract level, it will be sufficient to work with linear cocycles over a semiflow on a complete metric space (possibly noncompact).

In terms of the general computational scheme presented in \cite{Anikushin2023LyapExp}, to estimate $\lambda_{1}(\Xi_{m})$ from above, one should choose a suitable (adapted) metric given by a family of inner products in $\mathbb{H}^{\wedge m}$ depending on $q \in \mathcal{Q}$ and apply certain maximization procedures to the infinitesimal quantities arising from differentiation of the cocycle trajectories in the metric. These quantities are involved into the Growth Formula, which reduces to the Liouville trace formula if the metric is associated with a metric in $\mathbb{H}$. In the latter case, the variational problem can often be reduced to the computation of eigenvalues of self-adjoint operators. In infinite dimensions, such computations are usually done in standard metrics (see \cite{Temam1997,ChepyzhovIlyin2004,ZelikAttractors2022}), although it may happen that standard metrics may not be relevant, as in the case of delay equations \cite{AnikushinRomanov2024EffEst, Anikushin2023LyapExp, Anikushin2022Semigroups} or hyperbolic equations \cite{ZelikAttractors2022, TuraevZelik2003}. 

In finite dimensions, there is an approach known as the Leonov method \cite{LeonovBoi1992, Kuznetsov2016, Anikushin2023LyapExp}. On the geometric level, it corresponds to variations of a constant metric in its conformal class via Lyapunov-like functions. This method often allows to improve estimates or even provide exact computations of the Lyapunov dimension, as in the case of Lorenz, Lorenz-like, and H\'{e}non systems; see \cite{KuzReit2020, KuzMokKuzKud2020, LeoAlexKuzSMsystem2015,LeoKuzKorzh2016}. 

It is also worth mentioning the approach of Smith \cite{Smith1986HD} for ODEs based on quadratic functionals that allows to bound from below all the singular values of the derivative cocycle. In applications, his method goes in the spirit of the perturbative approach that we follow here. For infinite-dimensional systems, it can be developed via inertial manifolds; see \cite{Anikushin2022Semigroups, Anikushin2020Geom}. However, this reveals its impracticality and artificiality for the problem.

In the case of delay equations, the problem of obtaining effective dimension estimates remained without significant progress for a long time, see \cite{Anikushin2023LyapExp} for a more detailed discussion. This is reflected in some past \cite{HaleLunel1993} and relatively recent monographs \cite{Chueshov2015, CarvalhoLangaRobinson2012}, which essentially expand the ideas from the pioneering paper \cite{MalletParet1976} by exploiting the compactness of the derivative cocycle and hence making only qualitative conclusions on the finiteness of dimensions. In \cite{Anikushin2023LyapExp}, we resolved the problem by explicitly constructing constant adapted metrics for a fairy general class of delay equations in $\mathbb{R}^{n}$ with discrete delays. For global attractors arising in the Mackey--Glass equations \cite{MackeyGlass1977} and the periodically forced Suarez--Schopf delayed oscillator \cite{AnikushinRom2023SS, Suarez1988}, numerical experiments suggest that the obtained estimates are asymptotically sharp as the delay tends to infinity. In \cite{AnikushinRomanov2024EffEst}, the approach is illustrated by means of the Nicholson blowflies model. However, obtaining more accurate estimates for specific parameters remains a challenging task.

All the above results concern the use of metrics on exterior powers, which are associated with a metric on $\mathbb{H}$. For computation, this allows to avoid direct examinations of compound cocycles and their infinitesimal generators and remain only on the level of linearized equations. However, as it is argued in \cite{Anikushin2023LyapExp} by means of Lyapunov-like metrics, for theory\footnote{For finite-dimensional invertible systems, the result of \cite{KawanPogromsky2021}, based on what we call Shannon-like metrics (see \cite{Anikushin2023LyapExp}), shows that theoretically it is sufficient to work only with associated metrics. However, we note that Lyapunov-like metrics (used in \cite{Anikushin2023LyapExp}) also arise in applications of the frequency theorem to study compound cocycles, which is the topic of the present paper.} and practice, it may be natural to use adapted metrics defined on exterior powers. In the present work we follow this line, see Section \ref{SEC: ContributionSpecComp}.

Besides \cite{Anikushin2023LyapExp}, a rare exception in the field, which partly motivated the present work, is the work of Mallet-Paret and Nussbaum \cite{MalletParretNussbaum2013}, where compound cocycles in Banach spaces generated by certain scalar nonautonomous delay equations are studied. Such equations particularly arise after linearization of scalar delay equations with monotone feedback, which are known to satisfy the Poincar\'{e}--Bendixson trichotomy \cite{MalletParetSell1996}. In \cite{MalletParretNussbaum2013}, it is shown that the $m$-fold compound cocycle preserves a convex reproducing normal cone in the $m$-fold exterior power for either odd or even (the most interesting case) $m$ depending on the feedback sign. Based on this, the authors developed the Floquet theory for periodic equations using arguments in the spirit of the Krein--Rutman theorem. In particular, it is stated a comparison principle that allows to compare Floquet multipliers for periodic (in particular, stationary) equations. This principle can be developed to obtain effective dimension estimates, see \cite{Anikushin2022Semigroups}. However, scalar equations (not to mention systems of equations) that exhibit chaotic behavior are beyond the scope of this approach.

In what follows, we present another approach to the problem of effective dimension estimates for delay equations, based on direct examinations of compound cocycles and their generators.

\subsection{Contribution of the present work}
\label{SEC: ContributionSpecComp}
In this paper, we study a sufficiently general class of linear nonautonomous delay equations in $\mathbb{R}^{n}$ as it is described in \eqref{EQ: DelayRnLinearized}. As in our adjacent work \cite{Anikushin2023LyapExp}, we address the problem of obtaining conditions for the exponential stability of compound cocycles corresponding to such equations. We are aimed to express such conditions in terms of frequency inequalities arising from a comparison between compound cocycles $\Xi_{m}$ and certain $C_{0}$-semigroups $G^{\wedge m}$ with the aid of the frequency theorem developed in our work \cite{Anikushin2020FreqDelay}. In fact, we will obtain conditions for the existence of gaps in the Sacker--Sell spectrum \cite{SackerSell1994}, i.e., uniform exponential dichotomies, and even more, see Theorem \ref{TH: QuadraticFunctionalDelayCompoundTheorem} and the remarks below. As will be shown, following this program reveals novel functional-analytic properties of delay equations concerned with harmonic analysis, see below.

In our adjacent work \cite{AnikushinRomanov2023FreqConds}, we developed approximation schemes to verify such frequency inequalities and applied them to derive effective criteria for the absence of closed invariant contours on global attractors for scalar equations. Our experiments indicate improvements of the known rare results in the field. For $m=2$ and equations with single delays, it was observed that the associated transfer operators can be expressed as integral operators with explicitly given $L_{2}$-summable kernels. In \cite{AnikushinRomanov2025Schur}, we exploited the optimization of Schur test functions to check inequalities that reduce to estimating norms of transfer operators. Similar considerations are applicable for systems of equations and will be expounded in a future paper. A brief discussion of these results is given in Section \ref{SEC: PerspectivesDelayCompound}.

Let us expose main ideas and methods of our work. For precise preliminary definitions and notations, we refer to Sections \ref{SEC: CompOperatorsTensorProducts} and \ref{SEC: CocyclesSemigroupsAdditiveCompounds}.

First, we treat delay equations in the Hilbert space $\mathbb{H}$ defined by \eqref{EQ: HilbertSpaceDelayEqDefinition}, based on our work \cite{Anikushin2022Semigroups}. This contrasts with \cite{MalletParretNussbaum2013} and most other works, where delay equations are posed in the space of continuous functions. Such a treatment is essential for our functional-analytical study of delay equations as PDEs with nonhomogeneous boundary conditions, see \cite{LionsMagenesBVP11972}. On the other hand, thanks to the smoothing properties of dynamics, the final conclusions concerning asymptotic behavior in the norm of $\mathbb{H}$ are also fulfilled in the usual supremum norm.

Basically, we treat the $m$-fold compound cocycle $\Xi_{m}$ on the $m$-fold exterior power $\mathbb{H}^{\wedge m}$ of $\mathbb{H}$ as a nonautonomous perturbation of a stationary cocycle which is a $C_{0}$-semigroup $G^{\wedge m}$. In terms of \eqref{EQ: DelayRnLinearized}, the stationary linear part is distinguished, and to it corresponds an operator $A$, which generates a $C_{0}$-semigroup $G$ in $\mathbb{H}$. Then $G^{\wedge m}$ is given by the (multiplicative) extension of $G$ onto $\mathbb{H}^{\wedge m}$. On the infinitesimal level, $G^{\wedge m}$ is generated by an operator $A^{[\wedge m]}$, called the (antisymmetric) \textit{additive compound} of $A$. In Theorem \ref{TH: TensorCompoundCocycleDelayDescription}, the infinitesimal generator of $\Xi_{m}$ is described as a nonautonomous boundary perturbation of $A^{[\wedge m]}$. It is essential to use the Hilbert space setting to make sense of the boundary perturbation.

After that, we study the problem of providing conditions for preserving certain dichotomy properties of $G^{\wedge m}$ for all perturbations in a given class (for example, with a prescribed Lipschitz constant). Here, the perturbation class is described through an indefinite quadratic form, for which we consider the associated infinite-horizon quadratic regulator problem posed for a proper linear control system. The latter problem is resolved using the frequency theorem developed in our paper \cite{Anikushin2020FreqDelay}. It provides frequency conditions for the existence of a quadratic Lyapunov functional for $\Xi_{m}$, which can be used to obtain the desired dichotomy properties.

Note that the described approach can potentially be applied to a range of problems enjoying a kind of asymptotic compactness such as semilinear parabolic equations, damped hyperbolic equations, neutral delay equations, or parabolic equations with nonlinear boundary conditions. However, apart from our studies, we are not aware of any works devoted to this, even in the case of ODEs.

As to delay equations, they represent analytically nontrivial examples of such applications. Here, some problems arise mainly due to the unbounded nature of perturbations on the infinitesimal level. This demands distinguishing between \textit{regularity} and \textit{structure} in the associated linear inhomogeneous problems\footnote{Roughly speaking, regularity is related to various spectral bounds or estimates for resolvents in intermediate spaces, and structure is related to tempo-spatial properties of solutions. For example, in the case of semilinear parabolic equations in bounded domains, tempo-spatial properties, known as parabolic smoothing, come from the resolvent and spectrum bounds (see \cite{Anikushin2020FreqParab, Chueshov2015}). Thus, it is quite fair to attribute these properties to regularity (not mentioning their ``structurality''), as is always done. However, for delay equations, the issue of distinguishing becomes acute.}. In our work \cite{Anikushin2020FreqDelay}, we explored some related features that allow to resolve these obstacles in the case $m=1$. In this paper, the main part is devoted to the generalization of these properties for general $m$. They do not follow from the case $m=1$, and therefore it is necessary to develop an appropriate theory.

On the side of structure, we establish what was called in \cite{Anikushin2020FreqDelay} a \textit{structural Cauchy formula}, see Theorem \ref{TH: StructuralCauchyFormulaCompoundDelay}. This is a certain decomposition of mild solutions to linear inhomogeneous problems associated with $A^{[\wedge m]}$ (more generally, with $A^{[\otimes m]}$), which differs from the usual Cauchy formula, but reveals a certain structure of solutions. More precisely, according to the formula, each component of a solution is uniquely decomposed into the sum of what we call \textit{adorned} and \textit{twisted} functions. In its turn, such a sum is called by us an \textit{agalmanated} function, and the corresponding functional spaces are introduced in Appendix \ref{SEC: MeasurementOperatorsOnAgalmanated}. To prove and understand Theorem \ref{TH: StructuralCauchyFormulaCompoundDelay}, preparatory results on diagonal translation semigroups and diagonal Sobolev spaces from Appendix \ref{SEC: DiagonalTranslationSemigroups} are also required.

We use the structural Cauchy formula to make sense of integral functionals arising in the quadratic regulator problem. Here, what we call \textit{pointwise measurement operators} naturally arise, and they are studied in Appendix \ref{SEC: MeasurementOperatorsOnAgalmanated}. Such operators are defined by pointwise application of a certain unbounded operator\footnote{For example, a $\delta$-functional in some $L_{2}$-space.} to a function of time. They are naturally defined on what we call \textit{embracing spaces}, into which the aforementioned classes of functions can be naturally embedded. Note that for the case of adorned functions and $m=1$, the well-posedness of pointwise measurement operators reflects convolution theorems for measures, see \cite{HewittRoss1965}. However, we cannot find a general result that covers our situation for $m > 1$, let alone the other classes of functions. Another key property of embracing spaces is that the Fourier transform in $L_{2}$ provides an automorphism of the embracing space (over $\mathbb{R}$) and commutes with pointwise measurement operators. This constitutes Theorem \ref{TH: EmbracingFourierCommutesIGamma}, which is important for deriving frequency inequalities.

On the side of regularity, we establish certain uniform bounds for the resolvent in intermediate spaces, see Theorem \ref{TH: ResolventDelayCompoundBound}. This constitutes the second main ingredient for resolving the quadratic regulator problem and establishing our final Theorem \ref{TH: QuadraticFunctionalDelayCompoundTheorem}.

For applications of the frequency theorem to other problems also involving certain dichotomies, we refer to our works on inertial manifolds \cite{Anikushin2020FreqDelay, Anikushin2020Geom, Anikushin2020FreqParab} and almost periodic cocycles \cite{AnikushinAADyn2021,Anikushin2019Liouv,AnikushinReitRom2019}.

\subsection{Structure of the present work}
Now we will describe the structure of our work, indicating the key points.

To the best of our knowledge, the widespread interest in the theory of multiplicative and additive compounds began with the work of Muldowney on ODEs \cite{Muldowneys1990CompMatr}. Recently, Muldowney and Wang \cite{MuldowneyWang2021} presented an algebraic theory of additive and multiplicative compound operators in general linear spaces. For us, it is important the spectral theory of such operators in Hilbert spaces, which we cannot find in the existing literature. For this, we develop an appropriate theory in Sections \ref{SEC: CompOperatorsTensorProducts} and \ref{SEC: CocyclesSemigroupsAdditiveCompounds}.

In Section \ref{SEC: ComputationOfAdditiveCompountL2}, we study additive compounds $A^{[\otimes m]}$ and $A^{[\wedge m]}$ of delay operators $A$. This includes describing the abstract $m$-fold tensor product $\mathbb{H}^{\otimes m}$ of $\mathbb{H}$ in terms of a certain $L_{2}$-space, see Theorem \ref{TH: TensorProductDelayDescription}; the action of $A^{[\otimes m]}$, see Theorem \ref{TH: AdditiveCompoundDelayDescription}; the domain $\mathcal{D}(A^{[\otimes m]})$, see Theorem \ref{TH: CompoundDelayDomainDescription}; and deriving bounds for the resolvent in intermediate spaces, see Theorem \ref{TH: ResolventDelayCompoundBound}.

In Section \ref{SEC: LinearInhomogeneousDelayCompounds}, we establish a structural Cauchy formula for linear inhomogeneous problems associated with $A^{[\otimes m]}$ (in particular, $A^{[\wedge m]}$), see Theorems \ref{TH: StructuralCauchyFormulaCompoundDelay} and \ref{TH: DelayCompoundStructuralCauchyFormulaNormEstimate}.

In Section \ref{SEC: NonautonomousPerturbationsAdditiveCompounds}, linear cocycles generated by a class of delay equations are studied. In Section \ref{SUBSEC: DelayCompoundInfinitesimalDescription}, infinitesimal generators of the corresponding multiplicative compound cocycles in $\mathbb{H}^{\otimes m}$ (resp. $\mathbb{H}^{\wedge m}$) are described as nonautonomous boundary perturbations of $A^{[\otimes m]}$ (resp. $A^{[\wedge m]}$), see Theorem \ref{TH: TensorCompoundCocycleDelayDescription}. In Section \ref{SUBSEC: AssociatedLIPquadraticConstr}, related linear inhomogeneous problems with quadratic constraints are formulated. In Section \ref{SUBSEC: DelayCompPropertiesOfComplexificatedProblem}, the associated integral quadratic functionals are interpreted and their relation with the Fourier transform is established, see Lemma \ref{EQ: FourierTransformCompoundDelay}. In Section \ref{SUBSEC: DelayCompoundFrequencyInequalities}, frequency inequalities for the preservation of certain dichotomy properties under the perturbations are derived, see Theorem \ref{TH: QuadraticFunctionalDelayCompoundTheorem}.

In Section \ref{SEC: PerspectivesDelayCompound}, we discuss analytical-numerical methods to verify frequency inequalities for concrete problems. In particular, we briefly explain ideas and experimental results from our adjacent work \cite{AnikushinRomanov2023FreqConds}.

In Appendix \ref{SEC: DiagonalTranslationSemigroups}, the theory of diagonal translation semigroups and diagonal Sobolev spaces is developed.

In Appendix \ref{SEC: MeasurementOperatorsOnAgalmanated}, pointwise measurement operators on embracing spaces are studied. In particular, the spaces of adorned, twisted, and agalmanated functions are introduced.
\section*{Some general notations}
Throughout the paper, $m$, $n$, $k$, $l$, and $j$ denote natural numbers. Usually, $m$ and $n$ are fixed; $j \in \{1,\ldots, m\}$; $k$ is used to denote the size of multi-indices such as $j_{1}\ldots j_{k}$ with $1 \leq j_{1} < \cdots < j_{k} \leq m$; $l$ is used for indexing sequences. Real numbers are denoted by $t$, $s$ or $\theta$, where, usually, $t, s \geq 0$ and $\theta \in [-\tau,0]$ for some $\tau > 0$ being a fixed value (delay).

For distinct integers $j_{1},\ldots,j_{k}$ and $j$, we define $J(j)=J(j;j_{1},\ldots,j_{k})$ as the integer $J \in \{1,\ldots,k+1\}$ such that $j$ is the $J$th element of the set $\{ j, j_{1},\ldots, j_{k} \}$, arranged by increasing.

We often use the excluded index notation to denote multi-indices. For example, in the context of given $j_{1} \ldots j_{k}$ and $i \in \{j_{1},\ldots, j_{k}\}$, we denote by $j_{1} \ldots \hat{i} \ldots j_{k}$ the multi-index of size $k-1$ obtained from $j_{1}\ldots j_{k}$ by removing $i$. For brevity, we also write $\hat{i}$ instead of $j_{1} \ldots \hat{i} \ldots j_{k}$, if it is clear from the context what multi-index is meant. A similar notation is used for the exclusion of several indices.

For the compactness of formulas, it is often convenient to use $\bar{s}$ or $\bar{\theta}$ to denote vectors of real numbers. For example, $\bar{s}=(s_{1},\ldots,s_{m}) \in \mathbb{R}^{m}$ or $\bar{\theta}=(\theta_{1},\ldots,\theta_{m}) \in [-\tau,0]^{m}$. Sometimes the excluded index notation for these vectors is also used in different ways. For example, by $\bar{s}_{\hat{j}}$ we denote the $(m-1)$-vector appearing after eliminating the $j$th component from $\bar{s}$. Moreover, the same vector is denoted by $(s_{1},\ldots, \hat{s}_{j},\ldots,s_{m})$.

Given a real number $t \in \mathbb{R}$, we use $\underline{t}$ to denote the vector with identical components, all of which equal to $t$. Its dimension should be understood from the context. For example, if $\bar{s} \in \mathbb{R}^{m}$, then in the sum $\bar{s} + \underline{t}$ we have $\underline{t} \in \mathbb{R}^{m}$.

We use $\mu^{k}_{L}$ to denote the $k$-dimensional Lebesgue measure. This notation is used mainly in cases where it should be emphasized that we are dealing with $\mu^{k}_{L}$-almost all elements of a certain $k$-dimensional subset.

We use $\|\cdot\|_{\mathbb{E}}$ to denote the norm in a Banach space $\mathbb{E}$. In the case of a Hilbert space $\mathbb{H}$, we often (mainly in the context of $\mathbb{H}$-valued functions) use $|\cdot|_{\mathbb{H}}$ to denote the norm. Moreover, the inner product in $\mathbb{H}$ is denoted by $\langle \cdot, \cdot \rangle_{\mathbb{H}}$.

Given Banach spaces $\mathbb{E}$ and $\mathbb{F}$, we use $\mathcal{L}(\mathbb{E};\mathbb{F})$ to denote the space of bounded linear operators between them. If $\mathbb{E}=\mathbb{F}$, we write just $\mathcal{L}(\mathbb{E})$. For the corresponding operator norm, we use the notation $\|\cdot\|_{\mathcal{L}(\mathbb{E};\mathbb{F})}$ or simply $\|\cdot\|$, if the spaces are understood from the context. For the identity operator in $\mathbb{E}$, we use the notation $\operatorname{Id}_{\mathbb{E}}$. Often the same operator is denoted by $I$, if the space is understood from the context.
\section{Multiplicative compounds on tensor products of Hilbert spaces}
\label{SEC: CompOperatorsTensorProducts}
In this section, we briefly recall some facts about tensor products of Hilbert spaces.

Let $\mathbb{H}_{1}$ and $\mathbb{H}_{2}$ be real or complex Hilbert spaces with inner products\footnote{In the case of complex spaces, we use the convention that sesquilinear (Hermitian) forms are linear in the first argument and conjugate-linear in the second.} $\langle \cdot,\cdot \rangle_{\mathbb{H}_{1}}$ and $\langle \cdot,\cdot \rangle_{\mathbb{H}_{2}}$. Let $\mathbb{H}_{1} \odot \mathbb{H}_{2}$ denote the algebraic tensor product of these spaces. Recall that it is spanned by the elements $v_{1} \otimes v_{2}$, where $v_{1} \in \mathbb{H}_{1}$ and $v_{2} \in \mathbb{H}_{2}$, called decomposable tensors. As an operation, $v_{1} \otimes v_{2}$ is bilinear and called the tensor product of $v_{1}$ and $v_{2}$.

There is a natural inner product on $\mathbb{H}_{1} \odot \mathbb{H}_{2}$, which is defined for decomposable tensors $v_{1} \otimes v_{2}$ and $w_{1} \otimes w_{2}$ by
\begin{equation}
	\label{EQ: HilbertTensorProduct}
	\langle v_{1}\otimes v_{2}, w_{1} \otimes w_{2} \rangle_{\mathbb{H}_{1} \otimes \mathbb{H}_{2}} \coloneq \langle v_{1}, w_{1} \rangle_{\mathbb{H}_{1}} \langle v_{2},w_{2} \rangle_{\mathbb{H}_{2}}.
\end{equation}
Using standard arguments appealing to the universal property of algebraic tensor products, it can be shown that \eqref{EQ: HilbertTensorProduct} correctly extends to an inner product on $\mathbb{H}_{1} \odot \mathbb{H}_{2}$. Then the \textit{tensor product} $\mathbb{H}_{1} \otimes \mathbb{H}_{2}$ of Hilbert spaces $\mathbb{H}_{1}$ and $\mathbb{H}_{2}$ is defined by the completion of $\mathbb{H}_{1} \odot \mathbb{H}_{2}$ by the inner product from \eqref{EQ: HilbertTensorProduct}. Sometimes it may be convenient to emphasize the field, i.e., $\mathbb{R}$ or $\mathbb{C}$, over which the tensor product is taken. For this, we use the notations $\mathbb{H}_{1} \otimes_{\mathbb{R}} \mathbb{H}_{2}$ or $\mathbb{H}_{1} \otimes_{\mathbb{C}} \mathbb{H}_{2}$, respectively.

%Assume for simplicity that $\mathbb{H}_{1}$ and $\mathbb{H}_{2}$ are separable. Then, for any orthonormal bases $\{e^{1}_{k}\}_{k \geq 1}$ and $\{e^{2}_{l}\}_{l \geq 1}$ in $\mathbb{H}_{1}$ and $\mathbb{H}_{2}$, respectively, the vectors $e^{1}_{k} \otimes e^{2}_{l}$ taken over all $k,l \geq 1$ form an orthonormal basis in $\mathbb{H}_{1} \otimes \mathbb{H}_{2}$.

Let $\mathbb{W}_{1}$ and $\mathbb{W}_{2}$ be another Hilbert spaces over the same field as $\mathbb{H}_{1}$ and $\mathbb{H}_{2}$. Given bounded linear operators $L_{1} \colon \mathbb{H}_{1} \to \mathbb{W}_{1}$ and $L_{2} \colon \mathbb{H}_{2} \to \mathbb{W}_{2}$, their tensor product $L_{1} \otimes L_{2}$ is a bounded linear operator from $\mathbb{H}_{1} \otimes \mathbb{H}_{2}$ to $\mathbb{W}_{1} \otimes \mathbb{W}_{2}$ defined on decomposable tensors $v_{1} \otimes v_{2}$ by
\begin{equation}
	\label{EQ: TensorProductOperatorsDefDecomp}
	(L_{1} \otimes L_{2})(v_{1} \otimes v_{2}) \coloneq L_{1}v_{1} \otimes L_{2}v_{2}.
\end{equation}
It can be shown that this formula correctly determines a bounded linear operator on $\mathbb{H}_{1} \odot \mathbb{H}_{2}$, which extends by continuity to $\mathbb{H}_{1} \otimes \mathbb{H}_{2}$. Furthermore, we have 
\begin{equation}
	\label{EQ: TensorProductNormEquality}
	\|L_{1} \otimes L_{2}\| = \|L_{1}\| \cdot \|L_{2}\|,
\end{equation}
where $\|\cdot\|$ denotes appropriate operator norms. From \eqref{EQ: TensorProductOperatorsDefDecomp}, it is also clear that the relation $(L_{2} L_{1}) \otimes (L_{4} L_{3}) = (L_{2} \otimes L_{4}) (L_{1} \otimes L_{3})$ is satisfied for any bounded operators $L_{1}$, $L_{2}$, $L_{3}$, and $L_{4}$ defined on appropriate spaces.

Suppose that $\mathbb{H}_{1}$ is decomposed into the direct sum $\mathbb{H}_{1} = \mathbb{L}_{\sharp} \oplus \mathbb{L}_{\flat}$ of closed subspaces $\mathbb{L}_{\sharp}$ and $\mathbb{L}_{\flat}$. Then the spaces $\mathbb{L}_{\sharp} \otimes \mathbb{H}_{2}$ and $\mathbb{L}_{\flat} \otimes \mathbb{H}_{2}$ can be naturally considered as subspaces in $\mathbb{H}_{1} \otimes \mathbb{H}_{2}$, and there is the direct sum decomposition
\begin{equation}
	\label{EQ: HilbertTensorProductDirectSumDecomposition}
	\mathbb{H}_{1} \otimes \mathbb{H}_{2} = \left(\mathbb{L}_{\sharp} \otimes \mathbb{H}_{2}\right) \oplus \left(\mathbb{L}_{\flat} \otimes \mathbb{H}_{2}\right).
\end{equation}
A similar statement holds for decompositions of the second factor $\mathbb{H}_{2}$. This property is important for studying spectra of operators on tensor products, see Theorem \ref{TH: SpectrumAdditiveCompounds}.

Let $(\mathcal{X}_{1},\mu_{1})$ and $(\mathcal{X}_{2},\mu_{2})$ be two measure spaces. Given Hilbert spaces $\mathbb{F}_{1}$ and $\mathbb{F}_{2}$, we consider the Hilbert spaces\footnote{We refer to \cite{DunfordSchwartz1988} for the theory of integration involving functions with values in Banach spaces.} $\mathbb{H}_{1} \coloneq L_{2}(\mathcal{X}_{1};\mu_{1};\mathbb{F}_{1})$ and $\mathbb{H}_{2} \coloneq L_{2}(\mathcal{X}_{2};\mu_{2};\mathbb{F}_{2})$. Let $\mu_{1} \otimes \mu_{2}$ be the product measure on $\mathcal{X}_{1} \times \mathcal{X}_{2}$. The following theorem is well known, although we could not find any references for its statement in full generality in the literature, so we give a proof for the sake of completeness.
\begin{theorem}
	\label{TH: DescriptionTensorProductL2}
	For the above defined $L_{2}$-spaces $\mathbb{H}_{1}$ and $\mathbb{H}_{2}$, the mapping
	\begin{equation}
		\label{EQ: IsometricIsomorphismTensorL2}
		\mathbb{H}_{1} \otimes \mathbb{H}_{2} \ni \phi_{1} \otimes \phi_{2} \mapsto (\phi_{1} \otimes \phi_{2})(\cdot_{1},\cdot_{2}),
	\end{equation}
    where $(\phi_{1} \otimes \phi_{2})(x_{1},x_{2})\coloneq \phi_{1}(x_{1}) \otimes \phi_{2}(x_{2})$ for $(\mu_{1} \otimes \mu_{2})$-almost all $(x_{1},x_{2}) \in \mathcal{X}_{1} \times \mathcal{X}_{2}$,
	induces an isometric isomorphism between $\mathbb{H}_{1} \otimes \mathbb{H}_{2}$ and $L_{2}(\mathcal{X}_{1} \times \mathcal{X}_{2}; \mu_{1} \otimes \mu_{2};\mathbb{F}_{1} \otimes \mathbb{F}_{2})$. 
\end{theorem}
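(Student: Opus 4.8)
The plan is to build the isometric isomorphism in three stages: first establish that the map on decomposable tensors extends to a well-defined isometry on the algebraic tensor product $\mathbb{H}_1 \odot \mathbb{H}_2$, then that it extends by continuity to the Hilbert-space completion $\mathbb{H}_1 \otimes \mathbb{H}_2$, and finally that its range is dense in $L_2(\mathcal{X}_1 \times \mathcal{X}_2; \mu_1 \otimes \mu_2; \mathbb{F}_1 \otimes \mathbb{F}_2)$, so that the isometry is in fact surjective. The first stage rests on Fubini's theorem: for $\phi_1, \psi_1 \in \mathbb{H}_1$ and $\phi_2, \psi_2 \in \mathbb{H}_2$, the function $(x_1, x_2) \mapsto (\phi_1(x_1) \otimes \phi_2(x_2), \psi_1(x_1) \otimes \psi_2(x_2))_{\mathbb{F}_1 \otimes \mathbb{F}_2}$ factors (by the defining relation \eqref{EQ: HilbertTensorProduct} applied in $\mathbb{F}_1 \otimes \mathbb{F}_2$) as $(\phi_1(x_1), \psi_1(x_1))_{\mathbb{F}_1} \cdot (\phi_2(x_2), \psi_2(x_2))_{\mathbb{F}_2}$, whose integral over $\mathcal{X}_1 \times \mathcal{X}_2$ equals $(\phi_1, \psi_1)_{\mathbb{H}_1}(\phi_2, \psi_2)_{\mathbb{H}_2}$ by Fubini together with the Cauchy--Schwarz integrability bound. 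This is precisely the inner product \eqref{EQ: HilbertTensorProduct} defining $\mathbb{H}_1 \otimes \mathbb{H}_2$, so the map preserves inner products on decomposable tensors; by sesquilinearity it preserves them on all of $\mathbb{H}_1 \odot \mathbb{H}_2$, hence is a well-defined linear isometry there (in particular injective). One should note in passing that $(\phi_1 \otimes \phi_2)(\cdot,\cdot)$ really is a strongly measurable $\mathbb{F}_1 \otimes \mathbb{F}_2$-valued function, which follows from strong measurability of $\phi_1, \phi_2$ and continuity of the bilinear map $(v_1, v_2) \mapsto v_1 \otimes v_2$.

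The second stage is automatic: an isometry between inner product spaces extends uniquely to an isometry between their completions, and the completion of $\mathbb{H}_1 \odot \mathbb{H}_2$ is by definition $\mathbb{H}_1 \otimes \mathbb{H}_2$, while the target $L_2(\mathcal{X}_1 \times \mathcal{X}_2; \mu_1 \otimes \mu_2; \mathbb{F}_1 \otimes \mathbb{F}_2)$ is already complete. So it remains only to verify density of the range, which is the crux of the argument. The range contains all finite linear combinations $\sum_k \phi_1^{(k)}(x_1) \otimes \phi_2^{(k)}(x_2)$ with $\phi_i^{(k)} \in L_2(\mathcal{X}_i; \mu_i; \mathbb{F}_i)$; I would prove density by a successive-approximation argument. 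First, simple functions of the form $\chi_{E_1 \times E_2}(x_1, x_2)\, (u_1 \otimes u_2)$ with $\mu_i(E_i) < \infty$ and $u_i \in \mathbb{F}_i$ lie in the range, being the image of $(\chi_{E_1} u_1) \otimes (\chi_{E_2} u_2)$; finite sums of such functions are also in the range. Next, functions of the form $\chi_F(x_1, x_2)\, w$ with $F \subset \mathcal{X}_1 \times \mathcal{X}_2$ of finite product measure and $w \in \mathbb{F}_1 \otimes \mathbb{F}_2$ can be approximated in $L_2$: decomposable $w = u_1 \otimes u_2$ are handled by approximating $\chi_F$ in $L_2(\mu_1 \otimes \mu_2)$ by finite sums of rectangle indicators $\chi_{E_1 \times E_2}$ (a standard fact about product measures, using that measurable rectangles generate the product $\sigma$-algebra), and general $w$ are finite sums of decomposable ones up to arbitrarily small error since $\mathbb{F}_1 \odot \mathbb{F}_2$ is dense in $\mathbb{F}_1 \otimes \mathbb{F}_2$. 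Finally, $\mathbb{F}_1 \otimes \mathbb{F}_2$-valued simple functions (finite sums $\sum_j \chi_{F_j} w_j$) are dense in $L_2(\mathcal{X}_1 \times \mathcal{X}_2; \mu_1 \otimes \mu_2; \mathbb{F}_1 \otimes \mathbb{F}_2)$ by the definition of the Bochner $L_2$-space, so the range, being dense in this dense subset, is dense in the whole space.

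I expect the main obstacle to be the density step — specifically the passage from the definition of the vector-valued $L_2$-space (simple functions with values in $\mathbb{F}_1 \otimes \mathbb{F}_2$ dense) down to finite sums of \emph{rectangle} indicators times \emph{decomposable} tensors, which requires two independent approximation moves that must be combined with care: approximating an arbitrary finite-measure set $F$ in the metric $\mu \mapsto \mu(F \triangle \cdot)$ by finite unions of rectangles, and approximating an arbitrary $w \in \mathbb{F}_1 \otimes \mathbb{F}_2$ by elements of $\mathbb{F}_1 \odot \mathbb{F}_2$. If $\mathcal{X}_1, \mathcal{X}_2$ are $\sigma$-finite one gets the rectangle approximation cleanly from the construction of the product measure (Carath\'eodory extension); without $\sigma$-finiteness one may restrict attention to sets supported, up to null sets, on $\sigma$-finite pieces, which is harmless for $L_2$ purposes. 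A minor secondary point worth stating explicitly is the strong measurability of $x \mapsto \phi_1(x_1) \otimes \phi_2(x_2)$ and of the range elements; this is routine (Pettis measurability theorem plus separability of the relevant spaces, or directly via approximating $\phi_i$ by simple functions) but should be mentioned so the integrals in stage one are legitimate.
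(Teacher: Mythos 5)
Your proposal is correct and follows essentially the same route as the paper's proof: isometry on the algebraic tensor product via the inner-product identity (Fubini), extension by continuity to the completion, and surjectivity via density of the images of $(\chi_{E_1}u_1)\otimes(\chi_{E_2}u_2)$, using that measurable rectangles generate the product $\sigma$-algebra, that $\mathbb{F}_1\odot\mathbb{F}_2$ is dense in $\mathbb{F}_1\otimes\mathbb{F}_2$, and that simple functions are dense in the Bochner $L_2$-space. Your treatment merely spells out the measurability and $\sigma$-finiteness details that the paper leaves implicit.
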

\begin{proof}
	Since the right-hand side of \eqref{EQ: IsometricIsomorphismTensorL2} is linear in $\phi_{1}$ and $\phi_{2}$, it correctly defines a mapping from $\mathbb{H}_{1} \odot \mathbb{H}_{2}$. Let us denote the $L_{2}$-space over $\mathcal{X}_{1} \times \mathcal{X}_{2}$ just by $L_{2}$. Then, directly from the definitions, we have for any $\phi_{1},\psi_{1}\in \mathbb{H}_{1}$, $\phi_{2},\psi_{2} \in \mathbb{H}_{2}$ that
	\begin{equation}
		\langle \phi_{1} \otimes \phi_{2}, \psi_{1} \otimes \psi_{2} \rangle_{\mathbb{H}_{1} \otimes \mathbb{H}_{2}} = \langle \phi_{1} \otimes \phi_{2}, \psi_{1} \otimes \psi_{2}  \rangle_{L_{2}}.
	\end{equation}
    From this, it follows that \eqref{EQ: IsometricIsomorphismTensorL2} indeed induces an isometric embedding from $\mathbb{H}_{1} \odot \mathbb{H}_{2}$ to $L_{2}$, and, consequently, it can be extended to the entire $\mathbb{H}_{1} \otimes \mathbb{H}_{2}$.
    
    It remains to show that the image of $\mathbb{H}_{1} \otimes \mathbb{H}_{2}$ under \eqref{EQ: IsometricIsomorphismTensorL2} coincides with $L_{2}$. Thanks to the isometry, the image is closed. Thus, it is sufficient to show that the image is dense in $L_{2}$. For this, let $\mathbb{L}$ be the subspace in $\mathbb{H}_{1} \otimes \mathbb{H}_{2}$ spanned by the elements $f_{1}\rchi_{\mathcal{B}_{1}} \otimes f_{2}\rchi_{\mathcal{B}_{2}}$, where $f_{1} \in \mathbb{F}_{1}$, $f_{2} \in \mathbb{F}_{2}$, and $\rchi_{\mathcal{B}_{1}}$ and $\rchi_{\mathcal{B}_{2}}$ are the characteristic (indicator) functions of measurable subsets $\mathcal{B}_{1} \subset \mathcal{X}_{1}$ and $\mathcal{B}_{2} \subset \mathcal{X}_{2}$. Clearly, the mapping from \eqref{EQ: IsometricIsomorphismTensorL2} transfers $f_{1}\rchi_{\mathcal{B}_{1}} \otimes f_{2}\rchi_{\mathcal{B}_{2}}$ into $(f_{1} \otimes f_{2})\rchi_{\mathcal{B}_{1} \times \mathcal{B}_{2}}$, where $\rchi_{\mathcal{B}_{1} \times \mathcal{B}_{2}}$ is defined analogously. Since the semiring of products $\mathcal{B}_{1} \times \mathcal{B}_{2}$ generate the $\sigma$-algebra on $\mathcal{X}_{1} \times \mathcal{X}_{2}$, linear combinations of $f_{1} \otimes f_{2}$ are dense in $\mathbb{F}_{1} \otimes \mathbb{F}_{2}$, and simple functions are dense in $L_{2}$, the image of $\mathbb{L}$ under \eqref{EQ: IsometricIsomorphismTensorL2} is dense in $L_{2}$.
\end{proof}

It can be shown that the tensor product of Hilbert spaces is associative, i.e., for any triple $\mathbb{H}_{1}$, $\mathbb{H}_{2}$, and $\mathbb{H}_{3}$ of Hilbert spaces, the tensor products $(\mathbb{H}_{1} \otimes \mathbb{H}_{2}) \otimes \mathbb{H}_{3}$ and $\mathbb{H}_{1} \otimes ( \mathbb{H}_{2} \otimes \mathbb{H}_{3} )$ are naturally isometrically isomorphic, and so they are simply denoted by $\mathbb{H}_{1} \otimes \mathbb{H}_{2} \otimes \mathbb{H}_{3}$. This allows to extend the previous constructions to tensor products of any finite number of Hilbert spaces. 

Given a Hilbert space $\mathbb{H}$ and a positive integer $m$, we denote its $m$-fold tensor product by $\mathbb{H}^{\otimes m}$. For an operator $L \in \mathcal{L}(\mathbb{H})$, its $m$-fold tensor product is denoted by $L^{\otimes m}$ and called\footnote{In \cite{Muldowneys1990CompMatr, MuldowneyWang2021}, the term (multiplicative or additive) ``compound'' is applied only to operators acting on exterior powers. It is convenient to apply this term for general tensor products.} the \textit{$m$-fold multiplicative compound} of $L$. We have the following description of the spectrum $\operatorname{spec}(L^{\otimes m})$ of $L^{\otimes m}$.
\begin{theorem}[\cite{BrownPearcy1966}]
	\label{TH: SpectrumOfMultiplicativeCompound}
	For $L \in \mathcal{L}(\mathbb{H})$, we have
	\begin{equation}
		\label{EQ: SpectraMultComp}
		\operatorname{spec}(L^{\otimes m}) = \left\{ \lambda_{1}\cdot \ldots \cdot \lambda_{m} \mid \lambda_{j} \in \operatorname{spec}(L) \quad \text{for any} \ j \in \{1, \ldots, m\} \right\}.
	\end{equation}
\end{theorem}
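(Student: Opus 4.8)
The plan is to argue by induction on $m$, reducing everything to the two-factor case, for which the cited theorem of A.~Brown and C.~Pearcy does all the work. For $m=1$ there is nothing to prove. Assume the formula holds for $m-1$. By associativity of the Hilbert tensor product I identify $\mathbb{H}^{\otimes m}$ with $\mathbb{H}^{\otimes(m-1)} \otimes \mathbb{H}$; under this identification $L^{\otimes m}$ becomes $L^{\otimes(m-1)} \otimes L$, which is immediate on decomposable tensors via \eqref{EQ: TensorProductOperatorsDefDecomp} and hence holds on all of $\mathbb{H}^{\otimes m}$ by density and boundedness.

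Next I would invoke the result of \cite{BrownPearcy1966}, according to which $\operatorname{spec}(A \otimes B) = \{\alpha\beta \ | \ \alpha \in \operatorname{spec}(A), \ \beta \in \operatorname{spec}(B)\}$ for arbitrary bounded operators $A$, $B$ on Hilbert spaces. Applying this with $A = L^{\otimes(m-1)}$ and $B = L$, and then substituting the inductive description of $\operatorname{spec}(L^{\otimes(m-1)})$, gives
\[
	\operatorname{spec}(L^{\otimes m}) = \left\{ (\lambda_{1} \cdots \lambda_{m-1}) \lambda_{m} \ | \ \lambda_{j} \in \operatorname{spec}(L) \right\} = \left\{ \lambda_{1} \cdots \lambda_{m} \ | \ \lambda_{j} \in \operatorname{spec}(L) \right\},
\]
which is \eqref{EQ: SpectraMultComp}. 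In the real case one first passes to the complexifications, using the natural identifications $(\mathbb{H}^{\otimes_{\mathbb{R}} m})_{\mathbb{C}} \cong (\mathbb{H}_{\mathbb{C}})^{\otimes_{\mathbb{C}} m}$ and $(L_{\mathbb{C}})^{\otimes m} = (L^{\otimes m})_{\mathbb{C}}$, so that the computation of the (complex) spectrum is unaffected.

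I do not expect a genuine obstacle here: the two bookkeeping points --- the identification $L^{\otimes m} \cong L^{\otimes(m-1)} \otimes L$ and, in the real case, the passage to complexifications --- are routine, and all the substance sits inside the Brown--Pearcy theorem. Its only nontrivial part, the inclusion $\operatorname{spec}(A \otimes B) \subseteq \operatorname{spec}(A)\cdot\operatorname{spec}(B)$ (the absence of spurious spectral values), is proved there by a combination of approximate-eigenvector constructions and adjoint arguments; I would simply take it as given. If one preferred a self-contained treatment, the reverse inclusion $\operatorname{spec}(A)\cdot\operatorname{spec}(B) \subseteq \operatorname{spec}(A \otimes B)$ follows from telescoping and tensoring approximate eigenvectors, but reproving the harder inclusion would essentially amount to reproducing \cite{BrownPearcy1966}, which is why citing it is the natural route.
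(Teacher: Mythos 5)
Your proposal is correct and matches the paper's approach: the paper gives no separate argument but simply states that the formula follows immediately from the Brown--Pearcy theorem, and your induction via the identification $L^{\otimes m} \cong L^{\otimes(m-1)} \otimes L$ (plus the complexification remark, cf. Proposition \ref{PROP: ComplexficationsTensorProducts}) is exactly the routine bookkeeping implicit in that citation.
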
 
\begin{remark}
	\label{REM: SpectraOfTensorProducts}
	 For the spectrum of the tensor product $L_{1} \otimes \cdots \otimes L_{m}$ of operators $L_{j} \in \mathcal{L}(\mathbb{H}_{j})$ acting on possibly different Hilbert spaces $\mathbb{H}_{j}$, where $j \in \{1,\ldots,m\}$, we have the obvious analog of \eqref{EQ: SpectraMultComp}:
	 \begin{equation}
	 	\label{EQ: SpectraTensorProduct}
	 	\begin{split}
	 		\operatorname{spec}(L_{1} \otimes \cdots \otimes L_{m}) =\\= \left\{ \lambda_{1}\cdot \ldots \cdot \lambda_{m} \mid \lambda_{j} \in \operatorname{spec}(L_{j}) \quad \text{for any} \ j \in \{1, \ldots, m\} \right\},
	 	\end{split}
	 \end{equation}
	 which follows, for example, from \cite[Theorem 4.3]{Ichinose1970}. This stronger result is necessary for our study, see the footnote in the proof of Theorem \ref{TH: SpectrumAdditiveCompounds}.
\end{remark}
\begin{remark}
	In \cite{MalletParretNussbaum2013}, multiplicities of isolated spectral points for operators on injective tensor products of Banach spaces are described using the direct sum decomposition \eqref{EQ: HilbertTensorProductDirectSumDecomposition}, which is also applicable to our case. We do not need this result here, but similar arguments will be applied to show its analog for additive compounds in Theorem \ref{TH: AddCompoundMultiplicities}.
\end{remark}

Now let $\mathbb{S}_{m}$ denote the symmetric group of order $m$. For each permutation $\sigma \in \mathbb{S}_{m}$, consider the transposition operator $S_{\sigma} \in \mathcal{L}(\mathbb{H}^{\otimes m})$ defined on decomposable tensors $v_{1} \otimes \cdots \otimes v_{m}$ by
\begin{equation}
	\label{EQ: SymmetryOperatorWedgeDefinition}
	S_{\sigma}(v_{1} \otimes \cdots \otimes v_{m}) \coloneq v_{\sigma(1)} \otimes \cdots \otimes v_{\sigma(m)}.
\end{equation}
It is important to note that $S_{\sigma}$ is a well-defined bijective isometry on $\mathbb{H}^{\odot m}$ and hence can be extended by continuity to a unitary operator on $\mathbb{H}^{\otimes m}$. Furthermore, we have $S^{-1}_{\sigma} = S^{*}_{\sigma} = S_{\sigma^{-1}}$ and $S_{\sigma_{1}}S_{\sigma_{2}} = S_{\sigma_{2} \sigma_{1}}$ for all $\sigma,\sigma_{1},\sigma_{2} \in \mathbb{S}_{m}$.

We define the \textit{$m$-fold exterior product} $\mathbb{H}^{\wedge m}$ of $\mathbb{H}$ by
\begin{equation}
	\label{EQ: DefinitionWedgeHilberSpace}
	\mathbb{H}^{\wedge m} \coloneq \{ V \in \mathbb{H}^{\otimes m} \ | \ S_{\sigma}V = (-1)^{\sigma} V \quad \text{for any} \ \sigma \in \mathbb{S}_{m} \}.
\end{equation}
Equivalently, $\mathbb{H}^{\wedge m}$ can be described as the range of the orthogonal projector $\Pi^{\wedge}_{m}$ in $\mathbb{H}^{\otimes m}$ defined by
\begin{equation}
	\label{EQ: AntisymmetricProjectorDefinition}
	\Pi^{\wedge}_{m} \coloneq \frac{1}{m!} \sum_{\sigma \in \mathbb{S}_{m}}(-1)^{\sigma} S_{\sigma}.
\end{equation}
Clearly, we have $S_{\sigma}\Pi^{\wedge}_{m} = (-1)^{\sigma}\Pi^{\wedge}_{m}$, which is consistent with \eqref{EQ: DefinitionWedgeHilberSpace}. 

For all $v_{1},\ldots,v_{m} \in \mathbb{H}$, we set
\begin{equation}
	\label{EQ: AntisymmetrizationDecomposableDefinition}
	v_{1} \wedge \cdots \wedge v_{m} \coloneq \Pi^{\wedge}_{m}(v_{1} \otimes \cdots \otimes v_{m}).
\end{equation}
Then \eqref{EQ: HilbertTensorProduct}, \eqref{EQ: AntisymmetrizationDecomposableDefinition}, and \eqref{EQ: AntisymmetricProjectorDefinition}, yield
\begin{equation}
	\label{EQ: WedgeProductInheritedWithCoefficient}
	\left(v_{1} \wedge \cdots \wedge v_{m}, w_{1} \wedge \cdots \wedge w_{m}\right)_{\mathbb{H}^{\otimes m}} = \frac{1}{m!} \det\{ (v_{k}, w_{l})_{\mathbb{H}} \}_{1 \leq k,l \leq m},
\end{equation}
where all $v_{k}$ and $w_{l}$ belong to $\mathbb{H}$. We endow $\mathbb{H}^{\wedge m}$ with the induced inner product \eqref{EQ: WedgeProductInheritedWithCoefficient}. Some authors define an inner product in  $\mathbb{H}^{\wedge m}$ without the factor $1/m!$. For us, this is not convenient due to Theorem \ref{TH: ExteriorPowerL2DescriptionAntisymmetric} below and its use in the next sections.

%Assuming for simplicity that $\mathbb{H}$ is separable, let $\{e_{k}\}_{k \geq 1}$ be an orthonormal basis in $\mathbb{H}$. Then the vectors $\{ \sqrt{m!} \cdot e_{j_{1}} \wedge \cdots \wedge e_{j_{m}} \}$ taken over all positive integers $j_{1} < j_{2} < \cdots < j_{m}$ form an orthonormal basis in $\mathbb{H}^{\wedge m}$ with respect to the induced inner product \eqref{EQ: WedgeProductInheritedWithCoefficient}. Some authors normalize the inner product in \eqref{EQ: WedgeProductInheritedWithCoefficient} therefore eliminating the factor $\sqrt{m!}$ from the basis vectors. However, for us this is not convenient due to Theorem \ref{TH: ExteriorPowerL2DescriptionAntisymmetric} below and its use in the next sections. So here we always assume that $\mathbb{H}^{\wedge m}$ is endowed with the induced inner product \eqref{EQ: WedgeProductInheritedWithCoefficient}.

For any operator $L \in \mathcal{L}(\mathbb{H})$, the operator $L^{\otimes m}$ commutes with $S_{\sigma}$ and hence with $\Pi^{\wedge}_{m}$. Therefore, there is a well-defined operator $L^{\wedge m}$ given by the restriction of $L^{\otimes m}$ to $\mathbb{H}^{\wedge m}$, which is called the \textit{$m$-fold antisymmetric multiplicative compound} of $L$ or the \textit{$m$-fold multiplicative compound} of $L$ in $\mathbb{H}^{\wedge m}$. Cocycles of such operators are the main object of our study, see Section \ref{SEC: CocyclesSemigroupsAdditiveCompounds}.

Now suppose $\mathbb{F}_{1}, \ldots, \mathbb{F}_{m}$ are Hilbert spaces. For any $\sigma \in \mathbb{S}_{m}$, we define the transposition operator $T_{\sigma}$ such that (here $f_{j} \in \mathbb{F}_{j}$ for $j \in \{1,\ldots,m\}$)
\begin{equation}
	\label{EQ: TranspoitionValuesDefinition}
	\begin{split}
		T_{\sigma} \colon \mathbb{F}_{1} \otimes \cdots \otimes \mathbb{F}_{m} \to \mathbb{F}_{\sigma(1)} \otimes \cdots \otimes \mathbb{F}_{\sigma(m)},\\
		T_{\sigma} (f_{1} \otimes \cdots \otimes f_{m}) \coloneq f_{\sigma(1)} \otimes \cdots \otimes f_{\sigma(m)}. 
	\end{split}
\end{equation}
Analogously to $S_{\sigma}$ from \eqref{EQ: SymmetryOperatorWedgeDefinition}, we have that $T_{\sigma}$ is a bijective isometry. Below, when the notation $T_{\sigma}$ is used, the spaces $\mathbb{F}_{1},\ldots,\mathbb{F}_{m}$ should be understood from the context. In this sense, the identities $T^{-1}_{\sigma} = T_{\sigma^{-1}}$ and $T_{\sigma_{2}} T_{\sigma_{1}} = T_{\sigma_{1}\sigma_{2}}$ may be understood. Note that if all the spaces $\mathbb{F}_{j}$, except possibly one, are one-dimensional, then any operator $T_{\sigma}$ is identical.

Below, we study functions with values in a tensor product of Hilbert spaces. We often consider $T_{\sigma}$, acting in the space of values, as an operator on such functions, meaning that it is applied pointwise.

Let $\mathbb{F}$ be a Hilbert space and $\mathcal{X}$ be a set. A function $\Phi \colon \mathcal{X}^{m} \to \mathbb{F}^{\otimes m}$ is called \textit{antisymmetric} if for any $\sigma \in \mathbb{S}_{m}$ and all $x_{1},\ldots,x_{m} \in \mathcal{X}$, we have
\begin{equation}
	\label{EQ: AntisymmetricFunctionDefinition}
	\Phi(x_{\sigma(1)},\ldots, x_{\sigma(m)} ) = (-1)^{\sigma} T_{\sigma} \Phi(x_{1},\ldots,x_{m}).
\end{equation}
In the context of a given measure $\nu$ on $\mathcal{X}^{m}$, we usually require \eqref{EQ: AntisymmetricFunctionDefinition} to be satisfied only for $\nu$-almost all $(x_{1},\ldots,x_{m}) \in \mathcal{X}^{m}$ and say that $\Phi$ is $\nu$-\textit{antisymmetric}. Note that for $\mathbb{F} = \mathbb{R}$ (or $\mathbb{C}$ in the complex case), the operator $T_{\sigma}$ is identical, and the definition coincides with the usual definition of an antisymmetric function that changes its sign according to the permutation of arguments. 

For any $\sigma \in \mathbb{S}_{m}$, it is convenient to introduce the operator
\begin{equation}
	\label{EQ: ThetaSigmaDefinition}
	(\Theta_{\sigma}\Phi)(x_{1},\ldots,x_{m}) \coloneq \Phi(x_{\sigma(1)},\ldots, x_{\sigma(m)})
\end{equation}
acting on functions $\Phi$ as above. Then \eqref{EQ: AntisymmetricFunctionDefinition} is read as $\Theta_{\sigma}\Phi = (-1)^{\sigma}T_{\sigma}\Phi$. Note that $\Theta_{\sigma} \Theta_{\sigma'} = \Theta_{\sigma \sigma'}$ and $\Theta_{\sigma}$ commutes with $T_{\sigma'}$ for any $\sigma, \sigma' \in \mathbb{S}_{m}$. It sometimes is convenient to write $\Theta^{(m)}_{\sigma}$ to emphasize the number of variables being permuted.
\begin{remark}
	Let us emphasize that the correspondence $\sigma \mapsto T_{\sigma}$ is an antihomomorphism, while $\sigma \mapsto \Theta_{\sigma}$ is a homomorphism. For the latter, note that $\Theta_{\sigma}$ is the mapping $h^{*}_{\sigma}$ on functions (a change of variables) associated with the mapping $h_{\sigma}$ of $\mathcal{X}^{m}$ permuting the arguments, i.e., $h_{\sigma}(x_{1},\ldots,x_{m}) = (x_{\sigma(1)},\ldots,x_{\sigma(m)})$. In turn, $h_{\sigma}$ is itself the mapping on $x_{j}$, considered as functions of the argument $j \in \{1,\ldots,m\}$, associated with $\sigma$, i.e., $h_{\sigma} = \sigma^{*}$ in similar terms. Thus, $\Theta_{\sigma}$ is the result of two contravariant operations (antihomomorphisms) $\sigma \mapsto \sigma^{*}=h_{\sigma}$ and $h_{\sigma} \mapsto h^{*}_{\sigma} = \Theta_{\sigma}$. On the other hand, $T_{\sigma}$ (in particular, $S_{\sigma}$) is obtained by a single contravariant operation.
\end{remark}

Suppose that $\mu$ is a measure on $\mathcal{X}$ and set $\mathbb{H} \coloneq L_{2}(\mathcal{X};\mu;\mathbb{F})$. Let $\mu^{\otimes m}$ be the $m$-fold product of $\mu$ with itself, which is a measure on $\mathcal{X}^{m}$.
\begin{theorem}
	\label{TH: ExteriorPowerL2DescriptionAntisymmetric}
     For the spaces $\mathbb{H}^{\otimes m}$ and $L_{2}(\mathcal{X}^{m};\mu^{\otimes m}; \mathbb{F}^{\otimes m})$, consider the natural isometric isomorphism induced by (see Theorem \ref{TH: DescriptionTensorProductL2})
	\begin{equation}
		\phi_{1} \otimes \cdots \otimes \phi_{m} \mapsto (\phi_{1} \otimes \cdots \otimes \phi_{m})(\cdot_{1},\ldots,\cdot_{m}),
	\end{equation}
    where $(\phi_{1} \otimes \cdots \otimes \phi_{m})(x_{1},\ldots,x_{m}) \coloneq \phi_{1}(x_{1}) \otimes \cdots \otimes \phi_{m}(x_{m})$ for $\mu^{\otimes m}$-almost all \\$(x_{1},\ldots,x_{m}) \in \mathcal{X}^{m}$.
    Then its restriction to $\mathbb{H}^{\wedge m}$ is an isometric isomorphism between $\mathbb{H}^{\wedge m}$ and the subspace of $\mu^{\otimes m}$-antisymmetric functions in $L_{2}(\mathcal{X}^{m};\mu^{\otimes m}; \mathbb{F}^{\otimes m})$.
\end{theorem}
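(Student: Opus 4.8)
The plan is to leverage Theorem~\ref{TH: DescriptionTensorProductL2} (applied iteratively) so that the ambient isometric isomorphism $\mathbb{H}^{\otimes m} \cong L_{2}(\mathcal{X}^{m};\mu^{\otimes m};\mathbb{F}^{\otimes m})$ is already in hand, and then to verify two things: first, that under this isomorphism the transposition operator $S_{\sigma}$ on the left corresponds to the operator on $L_{2}$ that simultaneously permutes the arguments and applies $T_{\sigma}$ on the value space; second, that the $S_{\sigma}$-antisymmetry condition defining $\mathbb{H}^{\wedge m}$ then translates exactly into the pointwise ($\mu^{\otimes m}$-a.e.) condition \eqref{EQ: AntisymmetricFunctionDefinition}. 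Since an isometric isomorphism maps a closed subspace onto a closed subspace, and both $\mathbb{H}^{\wedge m}$ (the image of the orthogonal projector $\Pi^{\wedge}_{m}$) and the space of $\mu^{\otimes m}$-antisymmetric $L_{2}$-functions are closed subspaces, it suffices to show the isomorphism carries one onto the other; equivalently, to identify the conjugated projector with the corresponding averaging operator on $L_{2}$.

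First I would fix $\sigma \in \mathbb{S}_{m}$ and compute the action of $S_{\sigma}$ on a decomposable tensor $\phi_{1} \otimes \ldots \otimes \phi_{m}$ with $\phi_{j} \in \mathbb{H} = L_{2}(\mathcal{X};\mu;\mathbb{F})$: by \eqref{EQ: SymmetryOperatorWedgeDefinition} it is $\phi_{\sigma(1)} \otimes \ldots \otimes \phi_{\sigma(m)}$, which as an element of $L_{2}(\mathcal{X}^{m};\mu^{\otimes m};\mathbb{F}^{\otimes m})$ is the function $(x_{1},\ldots,x_{m}) \mapsto \phi_{\sigma(1)}(x_{1}) \otimes \ldots \otimes \phi_{\sigma(m)}(x_{m})$. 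Re-indexing, $\phi_{\sigma(j)}(x_{j}) = \phi_{i}(x_{\sigma^{-1}(i)})$ with $i = \sigma(j)$, so this function equals $(x_{1},\ldots,x_{m}) \mapsto T_{\sigma}\bigl( \phi_{1}(x_{\sigma^{-1}(1)}) \otimes \ldots \otimes \phi_{m}(x_{\sigma^{-1}(m)}) \bigr)$ once one keeps careful track of which permutation acts where (here I would invoke the identity $T_{\sigma}^{-1} = T_{\sigma^{-1}}$ and the definition of $T_{\sigma}$ to align conventions with \eqref{EQ: AntisymmetricFunctionDefinition}). Since decomposable tensors span a dense subspace and both sides are bounded operators (isometries, in fact), the identity $(S_{\sigma}\Phi)(x_{1},\ldots,x_{m}) = T_{\sigma}\,\Phi(x_{\sigma^{-1}(1)},\ldots,x_{\sigma^{-1}(m)})$ holds for all $\Phi$, $\mu^{\otimes m}$-a.e.; equivalently $(S_{\sigma^{-1}}\Phi)(x_{\sigma(1)},\ldots,x_{\sigma(m)}) = T_{\sigma^{-1}}\Phi(x_{1},\ldots,x_{m})$, matching the left-hand pattern of \eqref{EQ: AntisymmetricFunctionDefinition} up to the sign factor.

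Next, $V \in \mathbb{H}^{\otimes m}$ lies in $\mathbb{H}^{\wedge m}$ iff $S_{\sigma}V = \operatorname{sgn}(\sigma)V$ for every $\sigma$. Transporting to $L_{2}$ and using the formula just established, this says precisely that the representing function $\Phi$ satisfies $\Phi(x_{\sigma(1)},\ldots,x_{\sigma(m)}) = (-1)^{\sigma} T_{\sigma^{-1}}\Phi(x_{1},\ldots,x_{m})$ for $\mu^{\otimes m}$-a.e.\ $(x_{1},\ldots,x_{m})$, for each $\sigma$ — which is exactly the defining condition for $\Phi$ to be $\mu^{\otimes m}$-antisymmetric. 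One small point to handle carefully is the quantifier order: a priori the a.e.\ exceptional null set depends on $\sigma$, but $\mathbb{S}_{m}$ is finite, so the union of these finitely many null sets is null, and off it the identity holds for all $\sigma$ simultaneously. Because the map in Theorem~\ref{TH: DescriptionTensorProductL2} is a surjective isometry, it restricts to an isometric isomorphism between the $+1$-eigenspace intersection $\mathbb{H}^{\wedge m}$ and the closed subspace of antisymmetric functions, and surjectivity onto that subspace is automatic (any antisymmetric $L_{2}$-function is the image of its preimage $V$, which then satisfies the eigenvalue relations). This finishes the proof.

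The main obstacle is bookkeeping rather than anything conceptual: correctly matching the permutation conventions so that the $T_{\sigma^{-1}}$ in \eqref{EQ: AntisymmetricFunctionDefinition} comes out on the nose (it is easy to end up with $T_{\sigma}$ versus $T_{\sigma^{-1}}$ depending on whether one permutes argument slots or index labels), and making sure the identification of $S_{\sigma}$ with the permute-and-twist operator on $L_{2}$ is justified by density-plus-continuity rather than only checked on decomposables. Once the operator identity $(S_{\sigma}\Phi)(\cdot) = T_{\sigma}\Phi(\sigma^{-1}\,\cdot)$ is pinned down, the rest is immediate.
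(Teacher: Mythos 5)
Your overall route is the paper's own: push the $\mathbb{S}_m$-action through the isomorphism of Theorem \ref{TH: DescriptionTensorProductL2} on decomposable tensors, extend by density and continuity, and then identify $\mathbb{H}^{\wedge m}$ with the antisymmetric functions (the paper does this by conjugating the projector $\Pi^{\wedge}_{m}$, you do it by conjugating each $S_{\sigma}$ and using the joint $\operatorname{sgn}$-eigenspace characterization; these are equivalent). The closedness, null-set, and surjectivity remarks are all fine.

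However, the one step you yourself flagged as the danger is in fact not carried out correctly. Your operator identity $(S_{\sigma}\Phi)(x_{1},\ldots,x_{m}) = T_{\sigma}\,\Phi(x_{\sigma^{-1}(1)},\ldots,x_{\sigma^{-1}(m)})$ is correct with the paper's definition of $T_{\sigma}$, but the ``equivalently'' restatement is not: substituting $x_{j}\mapsto x_{\sigma(j)}$ gives $(S_{\sigma}\Phi)(x_{\sigma(1)},\ldots,x_{\sigma(m)}) = T_{\sigma}\Phi(x_{1},\ldots,x_{m})$, whereas the version you wrote, $(S_{\sigma^{-1}}\Phi)(x_{\sigma(1)},\ldots,x_{\sigma(m)}) = T_{\sigma^{-1}}\Phi(x_{1},\ldots,x_{m})$, only holds when $\sigma^{2}=e$. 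Consequently the eigenvalue condition $S_{\sigma}V=\operatorname{sgn}(\sigma)V$ for all $\sigma$ translates into $\Phi(x_{\sigma(1)},\ldots,x_{\sigma(m)}) = (-1)^{\sigma}T_{\sigma}\Phi(x_{1},\ldots,x_{m})$, with the index on $T$ matching the index on the argument permutation; it does \emph{not} reproduce the literal \eqref{EQ: AntisymmetricFunctionDefinition}, and for $m\geq 3$, $\dim\mathbb{F}\geq 2$ the two conditions genuinely differ. Concretely, take $\mathcal{X}=\{1,2,3\}$ with counting measure, $\phi_{i}(x)=\mathbf{1}_{\{x=i\}}f_{i}$ with $f_{1},f_{2},f_{3}$ orthonormal in $\mathbb{F}$, $\Phi=\phi_{1}\wedge\phi_{2}\wedge\phi_{3}$ and $\sigma$ the $3$-cycle $1\mapsto2\mapsto3\mapsto1$: at $(x_{1},x_{2},x_{3})=(1,2,3)$ the left-hand side of \eqref{EQ: AntisymmetricFunctionDefinition} is $\tfrac{1}{6}f_{2}\otimes f_{3}\otimes f_{1}$ while $(-1)^{\sigma}T_{\sigma^{-1}}\Phi(1,2,3)=\tfrac{1}{6}f_{3}\otimes f_{1}\otimes f_{2}$, so the wedge would fail the stated condition, whereas it satisfies the $T_{\sigma}$-version. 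To be fair, this mismatch is inherited from the paper's own conventions (with $T_{\sigma}(\phi_{1}\otimes\cdots\otimes\phi_{m})=\phi_{\sigma(1)}\otimes\cdots\otimes\phi_{\sigma(m)}$ the map $\sigma\mapsto T_{\sigma}$ is an anti-homomorphism, and the paper's proof of this theorem makes the same $T_{\sigma}\leftrightarrow T_{\sigma^{-1}}$ swap midway), so your argument becomes correct as soon as you either adopt the convention $T_{\sigma}(\phi_{1}\otimes\cdots\otimes\phi_{m})=\phi_{\sigma^{-1}(1)}\otimes\cdots\otimes\phi_{\sigma^{-1}(m)}$ or state the antisymmetry condition with $T_{\sigma}$; but as written, the claim that the derived identity matches \eqref{EQ: AntisymmetricFunctionDefinition} ``on the nose'' is exactly the bookkeeping error you warned against, and it needs to be fixed explicitly rather than asserted.
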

\begin{proof}
	In virtue of Theorem \ref{TH: DescriptionTensorProductL2}, it is sufficient to show that the image of $\mathbb{H}^{\wedge m}$ coincides with the subspace of $\mu^{\otimes m}$-antisymmetric functions.
	
	Up to the isomorphism, from \eqref{EQ: SymmetryOperatorWedgeDefinition} it is not hard to see that $S_{\sigma} = T_{\sigma}\Theta_{\sigma^{-1}}$ in terms of the operators $T_{\sigma}$ and $\Theta_{\sigma}$ defined by \eqref{EQ: TranspoitionValuesDefinition} and \eqref{EQ: ThetaSigmaDefinition}, respectively. Then \eqref{EQ: AntisymmetricProjectorDefinition} yields
	\begin{equation}
		\label{EQ: AnntisymmetricProjectorFunctions}
		\Pi^{\wedge}_{m} = \frac{1}{m!}\sum_{\sigma \in \mathbb{S}_{m}} (-1)^{\sigma} T_{\sigma^{-1}} \Theta_{\sigma}.
	\end{equation}
	Furthermore, $S_{\sigma^{-1}} \Pi^{\wedge}_{m} = (-1)^{\sigma}\Pi^{\wedge}_{m}$ gives $\Theta_{\sigma}\Pi^{\wedge}_{m} = (-1)^{\sigma} T_{\sigma}\Pi^{\wedge}_{m}$, showing that the range of $\Pi^{\wedge}_{m}$ is the subspace of $\mu^{\otimes}$-antisymmetric functions.
\end{proof}

At the end of this section, we recall the construction of complexification. Let $\mathbb{H}$ be a real Hilbert space. Then its \textit{complexification} $\mathbb{H}^{\mathbb{C}}$ is defined as the outer Hilbert direct sum $\mathbb{H} \oplus i\mathbb{H}$ consisting of elements $v + i w$, where $v,w \in \mathbb{H}$, and equipped with the natural multiplication over $\mathbb{C}$. In $\mathbb{H}^{\mathbb{C}}$, there is a natural sesquilinear form $\langle\cdot,\cdot\rangle_{\mathbb{H}^{\mathbb{C}}}$ defined by 
\begin{equation}
	\langle v+iw,v+iw \rangle_{\mathbb{H}^{\mathbb{C}}} \coloneq \langle v,v \rangle_{\mathbb{H}} + \langle w,w \rangle_{\mathbb{H}} \qquad \text{for all} \ v,w \in \mathbb{H}.
\end{equation}
Clearly, $\mathbb{H}^{\mathbb{C}}$, equipped with $\langle \cdot,\cdot \rangle_{\mathbb{H}^{\mathbb{C}}}$, is a complex Hilbert space.

Recall that for a linear operator $L$ in $\mathbb{H}$ with domain $\mathcal{D}(L)$, the \textit{complexification} $L^{\mathbb{C}}$ of $L$ is a linear operator in $\mathbb{H}^{\mathbb{C}}$ given by $L^{\mathbb{C}}(v+iw) \coloneq Lv + i Lw$ for all $v,w \in \mathcal{D}(L)$.

For a real Hilbert space $\mathbb{H}$, we may consider $\mathbb{H} \otimes_{\mathbb{R}} \mathbb{C}$ as a complex Hilbert space by defining the scalar multiplication as $\alpha \cdot (v\otimes z)\coloneq v \otimes (\alpha z)$ for all $v \in \mathbb{H}$ and $\alpha, z \in \mathbb{C}$. The following well-known properties are readily established.
\begin{proposition}
	\label{PROP: ComplexficationsTensorProducts}
	For real Hilbert spaces $\mathbb{H}$, $\mathbb{H}_{1}$, $\mathbb{H}_{2}$, $\mathbb{F}$ and a measure space $(\mathcal{X},\mu)$, we have the following natural isometric isomorphisms:
	\begin{enumerate}
		\item[1)] $\mathbb{H}^{\mathbb{C}} \cong \mathbb{H} \otimes_{\mathbb{R}} \mathbb{C}$;
		\item[2)] $\left(\mathbb{H}_{1} \otimes_{\mathbb{R}} \mathbb{H}_{2} \right)^{\mathbb{C}} \cong \mathbb{H}^{\mathbb{C}}_{1} \otimes_{\mathbb{C}} \mathbb{H}^{\mathbb{C}}_{2}$;
		\item[3)] $L_{2}(\mathcal{X};\mu;\mathbb{F}) \otimes_{\mathbb{R}} \mathbb{C} \cong L_{2}(\mathcal{X};\mu;\mathbb{F}^{\mathbb{C}})$.
	\end{enumerate}
	Moreover, for any operators $L_{1} \in \mathcal{L}(\mathbb{H}_{1})$ and $L_{2} \in \mathcal{L}(\mathbb{H}_{2})$, the identity $(L_{1} \otimes L_{2})^{\mathbb{C}} = (L^{\mathbb{C}}_{1} \otimes L^{\mathbb{C}}_{2})$ holds up to the isomorphism from item 2).
\end{proposition}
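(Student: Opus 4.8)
The plan is to exhibit each of the three isomorphisms explicitly on a convenient spanning set, verify that it is an isometry there, and then extend by continuity and deduce surjectivity from a density argument; the operator identity will then follow by comparing the two sides on decomposable tensors.

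For item 1), I would take the map $\iota\colon \mathbb{H}^{\mathbb{C}} \to \mathbb{H}\otimes_{\mathbb{R}}\mathbb{C}$ given by $\iota(v+iw) := v\otimes 1 + w\otimes i$. Viewing $\mathbb{C}$ as a two-dimensional real Hilbert space with inner product $\operatorname{Re}(z\overline{\zeta})$, the pair $\{1,i\}$ is an orthonormal basis, so $\mathbb{H}\otimes_{\mathbb{R}}\mathbb{C}$ splits orthogonally as $(\mathbb{H}\otimes 1)\oplus(\mathbb{H}\otimes i)$; comparing with the definition of $(\cdot,\cdot)_{\mathbb{H}^{\mathbb{C}}}$ shows that $\iota$ is a bijective real-linear isometry. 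That $\iota$ intertwines the two complex structures is immediate from $\alpha\cdot(v\otimes z) = v\otimes(\alpha z)$: for instance $\iota(i(v+iw)) = \iota(-w+iv) = -w\otimes 1 + v\otimes i = i\cdot\iota(v+iw)$, and real scalars are clear. Transporting the inner product through $\iota$ then identifies the complex inner product on $\mathbb{H}\otimes_{\mathbb{R}}\mathbb{C}$ as the canonical sesquilinear one determined by $(v_{1}\otimes z_{1},v_{2}\otimes z_{2}) = (v_{1},v_{2})_{\mathbb{H}}\,z_{1}\overline{z_{2}}$, which is what one wants.

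For item 2), I would chain item 1) with the compatibility of extension of scalars with tensor products. Using item 1) twice, $\mathbb{H}_{1}^{\mathbb{C}}\otimes_{\mathbb{C}}\mathbb{H}_{2}^{\mathbb{C}} \cong (\mathbb{H}_{1}\otimes_{\mathbb{R}}\mathbb{C})\otimes_{\mathbb{C}}(\mathbb{H}_{2}\otimes_{\mathbb{R}}\mathbb{C})$, and I would define a map onto $(\mathbb{H}_{1}\otimes_{\mathbb{R}}\mathbb{H}_{2})\otimes_{\mathbb{R}}\mathbb{C}\cong(\mathbb{H}_{1}\otimes_{\mathbb{R}}\mathbb{H}_{2})^{\mathbb{C}}$ by $(v_{1}\otimes z_{1})\otimes_{\mathbb{C}}(v_{2}\otimes z_{2})\mapsto(v_{1}\otimes v_{2})\otimes(z_{1}z_{2})$; well-definedness follows from the universal property (the assignment is $\mathbb{C}$-balanced), comparing inner products of such decomposable tensors via \eqref{EQ: HilbertTensorProduct} and the complexification inner product shows it is isometric, and density of the spanning decomposable tensors gives surjectivity. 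If $\mathbb{H}_{1},\mathbb{H}_{2}$ are separable, a shorter route is to fix orthonormal bases $\{e^{1}_{k}\}$, $\{e^{2}_{j}\}$ and observe that $\{e^{1}_{k}\otimes e^{2}_{j}\}$ is simultaneously an orthonormal basis of $(\mathbb{H}_{1}\otimes_{\mathbb{R}}\mathbb{H}_{2})^{\mathbb{C}}$ and of $\mathbb{H}_{1}^{\mathbb{C}}\otimes_{\mathbb{C}}\mathbb{H}_{2}^{\mathbb{C}}$, since by item 1) each $\{e^{i}_{k}\otimes 1\}$ is an orthonormal basis of the complexified factor.

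For item 3), by item 1) it is enough to identify $L_{2}(\mathcal{X};\mu;\mathbb{F})^{\mathbb{C}}$ with $L_{2}(\mathcal{X};\mu;\mathbb{F}^{\mathbb{C}})$ via $f+ig\mapsto(x\mapsto f(x)+ig(x))$: the target is strongly measurable with $|f(x)+ig(x)|^{2}_{\mathbb{F}^{\mathbb{C}}}=|f(x)|^{2}_{\mathbb{F}}+|g(x)|^{2}_{\mathbb{F}}$, so the map is a well-defined isometry, and any $h\in L_{2}(\mathcal{X};\mu;\mathbb{F}^{\mathbb{C}})$ decomposes pointwise along $\mathbb{F}^{\mathbb{C}}=\mathbb{F}\oplus i\mathbb{F}$ into measurable $\mathbb{F}$-valued components lying in $L_{2}(\mathcal{X};\mu;\mathbb{F})$, giving surjectivity (alternatively one may invoke Theorem \ref{TH: DescriptionTensorProductL2} with a one-point second factor carrying $\mathbb{R}^{2}$). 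Finally, for the operator identity, under the isomorphism of item 2) it suffices to note that $(L_{1}\otimes L_{2})^{\mathbb{C}}$ and $L_{1}^{\mathbb{C}}\otimes_{\mathbb{C}}L_{2}^{\mathbb{C}}$ are both bounded (by \eqref{EQ: TensorProductNormEquality} and $\|L^{\mathbb{C}}\|=\|L\|$) and send the image of $(v_{1}\otimes z_{1})\otimes_{\mathbb{C}}(v_{2}\otimes z_{2})$ to $(L_{1}v_{1}\otimes L_{2}v_{2})\otimes(z_{1}z_{2})$ by \eqref{EQ: TensorProductOperatorsDefDecomp} and the definition of the complexification of an operator, and such tensors span a dense subspace. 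The only step requiring genuine care is item 2): one must keep the roles of $\otimes_{\mathbb{R}}$ and $\otimes_{\mathbb{C}}$ distinct and check that the scalar-extension map is $\mathbb{C}$-linear and isometric, not merely real-linear; the rest is a routine unwinding of definitions.
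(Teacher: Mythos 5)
Your proof is correct. Note that the paper itself offers no argument for this proposition — it simply declares the properties clear — and your explicit verification (the map $v+iw\mapsto v\otimes 1+w\otimes i$, the scalar-extension map on decomposable tensors checked to be $\mathbb{C}$-balanced and inner-product preserving, the pointwise real/imaginary decomposition for the $L_{2}$ statement, and density plus boundedness for the operator identity) is exactly the routine unwinding the paper implicitly has in mind, with the one genuinely delicate point — keeping $\otimes_{\mathbb{R}}$ and $\otimes_{\mathbb{C}}$ distinct and checking $\mathbb{C}$-linearity and isometry of the identification in item 2) — handled correctly.
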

\section{Cocycles, $C_{0}$-semigroups, and additive compounds}
\label{SEC: CocyclesSemigroupsAdditiveCompounds}

Given a time set\footnote{Here $\mathbb{R}_{+} = [0,+\infty)$.} $\mathbb{T} \in \{\mathbb{R}_{+}, \mathbb{R}\}$ and a complete metric space $\mathcal{Q}$, a \textit{dynamical system} with continuous time is a family of mappings $\vartheta^{t}(\cdot) \colon \mathcal{Q} \to \mathcal{Q}$, where $t \in \mathbb{T}$, such that
\begin{description}[before={\renewcommand\descriptionlabel[1]{\ref{##1}}}]
	\item[\textbf{(DS1)}\refstepcounter{desccount}\label{DESC:DS1}] for all $q \in \mathcal{Q}$ and $t,s \in \mathbb{T}$, we have $\vartheta^{t+s}(q) = \vartheta^{t}( \vartheta^{s}(q))$ and $\vartheta^{0}(q) = q$;
	\item[\textbf{(DS2)}\refstepcounter{desccount}\label{DESC:DS2}] the mapping $\mathbb{T} \times \mathcal{Q} \ni (t,q) \mapsto \vartheta^{t}(q)$ is continuous.
\end{description}
For the sake of brevity, it is convenient to use the notation $(\mathcal{Q},\vartheta)$ or just $\vartheta$ to denote the dynamical system. If $\mathbb{T} = \mathbb{R}_{+}$ (resp. $\mathbb{T} = \mathbb{R}$), then $\vartheta$ is called a \textit{semiflow} (resp. \textit{flow}) on $\mathcal{Q}$.

Given a Banach space $\mathbb{E}$ and a dynamical system $(\mathcal{Q},\vartheta)$, a \textit{cocycle} in $\mathbb{E}$ over $(\mathcal{Q},\vartheta)$ is a family of mappings $\psi^{t}(q,\cdot) \colon \mathbb{E} \to \mathbb{E}$, where $t \in \mathbb{R}_{+}$ and $q \in \mathcal{Q}$, such that 
\begin{description}[before=\let\makelabel\descriptionlabel]
	\item[\textbf{(CO1)}\refstepcounter{desccount}\label{DESC: CO1}] for all $v \in \mathbb{E}$, $q \in \mathcal{Q}$, and $t,s \in \mathbb{R}_{+}$, we have $\psi^{t+s}(q,v) = \psi^{t}(\vartheta^{s}(q),\psi^{s}(q,v))$ and $\psi^{0}(q,v) = v$;
	\item[\textbf{(CO2)}\refstepcounter{desccount}\label{DESC: CO2}] the mapping $\mathbb{R}_{+} \times \mathcal{Q} \times \mathbb{E} \ni (t,q,v) \mapsto \psi^{t}(q,v)$ is continuous.
\end{description}
For the sake of brevity, it is convenient to denote the cocycle simply by $\psi$. If each mapping $\psi^{t}(q,\cdot)$ is a bounded linear operator in $\mathbb{E}$, then the cocycle is called \textit{linear}, and we usually denote it by $\Xi$. Furthermore, if such $\Xi$ additionally satisfies the following properties:
\begin{description}[before=\let\makelabel\descriptionlabel]
	\item[\textbf{(UC1)}\refstepcounter{desccount}\label{DESC: UC1}] for any $t \in \mathbb{R}_{+}$, the mapping $\mathcal{Q} \ni q \mapsto \Xi^{t}(q,\cdot) \in \mathcal{L}(\mathbb{E})$ is continuous in the operator norm; 
	\item[\textbf{(UC2)\refstepcounter{desccount}\label{DESC: UC2}}] the cocycle mappings $\Xi^{t}(q,\cdot)$ are bounded uniformly in finite times\footnote{Thanks to \nameref{DESC: CO1}, it follows from \eqref{EQ: UniformContinuityCocycleBoundednessFiniteTimes} that the analogous supremum, but taken over $t \in [0,T]$, is also finite any $T>0$.}, that is
	\begin{equation}
		\label{EQ: UniformContinuityCocycleBoundednessFiniteTimes}
		\sup_{t \in [0,1]} \sup_{q \in \mathcal{Q}}\| \Xi^{t}(q,\cdot) \|_{\mathcal{L}(\mathbb{E})} < +\infty,
	\end{equation}
\end{description}
then $\Xi$ is called a \textit{uniformly continuous linear cocycle}. For such cocycles, \nameref{DESC: CO2} is equivalent to the continuous dependence of the operator $\Xi^{t}(q,\cdot)$ on $(t,q)$ in the strong operator topology.

For what follows, we fix a Hilbert space $\mathbb{H}$ and a dynamical system $(\mathcal{Q}, \vartheta)$. Given a uniformly continuous linear cocycle $\Xi$ in $\mathbb{H}$ over $(\mathcal{Q},\vartheta)$, we define the \textit{$m$-fold multiplicative compound} $\Xi_{m}$ of $\Xi$ in $\mathbb{H}^{\otimes m}$ as the cocycle with the mappings $\Xi^{t}_{m}(q,\cdot) \in \mathcal{L}(\mathbb{H}^{\otimes m})$ given by the $m$-fold multiplicative compound of $\Xi^{t}(q,\cdot)$. For convenience, the same notation is used to denote the restriction of such $\Xi_{m}$ to the $m$-fold exterior power $\mathbb{H}^{\wedge m}$, called the \textit{$m$-fold multiplicative compound of $\Xi$} in $\mathbb{H}^{\wedge m}$. It is indeed a uniform continuous cocycle, as stated in the following proposition.

\begin{proposition}
	\label{PROP: CompoundCocycleIsUniformlyContinuous}
	Let $\Xi$ be as above. Then $\Xi_{m}$ is a uniformly continuous linear cocycle in $\mathbb{H}^{\otimes m}$ (in particular, in $\mathbb{H}^{\wedge m}$).
\end{proposition}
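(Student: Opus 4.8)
The plan is to verify the three defining properties of a uniformly continuous linear cocycle for $\Xi_m$ directly, leveraging the multiplicative functoriality of the operation $L \mapsto L^{\otimes m}$ recorded in Section \ref{SEC: CompOperatorsTensorProducts}, namely the relations $(L_2 L_1)^{\otimes m} = L_2^{\otimes m} L_1^{\otimes m}$ (a special case of $(L_2 L_1) \otimes (L_4 L_3) = (L_2 \otimes L_4)(L_1 \otimes L_3)$), $\operatorname{Id}^{\otimes m} = \operatorname{Id}$, and the norm identity $\|L^{\otimes m}\| = \|L\|^m$ from \eqref{EQ: TensorProductNormEquality}. Since each $\Xi_m^t(q,\cdot)$ is by definition $(\Xi^t(q,\cdot))^{\otimes m} \in \mathcal{L}(\mathbb{H}^{\otimes m})$, linearity and boundedness of each cocycle mapping are immediate.

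First I would check the cocycle identity \nameref{DESC: CO1}. Applying the operation $(\cdot)^{\otimes m}$ to the identity $\Xi^{t+s}(q,\cdot) = \Xi^t(\vartheta^s(q),\cdot)\,\Xi^s(q,\cdot)$ in $\mathcal{L}(\mathbb{H})$ and using multiplicativity of $(\cdot)^{\otimes m}$ with respect to composition, one gets $\Xi_m^{t+s}(q,\cdot) = \Xi_m^t(\vartheta^s(q),\cdot)\,\Xi_m^s(q,\cdot)$; likewise $\Xi_m^0(q,\cdot) = (\operatorname{Id}_{\mathbb{H}})^{\otimes m} = \operatorname{Id}_{\mathbb{H}^{\otimes m}}$. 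Next, the uniform finite-time bound \nameref{DESC: UC2} follows from \eqref{EQ: TensorProductNormEquality}: $\sup_{t \in [0,1]}\sup_{q \in \mathcal{Q}} \|\Xi_m^t(q,\cdot)\|_{\mathcal{L}(\mathbb{H}^{\otimes m})} = \left(\sup_{t \in [0,1]}\sup_{q \in \mathcal{Q}} \|\Xi^t(q,\cdot)\|_{\mathcal{L}(\mathbb{H})}\right)^m < +\infty$, where finiteness of the base follows from the same property \nameref{DESC: UC2} for $\Xi$.

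The one genuinely analytic point is the joint continuity \nameref{DESC: CO2}, equivalently continuity of $(t,q) \mapsto \Xi_m^t(q,\cdot)$ in the strong operator topology on $\mathbb{H}^{\otimes m}$, together with the operator-norm continuity \nameref{DESC: UC1} in $q$ alone. The key point is that the map $\mathcal{L}(\mathbb{H}) \ni L \mapsto L^{\otimes m} \in \mathcal{L}(\mathbb{H}^{\otimes m})$, while not strongly continuous on unbounded subsets, is strongly continuous on norm-bounded subsets (and norm-to-norm continuous everywhere). For the strong continuity I would argue on decomposable tensors $v_1 \otimes \cdots \otimes v_m$, which span a dense subspace: writing a telescoping sum
\begin{equation}
	L^{\otimes m}(v_1 \otimes \cdots \otimes v_m) - M^{\otimes m}(v_1 \otimes \cdots \otimes v_m) = \sum_{j=1}^{m} L v_1 \otimes \cdots \otimes L v_{j-1} \otimes (L-M)v_j \otimes M v_{j+1} \otimes \cdots \otimes M v_m,
\end{equation}
each summand has $\mathbb{H}^{\otimes m}$-norm bounded by $\|L\|^{j-1}\|M\|^{m-j}\,|(L-M)v_j|_{\mathbb{H}} \prod_{i \neq j} |v_i|_{\mathbb{H}}$; hence if $(t_l,q_l) \to (t,q)$ then, using \nameref{DESC: CO2} for $\Xi$ (which gives $\Xi^{t_l}(q_l,\cdot)v_j \to \Xi^t(q,\cdot)v_j$) and the uniform finite-time bound \nameref{DESC: UC2} to control the norm factors $\|\Xi^{t_l}(q_l,\cdot)\|$ along the sequence, we obtain $\Xi_m^{t_l}(q_l,\cdot)(v_1 \otimes \cdots \otimes v_m) \to \Xi_m^t(q,\cdot)(v_1 \otimes \cdots \otimes v_m)$. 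An $\varepsilon/3$ argument combining this convergence on the dense set with the uniform bound $\sup_l \|\Xi_m^{t_l}(q_l,\cdot)\| < +\infty$ (finite by \nameref{DESC: UC2} applied on the compact time interval containing $\{t_l\}$, via the footnote after \eqref{EQ: UniformContinuityCocycleBoundednessFiniteTimes}) upgrades this to convergence on all of $\mathbb{H}^{\otimes m}$, giving \nameref{DESC: CO2}. For \nameref{DESC: UC1}, fix $t$ and note that the same telescoping estimate yields $\|L^{\otimes m} - M^{\otimes m}\|_{\mathcal{L}(\mathbb{H}^{\otimes m})} \leq m \max(\|L\|,\|M\|)^{m-1}\|L-M\|_{\mathcal{L}(\mathbb{H})}$ on decomposable tensors and hence, by density and the fact that both sides are bounded operators, on all of $\mathbb{H}^{\otimes m}$; applying this with $L = \Xi^t(q,\cdot)$, $M = \Xi^t(q',\cdot)$ and using operator-norm continuity of $q \mapsto \Xi^t(q,\cdot)$ from \nameref{DESC: UC1} for $\Xi$, together with the local boundedness of $\|\Xi^t(q,\cdot)\|$ in $q$ that it entails, gives the claim.

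Finally, for the exterior-power assertion one restricts: since $\Xi_m^t(q,\cdot) = (\Xi^t(q,\cdot))^{\otimes m}$ commutes with every $S_\sigma$ and hence with the orthogonal projector $\Pi^{\wedge}_m$ of \eqref{EQ: AntisymmetricProjectorDefinition}, it leaves $\mathbb{H}^{\wedge m}$ invariant, and the restriction $\Xi_m^t(q,\cdot)|_{\mathbb{H}^{\wedge m}}$ inherits all of \nameref{DESC: CO1}, \nameref{DESC: CO2}, \nameref{DESC: UC1}, \nameref{DESC: UC2} from the statement just proved on $\mathbb{H}^{\otimes m}$ (strong continuity and norm bounds pass to a closed invariant subspace). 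I expect the telescoping/strong-continuity step to be the only nonroutine part; everything else is bookkeeping with the multiplicative functoriality and the norm identity \eqref{EQ: TensorProductNormEquality}.
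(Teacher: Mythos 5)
Your proof is correct and follows essentially the same route as the paper: the cocycle identity and \nameref{DESC: UC2} come from multiplicativity of $(\cdot)^{\otimes m}$ and the norm identity \eqref{EQ: TensorProductNormEquality}, \nameref{DESC: UC1} from a perturbation bound for $L^{\otimes m}-M^{\otimes m}$ (your telescoping estimate is just an explicit form of the paper's remainder $R(A,B,m)$ in \eqref{EQ: TensorPowerOfSumOperators}), and \nameref{DESC: CO2} from continuity on decomposable tensors plus the uniform finite-time bound, with the restriction to $\mathbb{H}^{\wedge m}$ handled via commutation with $\Pi^{\wedge}_{m}$ exactly as in the paper. No gaps; your write-up is merely more detailed than the paper's.
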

\begin{proof}
	The cocycle property \nameref{DESC: CO1} for $\Xi_{m}$ follows from \eqref{EQ: TensorProductOperatorsDefDecomp} and the cocycle property for $\Xi$. Moreover, \nameref{DESC: UC2} for $\Xi$ and \eqref{EQ: TensorProductNormEquality} gives that $\Xi_{m}$ also satisfies \nameref{DESC: UC2}.
	
	To show \nameref{DESC: UC1} for $\Xi_{m}$, we use  \nameref{DESC: UC1} for $\Xi$ and the fact that
	\begin{equation}
		\label{EQ: TensorPowerOfSumOperators}
		(A+B)^{\otimes m} = A^{\otimes m} + R(A,B,m),
	\end{equation}
    where $\|R(A,B,m)\| \leq C \cdot \|B\|$ for $\|B\| \leq 1$ and some constant $C = C(\|A\|,m)$. This should be applied to $A \coloneq \Xi^{t}(q_{0},\cdot)$ and $B \coloneq \Xi^{t}(q,\cdot) - \Xi^{t}(q_{0},\cdot)$ with $q \to q_{0}$ in $\mathcal{Q}$.
    
    Finally, thanks to \nameref{DESC: UC2}, to show that $\Xi_{m}$ satisfies \nameref{DESC: CO2}, it is sufficient to prove that the mapping $\mathbb{R}_{+} \times \mathcal{Q} \ni (t,q) \mapsto \Xi^{t}_{m}(q,V) \in \mathbb{H}^{\otimes m}$ is continuous for a dense subset of $V \in \mathbb{H}^{\otimes m}$. But for $V$, which is a linear combination of decomposable tensors, this follows from \eqref{EQ: TensorProductOperatorsDefDecomp}.
\end{proof}

Let $\mathcal{B}_{1}$ denote the unit ball in $\mathbb{H}$. We call $\Xi$ \textit{uniformly eventually compact} for $t \geq t_{0}$ if the set $\Xi^{t}(\mathcal{Q},\mathcal{B}_{1}) = \bigcup_{q \in \mathcal{Q}} \Xi^{t}(q,\mathcal{B}_{1})$ is precompact in $\mathbb{H}$ for any $t \geq t_{0}$. Along with \nameref{DESC: UC1} and \nameref{DESC: UC2}, compactness properties are important for recovering spectral decompositions under certain cone conditions, see \cite{Anikushin2020Geom}. Fortunately, uniform eventual compactness is also inherited by compound cocycles.

\begin{proposition}
	\label{PROP: EventuallyCompactCocycles}
	Let $\Xi$ be uniformly eventually compact for $t \geq t_{0}$. Then $\Xi_{m}$ is also uniformly eventually compact for $t \geq t_{0}$.
\end{proposition}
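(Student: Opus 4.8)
The plan is to reduce the statement to a single time $t \ge t_{0}$ and to the operators $L_{q} := \Xi^{t}(q,\cdot) \in \mathcal{L}(\mathbb{H})$, for which $\Xi^{t}_{m}(q,\cdot) = L_{q}^{\otimes m}$. First I would record what the hypothesis gives for free: denoting by $\mathcal{O}$ the closed unit ball of $\mathbb{H}$, the set $\mathcal{K} := \overline{\Xi^{t}(\mathcal{Q},\mathcal{O})} = \overline{\bigcup_{q}L_{q}(\mathcal{O})}$ is compact, and in particular $M := \sup_{q}\|L_{q}\| < +\infty$. It then suffices to prove that $\bigcup_{q}L_{q}^{\otimes m}(\mathcal{B})$ is totally bounded when $\mathcal{B}$ is the closed ball of radius $r$ in $\mathbb{H}^{\otimes m}$, since an arbitrary bounded subset lies in such a ball and precompactness passes to subsets.

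The core of the argument is a \emph{uniform} finite-rank approximation. Given $\varepsilon \in (0,1]$, compactness of $\mathcal{K}$ provides finitely many of its points whose $\varepsilon$-balls cover $\mathcal{K}$; letting $P_{\varepsilon}$ be the orthogonal projection of $\mathbb{H}$ onto their (finite-dimensional) span $\mathbb{F}_{\varepsilon}$, one gets $\sup_{x \in \mathcal{K}}|x - P_{\varepsilon}x|_{\mathbb{H}} \le \varepsilon$, and hence $\|L_{q} - P_{\varepsilon}L_{q}\| \le \varepsilon$ \emph{simultaneously} for all $q$ (because $L_{q}(\mathcal{O}) \subseteq \mathcal{K}$). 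Writing $L_{q} = A_{q} + E_{q}$ with $A_{q} := P_{\varepsilon}L_{q}$ — so that $A_{q}$ maps $\mathbb{H}$ into $\mathbb{F}_{\varepsilon}$ and $\|A_{q}\| \le M$ — and $\|E_{q}\| \le \varepsilon$, I would apply the algebraic identity \eqref{EQ: TensorPowerOfSumOperators} to this decomposition exactly as in the proof of Proposition \ref{PROP: CompoundCocycleIsUniformlyContinuous}, obtaining $L_{q}^{\otimes m} = A_{q}^{\otimes m} + R_{q}$ with $\|R_{q}\| \le C\varepsilon$ uniformly in $q$, where $C = C(M,m)$ since the constant in \eqref{EQ: TensorPowerOfSumOperators} depends only on $\|A_{q}\| \le M$ and on $m$.

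For the endgame I would use that $A_{q}^{\otimes m}$ has range inside the \emph{fixed} finite-dimensional subspace $\mathbb{F}_{\varepsilon}^{\otimes m}$ and, by \eqref{EQ: TensorProductNormEquality}, norm $\|A_{q}\|^{m} \le M^{m}$; hence $\bigcup_{q}A_{q}^{\otimes m}(\mathcal{B})$ is a bounded subset of a finite-dimensional space, therefore totally bounded. Consequently $\bigcup_{q}L_{q}^{\otimes m}(\mathcal{B})$ lies in the $(rC\varepsilon)$-neighbourhood of a totally bounded set, so it is covered by finitely many balls of radius $(1 + rC)\varepsilon$; as $\varepsilon \in (0,1]$ is arbitrary, this shows $\bigcup_{q}L_{q}^{\otimes m}(\mathcal{B})$ is totally bounded, hence precompact in the complete space $\mathbb{H}^{\otimes m}$. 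For the exterior power I would just note that each $L_{q}^{\otimes m}$ commutes with the projector $\Pi^{\wedge}_{m}$ from \eqref{EQ: AntisymmetricProjectorDefinition} and so preserves $\mathbb{H}^{\wedge m}$; then for bounded $\mathcal{B}' \subseteq \mathbb{H}^{\wedge m}$ the set $\bigcup_{q}\Xi^{t}_{m}(q,\mathcal{B}') = \bigcup_{q}L_{q}^{\otimes m}(\mathcal{B}')$ is, by the previous paragraph, precompact in $\mathbb{H}^{\otimes m}$, and being contained in the closed subspace $\mathbb{H}^{\wedge m}$ it is precompact there too.

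I expect the only genuine subtlety to be the uniformity in the finite-rank approximation: one must exploit compactness of the \emph{single} set $\mathcal{K}$ — rather than of each individual $\overline{L_{q}(\mathcal{O})}$ — in order to choose one subspace $\mathbb{F}_{\varepsilon}$ that works for all $q$ at once. Once this is in place, the remainder is the already available multilinear expansion \eqref{EQ: TensorPowerOfSumOperators} together with the triviality of total boundedness in finite dimensions, and the inheritance of uniform eventual compactness under restriction to the closed subspace $\mathbb{H}^{\wedge m}$ is then immediate.
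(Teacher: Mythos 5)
Your proof is correct and takes essentially the same route as the paper: a uniform finite-rank approximation of the operators $\Xi^{t}(q,\cdot)$, obtained from compactness of the single set $\overline{\Xi^{t}(\mathcal{Q},\mathcal{O})}$, is transferred to the $m$-fold tensor power via the expansion \eqref{EQ: TensorPowerOfSumOperators}, and one concludes because a uniformly bounded family of operators with ranges in a fixed finite-dimensional subspace sends bounded sets into totally bounded ones. The only cosmetic difference is that the paper truncates along a fixed orthonormal basis (the projectors $P_{N}$) whereas you project onto the span of an $\varepsilon$-net of the compact set; both exploit exactly the same uniformity.
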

\begin{proof}
	Let $t \geq t_{0}$ be fixed. By our assumptions, we may assume that $\Xi^{t}(\mathcal{Q},\mathbb{H})$ is contained in a separable closed subspace $\mathbb{L}$. Consider an orthonormal basis $\{e_{l} \}_{l \geq 1}$ in $\mathbb{L}$ and the orthogonal projector $P_{N}$ onto $\operatorname{Span}\{ e_{1}, \ldots, e_{N} \}$. Then for any $t \geq t_{0}$, we have that
	\begin{equation}
		\sup_{q \in \mathcal{Q}}\| \Xi^{t}(q,\cdot) - P_{N}\Xi^{t}(q,\cdot) \|_{\mathcal{L}(\mathbb{H})} \to 0 \qquad \text{as} \ N \to \infty.
	\end{equation}
   	Using this and applying arguments similar to those used below \eqref{EQ: TensorPowerOfSumOperators}, we obtain
    \begin{equation}
    	\sup_{q \in \mathcal{Q}}\left\| \Xi^{t}_{m}(q,\cdot) - \left(P_{N}\Xi^{t}(q,\cdot) \right)^{\otimes m} \right\|_{\mathcal{L}(\mathbb{H}^{\otimes m})} \to 0 \qquad \text{as} \ N \to \infty.
    \end{equation}
    Since the operators $\left(P_{N}\Xi^{t}(q,\cdot) \right)^{\otimes m}$ have uniformly (in $q$) finite ranges, this yields that $\Xi_{m}$ is uniformly eventually compact for $t \geq t_{0}$.
\end{proof}

Now we are going to introduce additive compound operators for generators of $C_{0}$-semigroups. For the general theory of $C_{0}$-semigroups, we refer to \cite{EngelNagel2000}. Below, a $C_{0}$-semigroup in $\mathbb{H}$ is denoted by $G$ with its time-$t$ mapping denoted by $G(t)$ for $t \geq 0$. Note that any $C_{0}$-semigroup can be considered as a uniformly continuous linear cocycle over the trivial dynamical system on a one-point set. In particular, according to Proposition \ref{PROP: CompoundCocycleIsUniformlyContinuous}, there is the associated semigroup $G^{\otimes m}$ (resp. $G^{\wedge m}$) called the \textit{$m$-fold multiplicative compound} of $G$ in $\mathbb{H}^{\otimes}$ (resp. $\mathbb{H}^{\wedge m}$).

Let $A$ be the generator of a $C_{0}$-semigroup $G$. By $A^{[\otimes m]}$ (resp. $A^{[\wedge m]}$), we denote the generator of $G^{\otimes m}$ (resp. $G^{\wedge m}$), which will be called the \textit{$m$-fold additive compound} of $A$ in $\mathbb{H}^{\otimes m}$ (resp. in $\mathbb{H}^{\wedge m}$). Let $\mathcal{D}(A^{[\otimes m]})$ (resp. $\mathcal{D}(A^{[\wedge m]})$) be the domain of $A^{[\otimes m]}$ (resp. $A^{[\wedge m]}$). It is clear from the definition that $\mathcal{D}(A^{[\wedge m]}) = \mathcal{D}(A^{[\otimes m]}) \cap \mathbb{H}^{\wedge m}$ and $A^{[\wedge m]}$ is the restriction of $A^{[\otimes m]}$ to $\mathbb{H}^{\wedge m}$.

%Recall that $\mathbb{H}^{\wedge m}$ is invariant with respect to each $G^{\otimes m}(t)$, and the restriction of $G^{\otimes m}(t)$ to $\mathbb{H}^{\wedge m}$ is the time-$t$ mapping $G^{\wedge m}(t)$ of the semigroup $G^{\wedge m}$ called the \textit{$m$-fold antisymmetric multiplicative compound} of $G$ or \textit{$m$-fold multiplicative compound} of $G$ in $\mathbb{H}^{\wedge m}$. We denote the generator of $G^{\wedge m}$ by $A^{[\wedge m]}$ and call it the \textit{$m$-fold antisymmetric additive compound} of $A$ or \textit{$m$-fold additive compound} of $A$ in $\mathbb{H}^{\wedge m}$. 

\begin{theorem}
	\label{TH: CompoundDescriptionBasicAction}
	For all $v_{1},\ldots,v_{m} \in \mathcal{D}(A)$, we have $v_{1} \otimes \cdots \otimes v_{m} \in \mathcal{D}(A^{[\otimes m]})$ and
	\begin{equation}
		\label{EQ: AdditiveCompoundDecTensorFormula}
		A^{[\otimes m]}(v_{1} \otimes \cdots \otimes v_{m}) = \sum_{j=1}^{m} v_{1} \otimes \cdots \otimes A v_{j} \otimes \cdots \otimes v_{m}.
	\end{equation}
    In particular, $v_{1} \wedge \cdots \wedge v_{m} \in \mathcal{D}(A^{[\wedge m]})$ and
    \begin{equation}
    	\label{EQ: AdditiveCompoundAntiSymDecTensorFormula}
	    A^{[\wedge m]}(v_{1} \wedge \cdots \wedge v_{m}) = \sum_{j=1}^{m} v_{1} \wedge \cdots \wedge A v_{j} \wedge \cdots \wedge v_{m}.
    \end{equation}
    Furthermore, $\mathcal{D}(A)^{\odot m}$ (resp. $\Pi^{\wedge}_{m}\mathcal{D}(A)^{\odot m}$) is dense in $\mathcal{D}(A^{[\otimes m]})$ (resp. $\mathcal{D}(A^{[\wedge m]})$) in the graph norm.
\end{theorem}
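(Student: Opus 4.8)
The plan is to compute the generator $A^{[\otimes m]}$ directly on decomposable tensors via difference quotients, then to descend to $\mathbb{H}^{\wedge m}$ by an equivariance argument with the projector $\Pi^{\wedge}_{m}$, and finally to obtain the two density statements from the standard core criterion for generators of $C_{0}$-semigroups.

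First I would establish \eqref{EQ: AdditiveCompoundDecTensorFormula}. Recall from Proposition \ref{PROP: CompoundCocycleIsUniformlyContinuous} that $G^{\otimes m}$ is a $C_{0}$-semigroup. Fix $v_{1},\ldots,v_{m} \in \mathcal{D}(A)$ and consider, for $t>0$, the difference quotient $t^{-1}\left(G^{\otimes m}(t)(v_{1} \otimes \ldots \otimes v_{m}) - v_{1} \otimes \ldots \otimes v_{m}\right)$; by \eqref{EQ: TensorProductOperatorsDefDecomp} it equals $t^{-1}\left(G(t)v_{1} \otimes \ldots \otimes G(t)v_{m} - v_{1} \otimes \ldots \otimes v_{m}\right)$. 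Applying the elementary telescoping identity
\[
w_{1} \otimes \ldots \otimes w_{m} - u_{1} \otimes \ldots \otimes u_{m} = \sum_{j=1}^{m} w_{1} \otimes \ldots \otimes w_{j-1} \otimes (w_{j} - u_{j}) \otimes u_{j+1} \otimes \ldots \otimes u_{m}
\]
with $w_{j} = G(t)v_{j}$ and $u_{j} = v_{j}$, the difference quotient becomes
\[
\sum_{j=1}^{m} G(t)v_{1} \otimes \ldots \otimes G(t)v_{j-1} \otimes \frac{G(t)v_{j} - v_{j}}{t} \otimes v_{j+1} \otimes \ldots \otimes v_{m}.
\]
Since, by \eqref{EQ: HilbertTensorProduct}, the map $(w_{1},\ldots,w_{m}) \mapsto w_{1} \otimes \ldots \otimes w_{m}$ is bounded multilinear (so that norm convergence of each factor forces norm convergence of the tensor product), since $G(t)v_{i} \to v_{i}$ as $t \to 0^{+}$ by strong continuity of $G$, and since $t^{-1}(G(t)v_{j}-v_{j}) \to A v_{j}$ because $v_{j} \in \mathcal{D}(A)$, the right-hand side converges as $t \to 0^{+}$ to $\sum_{j=1}^{m} v_{1} \otimes \ldots \otimes A v_{j} \otimes \ldots \otimes v_{m}$. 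Hence $v_{1} \otimes \ldots \otimes v_{m} \in \mathcal{D}(A^{[\otimes m]})$ and \eqref{EQ: AdditiveCompoundDecTensorFormula} holds; by linearity the same is true on all of $\mathcal{D}(A)^{\odot m}$.

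For the antisymmetric formula \eqref{EQ: AdditiveCompoundAntiSymDecTensorFormula}, I would use that $\Pi^{\wedge}_{m}$ (being, by \eqref{EQ: AntisymmetricProjectorDefinition}, a combination of the operators $S_{\sigma}$, each of which commutes with $G^{\otimes m}(t)$) commutes with $G^{\otimes m}(t)$ for all $t \geq 0$. A bounded operator commuting with a $C_{0}$-semigroup leaves the domain of its generator invariant and commutes with the generator there (for $v \in \mathcal{D}(A^{[\otimes m]})$ one has $t^{-1}(G^{\otimes m}(t)\Pi^{\wedge}_{m}v - \Pi^{\wedge}_{m}v) = \Pi^{\wedge}_{m}\,t^{-1}(G^{\otimes m}(t)v - v) \to \Pi^{\wedge}_{m}A^{[\otimes m]}v$). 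Thus $\Pi^{\wedge}_{m}\mathcal{D}(A^{[\otimes m]}) \subseteq \mathcal{D}(A^{[\otimes m]})$ and $A^{[\otimes m]}\Pi^{\wedge}_{m} = \Pi^{\wedge}_{m}A^{[\otimes m]}$ on $\mathcal{D}(A^{[\otimes m]})$. Applying this to $v = v_{1} \otimes \ldots \otimes v_{m}$, recalling $\mathcal{D}(A^{[\wedge m]}) = \mathcal{D}(A^{[\otimes m]}) \cap \mathbb{H}^{\wedge m}$ and $\Pi^{\wedge}_{m}(v_{1} \otimes \ldots \otimes v_{m}) = v_{1} \wedge \ldots \wedge v_{m}$ by \eqref{EQ: AntisymmetrizationDecomposableDefinition}, we get $v_{1} \wedge \ldots \wedge v_{m} \in \mathcal{D}(A^{[\wedge m]})$ and, using \eqref{EQ: AdditiveCompoundDecTensorFormula} together with linearity of $\Pi^{\wedge}_{m}$, exactly \eqref{EQ: AdditiveCompoundAntiSymDecTensorFormula}.

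Finally, for the density statements I would invoke the core criterion (see \cite{EngelNagel2000}): a subspace $\mathbb{L}$ of the domain of a $C_{0}$-semigroup generator which is dense in the ambient space and invariant under the semigroup is a core, i.e.\ dense in the domain for the graph norm. Here $\mathbb{L} := \mathcal{D}(A)^{\odot m}$ is contained in $\mathcal{D}(A^{[\otimes m]})$ by the first step, it is dense in $\mathbb{H}^{\otimes m}$ (since $\mathcal{D}(A)$ is dense in $\mathbb{H}$, its closure contains every decomposable tensor, hence $\mathbb{H}^{\odot m}$, which is dense in $\mathbb{H}^{\otimes m}$), and it is $G^{\otimes m}(t)$-invariant because $G(t)\mathcal{D}(A) \subseteq \mathcal{D}(A)$; therefore $\mathcal{D}(A)^{\odot m}$ is a core for $A^{[\otimes m]}$. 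For the antisymmetric case, note that $\Pi^{\wedge}_{m}$ is a contraction for the graph norm of $A^{[\otimes m]}$, since $\|\Pi^{\wedge}_{m}u\| \leq \|u\|$ and $\|A^{[\otimes m]}\Pi^{\wedge}_{m}u\| = \|\Pi^{\wedge}_{m}A^{[\otimes m]}u\| \leq \|A^{[\otimes m]}u\|$, and that $\Pi^{\wedge}_{m}\mathcal{D}(A^{[\otimes m]}) = \mathcal{D}(A^{[\otimes m]}) \cap \mathbb{H}^{\wedge m} = \mathcal{D}(A^{[\wedge m]})$; applying the graph-norm-continuous surjection $\Pi^{\wedge}_{m}$ to the graph-norm-dense subset $\mathcal{D}(A)^{\odot m}$ of $\mathcal{D}(A^{[\otimes m]})$ shows that $\Pi^{\wedge}_{m}\mathcal{D}(A)^{\odot m}$ is graph-norm dense in $\mathcal{D}(A^{[\wedge m]})$. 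No step presents a genuine obstacle; the only points requiring a little care are the passage to the limit in the difference quotient (handled by the boundedness of tensor multiplication and strong continuity of $G$) and, in the last paragraph, the verification that $\Pi^{\wedge}_{m}$ is graph-norm continuous so that density transfers from $\mathcal{D}(A^{[\otimes m]})$ to $\mathcal{D}(A^{[\wedge m]})$.
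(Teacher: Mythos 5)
Your proposal is correct and follows essentially the same route as the paper: compute the limit of the difference quotient $t^{-1}(G^{\otimes m}(t)-I)$ on decomposable tensors with entries in $\mathcal{D}(A)$, pass to $\mathbb{H}^{\wedge m}$ via the commutation of $\Pi^{\wedge}_{m}$ with $G^{\otimes m}(t)$, and obtain the density statements from the core criterion (Proposition 1.7 in \cite{EngelNagel2000}) applied to the dense, semigroup-invariant subspaces $\mathcal{D}(A)^{\odot m}$ and $\Pi^{\wedge}_{m}\mathcal{D}(A)^{\odot m}$. Your only deviation is cosmetic: for the antisymmetric density you transfer graph-norm density through the graph-norm-continuous projector $\Pi^{\wedge}_{m}$, whereas the paper applies the core criterion directly to $\Pi^{\wedge}_{m}\mathcal{D}(A)^{\odot m}$ inside $\mathbb{H}^{\wedge m}$; both arguments are valid.
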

\begin{proof}
	Indeed, for $v_{0} \in \mathcal{D}(A)$, the function $[0,\infty) \ni t \mapsto G(t)v_{0} \in \mathbb{H}$ is $C^{1}$-differentiable and $G(t)v_{0} \in \mathcal{D}(A)$ and $\frac{d}{dt}(G(t)v_{0})=AG(t)v_{0}$ for any $t \geq 0$. Using this and the definition $G^{\otimes m}(t)(v_{1} \otimes \cdots \otimes v_{m}) = G(t)v_{1} \otimes \cdots \otimes G(t)v_{m}$, we obtain that
	\begin{equation}
		\lim_{t \to 0+} \frac{1}{t}\left(G^{\otimes m}(t)-\operatorname{Id}_{\mathbb{H}^{\otimes m}}\right)(v_{1} \otimes \cdots \otimes v_{m}) = \sum_{j=1}^{m} v_{1} \otimes \cdots A v_{j} \otimes \cdots v_{m}.
	\end{equation}
    Consequently, $v_{1} \otimes \cdots \otimes v_{m} \in \mathcal{D}(A^{[\otimes m]})$ and \eqref{EQ: AdditiveCompoundDecTensorFormula} is satisfied. From this, it is not hard to check \eqref{EQ: AdditiveCompoundAntiSymDecTensorFormula}. Moreover, it is clear that $\mathcal{D}(A)^{\odot m}$ (resp. $\Pi^{\wedge}_{m}\mathcal{D}(A)^{\odot m}$) is invariant with respect to $G^{\otimes m}(t)$ (resp. $G^{\wedge m}(t)$) and it is dense in $\mathbb{H}^{\otimes m}$ (resp. $\mathbb{H}^{\wedge m}$) due to the density of $\mathcal{D}(A)$ in $\mathbb{H}$. Then \cite[Chapter II, Proposition 1.7]{EngelNagel2000} gives the density in the graph norm.
\end{proof}

Recall that $G$ is called \textit{eventually norm continuous} if for some $t_{0} \geq 0$ the mapping $\mathbb{R}_{+} \ni t \mapsto G(t) \in \mathcal{L}(\mathbb{H})$ is continuous at $t_{0}$ (and, consequently, at any $t \geq t_{0}$) in the operator norm. By \cite[Chapter II, Lemma 4.22]{EngelNagel2000}, if $G$ is eventually compact, i.e., $G(t_{0})$ is compact for some $t_{0} > 0$, then $G$ is eventually norm continuous.

\begin{proposition}
	\label{PROP: EventualNormContCompoundSemigroups}
	Suppose that $G$ is eventually norm continuous. Then $G^{\otimes m}$ (in particular, $G^{\wedge m}$) is also eventually norm continuous.
\end{proposition}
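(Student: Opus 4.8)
The plan is to reduce eventual norm continuity of $G^{\otimes m}$ to that of $G$ by using the identity $G^{\otimes m}(t) = G(t)^{\otimes m}$ together with the multiplicativity of the tensor product of operators and the norm identity \eqref{EQ: TensorProductNormEquality}. The key algebraic observation is the "telescoping" decomposition: for bounded operators $A$ and $B$ on $\mathbb{H}$ one has
\begin{equation*}
	A^{\otimes m} - B^{\otimes m} = \sum_{j=1}^{m} B^{\otimes (j-1)} \otimes (A - B) \otimes A^{\otimes(m-j)},
\end{equation*}
where the factors are grouped so that the $j$-th summand has $B$ in the first $j-1$ slots, $A-B$ in the $j$-th slot, and $A$ in the remaining $m-j$ slots. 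This is exactly the multilinear analog of $a^m - b^m = \sum_j b^{j-1}(a-b)a^{m-j}$ and follows by induction on $m$ from $(A_1 \otimes \ldots \otimes A_m)(B_1 \otimes \ldots \otimes B_m) = (A_1 B_1) \otimes \ldots \otimes (A_m B_m)$.

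First I would fix $t_{0} \geq 0$ such that $t \mapsto G(t)$ is norm continuous on $[t_{0},\infty)$, and fix $t_{1} \geq t_{0}$. For $t$ near $t_{1}$, apply the telescoping identity with $A := G(t)$ and $B := G(t_{1})$ (or vice versa), and estimate via \eqref{EQ: TensorProductNormEquality}:
\begin{equation*}
	\left\| G^{\otimes m}(t) - G^{\otimes m}(t_{1}) \right\| \leq \sum_{j=1}^{m} \|G(t_{1})\|^{\,j-1} \cdot \|G(t) - G(t_{1})\| \cdot \|G(t)\|^{\,m-j}.
\end{equation*}
Since $G$ is a $C_{0}$-semigroup, $\|G(t)\|$ is locally bounded (indeed bounded by $M e^{\omega t}$ for suitable $M, \omega$), so the prefactors $\|G(t_{1})\|^{j-1}\|G(t)\|^{m-j}$ stay bounded for $t$ in a neighborhood of $t_{1}$; and $\|G(t) - G(t_{1})\| \to 0$ as $t \to t_{1}$ by the assumed norm continuity at $t_{1} \geq t_{0}$. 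Hence the right-hand side tends to $0$, which gives norm continuity of $G^{\otimes m}$ at $t_{1}$, and therefore $G^{\otimes m}$ is eventually norm continuous (for $t \geq t_{0}$). The statement for $G^{\wedge m}$ follows immediately since $G^{\wedge m}(t)$ is the restriction of $G^{\otimes m}(t)$ to the closed invariant subspace $\mathbb{H}^{\wedge m}$, so $\|G^{\wedge m}(t) - G^{\wedge m}(t_{1})\| \leq \|G^{\otimes m}(t) - G^{\otimes m}(t_{1})\|$.

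There is no real obstacle here; the only point requiring a modicum of care is verifying the telescoping identity for tensor powers (rather than just citing the scalar case), but this is the same kind of elementary computation already invoked in the proofs of Propositions \ref{PROP: CompoundCocycleIsUniformlyContinuous} and \ref{PROP: EventuallyCompactCocycles} (cf.\ the remarks around \eqref{EQ: TensorPowerOfSumOperators}), and indeed one could alternatively just cite \eqref{EQ: TensorPowerOfSumOperators} directly with $A := G(t_1)$, $B := G(t) - G(t_1)$, noting $\|R(A,B,m)\| \leq C(\|G(t_1)\|, m)\cdot \|G(t)-G(t_1)\|$ for $\|G(t)-G(t_1)\| \leq 1$. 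The local boundedness of $\|G(t)\|$ is a standard property of $C_{0}$-semigroups. So the proof is short: state the decomposition, estimate, and restrict.
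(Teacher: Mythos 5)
Your proof is correct and follows essentially the same route as the paper, whose one-line proof simply invokes the expansion \eqref{EQ: TensorPowerOfSumOperators} with $A := G(t_{1})$ and $B := G(t)-G(t_{1})$ — exactly the alternative you mention at the end, and your telescoping identity is just an explicit form of that remainder estimate. The restriction argument for $G^{\wedge m}$ and the use of local boundedness of $\|G(t)\|$ are also as expected, so nothing is missing.
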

\begin{proof}
	This statement follows from arguments similar to those used below \eqref{EQ: TensorPowerOfSumOperators}.
\end{proof}

\begin{remark}
	\label{REM: ComplexificationOperatorsCompound}
	For semigroups in a real Hilbert space $\mathbb{H}$, we have $(G^{\otimes m}(t))^{\mathbb{C}} = \left((G(t))^{\mathbb{C}}\right)^{\otimes m}$ for any $t \geq 0$, thanks to Proposition \ref{PROP: ComplexficationsTensorProducts}. Bearing in mind that the generator of the complexification of a $C_{0}$-semigroup is the complexification of its generator, this implies that $(A^{[\otimes m]})^{\mathbb{C}} = (A^{\mathbb{C}})^{[\otimes m]}$ and $(A^{[\wedge m]})^{\mathbb{C}} = (A^{\mathbb{C}})^{[\wedge m]}$.
\end{remark}

Below, we consider the spectra of operators, and this requires consideration of complex spaces. From this point of view, Remark \ref{REM: ComplexificationOperatorsCompound} justifies the application of these results to the case of real spaces.

%Below we will deal with spectra of operators and therefore consider the context of a complex Hilbert space $\mathbb{H}$. From this view, Remark \ref{REM: ComplexificationOperatorsCompound} justifies applications of these results starting in the real context.
 
For what follows, $\omega(G)$ denotes the growth bound of $G$, $s(A)$ denotes the spectral bound of $A$, and $\operatorname{spec}(A)$ denotes the spectrum of $A$.
\begin{proposition}
	\label{EQ: PropositionSpectrumAdditiveUnderENC}
	Let $G$ be eventually norm continuous. Then for any $t \geq 0$, we have
	\begin{equation}
		\begin{split}
			\operatorname{spec}(G(t)) \setminus \{0\} &= e^{t\operatorname{spec}(A)},\\
			\operatorname{spec}(G^{\otimes m}(t)) \setminus \{ 0 \} &= e^{t \operatorname{spec}(A^{[\otimes m]})},\\
			\operatorname{spec}(G^{\wedge m}(t)) \setminus \{ 0 \} &= e^{t \operatorname{spec}(A^{[\wedge m]})}.
		\end{split}
	\end{equation}
    In particular, the growth bound $\omega(G^{\otimes m})$ (resp. $\omega(G^{\wedge m})$) equals the spectral bound $s(A^{[\otimes m]})$ (resp. $s(A^{[\wedge m]})$).
\end{proposition}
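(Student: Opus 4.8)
The plan is to derive all three spectral identities from a single tool, namely the Spectral Mapping Theorem for eventually norm continuous $C_{0}$-semigroups (see Chapter IV in \cite{EngelNagel2000}): if $G$ is eventually norm continuous with generator $A$, then $\operatorname{spec}(G(t)) \setminus \{0\} = e^{t\operatorname{spec}(A)}$ for every $t \geq 0$, and, as a consequence, $s(A) = \omega(G)$. The first displayed identity is exactly this statement applied to $G$, which is eventually norm continuous by hypothesis.

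For the second identity I would note that $G^{\otimes m}$ is a $C_{0}$-semigroup by Proposition \ref{PROP: CompoundCocycleIsUniformlyContinuous}, that its generator is $A^{[\otimes m]}$ by definition, and that it is eventually norm continuous by the proposition immediately preceding the present one. Hence the Spectral Mapping Theorem applies verbatim to $G^{\otimes m}$ and yields $\operatorname{spec}(G^{\otimes m}(t)) \setminus \{0\} = e^{t\operatorname{spec}(A^{[\otimes m]})}$. The third identity follows in the same way: $G^{\wedge m}$ is a $C_{0}$-semigroup with generator $A^{[\wedge m]}$, and it is eventually norm continuous as well (again by the preceding proposition, or simply because the restriction of an eventually norm continuous semigroup to the invariant closed subspace $\mathbb{H}^{\wedge m}$ is norm continuous at the same continuity point), so the theorem gives $\operatorname{spec}(G^{\wedge m}(t)) \setminus \{0\} = e^{t\operatorname{spec}(A^{[\wedge m]})}$.

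Finally, the statement about growth bounds is the ``consequently'' part of the Spectral Mapping Theorem applied to $G^{\otimes m}$ and $G^{\wedge m}$: for an eventually norm continuous semigroup the spectral radius of the time-$t$ mapping satisfies $r(G(t)) = e^{t s(A)}$ and $\omega(G) = \frac{1}{t}\log r(G(t))$ for $t>0$, whence $\omega(G^{\otimes m}) = s(A^{[\otimes m]})$ and $\omega(G^{\wedge m}) = s(A^{[\wedge m]})$ (the degenerate case $\operatorname{spec}(A^{[\otimes m]}) = \varnothing$, in which both sides equal $-\infty$, being consistent with the convention $\sup \varnothing = -\infty$). There is essentially no obstacle beyond invoking the right results: the only point requiring care is verifying the hypothesis of the Spectral Mapping Theorem for the compound semigroups, which is precisely the content of the proposition preceding this one, and keeping in mind that it is eventual norm continuity that upgrades the always-valid spectral inclusion $e^{t\operatorname{spec}(A)} \subseteq \operatorname{spec}(G(t)) \setminus \{0\}$ to an equality and forces $s(A) = \omega(G)$; the rest is bookkeeping.
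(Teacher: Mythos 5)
Your proposal is correct and follows essentially the same route as the paper: the statement is obtained by invoking the Spectral Mapping Theorem for eventually norm continuous semigroups from \cite{EngelNagel2000}, applied to $G$, $G^{\otimes m}$ and $G^{\wedge m}$, with the eventual norm continuity of the compound semigroups supplied by the immediately preceding proposition (the paper gives no further argument beyond this). Your remarks on the invariant subspace $\mathbb{H}^{\wedge m}$ and on the growth/spectral bound equality are exactly the intended bookkeeping.
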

\begin{proof}
	This follows from Proposition \ref{PROP: EventualNormContCompoundSemigroups} and the spectral mapping theorem for semigroups, see \cite[Chapter IV, Theorem 3.10]{EngelNagel2000}.
\end{proof}

For any eigenvalue $\lambda$ of $A$, we use $\mathbb{L}_{A}(\lambda)$ to denote the spectral subspace associated with $\lambda$. A similar notation is used for $A^{[\otimes m]}$ and $A^{[\wedge m]}$.

For eventually compact semigroups $G$, both semigroups $G^{\otimes m}$ and $G^{\wedge m}$ are also eventually compact, thanks to Proposition \ref{PROP: EventuallyCompactCocycles}. In particular, the spectra of $A$, $A^{[\otimes m]}$, and $A^{[\wedge m]}$ consist of eigenvalues having finite algebraic multiplicities, and there is a finite number of eigenvalues in each right half-plane, see \cite[Chapter V, Theorem 3.1]{EngelNagel2000}. In this case, the spectral properties of $A^{[\otimes m]}$ and $A^{[\wedge m]}$ can be fully described in terms of the spectral properties of $A$. This is constituted by Theorems \ref{TH: SpectrumAdditiveCompounds} and \ref{TH: AddCompoundMultiplicities} below.
\begin{theorem}
	\label{TH: SpectrumAdditiveCompounds}
	Suppose that $G$ is eventually compact. Then 
	\begin{equation}
	\label{EQ: SpectralSetAdditiveCompound}
	\operatorname{spec}(A^{[\otimes m]}) = \left\{\sum_{j=1}^{m} \lambda_{j} \mid \lambda_{j} \in \operatorname{spec}(A) \quad \text{for any} \ j \in \{1, \ldots, m\} \right\}.
	\end{equation}
    Moreover, any $\lambda_{0} \in \operatorname{spec}(A^{[\otimes m]})$ is an eigenvalue, and there exist finitely many, say $N$, distinct $m$-tuples $\left(\lambda^{k}_{1},\ldots,\lambda^{k}_{m}\right) \in \mathbb{C}^{m}$, where $k \in \{1,\ldots,N\}$, such that
    \begin{equation}
    	\label{EQ: AdditiveCompoundIsolatedPointDesc}
    	\lambda_{0} = \sum_{j=1}^{m}\lambda^{k}_{j} \quad \text{and} \quad \lambda^{k}_{j} \in \operatorname{spec}(A).
    \end{equation}
    In this context, the spectral subspace $\mathbb{L}_{A^{[\otimes m]}}(\lambda_{0})$ can be described as
    \begin{equation}
    	\label{EQ: SpectralSubspaceIsolatedAdditiveCompound}
    	\mathbb{L}_{A^{[\otimes m]}}(\lambda_{0}) = \bigoplus_{k=1}^{N} \bigotimes_{j=1}^{m} \mathbb{L}_{A}(\lambda^{k}_{j}).
    \end{equation}
    Furthermore, $\lambda_{0}$ is an eigenvalue of $A^{[\wedge m]}$ if and only if $\lambda_{0}$ is an eigenvalue of $A^{[\otimes m]}$ and $\Pi^{\wedge}_{m} \mathbb{L}_{A^{[\otimes m]}}(\lambda_{0}) \not= \{ 0 \}$, where $\Pi^{\wedge}_{m}$ is the orthogonal projector onto $\mathbb{H}^{\wedge m}$ defined by \eqref{EQ: AntisymmetricProjectorDefinition}. In this case the spectral subspaces are related by
    \begin{equation}
    	\label{EQ: SpectralSubspaceAntisymmetricCompound}
    	\mathbb{L}_{A^{[\wedge m]}}(\lambda_{0})=\Pi^{\wedge}_{m}\mathbb{L}_{A^{[\otimes m]}}(\lambda_{0}) = \mathbb{L}_{A^{[\otimes m]}}(\lambda_{0}) \cap \mathbb{H}^{\wedge m}.
    \end{equation}
\end{theorem}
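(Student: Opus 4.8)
The plan is to reduce everything to finite-dimensional linear algebra by exploiting the spectral decomposition of $\mathbb{H}$ that eventual compactness provides. First I collect the consequences of eventual compactness. Since $G(t_{0})$ is compact for some $t_{0}$ and the tensor product of compact operators is compact (approximate each factor by finite-rank operators), $G^{\otimes m}(t) = (G(t))^{\otimes m}$ is compact for $t \geq t_{0}$, and so is $G^{\wedge m}(t)$ as a restriction; hence $A$, $A^{[\otimes m]}$ and $A^{[\wedge m]}$ all have compact resolvent, so their spectra consist only of isolated eigenvalues of finite algebraic multiplicity, and in particular every spectral point is an eigenvalue. We may assume $\operatorname{spec}(A) \neq \varnothing$, since otherwise $\operatorname{spec}(A^{[\otimes m]})$ is empty as well (by the Spectral Mapping Theorem for Semigroups, $G$ being eventually norm continuous) and there is nothing to prove. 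Put $M := \sup\{\operatorname{Re}\mu : \mu \in \operatorname{spec}(A)\} = s(A) = \omega(G) < \infty$. Fix $\beta \in \mathbb{R}$ off the discrete set $\{\operatorname{Re}\mu : \mu \in \operatorname{spec}(A)\}$ and let $\mathbb{H} = \mathbb{H}_{+} \oplus \mathbb{H}_{-}$ be the associated $G$-invariant spectral decomposition, where $\mathbb{H}_{+}$ is the finite-dimensional sum of the generalized eigenspaces of $A$ for eigenvalues with real part $> \beta$ and $s(A|_{\mathbb{H}_{-}}) < \beta$; since $G|_{\mathbb{H}_{-}}$ is eventually norm continuous, $\omega(G|_{\mathbb{H}_{-}}) = s(A|_{\mathbb{H}_{-}}) < \beta$ too.

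Iterating the direct sum decomposition \eqref{EQ: HilbertTensorProductDirectSumDecomposition} produces the $G^{\otimes m}$-invariant orthogonal decomposition
\[
	\mathbb{H}^{\otimes m} = \bigoplus_{\varepsilon \in \{+,-\}^{m}} \mathbb{H}_{\varepsilon_{1}} \otimes \ldots \otimes \mathbb{H}_{\varepsilon_{m}},
\]
on whose summands $A^{[\otimes m]}$ acts, by the computation behind Theorem \ref{TH: CompoundDescriptionBasicAction}, as $\sum_{j=1}^{m} I \otimes \ldots \otimes A|_{\mathbb{H}_{\varepsilon_{j}}} \otimes \ldots \otimes I$. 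On the finite-dimensional summand $\mathbb{H}_{+}^{\otimes m}$ I further split each factor into generalized eigenspaces $\mathbb{L}_{A}(\mu_{i}) \subset \mathbb{H}_{+}$ and write $A|_{\mathbb{L}_{A}(\mu_{i})} = \mu_{i} I + Q_{i}$ with $Q_{i}$ nilpotent; then on $\mathbb{L}_{A}(\mu_{i_{1}}) \otimes \ldots \otimes \mathbb{L}_{A}(\mu_{i_{m}})$ the operator equals $(\sum_{j}\mu_{i_{j}}) I$ plus the sum of the pairwise commuting nilpotents $I \otimes \ldots \otimes Q_{i_{j}} \otimes \ldots \otimes I$, which is again nilpotent. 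Grouping these blocks according to the value of $\sum_{j}\mu_{i_{j}}$ yields at once the spectrum of $A^{[\otimes m]}|_{\mathbb{H}_{+}^{\otimes m}}$ and its spectral subspaces. For any other summand, where at least one $\varepsilon_{j} = -$, the norm identity \eqref{EQ: TensorProductNormEquality} gives the corresponding tensor-product semigroup a growth bound at most $\beta + (m-1)M$, and since that semigroup is eventually norm continuous its growth bound equals the spectral bound of its generator; hence the spectrum of the restricted generator lies in $\{\operatorname{Re} z \leq \beta + (m-1)M\}$. Given any $R \in \mathbb{R}$, choose $\beta < R - (m-1)M$: then the part of $\operatorname{spec}(A^{[\otimes m]})$ in $\{\operatorname{Re} z > R\}$ is contributed only by the all-plus summand, and — since $\operatorname{Re}\lambda_{i} > R - (m-1)M > \beta$ whenever $\operatorname{Re}(\lambda_{1} + \ldots + \lambda_{m}) > R$ and each $\operatorname{Re}\lambda_{j} \leq M$, so all such eigenvalues of $A$ lie in $\mathbb{H}_{+}$ — it equals $\{\sum_{j}\lambda_{j} : \lambda_{j} \in \operatorname{spec}(A)\} \cap \{\operatorname{Re} z > R\}$. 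Letting $R \to -\infty$ proves \eqref{EQ: SpectralSetAdditiveCompound}, and the same real-part bound shows only finitely many $m$-tuples of eigenvalues of $A$ can sum to a prescribed $\lambda_{0}$, which is \eqref{EQ: AdditiveCompoundIsolatedPointDesc}. Finally, for \eqref{EQ: SpectralSubspaceIsolatedAdditiveCompound}, fix $\lambda_{0}$ and choose $\beta$ as above with $R$ slightly below $\operatorname{Re}\lambda_{0}$; then $\lambda_{0}$ lies in the spectrum of no summand except the all-plus one, so the Riesz projection of $A^{[\otimes m]}$ at $\lambda_{0}$ has range inside $\mathbb{H}_{+}^{\otimes m}$, and there the block decomposition identifies $\mathbb{L}_{A^{[\otimes m]}}(\lambda_{0})$ with $\bigoplus_{k=1}^{N} \bigotimes_{j=1}^{m} \mathbb{L}_{A}(\lambda^{k}_{j})$, the sum over all tuples with $\sum_{j}\lambda^{k}_{j} = \lambda_{0}$. (As a shortcut, the set equality \eqref{EQ: SpectralSetAdditiveCompound} alone also follows by combining Theorem \ref{TH: SpectrumOfMultiplicativeCompound} with Proposition \ref{EQ: PropositionSpectrumAdditiveUnderENC}, but the decomposition simultaneously delivers the subspaces.)

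For the antisymmetric statements, observe that the projector $\Pi^{\wedge}_{m}$ from \eqref{EQ: AntisymmetricProjectorDefinition} commutes with every $S_{\sigma}$, hence with each $G^{\otimes m}(t)$, hence with $A^{[\otimes m]}$; therefore the orthogonal decomposition $\mathbb{H}^{\otimes m} = \mathbb{H}^{\wedge m} \oplus (\mathbb{H}^{\wedge m})^{\perp}$ reduces $A^{[\otimes m]}$, and $A^{[\wedge m]}$ is exactly its part in $\mathbb{H}^{\wedge m}$, with $G^{\wedge m}$ eventually compact so that $A^{[\wedge m]}$ again has only isolated eigenvalues. For a reduced operator with compact resolvent the spectral subspaces split along the reducing decomposition, so $\mathbb{L}_{A^{[\otimes m]}}(\lambda_{0}) = \mathbb{L}_{A^{[\wedge m]}}(\lambda_{0}) \oplus \mathbb{L}_{A^{[\otimes m]}|_{(\mathbb{H}^{\wedge m})^{\perp}}}(\lambda_{0})$; intersecting with $\mathbb{H}^{\wedge m}$, respectively applying $\Pi^{\wedge}_{m}$, gives $\mathbb{L}_{A^{[\otimes m]}}(\lambda_{0}) \cap \mathbb{H}^{\wedge m} = \Pi^{\wedge}_{m} \mathbb{L}_{A^{[\otimes m]}}(\lambda_{0}) = \mathbb{L}_{A^{[\wedge m]}}(\lambda_{0})$, and $\lambda_{0}$ is an eigenvalue of $A^{[\wedge m]}$ precisely when this space is nonzero. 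This establishes the characterization and \eqref{EQ: SpectralSubspaceAntisymmetricCompound}.

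The step I expect to be the main obstacle is the spectral localization in the second paragraph: confining all of $\operatorname{spec}(A^{[\otimes m]})$ outside the finite-dimensional summand to an arbitrarily far left half-plane requires knowing that the mixed tensor-product semigroups $G|_{\mathbb{H}_{\varepsilon_{1}}} \otimes \ldots \otimes G|_{\mathbb{H}_{\varepsilon_{m}}}$ are eventually norm continuous (so that $\omega = s$ applies to their generators) and then carefully combining \eqref{EQ: TensorProductNormEquality} with the real-part arithmetic; by comparison, the nilpotency computation on $\mathbb{H}_{+}^{\otimes m}$ is routine, though it is precisely there that the shape of \eqref{EQ: SpectralSetAdditiveCompound}–\eqref{EQ: SpectralSubspaceIsolatedAdditiveCompound} originates.
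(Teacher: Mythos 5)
Your route is genuinely different from the paper's. The paper proves \eqref{EQ: SpectralSetAdditiveCompound} by combining the spectral mapping theorem (Proposition \ref{EQ: PropositionSpectrumAdditiveUnderENC}) at $t=1$ and $t=1+\varepsilon$ with the Brown--Pearcy description of $\operatorname{spec}(G(t)^{\otimes m})$ (Theorem \ref{TH: SpectrumOfMultiplicativeCompound}), removing the resulting $2\pi i l$ ambiguity by an arithmetic argument, and then obtains \eqref{EQ: SpectralSubspaceIsolatedAdditiveCompound} by constructing the complementary spectral subspace from decompositions of each tensor factor relative to the finitely many eigenvalues in \eqref{EQ: AdditiveCompoundIsolatedPointDesc}, in the spirit of Mallet-Paret--Nussbaum. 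You instead split $\mathbb{H}=\mathbb{H}_{+}\oplus\mathbb{H}_{-}$ along a vertical line with $\dim\mathbb{H}_{+}<\infty$, do finite-dimensional linear algebra on $\mathbb{H}_{+}^{\otimes m}$, and push all mixed summands into a left half-plane via \eqref{EQ: TensorProductNormEquality}; this bypasses the $2\pi i l$ issue entirely and delivers the spectrum and the spectral subspaces simultaneously. The step you flag as the main obstacle is in fact harmless: each mixed summand is a closed $G^{\otimes m}$-invariant subspace, so the restricted semigroup is eventually compact (Proposition \ref{PROP: EventuallyCompactCocycles} applied to $G^{\otimes m}$), hence eventually norm continuous, its generator is the part of $A^{[\otimes m]}$ there, and the equality of growth bound and spectral bound applies. (Also note the decomposition $\mathbb{H}_{+}\oplus\mathbb{H}_{-}$ is in general not orthogonal, but only directness and invariance are used.)

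One claim, however, is wrong and must be replaced: eventual compactness of the semigroup does \emph{not} imply compactness of the resolvent. The paper itself provides a counterexample in the remark following Theorem \ref{TH: ResolventDelayCompoundBound}: for the delay operator, $G^{\wedge 2}$ is eventually compact while the resolvent of $A^{[\wedge 2]}$ is not compact. So your justification of ``the spectra consist only of isolated eigenvalues of finite algebraic multiplicity'' fails as written; the conclusion itself is true, but it must be drawn from the standard spectral theory of eventually compact semigroups (compactness of $G^{\otimes m}(t)$ for large $t$ together with the spectral mapping theorem for eventually norm continuous semigroups), which is exactly what the paper invokes. Similarly, in the antisymmetric part, drop ``reduced operator with compact resolvent'' and argue instead that $\Pi^{\wedge}_{m}$ commutes with every $G^{\otimes m}(t)$, hence with the resolvent of $A^{[\otimes m]}$ and with the Riesz projection at the isolated point $\lambda_{0}$; this alone gives $\Pi^{\wedge}_{m}\mathbb{L}_{A^{[\otimes m]}}(\lambda_{0})=\mathbb{L}_{A^{[\otimes m]}}(\lambda_{0})\cap\mathbb{H}^{\wedge m}=\mathbb{L}_{A^{[\wedge m]}}(\lambda_{0})$ and the stated eigenvalue criterion, since $\operatorname{spec}(A^{[\wedge m]})\subset\operatorname{spec}(A^{[\otimes m]})$ and the Riesz projection of the part is the restriction of the Riesz projection of the whole. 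With these two repairs your argument goes through.
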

\begin{proof}
	By \eqref{EQ: AdditiveCompoundDecTensorFormula}, any eigenvectors $e_{1},\ldots,e_{m}$ of $A$ corresponding to some eigenvalues $\lambda_{1},\ldots,\lambda_{m}$ give rise to the eigenvector $e_{1} \otimes \cdots \otimes e_{m}$ of $A^{[\otimes m]}$ corresponding to the eigenvalue $\lambda_{0} = \lambda_{1} + \cdots + \lambda_{m}$. In particular, the inclusion ``$\supset$'' holds for the subsets in \eqref{EQ: SpectralSetAdditiveCompound}.
	
	To show the inverse inclusion, we use Proposition \ref{EQ: PropositionSpectrumAdditiveUnderENC} with $t > 0$ and Theorem \ref{TH: SpectrumOfMultiplicativeCompound} applied to $L \coloneq G(t)$.  For any $\lambda_{0} \in \operatorname{spec}(A^{[\otimes m]})$, this gives eigenvalues $\lambda_{1}(t),\ldots,\lambda_{m}(t)$ of $A$ and an integer $l(t)$ such that
	\begin{equation}
		\label{EQ: SpectraAddComp2PiL}
		t\lambda_{0} = t\sum_{j=1}^{m} \lambda_{j} + i2\pi l(t).
	\end{equation}
	In particular, $\operatorname{Re}\lambda_{0} = \sum_{j=1}^{m} \operatorname{Re}\lambda_{j}(t)$. From this and since $A$ has a finite number of eigenvalues in each right half-plane, the functions $\lambda_{j}(t)$ may attain only a finite number of values. Consequently, $i2\pi l(t)/t$ must also attain a finite number of values. This implies the existence of $t$ such that $l(t) = 0$, i.e., $\lambda_{0} = \sum_{j=1}^{m} \lambda_{j}(t)$, so \eqref{EQ: SpectralSetAdditiveCompound} is justified.
	
	Now suppose $\lambda_{0} \in \operatorname{spec}(A^{[\otimes m]})$ is fixed and consider its decomposition as in \eqref{EQ: AdditiveCompoundIsolatedPointDesc}. We are aimed to show \eqref{EQ: SpectralSubspaceIsolatedAdditiveCompound}. This will be done similarly to \cite[Corollary 2.2]{MalletParretNussbaum2013} by constructing the complementary to $\mathbb{L}_{A^{[\otimes m]}}(\lambda_{0})$ spectral subspace. Recall that all the distinct $m$-tuples $(\lambda^{k}_{1},\ldots,\lambda^{k}_{m}) \in \mathbb{C}^{m}$ satisfying \eqref{EQ: AdditiveCompoundIsolatedPointDesc} are enumerated by $k \in \{1,\ldots,N\}$. For each $j \in \{1,\ldots,m\}$, let $q_{j}$ be the number of numerically distinct quantities $\lambda^{k}_{j}$ over $k \in \{1,\ldots,N\}$. We renumber them as $\widetilde{\lambda}^{i}_{j}$ for $i \in \{1,\ldots,q_{j}\}$, so the unordered sets $\{ \lambda^{1}_{j},\ldots,\lambda^{N}_{j} \}$ and $\{ \widetilde{\lambda}^{1}_{j}, \ldots, \widetilde{\lambda}^{q_{j}}_{j}  \}$ coincide. Let $\mathbb{L}_{j}$ be the complementary spectral subspace of $A$ with respect to $\{ \widetilde{\lambda}^{1}_{j}, \ldots, \widetilde{\lambda}^{q_{j}}_{j}  \}$. Then for any $j \in \{1,\ldots,m\}$, we have the direct sum decomposition
	\begin{equation}
		\label{EQ: SpectraAddCompDecompositionSingle}
		\mathbb{H} = \mathbb{L}_{j} \oplus \bigoplus_{i=1}^{q_{j}} \mathbb{L}_{A}(\widetilde{\lambda}^{i}_{j}).
	\end{equation}
	
	Let $\mathcal{I}$ be the set of all $m$-tuples $\mathfrak{i} = (i_{1}, \ldots, i_{m}) \in \mathbb{Z}^{m}$ satisfying $i_{j} \in \{0,\ldots, q_{j}\}$ for each $j \in \{1,\ldots,m\}$. For any $\mathfrak{i} \in \mathcal{I}$, we set
	\begin{equation}
		\label{EQ: SpectraAddCompWspace}
		\mathbb{W}^{\mathfrak{i}} \coloneq \bigotimes_{j=1}^{m} \mathbb{L}^{i_{j}}_{j}, \quad \text{where} \ \mathbb{L}^{i_{j}}_{j} = \begin{cases}
			\mathbb{L}_{j} \qquad &\text{if} \quad i_{j} = 0,\\
			\mathbb{L}_{A}(\widetilde{\lambda}^{i_{j}}_{j}) \qquad &\text{otherwise}.
			\end{cases}
	\end{equation}
	Note that each $\mathbb{W}^{\mathfrak{i}}$ is a subspace of $\mathbb{H}^{\otimes m}$, and furthermore, from \eqref{EQ: SpectraAddCompDecompositionSingle}, \eqref{EQ: SpectraAddCompWspace}, and \eqref{EQ: HilbertTensorProductDirectSumDecomposition} we obtain
	\begin{equation}
		\label{EQ: SpectraAddCompDecompositionTensorProd}
		\mathbb{H}^{\otimes m} = \bigoplus_{\mathfrak{i} \in \mathcal{I}} \mathbb{W}^{\mathfrak{i}}.
	\end{equation}
	
	By construction, each subspace $\mathbb{L}^{i}_{j}$ is spectral and, in particular, invariant with respect to $A$. From this and \eqref{EQ: AdditiveCompoundDecTensorFormula}, the subspace $\mathbb{W}^{\mathfrak{i}}$ is invariant with respect to $A^{[\otimes m]}$. This and \eqref{EQ: SpectraAddCompDecompositionTensorProd} show that the algebraic multiplicity of $\lambda_{0}$ as an eigenvalue of $A^{[\otimes m]}$ is equal to the sum over $\mathfrak{i} \in \mathcal{I}$ of the corresponding multiplicities counted for the operator $A^{[\otimes m]}$ restricted to $\mathbb{W}^{\mathfrak{i}}$. For the computation, we only need to consider $\mathfrak{i} \in \mathcal{I}$ for which $\lambda_{0}$ belongs to the spectrum of the restriction, i.e.,
	\begin{equation}
		\lambda_{0} \in \operatorname{spec}\left(\restr{A^{[\otimes m]}}{\mathbb{W}^{\mathfrak{i}}}\right).
	\end{equation}
	From \eqref{EQ: SpectralSetAdditiveCompound} it is not hard to see\footnote{Here we mean that an analog of \eqref{EQ: SpectralSetAdditiveCompound} can be established for the restriction. For this, we have to define ``additive compounds'' for possibly distinct generators $A_{1}, \ldots, A_{m}$ of $C_{0}$-semigroups $G_{1}, \ldots, G_{m}$ acting in Hilbert spaces $\mathbb{H}_{1}, \ldots, \mathbb{H}_{m}$, respectively. Then the additive compound $A^{[\otimes m]}_{1 \ldots m}$ of $A_{1}, \ldots, A_{m}$ is defined as the generator of $G_{1} \otimes \cdots \otimes G_{m}$. For eventually compact semigroups, it can be shown by the same argument with appealing to \eqref{EQ: SpectraTensorProduct} that
	\begin{equation}
		\operatorname{spec}(A^{[\otimes m]}_{1 \ldots m}) = \left\{ \sum_{j=1}^{m}\lambda_{j} \mid \lambda_{j} \in \operatorname{spec}(A_{j}) \right\}.
		\end{equation}
	Then the restriction of $A^{[\otimes m]}$ to $\mathbb{W}^{\mathfrak{i}}$ with $\mathfrak{i}=(i_{1},\ldots,i_{m})$ is the additive compound $A^{[\otimes m]}_{1\ldots m}$, where $A_{j}$ is given by the restriction of $A$ to $\mathbb{L}^{i_{j}}_{j}$.} that there are exactly $N$ such $m$-tuples $\mathfrak{i}$, and they correspond to the decompositions from \eqref{EQ: AdditiveCompoundIsolatedPointDesc}. More precisely, for each $k \in \{1,\ldots,N\}$, there exists a unique $\mathfrak{i}^{k}=(i^{k}_{1},\ldots,i^{k}_{m}) \in \mathcal{I}$ such that 
	\begin{equation}
		(\widetilde{\lambda}^{i^{k}_{1}}_{1},\ldots,\widetilde{\lambda}^{i^{k}_{m}}_{m}) = (\lambda^{k}_{1}, \ldots, \lambda^{k}_{m}).
	\end{equation}
	Note that $i^{k}_{j} > 0$ for any $j \in \{1,\ldots,m\}$. Then we have
	\begin{equation}
		\mathbb{L}_{A^{[\otimes m]}}(\lambda_{0}) = \bigoplus_{k=1}^{N} \mathbb{W}^{\mathfrak{i}^{k}} = \bigoplus_{k=1}^{N} \bigotimes_{j=1}^{m} \mathbb{L}_{A}(\widetilde{\lambda}^{i^{k}_{j}}_{j}) = \bigoplus_{k=1}^{N} \bigotimes_{j=1}^{m} \mathbb{L}_{A}(\lambda^{k}_{j}),
	\end{equation}
	which establishes \eqref{EQ: SpectralSubspaceIsolatedAdditiveCompound}.
	
	From \eqref{EQ: SpectralSubspaceIsolatedAdditiveCompound} it immediately follows that $\Pi^{\wedge}_{m} \mathbb{L}_{A^{[\otimes m]}}(\lambda_{0}) = \mathbb{L}_{A^{[\otimes m]}}(\lambda_{0}) \cap \mathbb{H}^{\wedge m}$. Clearly, any $\lambda_{0} \in \operatorname{spec}(A^{[\wedge m]})$ must be an eigenvalue of $A^{[\otimes m]}$, and hence \eqref{EQ: SpectralSubspaceAntisymmetricCompound} is satisfied. Conversely, any $\lambda_{0} \in \operatorname{spec}(A^{[\otimes m]})$ with $\mathbb{L}_{A^{[\otimes m]}}(\lambda_{0}) \cap \mathbb{H}^{\wedge m} = \Pi^{\wedge}_{m} \mathbb{L}_{A^{[\otimes m]}}(\lambda_{0}) \not= 0$ must be an eigenvalue of $A^{[\wedge m]}$.
\end{proof}

We can also describe multiplicities of eigenvalues of $A^{[\wedge m]}$ as follows.

\begin{theorem}
	\label{TH: AddCompoundMultiplicities}
	In the context of Theorem \ref{TH: SpectrumAdditiveCompounds}, consider an eigenvalue $\lambda_{0}$ of $A^{[\wedge m]}$. Then, in terms of \eqref{EQ: SpectralSubspaceIsolatedAdditiveCompound}, for any $k \in \{1,\ldots, N\}$ put
	\begin{equation}
		\label{EQ: AntiSymCompSpectralComputation1}
		\mathbb{L}_{k} \coloneq \bigotimes_{j=1}^{m} \mathbb{L}_{A}(\lambda^{k}_{j}).
	\end{equation}
	Define an equivalence relation on $\{1,\ldots,N\}$ as follows: $k \sim k'$ if and only if there exists a permutation $\sigma \in \mathbb{S}_{m}$ such that $\mathbb{L}_{k'} = S_{\sigma}\mathbb{L}_{k}$ or, equivalently, $\mathbb{L}_{A}(\lambda^{k'}_{j}) = \mathbb{L}_{A}(\lambda^{k}_{\sigma(j)})$ for any $j \in \{1,\ldots,m\}$.
	For some $r>0$, let $\mathcal{K}_{1},\ldots,\mathcal{K}_{r} \subset \{1,\ldots,N\}$ form a complete set of the equivalence classes and consider for any $i \in \{1,\ldots, r\}$ the subspace
	\begin{equation}
		\widetilde{\mathbb{L}}_{i} \coloneq \bigoplus_{k \in \mathcal{K}_{i}} \mathbb{L}_{k}.
	\end{equation}
	Then we have the direct sum decomposition
	\begin{equation}
		\mathbb{L}_{A^{[\wedge m]}}(\lambda_{0}) = \Pi^{\wedge}_{m}\mathbb{L}_{A^{[\otimes m]}}(\lambda_{0}) = \bigoplus_{i=1}^{r}\Pi^{\wedge}_{m}\widetilde{\mathbb{L}}_{i}.
	\end{equation}
	Furthermore, for any $i \in \{1,\ldots,r\}$ there exist $k^{*} \in \mathcal{K}_{i}$ and positive integers $d$ and $\kappa_{1}$,\ldots,$\kappa_{d}$ such that $\mathbb{L}_{k^{*}}$ is of the form
	\begin{equation}
		\mathbb{L}_{k^{*}} = \mathbb{V}^{\otimes \kappa_{1}}_{1} \otimes \cdots \otimes \mathbb{V}^{\otimes \kappa_{d}}_{d},
	\end{equation}
	where the factors $\mathbb{V}_{1}, \ldots, \mathbb{V}_{d}$ form the set of all distinct spectral subspaces of $A$ from \eqref{EQ: AntiSymCompSpectralComputation1} with $k=k^{*}$, so $\kappa_{1}+\cdots+\kappa_{d} = m$. Then, in these terms, we have\footnote{Here the binomial coefficient $\binom{n}{k} = C^{k}_{n}$ is assumed to be zero for $k > n$.}
	\begin{equation}
		\dim( \Pi^{\wedge}_{m} \widetilde{\mathbb{L}}_{i}) = \prod_{l=1}^{d}\binom{\dim\mathbb{V}_{l}}{\kappa_{l}}.
	\end{equation}
\end{theorem}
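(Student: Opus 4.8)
The plan is to build directly on Theorem \ref{TH: SpectrumAdditiveCompounds}, which already supplies $\mathbb{L}_{A^{[\wedge m]}}(\lambda_{0}) = \Pi^{\wedge}_{m}\mathbb{L}_{A^{[\otimes m]}}(\lambda_{0})$ (see \eqref{EQ: SpectralSubspaceAntisymmetricCompound}) together with the decomposition $\mathbb{L}_{A^{[\otimes m]}}(\lambda_{0}) = \bigoplus_{k=1}^{N}\mathbb{L}_{k}$ from \eqref{EQ: SpectralSubspaceIsolatedAdditiveCompound} and \eqref{EQ: AntiSymCompSpectralComputation1}. The first step is to observe how the transposition operators $S_{\sigma}$ from \eqref{EQ: SymmetryOperatorWedgeDefinition} act on the subspaces $\mathbb{L}_{k}$: since $S_{\sigma}$ just permutes tensor factors, $S_{\sigma}\mathbb{L}_{k} = \mathbb{L}_{A}(\lambda^{k}_{\sigma(1)}) \otimes \ldots \otimes \mathbb{L}_{A}(\lambda^{k}_{\sigma(m)})$, and a permutation of the entries of the $m$-tuple $(\lambda^{k}_{1},\ldots,\lambda^{k}_{m})$ is again an $m$-tuple with entries in $\operatorname{spec}(A)$ summing to $\lambda_{0}$, hence occurs in the enumeration; so $S_{\sigma}\mathbb{L}_{k} = \mathbb{L}_{k'}$ for a unique $k'$, and necessarily $k' \sim k$ in the sense of the statement. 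Thus $\mathbb{S}_{m}$ acts by permutations on the finite set $\{\mathbb{L}_{1},\ldots,\mathbb{L}_{N}\}$ (which are pairwise in direct sum, as seen in \eqref{EQ: SpectraAddCompDecompositionTensorProd}), and the classes $\mathcal{K}_{1},\ldots,\mathcal{K}_{r}$ are precisely the orbits. Consequently each $\widetilde{\mathbb{L}}_{i} = \bigoplus_{k\in\mathcal{K}_{i}}\mathbb{L}_{k}$ is invariant under every $S_{\sigma}$, hence under $\Pi^{\wedge}_{m}$ by \eqref{EQ: AntisymmetricProjectorDefinition}; since $\mathbb{L}_{A^{[\otimes m]}}(\lambda_{0}) = \bigoplus_{i=1}^{r}\widetilde{\mathbb{L}}_{i}$ and $\Pi^{\wedge}_{m}\widetilde{\mathbb{L}}_{i}\subset\widetilde{\mathbb{L}}_{i}$, applying $\Pi^{\wedge}_{m}$ gives the direct sum $\mathbb{L}_{A^{[\wedge m]}}(\lambda_{0}) = \bigoplus_{i=1}^{r}\Pi^{\wedge}_{m}\widetilde{\mathbb{L}}_{i}$.

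Next I would reduce the computation of $\dim(\Pi^{\wedge}_{m}\widetilde{\mathbb{L}}_{i})$ to a single tensor block. The key identity is $\Pi^{\wedge}_{m}S_{\sigma} = \operatorname{sgn}(\sigma)\Pi^{\wedge}_{m}$, which follows from \eqref{EQ: AntisymmetricProjectorDefinition} and $S_{\sigma_{1}}S_{\sigma_{2}} = S_{\sigma_{1}\sigma_{2}}$ by re-indexing the sum over $\mathbb{S}_{m}$. Choosing a representative $k^{*}\in\mathcal{K}_{i}$ whose $m$-tuple has equal entries grouped consecutively — which is possible because the orbit contains all permutations of the tuple — one gets $\mathbb{L}_{k^{*}} = \mathbb{V}^{\otimes\kappa_{1}}_{1}\otimes\ldots\otimes\mathbb{V}^{\otimes\kappa_{d}}_{d}$ with $\mathbb{V}_{1},\ldots,\mathbb{V}_{d}$ the distinct spectral subspaces occurring among the factors of $\mathbb{L}_{k^{*}}$ and $\kappa_{1}+\ldots+\kappa_{d}=m$. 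Since $\widetilde{\mathbb{L}}_{i}$ is spanned by the subspaces $S_{\sigma}\mathbb{L}_{k^{*}}$, $\sigma\in\mathbb{S}_{m}$, and $\Pi^{\wedge}_{m}(S_{\sigma}\mathbb{L}_{k^{*}}) = \operatorname{sgn}(\sigma)\Pi^{\wedge}_{m}\mathbb{L}_{k^{*}}$ coincides with $\Pi^{\wedge}_{m}\mathbb{L}_{k^{*}}$ as a subspace, we obtain $\Pi^{\wedge}_{m}\widetilde{\mathbb{L}}_{i} = \Pi^{\wedge}_{m}\mathbb{L}_{k^{*}}$.

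Finally, to evaluate $\dim(\Pi^{\wedge}_{m}\mathbb{L}_{k^{*}})$ I would fix a basis $\{f^{l}_{1},\ldots,f^{l}_{n_{l}}\}$ of each $\mathbb{V}_{l}$, with $n_{l}=\dim\mathbb{V}_{l}$; since spectral subspaces of $A$ for distinct eigenvalues are in direct sum, these vectors jointly form a basis of $\mathbb{V}_{1}\oplus\ldots\oplus\mathbb{V}_{d}$, and the $m$-fold wedges of distinct such basis vectors are linearly independent in $\mathbb{H}^{\wedge m}$. The image $\Pi^{\wedge}_{m}\mathbb{L}_{k^{*}}$ is spanned by the antisymmetrizations of the decomposable basis tensors of $\mathbb{L}_{k^{*}}$, i.e.\ by wedges of the form $f^{1}_{a_{1,1}}\wedge\ldots\wedge f^{d}_{a_{d,\kappa_{d}}}$; by the block structure and linear independence of the $\mathbb{V}_{l}$ such a wedge vanishes exactly when two indices within some block agree, and two nonzero ones coincide up to sign exactly when the chosen (unordered) index sets within each block agree. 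Counting the surviving wedges therefore gives $\prod_{l=1}^{d}\binom{n_{l}}{\kappa_{l}}$, as claimed. The main obstacle is this last step: one must make sure that the merely direct-sum (not orthogonal) structure of the $\mathbb{V}_{l}$ still suffices both for the vanishing criterion and for the linear independence of the surviving wedges — once that is secured, the binomial bookkeeping and the $\mathbb{S}_{m}$-orbit arguments of the earlier steps are routine, in the spirit of Proposition 2.4 in \cite{MalletParretNussbaum2013}.
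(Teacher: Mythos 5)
Your argument is correct and is essentially the proof the paper intends: the paper omits it, deferring to Proposition 2.4 of \cite{MalletParretNussbaum2013}, and your steps — the $\mathbb{S}_{m}$-action permuting the blocks $\mathbb{L}_{k}$ so that the classes $\mathcal{K}_{i}$ are orbits, the identity $\Pi^{\wedge}_{m}S_{\sigma}=\operatorname{sgn}(\sigma)\Pi^{\wedge}_{m}$ reducing $\Pi^{\wedge}_{m}\widetilde{\mathbb{L}}_{i}$ to $\Pi^{\wedge}_{m}\mathbb{L}_{k^{*}}$, and the wedge count over a grouped representative — follow exactly those lines. The point you flag at the end is not a genuine obstacle: that wedges of pairwise distinct members of a finite linearly independent family $\{f^{l}_{j}\}$ are nonzero and pairwise linearly independent is a purely algebraic fact requiring no orthogonality of the $\mathbb{V}_{l}$ (for instance, extend an invertible operator on the finite-dimensional span sending $\{f^{l}_{j}\}$ to an orthonormal family to an invertible operator $T$ on $\mathbb{H}$; then $T^{\otimes m}$ commutes with $\Pi^{\wedge}_{m}$, is injective, and sends your wedges to wedges of orthonormal vectors, which are linearly independent by the orthonormal-basis description of $\mathbb{H}^{\wedge m}$ given in Section \ref{SEC: CompOperatorsTensorProducts}).
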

\begin{proof}
	The proof follows exactly the same lines as \cite[Proposition 2.4]{MalletParretNussbaum2013} and relies only on the identities \eqref{EQ: SpectralSubspaceIsolatedAdditiveCompound} and \eqref{EQ: SpectralSubspaceAntisymmetricCompound}, so we omit it.
\end{proof}
\section{Description of additive compounds for delay equations}
\label{SEC: ComputationOfAdditiveCompountL2}
In our study of delay equations, we encounter the Hilbert space 
\begin{equation}
	\label{EQ: HilbertSpaceDelayEqDefinition}
	\mathbb{H} = L_{2}([-\tau,0]; \mu; \mathbb{R}^{n}),
\end{equation}
where $\tau>0$ and $\mu =\mu^{1}_{L} + \delta_{0}$ is the sum of the Lebesgue measure $\mu^{1}_{L}$ on $[-\tau,0]$  and the $\delta$-measure $\delta_{0}$ at $0$. Let $\mu^{\otimes m}$ be the $m$-fold product of $\mu$. Then Theorems \ref{TH: DescriptionTensorProductL2} and \ref{TH: ExteriorPowerL2DescriptionAntisymmetric} yield the following description of the abstract $m$-fold tensor product $\mathbb{H}^{\otimes m}$  and $m$-fold exterior product $\mathbb{H}^{\wedge m}$ of $\mathbb{H}$.
\begin{theorem}
	\label{TH: TensorProductDelayDescription}
	For the space $\mathbb{H}$ from \eqref{EQ: HilbertSpaceDelayEqDefinition}, the mapping\footnote{Recall that $(\phi_{1} \otimes \cdots \otimes \phi_{m})(\theta_{1},\ldots,\theta_{m}) \coloneq \phi_{1}(\theta_{1}) \otimes \cdots \otimes \phi_{m}(\theta_{m})$ for $\mu^{\otimes m}$-almost all $(\theta_{1},\ldots,\theta_{m}) \in [-\tau,0]^{m}$.}
	\begin{equation}
		\label{EQ: DelayCompoundIsomorphismHTensorProd}
		\phi_{1} \otimes \cdots \otimes \phi_{m} \mapsto (\phi_{1} \otimes \cdots \otimes \phi_{m})(\cdot_{1},\ldots,\cdot_{m})
	\end{equation}
    induces a natural isometric isomorphism between $\mathbb{H}^{\otimes m}$ and
    \begin{equation}
    	\label{EQ: L2SpaceTensorCompoundDefinition}
    	\mathcal{L}^{\otimes}_{m} \coloneq L_{2}([-\tau,0]^{m};\mu^{\otimes m};(\mathbb{R}^{n})^{\otimes m}).
    \end{equation}
    Furthermore, its restriction to $\mathbb{H}^{\wedge m}$ gives an isometric isomorphism onto the subspace $\mathcal{L}^{\wedge}_{m}$ of $\mu^{\otimes m}$-antisymmetric functions\footnote{See \eqref{EQ: AntisymmetricFunctionDefinition} or \eqref{EQ: AntisymmetricRelationsPropPreparatory} for the definition.} in $\mathcal{L}^{\otimes}_{m}$.
\end{theorem}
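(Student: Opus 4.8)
The plan is to deduce the statement from Theorems \ref{TH: DescriptionTensorProductL2} and \ref{TH: ExteriorPowerL2DescriptionAntisymmetric} by induction on $m$, using associativity of the Hilbert space tensor product together with associativity of product measures. Everything here is essentially a bookkeeping argument once these two earlier theorems are available.

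First I would set up the induction for the tensor product part. For $m=1$ there is nothing to prove. Assume the claim for $m-1$, i.e.\ that $\phi_{1} \otimes \ldots \otimes \phi_{m-1} \mapsto \phi_{1}(\cdot_{1}) \otimes \ldots \otimes \phi_{m-1}(\cdot_{m-1})$ extends to an isometric isomorphism $\mathbb{H}^{\otimes(m-1)} \cong \mathcal{L}^{\otimes}_{m-1} = L_{2}([-\tau,0]^{m-1};\mu^{\otimes(m-1)};(\mathbb{R}^{n})^{\otimes(m-1)})$. Writing $\mathbb{H}^{\otimes m} = \mathbb{H}^{\otimes(m-1)} \otimes \mathbb{H}$ by associativity of the Hilbert space tensor product, I would then apply Theorem \ref{TH: DescriptionTensorProductL2} with $\mathbb{H}_{1} := \mathcal{L}^{\otimes}_{m-1}$ regarded as $L_{2}([-\tau,0]^{m-1};\mu^{\otimes(m-1)};(\mathbb{R}^{n})^{\otimes(m-1)})$ and $\mathbb{H}_{2} := \mathbb{H} = L_{2}([-\tau,0];\mu;\mathbb{R}^{n})$, which produces an isometric isomorphism onto $L_{2}([-\tau,0]^{m-1} \times [-\tau,0]; \mu^{\otimes(m-1)} \otimes \mu; (\mathbb{R}^{n})^{\otimes(m-1)} \otimes \mathbb{R}^{n})$. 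It then remains to identify $[-\tau,0]^{m-1} \times [-\tau,0]$ with $[-\tau,0]^{m}$, the product measure $\mu^{\otimes(m-1)} \otimes \mu$ with $\mu^{\otimes m}$ (associativity of product measures, legitimate since $\mu$ is finite), and $(\mathbb{R}^{n})^{\otimes(m-1)} \otimes \mathbb{R}^{n}$ with $(\mathbb{R}^{n})^{\otimes m}$ (associativity of the tensor product, already recorded above); under these identifications the composed map sends $\phi_{1} \otimes \ldots \otimes \phi_{m} = (\phi_{1} \otimes \ldots \otimes \phi_{m-1}) \otimes \phi_{m}$ to $(\theta_{1},\ldots,\theta_{m}) \mapsto (\phi_{1}(\theta_{1}) \otimes \ldots \otimes \phi_{m-1}(\theta_{m-1})) \otimes \phi_{m}(\theta_{m}) = \phi_{1}(\theta_{1}) \otimes \ldots \otimes \phi_{m}(\theta_{m})$, which is exactly \eqref{EQ: DelayCompoundIsomorphismHTensorProd}. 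Since both the inductive isomorphism and the one supplied by Theorem \ref{TH: DescriptionTensorProductL2} are isometries and agree on the total set of decomposable tensors, their composition is the asserted isometric isomorphism $\mathbb{H}^{\otimes m} \cong \mathcal{L}^{\otimes}_{m}$.

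For the exterior power part I would simply observe that the isomorphism just constructed is precisely the ``natural isometric isomorphism between $\mathbb{H}^{\otimes m}$ and $L_{2}(\mathcal{X}^{m};\mu^{\otimes m};\mathbb{F}^{\otimes m})$'' appearing in Theorem \ref{TH: ExteriorPowerL2DescriptionAntisymmetric} specialized to $\mathcal{X} := [-\tau,0]$, $\mathbb{F} := \mathbb{R}^{n}$ and the present measure $\mu$. That theorem then asserts verbatim that the restriction of this isomorphism to $\mathbb{H}^{\wedge m}$ is an isometric isomorphism onto the subspace of $\mu^{\otimes m}$-antisymmetric functions in $L_{2}([-\tau,0]^{m};\mu^{\otimes m};(\mathbb{R}^{n})^{\otimes m}) = \mathcal{L}^{\otimes}_{m}$, i.e.\ onto $\mathcal{L}^{\wedge}_{m}$, which finishes the proof.

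The only point requiring a word of care — and what I would treat as the main (though mild) obstacle — is the coherence of the associativity identifications across the induction: one must be sure that no permutation of the factors is surreptitiously introduced when passing from $(\mathbb{R}^{n})^{\otimes(m-1)} \otimes \mathbb{R}^{n}$ to $(\mathbb{R}^{n})^{\otimes m}$ (and likewise for the domains and measures). This is handled by the remark that every isomorphism involved carries decomposable tensors to pointwise tensor products, that linear combinations of decomposable tensors are dense, and that two isometries agreeing on a total set coincide; hence the iterated binary identification is forced to equal the explicit $m$-fold map \eqref{EQ: DelayCompoundIsomorphismHTensorProd}, with the factors in their natural order.
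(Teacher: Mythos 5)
Your proposal is correct and follows essentially the same route as the paper: the paper presents Theorem \ref{TH: TensorProductDelayDescription} as a direct consequence of Theorems \ref{TH: DescriptionTensorProductL2} and \ref{TH: ExteriorPowerL2DescriptionAntisymmetric} applied with $\mathcal{X}=[-\tau,0]$, $\mu = \mu^{1}_{L}+\delta_{0}$ and $\mathbb{F}=\mathbb{R}^{n}$, with the $m$-fold extension handled exactly by the associativity/induction bookkeeping you spell out. Your extra care about the coherence of the identifications (agreement on the total set of decomposable tensors) is the right way to make the implicit step explicit.
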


Below we identify the spaces  $\mathbb{H}^{\otimes m}$ (resp. $\mathbb{H}^{\wedge m}$) and $\mathcal{L}^{\otimes}_{m}$ (resp. $\mathcal{L}^{\wedge}_{m}$) according to the isomorphism \eqref{EQ: DelayCompoundIsomorphismHTensorProd} and use the same notations for the operators on $\mathcal{L}^{\otimes}_{m}$ (resp. $\mathcal{L}^{\wedge}_{m}$) induced from $\mathbb{H}^{\otimes m}$ (resp. $\mathbb{H}^{\wedge m}$) by this isomorphism.

It is convenient to introduce some notation for working with the spaces $\mathcal{L}^{\otimes}_{m}$ and $\mathcal{L}^{\wedge}_{m}$. For each $k \in \{1,\ldots,m\}$ and any integers $1 \leq j_{1} < \cdots < j_{k} \leq m$, we define the set $\mathcal{B}^{(m)}_{j_{1}\ldots j_{k}}$, called a $k$-\textit{face} of $[-\tau,0]^{m}$ with respect to $\mu^{\otimes m}$, by
\begin{equation}
	\label{EQ: DefinitionOfBoundaryFace}
	\mathcal{B}^{(m)}_{j_{1}\ldots j_{k}} \coloneq \left\{ (\theta_{1},\ldots,\theta_{m}) \in [-\tau,0]^{m} \mid \theta_{j} = 0 \quad \text{for any} \ j \notin \{j_{1},\ldots,j_{k}\} \right\}.
\end{equation}
We also put $\mathcal{B}^{(m)}_{0} \coloneq \{0\}^{m}$ denoting the set corresponding to the unique $0$-face with respect to $\mu^{\otimes m}$.

In what follows, summation over multi-indices $j_{1}\ldots j_{k}$ (with fixed $k$) is always taken over all $1 \leq j_{1} < \cdots < j_{k} \leq m$. For $k=0$, we set $j_{1}\ldots j_{k} \coloneq 0$. In particular, in this case $\mathcal{B}^{(m)}_{j_{1}\ldots j_{k}} = \mathcal{B}^{(m)}_{0}$.

From the definition of $\mu^{\otimes m}$ we obtain (see also \eqref{EQ: TensorSpaceDelayCompoundDecompositionBoundarySubspaces})
\begin{equation}
\label{EQ: DelayCompoundTensorMeasureDecomposition}
\mu^{\otimes m} = \sum_{k=0}^{m}	\sum_{j_{1}\ldots j_{k}} \mu^{k}_{L}(\mathcal{B}^{(m)}_{j_{1}\ldots j_{k}}),
\end{equation}
where $\mu^{k}_{L}(\mathcal{B}^{(m)}_{j_{1}\ldots j_{k}})$ denotes the $k$-dimensional Lebesgue measure on $\mathcal{B}^{(m)}_{j_{1}\ldots j_{k}}$, which for $k=0$ reduces to the $\delta$-measure at $\mathcal{B}^{(m)}_{0}$. From this, we define the \textit{restriction operator $R^{(m)}_{j_{1}\ldots j_{k}}$} by (here for $k=0$ the $L_{2}$-space is just $(\mathbb{R}^{n})^{\otimes m}$)
\begin{equation}
	\label{EQ: RestrictionOperatorDelayTensor}
	 \mathcal{L}^{\otimes}_{m} \ni \Phi \mapsto R^{(m)}_{j_{1}\ldots j_{k}}\Phi \coloneq \restr{\Phi}{\mathcal{B}_{j_{1}\ldots j_{k}}} \in L_{2}((-\tau,0)^{k};(\mathbb{R}^{n})^{\otimes m}),
\end{equation}
where for the inclusion in $L_{2}$ we naturally identified $\mathcal{B}^{(m)}_{j_{1}\ldots,j_{k}}$ with $[-\tau,0]^{k}$ by omitting the zeroed arguments. Thus, $R^{(m)}_{j_{1}\ldots j_{m}}$ takes a function of $m$ arguments $\theta_{1}, \ldots, \theta_{m}$ to the function of $k$ arguments $\theta_{j_{1}}, \ldots, \theta_{j_{k}}$, putting $\theta_{j}=0$ for $j \notin \{j_{1},\ldots,j_{k}\}$, and then it is considered as an element of the $L_{2}$-space over $(-\tau,0)^{k}$ with the usual Lebesgue measure.

Let $\partial_{j_{1}\ldots j_{k}}\mathcal{L}^{\otimes}_{m}$ denote the subspace of $\mathcal{L}^{\otimes}_{m}$ on which all the restriction operators, except possibly $R^{(m)}_{j_{1}\ldots j_{k}}$, vanish. We call $\partial_{j_{1}\ldots j_{k}}\mathcal{L}^{\otimes}_{m}$ the \textit{boundary subspace over the $k$-face $\mathcal{B}^{(m)}_{j_{1}\ldots j_{k}}$}. Clearly, the space $\mathcal{L}^{\otimes}_{m}$ decomposes into the orthogonal inner sum as follows:
\begin{equation}
	\label{EQ: TensorSpaceDelayCompoundDecompositionBoundarySubspaces}
	\mathcal{L}^{\otimes}_{m} = \bigoplus_{k=0}^{m}\bigoplus_{j_{1}\ldots j_{k}} \partial_{j_{1}\ldots j_{k}}\mathcal{L}^{\otimes}_{m},
\end{equation}
where each boundary subspace $\partial_{j_{1}\ldots j_{k}}\mathcal{L}^{\otimes}_{m}$ is naturally isomorphic to $L_{2}((-\tau,0)^{k};(\mathbb{R}^{n})^{\otimes m})$ through the restriction operator $R^{(m)}_{j_{1}\ldots j_{k}}$.

Thus, any element $\Phi$ of $\mathcal{L}^{\otimes}_{m}$ can be uniquely determined from all of its restrictions $R^{(m)}_{j_{1}\ldots j_{k}}\Phi$ and vice versa. We often omit the upper index in $R^{(m)}_{j_{1}\ldots j_{k}}$ and $\mathcal{B}^{(m)}_{j_{1}\ldots j_{k}}$ if it is clear from the context and write simply $R_{j_{1}\ldots j_{k}}$ or $\mathcal{B}_{j_{1}\ldots j_{k}}$. Moreover, it will be convenient to use the notation $R_{j_{1}\ldots j_{k}}$ for not necessarily monotone sequence $j_{1},\ldots,j_{k}$ to mean the same operator as for the properly rearranged sequence. Sometimes we will use the excluded index notation to denote restriction operators and $k$-faces. In particular, for $j \in \{1, \ldots, m\}$ we will often use $R_{\hat{j}} \coloneq R_{1\ldots \hat{j} \ldots m}$ and $\mathcal{B}_{\hat{j}}\coloneq \mathcal{B}_{1 \ldots \hat{j} \ldots m}$, where the hat on the right-hand sides means that the index is excluded from the considered set.
\begin{remark}
	\begin{figure}
		\centering
		\includegraphics[width=0.6\linewidth]{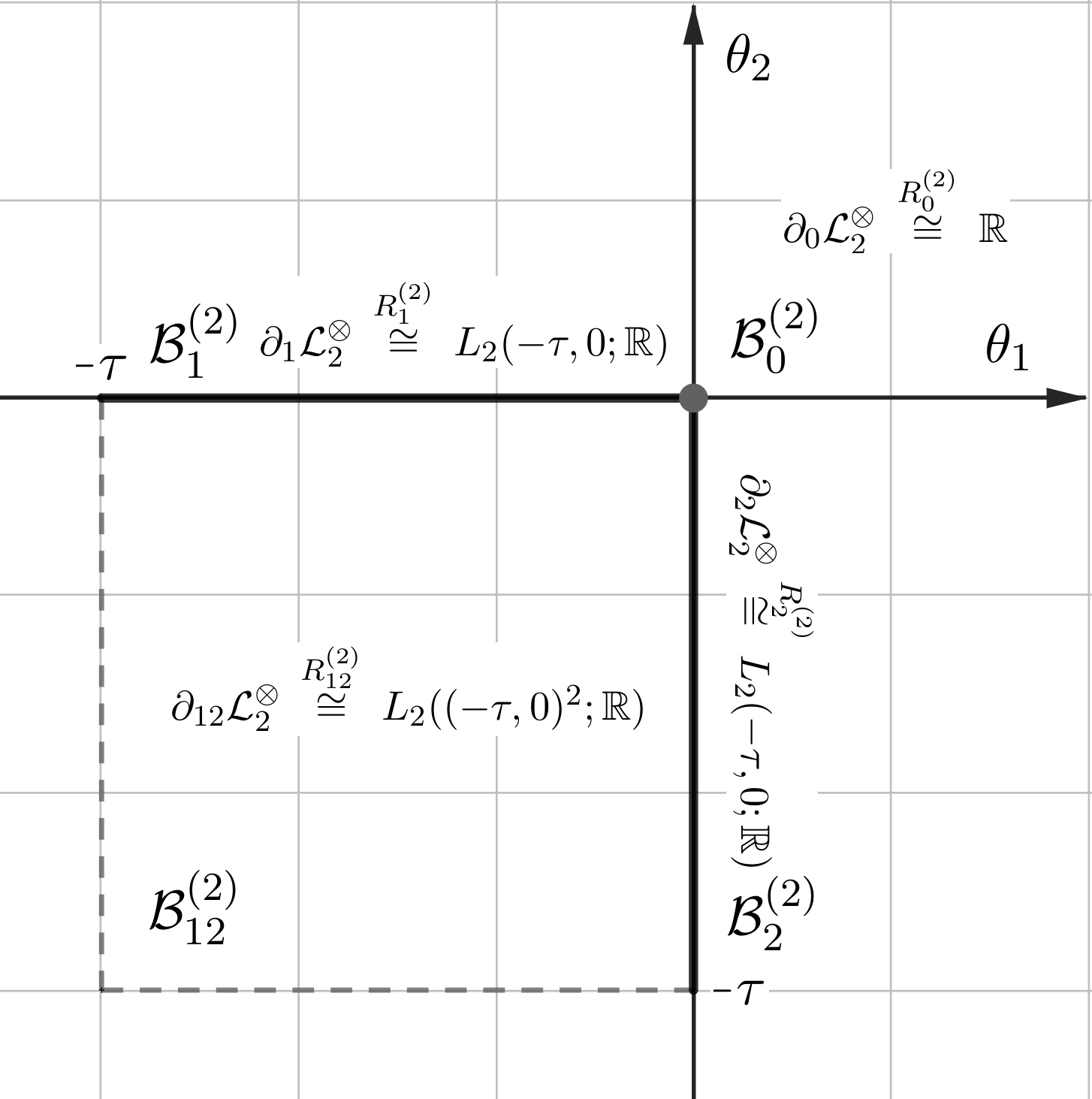}	
		\caption{An illustration to the decomposition of $L_{2}([-\tau,0]^{2};\mu^{\otimes 2};\mathbb{R})$ according to \eqref{EQ: TensorSpaceDelayCompoundDecompositionBoundarySubspaces}, where the restriction operators $R^{(2)}_{0}$, $R^{(2)}_{1}$, $R^{(2)}_{2}$, and $R^{(2)}_{12}$ provide natural isometric isomorphisms between the boundary subspaces over the faces $\mathcal{B}^{(2)}_{0}$, $\mathcal{B}^{(2)}_{1}$, $\mathcal{B}^{(2)}_{2}$, and $\mathcal{B}^{(2)}_{12}$ and appropriate $L_{2}$-spaces respectively.}
		\label{FIG: ExampleFunction}
	\end{figure}
	For $m=2$ and $n=1$, any element $\Phi \in \mathcal{L}^{\otimes}_{2}$ is determined by its four restrictions: $R^{(2)}_{0}\Phi \in \mathbb{R}$, $R^{(2)}_{1}\Phi, R^{(2)}_{2}\Phi \in L_{2}(-\tau,0;\mathbb{R})$, and $R^{(2)}_{12} \Phi \in L_{2}((-\tau,0)^{2};\mathbb{R})$, see Fig. \ref{FIG: ExampleFunction}. Note that even if $R^{(2)}_{12}\Phi$, $R^{(2)}_{1}\Phi$, and $R^{(2)}_{2}\Phi$ have continuous representations, it is not necessary that they are somehow related on intersections of faces. For example, the values $(R^{(2)}_{12}\Phi)(0,0)$, $(R^{(2)}_{1}\Phi)(0)$, $(R^{(2)}_{2}\Phi)(0)$, and $R^{(2)}_{0}\Phi$ need not be related.
\end{remark}

Relations between restrictions arise for the elements of $\mathcal{L}^{\wedge}_{m}$. This is contained in the following proposition. Recall here the operator $\Theta^{(m)}_{\sigma}$ defined in \eqref{EQ: ThetaSigmaDefinition}.
\begin{proposition}
	\label{PROP: AntisymmetricRelationsGeneral}
	An element $\Phi \in \mathcal{L}^{\otimes}_{m}$ belongs to $\mathcal{L}^{\wedge}_{m}$ if and only if for all $k \in \{0,\ldots,m\}$, integers $1 \leq j_{1} < \cdots < j_{m} \leq m$, and $\sigma \in \mathbb{S}_{m}$, it satisfies
	\begin{equation}
		\label{EQ: AntisymmetricRelationsGeneral}
		R_{j_{1}\ldots j_{k}}\Phi = (-1)^{\sigma}T_{\sigma} \Theta^{(k)}_{\bar{\sigma}} R_{\sigma(j_{1}) \ldots \sigma(j_{k})}\Phi,
	\end{equation}
    where $\bar{\sigma} \in \mathbb{S}_{k}$ is such that $\sigma( j_{\bar{\sigma}(1)} ) < \cdots < \sigma( j_{\bar{\sigma}(k)})$.
    
    As a consequence, we have that\footnote{Here in \eqref{EQ: AntisymmetricRestrictionRelationOuterPermutation} the tail of $\sigma$, i.e., $\sigma(l)$ for $l \geq k+1$ is arbitrary.}
	\begin{equation}
		\label{EQ: AntisymmetricRestrictionRelationOuterPermutation}
		R_{1\ldots k}\Phi = (-1)^{\sigma}T_{\sigma} R_{j_{1}\ldots j_{k}}\Phi \quad \text{for any} \quad \sigma = \begin{pmatrix}
			1 & \ldots & k & \ldots \\
			j_{1} & \ldots & j_{k} & \ldots
		\end{pmatrix} \in \mathbb{S}_{m},
	\end{equation}
    and, in particular, for almost all $(\theta_{1},\ldots,\theta_{k}) \in (-\tau,0)^{k}$ we have
    \begin{equation}
    	\label{EQ: AntisymmetricRestrictionRelationValues}
    	(R_{1\ldots k}\Phi)(\theta_{1},\ldots,\theta_{k}) \in (\mathbb{R}^{n})^{\otimes k} \otimes (\mathbb{R}^{n})^{\wedge (m-k)}.
    \end{equation}
\end{proposition}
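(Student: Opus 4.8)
The statement has two directions plus two corollaries. The strategy is to start from the membership criterion $S_{\sigma}\Phi = \operatorname{sgn}(\sigma)\Phi$ that defines $\mathcal{L}^{\wedge}_{m}$ (via Theorem \ref{TH: ExteriorPowerL2DescriptionAntisymmetric}, equivalently $\Phi(\theta_{\sigma(1)},\ldots,\theta_{\sigma(m)}) = (-1)^{\sigma}T_{\sigma^{-1}}\Phi(\theta_{1},\ldots,\theta_{m})$ for $\mu^{\otimes m}$-almost all arguments, i.e. \eqref{EQ: AntisymmetricFunctionDefinition}), and push it through the restriction operators $R_{j_{1}\ldots j_{k}}$. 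The key point is that the orthogonal decomposition \eqref{EQ: DelayCompoundTensorMeasureDecomposition} of $\mu^{\otimes m}$ into the $k$-face pieces $\mu^{k}_{L}(\mathcal{B}^{(m)}_{j_{1}\ldots j_{k}})$ is \emph{permutation-covariant}: a permutation $\sigma\in\mathbb{S}_{m}$ of the coordinates carries the face $\mathcal{B}^{(m)}_{j_{1}\ldots j_{k}}$ (where $\theta_{j}=0$ exactly for $j\notin\{j_{1},\ldots,j_{k}\}$) onto the face $\mathcal{B}^{(m)}_{\sigma^{-1}(j_{1})\ldots\sigma^{-1}(j_{k})}$, preserving the $k$-dimensional Lebesgue measure. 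Hence evaluating \eqref{EQ: AntisymmetricFunctionDefinition} on a given face and reading off the component that survives on the image face yields exactly \eqref{EQ: AntisymmetricRelationsGeneral}; the index bookkeeping — sorting $\{\sigma^{-1}(j_{1}),\ldots,\sigma^{-1}(j_{k})\}$ into increasing order, which forces the auxiliary permutation $\overline{\sigma}\in\mathbb{S}_{k}$ reindexing the surviving arguments — is where one must be careful.

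\textbf{Forward direction.} First I would fix $\Phi\in\mathcal{L}^{\wedge}_{m}$, fix $k$, a monotone multi-index $j_{1}<\ldots<j_{k}$, and $\sigma\in\mathbb{S}_{m}$. Restricting identity \eqref{EQ: AntisymmetricFunctionDefinition} to the face where $\theta_{j}=0$ for $j\notin\{\sigma^{-1}(j_{1}),\ldots,\sigma^{-1}(j_{k})\}$, the left-hand side $\Phi(\theta_{\sigma(1)},\ldots,\theta_{\sigma(m)})$ sees zeros exactly in the slots $j\notin\{j_{1},\ldots,j_{k}\}$, so it equals $(R_{j_{1}\ldots j_{k}}\Phi)$ evaluated at the nonzero arguments; the right-hand side $(-1)^{\sigma}T_{\sigma^{-1}}\Phi$ restricted to that same face is $(-1)^{\sigma}T_{\sigma^{-1}}(R_{\sigma^{-1}(j_{1})\ldots\sigma^{-1}(j_{k})}\Phi)$ up to reordering the $k$ surviving arguments into the order demanded by our convention that $R$-indices are written increasingly. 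Matching the arguments $\theta_{j_{1}},\ldots,\theta_{j_{k}}$ on the left with $\theta_{j_{\overline{\sigma}(1)}},\ldots,\theta_{j_{\overline{\sigma}(k)}}$ on the right — where $\overline{\sigma}$ is precisely the element of $\mathbb{S}_{k}$ that sorts $\sigma^{-1}(j_{1}),\ldots,\sigma^{-1}(j_{k})$ — gives \eqref{EQ: AntisymmetricRelationsGeneral}. For the converse, I would note that the restriction operators $R_{j_{1}\ldots j_{k}}$ taken over all faces jointly reconstruct $\Phi$ (this is the content of \eqref{EQ: TensorSpaceDelayCompoundDecompositionBoundarySubspaces}), so \eqref{EQ: AntisymmetricRelationsGeneral} holding for all faces and all $\sigma$ reassembles into \eqref{EQ: AntisymmetricFunctionDefinition} face by face, hence $\Phi\in\mathcal{L}^{\wedge}_{m}$.

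\textbf{The two corollaries.} For \eqref{EQ: AntisymmetricRestrictionRelationOuterPermutation}, I would specialize the general relation to $\sigma$ with $\sigma(1)=j_{1},\ldots,\sigma(k)=j_{k}$ (tail arbitrary), whence $\sigma^{-1}(j_{l})=l$ so the $R$-multi-index on the right is $1\ldots k$, the sorting permutation $\overline{\sigma}$ is the identity, and the pointwise identity of \eqref{EQ: AntisymmetricRelationsGeneral} upgrades to the operator identity $R_{j_{1}\ldots j_{k}}\Phi = (-1)^{\sigma}T_{\sigma}R_{1\ldots k}\Phi$ after checking that the two sides of \eqref{EQ: AntisymmetricFunctionDefinition} transposition structure match ($T_{\sigma^{-1}}$ versus $T_{\sigma}$: here one uses $\sigma^{-1}$ restricted to $\{1,\ldots,k\}$ acts as the inverse of $\sigma|_{\{1,\ldots,k\}}\to\{j_1,\ldots,j_k\}$ reindexing, which after the bookkeeping collapses to $T_{\sigma}$ with the appropriate identification of spaces). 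Finally \eqref{EQ: AntisymmetricRestrictionRelationValues}: apply \eqref{EQ: AntisymmetricRelationsGeneral} with $k$ fixed, $j_{1}\ldots j_{k}=1\ldots k$, and $\sigma$ ranging over permutations fixing $\{1,\ldots,k\}$ pointwise and permuting $\{k+1,\ldots,m\}$; then $\sigma^{-1}(j_{l})=l$ again, $\overline{\sigma}=\mathrm{id}$, and the relation becomes $(R_{1\ldots k}\Phi)(\theta_{1},\ldots,\theta_{k}) = (-1)^{\sigma}T_{\sigma}(R_{1\ldots k}\Phi)(\theta_{1},\ldots,\theta_{k})$ where $T_{\sigma}$ now acts only on the last $m-k$ tensor factors, i.e. the value is fixed up to sign by the $\mathbb{S}_{m-k}$-action on those factors — which by definition of the exterior power says it lies in $(\mathbb{R}^{n})^{\otimes k}\otimes(\mathbb{R}^{n})^{\wedge(m-k)}$. \textbf{Main obstacle:} the whole proof is essentially one careful bookkeeping argument; the genuinely delicate step is tracking how the transposition operator $T_{\sigma^{-1}}$ on $(\mathbb{R}^{n})^{\otimes m}$ interacts with the reindexing $\overline{\sigma}\in\mathbb{S}_{k}$ of the spatial arguments and the tail of $\sigma$ acting on the zeroed slots, so that the statement comes out with the correct $T_{\sigma}$ (not $T_{\sigma^{-1}}$) and the correct $\overline{\sigma}$ normalization — I would verify this explicitly for $m=2,3$ first to pin down all conventions before writing the general case.
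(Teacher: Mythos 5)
Your proposal is correct and follows essentially the paper's own route: the paper likewise observes that membership in $\mathcal{L}^{\wedge}_{m}$ means $\mu^{\otimes m}$-antisymmetry, applies the restriction operators $R_{j_{1}\ldots j_{k}}$ to the relation $(-1)^{\sigma}T_{\sigma}\Phi(\theta_{\sigma(1)},\ldots,\theta_{\sigma(m)})=\Phi(\theta_{1},\ldots,\theta_{m})$ face by face, uses the measure/space decompositions \eqref{EQ: DelayCompoundTensorMeasureDecomposition} and \eqref{EQ: TensorSpaceDelayCompoundDecompositionBoundarySubspaces} for the equivalence ("iff"), obtains \eqref{EQ: AntisymmetricRestrictionRelationOuterPermutation} by the same specialization of $\sigma$, and gets \eqref{EQ: AntisymmetricRestrictionRelationValues} from permutations fixing $\{1,\ldots,k\}$ (the paper averages over $\widetilde{\sigma}\in\mathbb{S}_{m-k}$ to exhibit the antisymmetrizing projector, which is equivalent to your "fixed up to sign" characterization). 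The one bookkeeping slip is exactly at the spot you flagged: restricting \eqref{EQ: AntisymmetricFunctionDefinition} to the face whose nonzero slots are $\{\sigma^{-1}(j_{1}),\ldots,\sigma^{-1}(j_{k})\}$ makes the left-hand side a restriction over $\{\sigma^{-2}(j_{i})\}$ rather than $\{j_{i}\}$; the clean fix, as in the paper, is to apply $R_{j_{1}\ldots j_{k}}$ directly to the equivalent relation \eqref{EQ: AntisymmetricRelationsPropPreparatory} (or, equivalently, restrict to the face indexed by $\{\sigma(j_{i})\}$ and rename $\sigma\mapsto\sigma^{-1}$, which is harmless since the relations are quantified over all $\sigma\in\mathbb{S}_{m}$).
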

\begin{proof}
	By Theorem \ref{TH: TensorProductDelayDescription}, $\Phi \in \mathcal{L}^{\otimes}_{m}$ belongs to $\mathcal{L}^{\wedge}_{m}$ if and only if it is $\mu^{\otimes m}$-antisymmetric, i.e., for any $\sigma \in \mathbb{S}_{m}$ we have
	\begin{equation}
		\label{EQ: AntisymmetricRelationsPropPreparatory}
		\Theta^{(m)}_{\sigma}\Phi = (-1)^{\sigma}T_{\sigma} \Phi \qquad \text{in} \quad \mathcal{L}^{\otimes}_{m}.
	\end{equation}
    Applying the restriction operator $R_{j_{1}\ldots j_{k}}$ in \eqref{EQ: AntisymmetricRelationsPropPreparatory}, we obtain \eqref{EQ: AntisymmetricRelationsGeneral}. For this, one should note the key identity
    \begin{equation}
    	\label{EQ: RestrictionAndThetaSigmaIdentity}
    	R_{j_{1}\ldots j_{k}}\Theta^{(m)}_{\sigma^{-1}} = \Theta^{(k)}_{\bar{\sigma}}R_{\sigma(j_{1})\ldots\sigma(j_{k})}.
    \end{equation}
    
    Using the decomposition \eqref{EQ: TensorSpaceDelayCompoundDecompositionBoundarySubspaces}, we get that \eqref{EQ: AntisymmetricRelationsGeneral}, taken over all restrictions, is the same as \eqref{EQ: AntisymmetricRelationsPropPreparatory}. This proves the necessity and sufficiency from the statement.
    
    Note that \eqref{EQ: AntisymmetricRestrictionRelationOuterPermutation} is a particular case of \eqref{EQ: AntisymmetricRelationsGeneral} with $j_{l}=l$ for $l \in \{1,\ldots,k\}$.
	
	To show \eqref{EQ: AntisymmetricRestrictionRelationValues}, we use \eqref{EQ: AntisymmetricRestrictionRelationOuterPermutation} with $j_{l}=l$ for $l \in \{1,\ldots,k\}$ and consider $\sigma$ such that
	\begin{equation}
		\sigma = \begin{pmatrix}
			1 & \ldots & k & k+1 & \ldots & m\\
			1 & \ldots & k & \widetilde{\sigma}(1)+k & \ldots & \widetilde{\sigma}(m-k)+k
		\end{pmatrix},
	\end{equation}
    where $\widetilde{\sigma} \in \mathbb{S}_{m-k}$. Note that $(-1)^{\sigma} = (-1)^{\widetilde{\sigma}}$. Summing \eqref{EQ: AntisymmetricRestrictionRelationOuterPermutation} over all such $\widetilde{\sigma}$ and dividing by $(m-k)!$, we obtain
    \begin{equation}
    	R_{1\ldots k}\Phi = \left(\frac{1}{(m-k)!}\sum_{\widetilde{\sigma} \in \mathbb{S}_{m-k}} (-1)^{\widetilde{\sigma}}T_{\sigma}\right)R_{1\ldots k}\Phi
    \end{equation}
    which shows \eqref{EQ: AntisymmetricRestrictionRelationValues}.
\end{proof}

Since $\mathbb{R}^{\wedge k} = 0$ for $k \geq 2$, from Proposition \ref{PROP: AntisymmetricRelationsGeneral} one may derive the following corollary, which is not technically important for what follows, and hence we left it for the reader as an exercise (or see \cite[Proposition 4.2]{AnikushinRomanov2023FreqConds} for related arguments).
\begin{corollary}
	\label{COR: AntisymmetricRelationsScalar}
	For $n=1$, the relations from \eqref{EQ: AntisymmetricRelationsGeneral} are equivalent to the relations
	\begin{alignat}{2}
		\label{EQ: AntisymmRelationsScalar}
			&R_{j_{1}\ldots j_{k}}\Phi = 0 \qquad &&\text{for all} \ k \in \{0,\ldots, m-2\},\notag\\
			&R_{\hat{j}}\Phi \qquad &&\text{is $\mu^{m-1}_{L}$-antisymmetric for any } j \in \{1, \ldots, m\},\\
			&R_{\hat{i}}\Phi = (-1)^{j-i} R_{\hat{j}}\Phi \qquad &&\text{for} \ i,j \in \{1, \ldots, m\},\notag\\
			&R_{1\ldots m}\Phi \qquad &&\text{is $\mu^{m}_{L}$-antisymmetric}.\notag
	\end{alignat}
\end{corollary}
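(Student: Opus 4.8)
The plan is to derive everything from Proposition~\ref{PROP: AntisymmetricRelationsGeneral} by specializing to $n=1$. In that case $(\mathbb{R}^{n})^{\otimes m}$ collapses to $\mathbb{R}$ and every transposition operator $T_{\sigma}$ becomes the identity, so the general relations \eqref{EQ: AntisymmetricRelationsGeneral} reduce to a system of scalar identities relating the restrictions $R_{j_{1}\ldots j_{k}}\Phi$ under permutations of arguments. I would organize the argument according to the size $k$ of the face $\mathcal{B}^{(m)}_{j_{1}\ldots j_{k}}$, treating the three ranges $k\le m-2$, $k=m-1$ and $k=m$ separately, and then verify the converse implication by reversing the same steps.

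For $k\le m-2$ I invoke \eqref{EQ: AntisymmetricRestrictionRelationValues}: the values of $R_{1\ldots k}\Phi$ (and hence, via \eqref{EQ: AntisymmetricRestrictionRelationOuterPermutation}, of any $R_{j_{1}\ldots j_{k}}\Phi$) lie in $(\mathbb{R}^{n})^{\otimes k}\otimes(\mathbb{R}^{n})^{\wedge(m-k)}$, which for $n=1$ is $\mathbb{R}\otimes\mathbb{R}^{\wedge(m-k)}$. Since $m-k\ge 2$ forces $\mathbb{R}^{\wedge(m-k)}=0$, this space is trivial, so $R_{j_{1}\ldots j_{k}}\Phi=0$ — the first line of \eqref{EQ: AntisymmRelationsScalar}. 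For $k=m$, the instance of \eqref{EQ: AntisymmetricRelationsGeneral} with $(j_{1},\ldots,j_{m})=(1,\ldots,m)$ and all $\sigma\in\mathbb{S}_{m}$, after dropping the trivial $T_{\sigma}$, is exactly the statement that $R_{1\ldots m}\Phi$ is $\mu^{m}_{L}$-antisymmetric — the last line of \eqref{EQ: AntisymmRelationsScalar}.

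The heart of the matter is $k=m-1$, where the monotone index set $j_{1}<\ldots<j_{m-1}$ is the complement of a single index, so $R_{j_{1}\ldots j_{m-1}}\Phi=R_{\widehat{i}}\Phi$. Restricting attention to those $\sigma\in\mathbb{S}_{m}$ that fix $i$: then $\sigma^{-1}$ also fixes $i$, the surviving index set is unchanged, the sign $(-1)^{\sigma}$ equals the sign of the permutation of $\{1,\ldots,m\}\setminus\{i\}$ induced by $\sigma$, and \eqref{EQ: AntisymmetricRelationsGeneral} becomes precisely the $\mu^{m-1}_{L}$-antisymmetry of $R_{\widehat{i}}\Phi$ — the second line. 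For the cross-relations between distinct faces I apply \eqref{EQ: AntisymmetricRestrictionRelationOuterPermutation} with the cycle $\sigma=(i\ i+1\ \ldots\ m)$, which has sign $(-1)^{m-i}$ and is exactly the "upper-triangular" permutation carrying $(1,\ldots,m-1)$ onto the ordered complement of $i$; since $T_{\sigma}=\operatorname{Id}$ this gives $R_{\widehat{i}}\Phi=(-1)^{m-i}R_{\widehat{m}}\Phi$, and comparing the cases $i$ and $j$ yields $R_{\widehat{i}}\Phi=(-1)^{j-i}R_{\widehat{j}}\Phi$ — the third line. For the converse I reverse these steps: given the four relations, both sides of \eqref{EQ: AntisymmetricRelationsGeneral} vanish for $k\le m-2$, the case $k=m$ is literally the fourth relation, and for $k=m-1$ an arbitrary $\sigma$ is handled by first using the third relation to pass from the face $R_{\widehat{i}}\Phi$ to the face $R_{\widehat{\sigma^{-1}(i)}}\Phi$ (picking up the sign $(-1)^{\sigma^{-1}(i)-i}$) and then using the second relation to absorb the remaining permutation of arguments within a single face, the signs matching because the parity of $\sigma$ splits accordingly.

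I expect the main obstacle to be purely combinatorial bookkeeping: once the $n=1$ simplification $T_{\sigma}=\operatorname{Id}$ is in place there is no analytic content, but one must check carefully that, for $k=m-1$ and a general $\sigma$, the sign produced by factoring $\sigma$ into "move the missing index to its place" times "permute within the face" matches $(-1)^{\sigma}$ exactly, and that the argument relabelling dictated by $\overline{\sigma}$ is the one supplied by the second and third relations of \eqref{EQ: AntisymmRelationsScalar}. This is precisely why the corollary is stated as an exercise rather than proved in detail.
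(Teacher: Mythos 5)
Your proposal is correct and follows exactly the route the paper indicates (the corollary is left as an exercise, with the hint that it follows from Proposition \ref{PROP: AntisymmetricRelationsGeneral} together with $\mathbb{R}^{\wedge k}=0$ for $k\ge 2$ and the triviality of $T_{\sigma}$ when $n=1$): vanishing on faces with $k\le m-2$ via \eqref{EQ: AntisymmetricRestrictionRelationValues}, antisymmetry within each $(m-1)$-face from permutations fixing the excluded index, the cross-face sign from \eqref{EQ: AntisymmetricRestrictionRelationOuterPermutation} with the cycle of sign $(-1)^{m-i}$, and the case $k=m$ giving $\mu^{m}_{L}$-antisymmetry. The sign bookkeeping you flag in the converse does work out, since for $\sigma$ with $\sigma^{-1}(i)=i'$ one has $(-1)^{\sigma}=(-1)^{i'-i}$ times the parity of the permutation induced between the two complements under their order-preserving identifications with $\{1,\ldots,m-1\}$.
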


Note that the antisymmetric relations \eqref{EQ: AntisymmetricRelationsGeneral} link each $\partial_{j_{1} \ldots j_{k}} \mathcal{L}^{\otimes}_{m}$ with other boundary subspaces over $k$-faces. From this, it is convenient to define for a given $k \in \{0,\ldots, m\}$ the subspace (recall $\Pi^{\wedge}_{m}$ from \eqref{EQ: AnntisymmetricProjectorFunctions})
\begin{equation}
	\label{EQ: AntisymmetricSubspaceOverKfaces}
	\partial_{k}\mathcal{L}^{\wedge}_{m} \coloneq \left\{ \Phi \in \bigoplus\limits_{j_{1}\ldots j_{k}}\partial_{j_{1} \ldots j_{k}}\mathcal{L}^{\otimes}_{m} \ | \ \Phi \text{ satisfies } \eqref{EQ: AntisymmetricRelationsGeneral} \right\} = \Pi^{\wedge}_{m} \bigoplus\limits_{j_{1}\ldots j_{k}}\partial_{j_{1} \ldots j_{k}}\mathcal{L}^{\otimes}_{m}.
\end{equation}
Clearly, $\mathcal{L}^{\wedge}_{m}$ decomposes into the orthogonal sum of all $\partial_{k}\mathcal{L}^{\wedge}_{m}$ as follows:
\begin{equation}
	\label{EQ: DecompositionAntisymmetricSubspaceDelayCompound}
	\mathcal{L}^{\wedge}_{m} = \bigoplus_{k=0}^{m} \partial_{k}\mathcal{L}^{\wedge}_{m}.
\end{equation}

An integer $k \in \{0,\ldots,m\}$ is called \textit{improper} if $\partial_{k}\mathcal{L}^{\wedge}_{m}$ is the zero subspace. Otherwise $k$ is called \textit{proper}. For $n=1$, Corollary \ref{COR: AntisymmetricRelationsScalar} shows that only $k=m-1$ and $k=m$ are proper. For general $n$, \eqref{EQ: AntisymmetricRestrictionRelationValues} immediately yields that any $k < m - n$ is improper, since $(\mathbb{R}^{n})^{\wedge (m-k)} = \{0\}$ in this case. It can be shown that the inverse inequality $k \geq m-n$ implies that $k$ is proper (this is not essential for what follows).

Below, we consider some operators $A$ in $\mathbb{H}$ that arise in the study of delay equations and are therefore called \textit{delay operators}. Their domain $\mathcal{D}(A)$ is given by the embedding\footnote{Recall that the Sobolev space $W^{1,2}(-\tau,0;\mathbb{R}^{n})$ is continuously embedded into $C([-\tau,0];\mathbb{R}^{n})$.} of $W^{1,2}(-\tau,0;\mathbb{R}^{n})$ into $\mathbb{H}$ such that any $\psi \in W^{1,2}(-\tau,0;\mathbb{R}^{n})$ is mapped into $\phi \in \mathbb{H}$ satisfying $R^{(1)}_{0}\phi = \psi(0)$ and $R^{(1)}_{1} \phi = \psi$. For $\phi \in \mathcal{D}(A)$, the action of $A$ is defined as follows:
\begin{equation}
	\label{EQ: OperatorAScalarDelayEquations}
	R^{(1)}_{0}(A\phi) = \widetilde{A}R^{(1)}_{1}\phi \quad \text{and} \quad R^{(1)}_{1}(A\phi) = \frac{d}{d\theta}R^{(1)}_{1}\phi,
\end{equation}
where $\widetilde{A} \colon C([-\tau,0];\mathbb{R}^{n}) \to \mathbb{R}^{n}$ is a bounded linear operator.

It can be shown that $A$ generates a $C_{0}$-semigroup $G$ in $\mathbb{H}$, see \cite{Anikushin2022Semigroups, BatkaiPiazzera2005}. We aim to describe the $m$-fold additive compound $A^{[\otimes m]}$ of $A$ in $\mathbb{H}^{\otimes m}$, discussed in Section \ref{SEC: CocyclesSemigroupsAdditiveCompounds}, in terms of the space $\mathcal{L}^{\otimes}_{m}$.

By the Riesz representation theorem, there exists an $(n\times n)$-matrix function $\alpha(\cdot)$ of bounded variation on $[-\tau,0]$ such that
\begin{equation}
	\label{EQ: DelayOperatorRieszRepresentation}
	\widetilde{A}\phi = \int_{-\tau}^{0}  d\alpha(\theta) \phi(\theta) \qquad \text{for any} \ \phi \in C([-\tau,0];\mathbb{R}^{n}).
\end{equation}
For any integer $j \in \{1,\ldots, m\}$, we set $\mathbb{R}_{1,j} \coloneq (\mathbb{R}^{n})^{\otimes(j-1)}$ and $\mathbb{R}_{2,j} \coloneq (\mathbb{R}^{n})^{\otimes (m-j)}$ and define a linear operator $\alpha_{j}(\theta)$ on $(\mathbb{R}^{n})^{\otimes m}$ by $\alpha_{j}(\theta) \coloneq \operatorname{Id}_{\mathbb{R}_{1,j}} \otimes \alpha(\theta) \otimes \operatorname{Id}_{\mathbb{R}_{2,j}}$. Clearly, $\alpha_{j}$ has bounded variation on $[-\tau,0]$.

Next, for any integers $j \in \{1,\ldots, m\}$, $k \in \{0,\ldots, m-1\}$ and $J \in \{1,\ldots,k+1\}$, we define a linear operator $\widetilde{A}^{(k)}_{j,J}$ taking a function $\Phi$ from $C([-\tau,0]^{k+1};(\mathbb{R}^{n})^{\otimes m})$ to a function from $C([-\tau,0]^{k};(\mathbb{R}^{n})^{\otimes m})$ by
\begin{equation}
	(\widetilde{A}^{(k)}_{j,J} \Phi)(\theta_{1},\ldots,\hat{\theta}_{J},\ldots \theta_{k+1}) \coloneq \int_{-\tau}^{0}d\alpha_{j}(\theta_{J})\Phi(\theta_{1}, \ldots, \theta_{k+1})
\end{equation}
for all $(\theta_{1}, \ldots, \hat{\theta}_{J}, \ldots, \theta_{k+1}) \in [-\tau,0]^{k}$.

Given integers $j_{1},\ldots,j_{k}$ with $k \in \{0,\ldots, m-1\}$ and $j \notin \{j_{1}, \ldots, j_{k}\}$, we define an integer $J(j) = J(j;j_{1},\ldots,j_{k})$ such that $j$ is the $J(j)$th element in the set $\{ j, j_{1},\ldots,j_{k} \}$ arranged by increasing. We usually write $J(j)$ when $j_{1},\ldots,j_{k}$ should be understood from the context.
\begin{theorem}
	\label{TH: AdditiveCompoundDelayDescription}
	Let $A$ be a delay operator as in \eqref{EQ: OperatorAScalarDelayEquations}. Then its $m$-fold additive compound $A^{[\otimes m]}$ acts on $\Phi \in \mathcal{D}(A)^{\odot m}$ as follows:
	\begin{equation}
		\label{EQ: OperatorAmFormula}
				R_{j_{1}\ldots j_{k}}\left( A^{[\otimes m]} \Phi  \right) = 
				\sum_{l=1}^{k}\frac{\partial}{\partial \theta_{j_{l}}}R_{j_{1}\ldots j_{k}}\Phi + \sum_{j \notin \{ j_{1},\ldots,j_{k} \}} \widetilde{A}^{(k)}_{j,J(j)}R_{jj_{1}\ldots j_{k}}\Phi
	\end{equation}
    for all $k \in \{0, \ldots, m\}$ and $1 \leq j_{1} < \cdots < j_{k} \leq m$. Here $R_{j_{1}\ldots j_{k}}\Phi$ is considered as a function of $(\theta_{j_{1}},\ldots,\theta_{j_{k}}) \in (-\tau,0)^{k}$.
\end{theorem}
\begin{proof}
	Due to linearity, it suffices to check \eqref{EQ: OperatorAmFormula} on decomposable tensors $\Phi = \phi_{1} \otimes \cdots \otimes \phi_{m}$ with $\phi_{j} \in \mathcal{D}(A)$ for $j \in \{1, \ldots, m\}$. Here \eqref{EQ: AdditiveCompoundDecTensorFormula} is read as
	\begin{equation}
		A^{[\otimes m]}\Phi = \sum_{j=1}^{m} \phi_{1} \otimes \cdots \otimes A\phi_{j} \otimes \cdots \otimes \phi_{m}.
	\end{equation}
	 
	Using \eqref{EQ: OperatorAScalarDelayEquations} and \eqref{EQ: RestrictionOperatorDelayTensor}, it is straightforward to verify that
	\begin{equation}
		\begin{split}
		\left(R_{j_{1}\ldots j_{k}}(\phi_{1} \otimes \cdots \otimes A\phi_{j} \otimes \cdots \otimes \phi_{m})\right)(\theta_{j_{1}},\ldots,\theta_{j_{k}}) =\\= \begin{cases}
			\frac{d}{d\theta_{j}}(R_{j_{1}\ldots j_{k}}\Phi)(\theta_{j_{1}},\ldots,\theta_{j_{k}}) \qquad &\text{if} \quad j \in \{ j_{1}, \ldots, j_{k}\},\\
			\left(\widetilde{A}^{(k)}_{j,J(j)}R_{jj_{1}\ldots j_{k}}\Phi\right)(\theta_{j_{1}},\ldots,\theta_{j_{k}}) \qquad &\text{if} \quad j \notin \{j_{1},\ldots, j_{k}\}
		\end{cases}
	    \end{split}
	\end{equation}
    for almost all $(\theta_{j_{1}},\ldots,\theta_{j_{k}}) \in (-\tau,0)^{k}$. Since $R_{j_{1}\ldots j_{k}}$ is linear, this gives \eqref{EQ: OperatorAmFormula}.
\end{proof}

We will now characterize the domain $\mathcal{D}(A^{[\otimes m]})$ of $A^{[\otimes m]}$ and discuss in what sense the action \eqref{EQ: OperatorAmFormula} can be understood for general $\Phi \in \mathcal{D}(A^{[\otimes m]})$, see Remark \ref{REM: ComputationOfDelayCompoundForGeneralFunc}. For this, we recall the diagonal Sobolev space $\mathcal{W}^{2}_{D}(\Omega;\mathbb{F})$ from \eqref{EQ: W2DiagonalDefinition}, which will be used with $\mathbb{F} = (\mathbb{R}^{n})^{\otimes m}$ and $\Omega = (-\tau,0)^{k}$ for $k \in \{1,\ldots, m\}$. Based on its characterization given in Proposition \ref{PROP: DiagonalSobolevCubeDescription}, we use the equivalent norm \eqref{EQ: NormInDiagonalSobolevOnCube}, denoted by $\|\cdot\|_{\mathcal{W}^{2}_{D}(\Omega;\mathbb{F})}$. Recall also that on $\mathcal{W}^{2}_{D}((-\tau,0)^{k};(\mathbb{R}^{n})^{\otimes m})$ there is a well-defined trace operator $\operatorname{Tr}_{\mathcal{B}^{(k)}_{\hat{l}}}$ given by Theorem \ref{TH: TraceOperatorForDiagonalTranslatesFinite} for each $l \in \{1,\ldots, k\}$.

In the following theorem, we show that restrictions of any $\Phi \in \mathcal{D}(A^{[\otimes m]})$ belong to appropriate diagonal Sobolev spaces, and their traces agree with the restrictions of lower orders, see \eqref{EQ: DomainCompoundDelayBoundaryAndTraceIdentity}.
\begin{theorem}
	\label{TH: CompoundDelayDomainDescription}
	For any $\Phi \in \mathcal{D}(A^{[\otimes m]})$, $k \in \{1, \ldots, m\}$, and $1 \leq j_{1} < \cdots < j_{k} \leq m$, the restriction $R_{j_{1}\ldots j_{k}}\Phi$ belongs to $\mathcal{W}^{2}_{D}((-\tau,0)^{k};(\mathbb{R}^{n})^{\otimes m})$ and satisfies the identity\footnote{Recall that $\bar{\theta}_{\hat{l}}$ is the vector obtained from $\bar{\theta}$ by omitting the $l$th component.}
	\begin{equation}
		\label{EQ: DomainCompoundDelayBoundaryAndTraceIdentity}
		\left(\operatorname{Tr}_{\mathcal{B}^{(k)}_{ \hat{l}}} R_{j_{1} \ldots j_{k}} \Phi\right)(\bar{\theta}) = (R_{j_{1} \ldots \hat{j_{l}} \ldots j_{k} } \Phi)(\bar{\theta}_{\hat{l}})
	\end{equation}
    for $\mu^{k-1}_{L}$-almost all $\bar{\theta} = (\theta_{1},\ldots, \theta_{k}) \in \mathcal{B}^{(k)}_{\hat{l}}$ and each $l \in \{1,\ldots,k\}$.
    
    Furthermore, the norm $\|\cdot\|_{\mathcal{W}^{2}_{D}}$ on $\mathcal{D}(A^{[\otimes m]})$ given by
    \begin{equation}
    	\label{EQ: EquivalentDiagonalSobolevNormInAddCompDelay}
    	\| \Phi \|^{2}_{ \mathcal{W}^{2}_{D}} \coloneq \sum_{k=1}^{m}\sum_{j_{1} \ldots j_{k}}\| R_{j_{1}\ldots j_{k}}\Phi \|^{2}_{\mathcal{W}^{2}_{D}((-\tau,0)^{k};(\mathbb{R}^{n})^{\otimes m} )}
    \end{equation}
    is equivalent to the graph norm.
\end{theorem}
\begin{proof}
	By Theorem \ref{TH: CompoundDescriptionBasicAction}, $\mathcal{D}(A)^{\odot m}$ is dense in $\mathcal{D}(A^{[\otimes m]})$ in the graph norm. Proposition \ref{PROP: DiagonalSobolevCubeDescription} gives that any $\Phi \in \mathcal{D}(A)^{\odot m}$ satisfies $R_{j_{1}\ldots j_{k}} \Phi \in \mathcal{W}^{2}_{D}((-\tau,0)^{k};(\mathbb{R}^{n})^{\otimes m})$ for any $j_{1}\ldots j_{k}$ as in the statement.
	
	Note that the operator $\widetilde{A}^{(k)}_{j,J(j)}$ from \eqref{EQ: OperatorAmFormula} is the operator $C^{\gamma}_{J}$ from Theorem \ref{TH: OperatorCExntesionOntoWDiagonal} with $\gamma\coloneq \alpha_{j}$ defined below \eqref{EQ: DelayOperatorRieszRepresentation}, $\mathbb{F} = \mathbb{M}_{\gamma} \coloneq (\mathbb{R}^{n})^{\otimes m}$, and $J=J(j)$. Using this and Proposition \ref{PROP: EmbeddingWDiagToEmDelay}, we can rewrite \eqref{EQ: OperatorAmFormula} as
	\begin{equation}
		\label{EQ: DelayCompoundDomainDescriptionDerivativeIdentity}
		\sum_{l=1}^{k}\frac{\partial}{\partial \theta_{j_{l}}}R_{j_{1}\ldots j_{k}}\Phi = R_{j_{1}\ldots j_{k}}\left( A^{[\otimes m]} \Phi  \right) - \sum_{j \notin \{ j_{1},\ldots,j_{k} \}} \widetilde{A}^{(k)}_{j,J(j)}R_{jj_{1}\ldots j_{k}}\Phi
	\end{equation}
	and estimate the diagonal derivative of $R_{j_{1}\ldots j_{k}}\Phi$. This gives for some constant $C(k)>0$, which depends on $k$, $\tau$ and the total variation $\operatorname{Var}_{[-\tau,0]}(\alpha)$ of $\alpha$ on $[-\tau,0]$, the estimate
	\begin{equation}
		\label{EQ: NormsCompoundDelayDomainEquivalencePreliminary}
		\begin{split}
		\| R_{j_{1}\ldots j_{k}} \Phi \|_{\mathcal{W}^{2}_{D}((-\tau,0)^{k};(\mathbb{R}^{n})^{\otimes m})} \leq \\ \leq C(k) \cdot \left( \| \Phi \|_{A^{[\otimes m]}} + \sum_{j \notin \{j_{1},\ldots, j_{k}\} } \| R_{j j_{1}\ldots j_{k}} \Phi \|_{\mathcal{W}^{2}_{D}((-\tau,0)^{k+1};(\mathbb{R}^{n})^{\otimes m})}\right),
		\end{split}
	\end{equation}
    where $\| \cdot \|_{A^{[\otimes m]}}$ is the graph norm.
    
    Clearly, $C(m)$ in \eqref{EQ: NormsCompoundDelayDomainEquivalencePreliminary} can be taken equal to $1$. Thus, we have 
    \begin{equation}
    	\| R_{1\ldots m}\Phi  \|_{\mathcal{W}^{2}_{D}((-\tau,0)^{m};(\mathbb{R}^{n})^{\otimes m})} \leq \| \Phi \|_{A^{[\otimes m]}}.
    \end{equation}
    From this and \eqref{EQ: NormsCompoundDelayDomainEquivalencePreliminary}, acting by induction from $k=m$ to $k=1$, we obtain that the graph norm on $\mathcal{D}(A)^{\odot m}$ is stronger than $\| \cdot \|_{\mathcal{W}_{D}}$. Similarly, we can apply Theorem \ref{TH: OperatorCExntesionOntoWDiagonal} and Proposition \ref{PROP: EmbeddingWDiagToEmDelay} to estimate the $L_{2}$-norm of $R_{j_{1}\ldots j_{k}}\left( A^{[\otimes m]} \Phi  \right)$ from \eqref{EQ: DelayCompoundDomainDescriptionDerivativeIdentity}, thereby establishing that $\| \cdot \|_{\mathcal{W}^{2}_{D}}$ is stronger than the graph norm. Thus, both norms are equivalent on $\mathcal{D}(A)^{\odot m}$ and, consequently, $\mathcal{D}(A^{[\otimes m]})$ is determined by the completion by either of them.
	
	Clearly, \eqref{EQ: DomainCompoundDelayBoundaryAndTraceIdentity} is satisfied for $\Phi \in \mathcal{D}(A)^{\odot m}$. Since trace operators on diagonal Sobolev spaces are bounded, \eqref{EQ: DomainCompoundDelayBoundaryAndTraceIdentity} is satisfied for all $\Phi \in \mathcal{D}(A^{[\otimes m]})$, due to the continuity and equivalence of norms.
\end{proof}

\begin{remark}
	In fact, $\mathcal{D}(A^{[\otimes m]})$ is characterized by the property described in Theorem \ref{TH: CompoundDelayDomainDescription}. Namely, if $\Phi \in \mathcal{L}^{\otimes}_{m}$ satisfies $R_{j_{1}\ldots j_{k}} \Phi \in \mathcal{W}^{2}_{D}((-\tau,0)^{k};(\mathbb{R}^{n})^{\otimes m})$ and \eqref{EQ: DomainCompoundDelayBoundaryAndTraceIdentity} for any $j_{1}\ldots j_{k}$ as in the statement, then it belongs to $\mathcal{D}(A^{[\otimes m]})$. This result is not essential for the present and even our adjacent works, and therefore we will limit ourselves to only indicating the idea of the proof. Namely, since $\mathcal{D}(A^{[\otimes m]})$ does not depend\footnote{Because $\mathcal{D}(A)^{\odot m}$ and the equivalent norm \eqref{EQ: EquivalentDiagonalSobolevNormInAddCompDelay} do not depend on $\widetilde{A}$.} on $\widetilde{A}$, it is sufficient to consider the case $\widetilde{A} = 0$. Here, we can explicitly construct the corresponding classical solutions by using Remark \ref{REM: CauchyFormulaCompundInverseStatement}, where the converse to the structural Cauchy formula is discussed.
\end{remark}

\begin{remark}
	\label{REM: ComputationOfDelayCompoundForGeneralFunc}
	Thus, any $\Phi \in \mathcal{D}(A^{[\otimes m]})$ has restrictions with $L_{2}$-summable diagonal derivatives, and on such restrictions the action of $\widetilde{A}^{(k)}_{j,J(j)}$ can be extended according to Theorem \ref{TH: OperatorCExntesionOntoWDiagonal}. In this sense, \eqref{EQ: OperatorAmFormula} can be understood for general $\Phi \in \mathcal{D}(A^{[\otimes m]})$.
\end{remark}

We now describe a property of the resolvent of $A^{[\otimes m]}$ that is crucial for studying spectral perturbations. For this, recall the spaces $\mathbb{E}^{2}_{k}(\mathbb{F})$ from Appendix \ref{SEC: DiagonalTranslationSemigroups} for $k \in \{ 1, \ldots, m \}$ and $\mathbb{F} = (\mathbb{R}^{n})^{\otimes m}$, see near \eqref{EQ: NormInSpaceEmDelay}. Below, we also set $\mathbb{E}^{2}_{0}((\mathbb{R}^{n})^{\otimes m}) \coloneq (\mathbb{R}^{n})^{\otimes m}$. We define the Banach space $\mathbb{E}^{\otimes}_{m}$ through the outer direct sum as follows:
\begin{equation}
	\label{EQ: SpaceEmDefinitionDelayCompoundGeneral}
	\mathbb{E}^{\otimes}_{m} \coloneq \bigoplus_{k=0}^{m} \bigoplus_{j_{1}\ldots j_{k}} \mathbb{E}^{2}_{k}((\mathbb{R}^{n})^{\otimes m}),
\end{equation}
and endow it with any of standard norms. By naturally sending each element from the $j_{1}\ldots j_{k}$th summand in \eqref{EQ: SpaceEmDefinitionDelayCompoundGeneral} to $\partial_{j_{1}\ldots j_{k}}\mathcal{L}^{\otimes}_{m}$, we embed the space $\mathbb{E}^{\otimes}_{m}$ into $\mathcal{L}^{\otimes}_{m}$. From Proposition \ref{PROP: EmbeddingWDiagToEmDelay} and Theorem \ref{TH: CompoundDelayDomainDescription} it follows that\footnote{Instead of $\mathbb{E}^{\otimes}_{m}$, we may also consider its subspace, where an analog of  \eqref{EQ: DomainCompoundDelayBoundaryAndTraceIdentity} is satisfied, namely, all restrictions agree with appropriate functions of $j$th sections, see \eqref{EQ: SpaceEmDelayPhiBDefinition}. By Theorem \ref{TH: CompoundDelayDomainDescription}, we are in fact working in this subspace when considering the resolvents below. It is also clear that the embedding of $\mathcal{D}(A^{[\otimes m]})$ into this subspace is dense and continuous.} 
\begin{equation}
	\label{EQ: MyLovelyScaleDelayCompoundGeneral}
	\mathcal{D}(A^{[\otimes m]}) \subset \mathbb{E}^{\otimes}_{m} \subset \mathcal{L}^{\otimes}_{m},
\end{equation}
where all the embeddings are continuous and dense in $\mathcal{L}^{\otimes}_{m}$.

In the following theorem, there is a slight abuse of notation, since we are dealing with resolvents defined on complexifications of the spaces. For convenience, we omit mention of the complexifications, but it should be understood that all the spaces involved are complex, i.e., consist of $(\mathbb{C}^{n})^{\otimes m}$-valued functions.
\begin{theorem}
	\label{TH: ResolventDelayCompoundBound}
	Let $p \in \mathbb{C}$ be a regular point of $A^{[\otimes m]}$, i.e., $p \notin \operatorname{spec}(A^{[\otimes m]})$. Then
	\begin{equation}
		\label{EQ: ResolventEstimateEmDelayCompound}
		\| (A^{[\otimes m]} - p I)^{-1} \|_{\mathcal{L}(\mathcal{L}^{\otimes}_{m};\mathbb{E}^{\otimes}_{m}) } \leq C_{1}(p) \cdot \| (A^{[\otimes m]} - p I)^{-1} \|_{\mathcal{L}(\mathcal{L}^{\otimes}_{m}) } + C_{2}(p),
	\end{equation}
	where the constants $C_{1}(p)$ and $C_{2}(p)$ depend on $\max\{1, e^{-\tau \operatorname{Re}p}\}$, $\tau$, $m$, and $\operatorname{Var}_{[-\tau,0]}(\alpha)$ in a monotonically increasing manner. Furthermore, a similar statement is valid for regular points of $A^{[\wedge m]}$.
\end{theorem}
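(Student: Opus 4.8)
The plan is to fix a regular point $p$ of $A^{[\otimes m]}$, take an arbitrary $\Psi\in\mathcal{L}^{\otimes}_{m}$, set $\Phi:=(A^{[\otimes m]}-pI)^{-1}\Psi\in\mathcal{D}(A^{[\otimes m]})$ and abbreviate $\varrho:=\|(A^{[\otimes m]}-pI)^{-1}\|_{\mathcal{L}(\mathcal{L}^{\otimes}_{m})}$, so that $\|\Phi\|_{\mathcal{L}^{\otimes}_{m}}\le\varrho\|\Psi\|_{\mathcal{L}^{\otimes}_{m}}$. Since the $\mathbb{E}^{\otimes}_{m}$-norm is, by \eqref{EQ: SpaceEmDefinitionDelayCompoundGeneral}, built from the norms $\|R_{j_{1}\ldots j_{k}}\Phi\|_{\mathbb{E}^{2}_{k}((\mathbb{R}^{n})^{\otimes m})}$, it suffices to estimate each of these separately by $(C_{1}(p)\varrho+C_{2}(p))\|\Psi\|_{\mathcal{L}^{\otimes}_{m}}$. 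Using Theorem \ref{TH: AdditiveCompoundDelayDescription} in the extended sense of Remark \ref{REM: ComputationOfDelayCompoundForGeneralFunc} together with the resolvent identity $A^{[\otimes m]}\Phi=p\Phi+\Psi$, I would first record that on each $k$-face
\begin{equation*}
\sum_{l=1}^{k}\frac{\partial}{\partial\theta_{l}}R_{j_{1}\ldots j_{k}}\Phi-p\,R_{j_{1}\ldots j_{k}}\Phi=R_{j_{1}\ldots j_{k}}\Psi-\sum_{j\notin\{j_{1},\ldots,j_{k}\}}\widetilde{A}^{(k)}_{j,J(j)}R_{jj_{1}\ldots j_{k}}\Phi=:G^{(k)}_{j_{1}\ldots j_{k}},
\end{equation*}
and, by Theorem \ref{TH: CompoundDelayDomainDescription}, that $R_{j_{1}\ldots j_{k}}\Phi\in\mathcal{W}^{2}_{D}((-\tau,0)^{k};(\mathbb{R}^{n})^{\otimes m})$ with trace on the face $\{\theta_{l}=0\}$ equal to $R_{j_{1}\ldots\widehat{j_{l}}\ldots j_{k}}\Phi$, whose $L_{2}$-norm is at most $\|\Phi\|_{\mathcal{L}^{\otimes}_{m}}\le\varrho\|\Psi\|_{\mathcal{L}^{\otimes}_{m}}$.

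The heart of the argument is to read each of these relations as a diagonal transport equation $(D-p)u=G$ on the cube $(-\tau,0)^{k}$, with $D=\sum_{l}\partial/\partial\theta_{l}$ and with data prescribed — through the trace identity \eqref{EQ: DomainCompoundDelayBoundaryAndTraceIdentity} — on the outgoing faces $\{\theta_{l}=0\}$. Integrating along a characteristic $s\mapsto\theta+s\underline{1}$, which reaches $\bigcup_{l}\{\theta_{l}=0\}$ at the forward time $s^{*}=-\max_{i}\theta_{i}\in[0,\tau]$, yields the variation-of-constants representation of $u$; the only $p$-dependence there is through the factor $e^{-ps^{*}}$ with $s^{*}\in[0,\tau]$, and inside the Duhamel term through $e^{-ps}$ with $s\in[0,s^{*}]$, both of modulus at most $\max\{1,e^{-\tau\operatorname{Re}p}\}$. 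This is exactly the solvability-and-regularity statement for diagonal translation semigroups proved in Appendix \ref{SEC: DiagonalTranslationSemigroups}: it gives $u\in\mathbb{E}^{2}_{k}((\mathbb{R}^{n})^{\otimes m})$ (cf. the equivalent norm \eqref{EQ: NormInSpaceEmDelay} and Proposition \ref{PROP: EmbeddingWDiagToEmDelay}) together with an estimate of the form
\begin{equation*}
\|u\|_{\mathbb{E}^{2}_{k}((\mathbb{R}^{n})^{\otimes m})}\le C(k,\tau)\,\max\{1,e^{-\tau\operatorname{Re}p}\}\Bigl(\|G\|_{L_{2}((-\tau,0)^{k};(\mathbb{R}^{n})^{\otimes m})}+\sum_{l=1}^{k}\bigl\|u|_{\{\theta_{l}=0\}}\bigr\|_{L_{2}}\Bigr),
\end{equation*}
with no factor of $|p|$. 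I would combine this with the fact, already used in the proof of Theorem \ref{TH: CompoundDelayDomainDescription}, that $\widetilde{A}^{(k)}_{j,J(j)}$ is the operator $C^{\gamma}_{J}$ of Theorem \ref{TH: OperatorCExntesionOntoWDiagonal} with $\gamma=\alpha_{j}$, hence bounded from $\mathcal{W}^{2}_{D}((-\tau,0)^{k+1};(\mathbb{R}^{n})^{\otimes m})$ (equivalently, from $\mathbb{E}^{2}_{k+1}$) into $L_{2}((-\tau,0)^{k};(\mathbb{R}^{n})^{\otimes m})$ with norm controlled by $\operatorname{Var}_{[-\tau,0]}(\alpha)$.

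With these one-step bounds, I would run a downward induction on the face dimension $k=m,m-1,\ldots,1$ (the $k=0$ component needs nothing, since $\|R_{0}\Phi\|_{\mathbb{E}^{2}_{0}}=|R_{0}\Phi|\le\|\Phi\|_{\mathcal{L}^{\otimes}_{m}}\le\varrho\|\Psi\|_{\mathcal{L}^{\otimes}_{m}}$). For $k=m$ the coupling sum in $G^{(m)}_{1\ldots m}$ is empty, so $\|G^{(m)}_{1\ldots m}\|_{L_{2}}\le\|\Psi\|_{\mathcal{L}^{\otimes}_{m}}$, and the transport estimate (with trace data bounded by $\varrho\|\Psi\|_{\mathcal{L}^{\otimes}_{m}}$ via Theorem \ref{TH: CompoundDelayDomainDescription}) gives $\|R_{1\ldots m}\Phi\|_{\mathbb{E}^{2}_{m}}\le(C_{1}^{(m)}(p)\varrho+C_{2}^{(m)}(p))\|\Psi\|_{\mathcal{L}^{\otimes}_{m}}$ with constants monotonically increasing in $\max\{1,e^{-\tau\operatorname{Re}p}\}$, $\tau$, $m$ and $\operatorname{Var}_{[-\tau,0]}(\alpha)$. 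In the step from $k+1$ to $k$, the $(k+1)$-face restrictions have already been bounded, so $\|G^{(k)}_{j_{1}\ldots j_{k}}\|_{L_{2}}\le\|\Psi\|_{\mathcal{L}^{\otimes}_{m}}+\operatorname{Var}_{[-\tau,0]}(\alpha)\cdot m\cdot(C_{1}^{(k+1)}(p)\varrho+C_{2}^{(k+1)}(p))\|\Psi\|_{\mathcal{L}^{\otimes}_{m}}$, and a further application of the transport estimate propagates the bound to level $k$ with updated constants of the same type (each step contributing at most one extra factor of $\max\{1,e^{-\tau\operatorname{Re}p}\}$). Summing the squared bounds over all faces yields \eqref{EQ: ResolventEstimateEmDelayCompound}. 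The antisymmetric case is identical: if $p$ is regular for $A^{[\wedge m]}$ and $\Psi\in\mathcal{L}^{\wedge}_{m}$, then $\Phi=(A^{[\wedge m]}-pI)^{-1}\Psi\in\mathcal{D}(A^{[\otimes m]})\cap\mathcal{L}^{\wedge}_{m}$ still satisfies $A^{[\otimes m]}\Phi=p\Phi+\Psi$ and the same face relations, so the same induction applies with $\varrho$ the $\mathcal{L}^{\wedge}_{m}$-resolvent norm and the intermediate space the antisymmetric subspace of $\mathbb{E}^{\otimes}_{m}$.

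The hard part will be the one-step transport estimate with a $|p|$-free constant: one must check that the diagonal translation semigroup machinery of Appendix \ref{SEC: DiagonalTranslationSemigroups} really places the solution in $\mathbb{E}^{2}_{k}$ (not merely in $\mathcal{W}^{2}_{D}$) with a constant depending on $p$ only through $\max\{1,e^{-\tau\operatorname{Re}p}\}$, which requires handling the geometry of characteristics in the cube (a.e. line reaches exactly one outgoing face in forward time $\le\tau$, with the prescribed data consistent across adjacent faces) and the measure-theoretic subtleties of traces on the skeleton of faces — exactly the points for which Theorems \ref{TH: TraceOperatorForDiagonalTranslatesFinite} and \ref{TH: OperatorCExntesionOntoWDiagonal} and the diagonal Sobolev theory of the appendix are designed. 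Once that is in place, the compounding of constants through the $m$-fold induction is routine bookkeeping, and the remaining ingredients — the resolvent identity, the reduction of \eqref{EQ: OperatorAmFormula} to decomposable tensors, and the boundedness of the coupling operators $\widetilde{A}^{(k)}_{j,J(j)}$ — are already available from the preceding sections.
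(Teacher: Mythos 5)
Your proposal is correct and follows essentially the same route as the paper's proof: restrict the resolvent equation to each $k$-face, integrate along diagonal characteristics via the Cauchy (variation-of-constants) formula so the $p$-dependence enters only through $\max\{1,e^{-\tau\operatorname{Re}p}\}$, control the coupling terms $\widetilde{A}^{(k)}_{j,J(j)}$ through the $\mathbb{E}^{2}_{k+1}$-norm and $\operatorname{Var}_{[-\tau,0]}(\alpha)$ (Theorem \ref{TH: OperatorCExntesionOntoWDiagonal}), bound the exit-face data by the $\mathcal{L}^{\otimes}_{m}$-resolvent norm, and run the downward induction from $k=m$ to $k=1$, with the antisymmetric case obtained by simply taking $\Psi\in\mathcal{L}^{\wedge}_{m}$. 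The "hard part" you flag is exactly the paper's estimates \eqref{EQ: ResolventEstimateDelayCompoundPreparatory1} and \eqref{EQ: ResolventEstimateDelayCompoundPreparatory1ForK}, obtained by the Cauchy formula plus H\"older and Fubini, so no genuinely different ingredient is involved.
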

\begin{proof}
	Suppose $(A^{[\otimes m]}-pI) \Phi = \Psi$ for some $\Psi \in \mathcal{L}^{\otimes}_{m}$ and $\Phi \in \mathcal{D}(A^{[\otimes m]})$. From Theorem \ref{TH: CompoundDelayDomainDescription} we get that $R_{j_{1}\ldots j_{k}}\Phi \in \mathcal{W}^{2}_{D}((-\tau,0)^{k};(\mathbb{C}^{n})^{\otimes m})$ for any $k \in \{1,\ldots, m\}$ and $1 \leq j_{1} < \cdots j_{k} \leq m$. We have to estimate the norm of $R_{j_{1}\ldots j_{k}}\Phi$ in $\mathbb{E}^{2}_{k}((\mathbb{C}^{n})^{\otimes m})$. This will be done by induction from $k=m$ to $k=1$.
	
	For $k=m$, we consider $\bar{\theta} \in \mathcal{B}^{(m)}_{\hat{j}}$ for some $j \in \{1,\ldots,m \}$ and define $D_{\bar{\theta}} \coloneq (-\tau,0)^{m} \cap (\mathcal{L}_{0} + \bar{\theta})$, where $\mathcal{L}_{0} = \{ \underline{t} \in \mathbb{R}^{m} \ | \ t \in \mathbb{R} \}$ is the diagonal line in $\mathbb{R}^{m}$. Assuming $\bar{\theta} = (\theta_{1},\ldots,\theta_{m})$, we set $\tau(\bar{\theta}) \coloneq \tau+\min_{1 \leq l \leq m}\theta_{l}$. Then the closure of $D_{\bar{\theta}}$ consist of points $\bar{\theta} + \zeta \cdot \underline{1}$, where $\underline{1} = (1,\ldots,1) \in \mathbb{R}^{m}$ and $\zeta \in [-\tau(\bar{\theta}),0]$.
	
	Then for $\mu^{m-1}_{L}$-almost all $\bar{\theta} \in \mathcal{B}^{(m)}_{\hat{j}}$ we have that $\restr{R_{1\ldots m}\Phi}{D_{\bar{\theta}}}$ is well defined and can be considered as an element of $W^{1,2}(-\tau(\bar{\theta}),0;(\mathbb{C}^{n})^{\otimes m})$. In these terms, \eqref{EQ: OperatorAmFormula} gives
	\begin{equation}
		\label{EQ: ResolventDelayCompoundEstimateRestrictedEquationStart}
		\begin{split}
			\frac{d}{d\zeta} \restr{R_{1\ldots m}\Phi}{D_{\bar{\theta}}} - p \restr{R_{1\ldots m}\Phi}{D_{\bar{\theta}}} = \restr{R_{1\ldots m}\Psi}{D_{\bar{\theta}}}.
		\end{split}
	\end{equation}
    Applying the Cauchy formula, for any $\zeta \in (-\tau(\bar{\theta}),0)$ we obtain
    \begin{equation}
    	\restr{R_{1\ldots m}\Phi}{D_{\bar{\theta}}}(\zeta) = e^{p\zeta}\restr{R_{1\ldots m}\Phi}{D_{\bar{\theta}}}(0) - \int_{\zeta}^{0}e^{p(\zeta-s)}\restr{R_{1\ldots m}\Psi}{D_{\bar{\theta}}}(s)ds.
    \end{equation}
	This and the H\"{o}lder inequality yield the estimate
	\begin{equation}
		\begin{split}
			\left| \restr{R_{1\ldots m}\Phi}{D_{\bar{\theta}}}(\zeta) \right| \leq \\ \leq C_{0}(p) \cdot \left( \left| \Phi(\bar{\theta}) \right| + \left\| \restr{R_{1\ldots m}\Psi}{D_{\bar{\theta}}} \right\|_{L_{2}(D_{\bar{\theta}};(\mathbb{C}^{n})^{\otimes m})} \right),
		\end{split}
	\end{equation}
    where $C_{0}(p) = \max\{1, \sqrt{\tau}\} \cdot \max\{1, e^{-\tau\operatorname{Re}p}\}$ and $|\cdot|$ denotes the norm in $(\mathbb{C}^{n})^{\otimes m}$.

	By combining the above estimates for any $j \in \{1, \ldots, m\}$, we get for any $l \in \{1,\ldots,m\}$ and all $\theta \in [-\tau,0]$ in appropriate $L_{2}$-norms the estimate\footnote{Here $e_{l}$ is the $l$th vector from the standard basis in $\mathbb{R}^{m}$.}
	\begin{equation} 
		\label{EQ: ResolventEstimateDelayCompoundPreparatory1}
		\left\| \operatorname{Tr}_{\mathcal{B}^{(m)}_{\hat{l}}+\theta e_{l}}\Phi \right\|_{L_{2}} \leq \widetilde{C}_{0}(p) \cdot\left(\sum_{j=1}^{m}\|R_{\hat{j}}\Phi\|_{L_{2}} + \| R_{1\ldots m}\Psi \|_{L_{2}}\right),
	\end{equation}
    where $\widetilde{C}_{0}(p)$ equals $C_{0}(p)$ multiplied by an absolute constant.
    
	From the Cauchy inequality and since $p$ is a regular point, we have
	\begin{equation}
		\label{EQ: ResolventEstimateDelayCompoundPreparatory2}
		\sum_{j=1}^{m}\|R_{\hat{j}}\Phi\|_{L_{2}} \leq \sqrt{m} \cdot \|\Phi\|_{\mathcal{L}^{\otimes}_{m}} \leq \sqrt{m} \cdot \| (A^{[\otimes m]}-pI)^{-1} \|_{\mathcal{L}(\mathcal{L}^{\otimes}_{m})} \cdot \|\Psi\|_{\mathcal{L}^{\otimes}_{m}}.
	\end{equation}
	Now, combining \eqref{EQ: ResolventEstimateDelayCompoundPreparatory1} and \eqref{EQ: ResolventEstimateDelayCompoundPreparatory2}, we get
	\begin{equation}
		\label{EQ: ResolventEstimateDelayCompoundBaseEstimateEm}
		\begin{split}
			\|R_{1\ldots m}\Phi\|_{\mathbb{E}^{2}_{m}((\mathbb{C}^{n})^{\otimes m})} = \sup_{l \in \{1,\ldots,m\}}\sup_{\theta \in [-\tau,0]}\left\|\operatorname{Tr}_{\mathcal{B}^{(m)}_{\hat{l}}+\theta e_{l}}\Phi \right\|_{L_{2}} \leq \\ \leq (\sqrt{m} \cdot \widetilde{C}_{0}(p) \cdot \| (A^{[\otimes m]}-pI)^{-1} \|_{\mathcal{L}(\mathcal{L}^{\otimes}_{m})} + 1) \cdot \|\Psi\|_{\mathcal{L}^{\otimes}_{m}}.
		\end{split}
	\end{equation}
    This is the required estimate for $k=m$.

    Now consider $k \in \{ 0, \ldots, m-1\}$ and $1 \leq j_{1} < \cdots < j_{k} \leq m$, assuming that the statement has already been proved for larger $k$. Given $j \in \{1,\ldots, k\}$, for any $\bar{\theta} \in \mathcal{B}^{(k)}_{\hat{j}}$ we similarly define $D_{\bar{\theta}} \coloneq (-\tau,0)^{k} \cap (\mathcal{L}_{0} + \bar{\theta})$, where $\mathcal{L}_{0}=\{ \underline{t} \in \mathbb{R}^{k} \ | \ t \in \mathbb{R} \}$ is the diagonal line in $\mathbb{R}^{k}$. Here an analog of \eqref{EQ: ResolventDelayCompoundEstimateRestrictedEquationStart}, which is also derived from \eqref{EQ: OperatorAmFormula}, is given by
    \begin{equation}
    	\label{EQ: ResolventDelayCompoundEstimateRestrictedEquationGeneral}
    	\begin{split}
    		  \frac{d}{d\zeta} \restr{R_{j_{1}\ldots j_{k}}\Phi}{D_{\bar{\theta}}} - p \restr{R_{j_{1}\ldots j_{k}}\Phi}{D_{\bar{\theta}}} =\\= - \sum_{j \notin \{ j_{1},\ldots,j_{k} \}} \restr{(\widetilde{A}^{(k)}_{j,J(j)}R_{jj_{1}\ldots j_{k}}\Phi)}{D_{\bar{\theta}}} + \restr{R_{j_{1}\ldots j_{k}}\Psi}{D_{\bar{\theta}}}.
    	\end{split}
    \end{equation}
    By applying the Cauchy formula, one obtains an analog of \eqref{EQ: ResolventEstimateDelayCompoundPreparatory1} in appropriate $L_{2}$-spaces for each $l \in \{1,\ldots, k\}$ as follows:
    \begin{equation}
    	\label{EQ: ResolventEstimateDelayCompoundPreparatory1ForK}
    	\begin{split}
    		  \left\| \operatorname{Tr}_{\mathcal{B}^{(k)}_{\hat{l}}+\theta e_{l}}\Phi \right\|_{L_{2}} \leq \widetilde{C}_{0}(p) \cdot\Biggl(\| R_{j_{1} \ldots j_{k}}\Psi \|_{L_{2}} + \sum_{l=1}^{k}\|R_{j_{1}\ldots \hat{j_{l}} \ldots j_{k}}\Phi\|_{L_{2}} + \\ + \sum_{j \notin \{j_{1}\ldots j_{k}\}}\| \widetilde{A}^{(k)}_{j,J(j)}R_{jj_{1}\ldots j_{k}}\Phi \|_{L_{2}} \Biggr),
    	\end{split}
    \end{equation}
    where $e_{l}$ is the $l$th basis vector in the standard basis of $\mathbb{R}^{k}$, and $\widetilde{C}_{0}(p)$ can be taken the same.
    
    Note that we already have an upper estimate for the $L_{2}$-norm of the new (last) term in \eqref{EQ: ResolventEstimateDelayCompoundPreparatory1ForK}, since Theorem \ref{TH: OperatorCExntesionOntoWDiagonal} provides an estimate\footnote{See above \eqref{EQ: DelayCompoundDomainDescriptionDerivativeIdentity} for details.} for the $j$th summand in terms of the norm of $R_{jj_{1}\ldots j_{k}}\Phi$ in  $\mathbb{E}^{2}_{k+1}((\mathbb{C}^{n})^{\otimes m})$, which can be further estimated from the previous step. Furthermore, similarly to \eqref{EQ: ResolventEstimateDelayCompoundBaseEstimateEm}, the resulting estimates are always of the form
    \begin{equation}
    	\begin{split}
    		 \|R_{j_{1} \ldots j_{k}}\Phi\|_{\mathbb{E}_{k}((\mathbb{C}^{n})^{\otimes m})} \leq \\ \leq C^{(k)}_{1}(p) \cdot \| (A^{[\otimes m]}-pI)^{-1} \|_{\mathcal{L}(\mathcal{L}^{\otimes}_{m})} \cdot \|\Psi\|_{\mathcal{L}^{\otimes}_{m}} + C^{(k)}_{2}(p) \cdot \|\Psi\|_{\mathcal{L}^{\otimes}_{m}},
    	\end{split}
    \end{equation}
    where the constants $C^{(k)}_{1}(p)$ and $C^{(k)}_{2}(p)$ are formed from the previous ones by addition and multiplication of $\widetilde{C}_{0}(p)$, $\sqrt{m}$, $\sqrt{\tau}$, $\operatorname{Var}_{[-\tau,0]}(\alpha)$, and some absolute constants. This shows the monotone behavior of $C_{1}(p)$ and $C_{2}(p)$ from the statement. 
    
    It should also be noted that we only used the existence of the resolvent of $A^{[\otimes m]}$, and hence the same estimates hold for $A^{[\wedge m]}$ and its regular points $p$ if we take $\Psi \in \mathcal{L}^{\wedge}_{m}$.
\end{proof}

\begin{remark}
	In contrast to the case $m=1$, the resolvent of $A^{[\wedge m]}$ (and, hence, $A^{[\otimes m]}$) ceases to be compact for $m>1$. In other words, the embedding of $\mathcal{D}(A^{[\wedge m]})$ into $\mathcal{L}^{\wedge}_{m}$ is not compact. Let us illustrate\footnote{In particular, the below arguments show that the embedding of the diagonal Sobolev space over $(-\tau,0)^{m}$ into $L_{2}$ is not compact for $m > 1$, which is more obvious.} this in the case $m=2$ and $n=1$. For any positive integer $k$, consider $\Phi_{k}(\theta_{1},\theta_{2}) \coloneq \sin( \frac{2\pi k}{\tau} (\theta_{1}-\theta_{2}) )$. Note that $\Phi_{k}$ can be viewed as an element $\Psi_{k}$ of $\mathcal{D}(A^{[\wedge 2]})$ with $R_{12}\Psi_{k} = \Phi_{k}$, $R_{1}\Psi_{k} = -R_{2}\Psi_{k} = \Phi_{k}(\cdot,0)$, and $R_{0}\Psi_{k} = 0$. Clearly, we have
	\begin{equation}
		\label{EQ: VanishingSumOfDerivativesCompoudDelay}
		\left(\frac{\partial}{\partial \theta_{1}} + \frac{\partial}{\partial \theta_{2}}\right) \Phi_{k}(\theta_{1},\theta_{2}) \equiv 0 \qquad \text{for} \ (\theta_{1},\theta_{2}) \in (-\tau,0)^{2}.
	\end{equation}
	Moreover, $R_{12} \Psi_{k}$ and $R_{12}\Psi_{l}$ are orthogonal in $L_{2}$ for $k \not= l$. However, the boundary values of $\Phi_{k}$ make the family of $\Psi_{k}$ unbounded in the graph norm. To overcome this, we use a proper truncation of $\Phi_{k}$. Given $\varepsilon>0$, let $c=c(\theta_{1},\theta_{2})$ be a scalar $C^{1}$-function of $(\theta_{1},\theta_{2}) \in [-\tau,0]^{2}$ such that\footnote{Such a function can be defined on segments parallel to the diagonal line by properly scaling the truncation function on $[0,1]$, which equals to $1$ everywhere except a small neighborhood of $1$, where it decays to zero.}
	\begin{enumerate}
		\item[1).] $c(\theta_{1},\theta_{2}) = c(\theta_{2},\theta_{1})$;
		\item[2).] The diagonal derivative $(\frac{\partial}{\partial \theta_{1}} + \frac{\partial}{\partial \theta_{2}})c(\theta_{1},\theta_{2})$ is bounded;
		\item[3).] $c(\theta_{1},0)=c(0,\theta_{2}) = 0$;
		\item[4).] $0 \leq c(\theta_{1},\theta_{2}) \leq 1$ everywhere and $c(\theta_{1},\theta_{2}) \not= 1$ on a set of measure $\leq \varepsilon$. 
	\end{enumerate}
	Then we set $\Phi_{\varepsilon, k} \coloneq c \cdot \Phi_{k}$. From \eqref{EQ: VanishingSumOfDerivativesCompoudDelay} we obtain that
	\begin{equation}
		\label{EQ: VanishingSumOfDerivativesTruncated}
		\left(\frac{\partial}{\partial \theta_{1}} + \frac{\partial}{\partial \theta_{2}}\right) \Phi_{\varepsilon,k}(\theta_{1},\theta_{2}) = \Phi_{k}(\theta_{1},\theta_{2})\left( \frac{\partial}{\partial \theta_{1}} + \frac{\partial}{\partial \theta_{2}} \right)c(\theta_{1},\theta_{2}).
	\end{equation}
	Item 3) ensures that the boundary values of $\Phi_{\varepsilon,k}$ are zero, and hence from \eqref{EQ: VanishingSumOfDerivativesTruncated} and items 1), 2), and 4) we get that the family of all $\Psi_{\varepsilon,k}$ such that $R_{12}\Psi_{\varepsilon,k} = \Phi_{\varepsilon,k}$, $R_{1}\Psi_{\varepsilon,k}=-R_{2}\Psi_{\varepsilon,k} = \Phi_{\varepsilon,k}(\cdot,0)$, and $R_{0}\Psi_{\varepsilon,k} = 0$ belongs to $\mathcal{D}(A^{[\wedge 2]})$ and bounded in the graph norm for a fixed $\varepsilon$. Furthermore, according to the definition of $\Phi_{k}$ and item 4), there exists $\delta>0$ such that the inequality
	\begin{equation}
		\| \Phi_{\varepsilon, k} - \Phi_{\varepsilon, l} \|_{L_{2}((-\tau,0)^{2};\mathbb{R})} \geq \delta \text{ for any } k \not= l.
	\end{equation}
	is satisfied for all sufficiently small $\varepsilon>0$. In particular, one cannot extract a convergent in $L_{2}$ subsequence from $\Phi_{\varepsilon, k}$. This shows that the resolvent of $A^{[\wedge 2]}$ is not compact.
\end{remark}
\begin{remark}
	\label{REM: DelayCompoundBoundActResolvent}
	In the applications discussed in Section \ref{SEC: NonautonomousPerturbationsAdditiveCompounds}, we are interested in transfer operators associated with $A^{[\wedge m]}$, and these are related to the \textit{boundary action} of the resolvent, i.e., the correspondence $\Psi \mapsto \Phi$, where $\Phi = (A^{[\wedge m]} - p I)^{-1}\Psi$ and $\Psi \in \mathcal{L}^{\wedge}_{m}$ satisfies $R_{1\ldots m}\Psi = 0$. From \eqref{EQ: OperatorAmFormula} it is seen that in this case $R_{1\ldots m}\Phi$ is determined from the restrictions of lower order. In particular, for $m=2$, the entire $\Phi$ is determined from $R_{1}\Phi = -R_{2}\Phi$ and $R_{0}\Phi$. Since $R_{1}\Phi$ belongs to the usual Sobolev space $W^{1,2}(-\tau,0;(\mathbb{C}^{n})^{\otimes m})$, which compactly embeds into $C([-\tau,0];(\mathbb{C}^{n})^{\otimes m})$, the boundary action $\Psi \mapsto \Phi \in \mathbb{E}^{\otimes}_{2}$ is a compact operator. Furthermore, at least in the single delay case, i.e., when $\widetilde{A}$ in \eqref{EQ: OperatorAScalarDelayEquations} is given by $\widetilde{A}\phi = A_{0}\phi(0) + A_{-\tau}\phi(-\tau)$ for some $(n\times n)$-matrices $A_{0}$ and $A_{-\tau}$, we know (see \cite[Section 4.5]{AnikushinRomanov2023FreqConds} for $n=1$) that the boundary action is an integral operator with an $L_{2}$-summable kernel and expect this to hold in a wider generality. Thus, the case $m=2$ is special for computations.
\end{remark}

We finalize this section by describing the spectra of $A^{[\otimes m]}$ and $A^{[\wedge m]}$. It is well known that the semigroup $G$ generated by $A$ is eventually compact, see \cite{Anikushin2022Semigroups}. Consequently, Theorems \ref{TH: SpectrumAdditiveCompounds} and \ref{TH: AddCompoundMultiplicities} are applicable and yield the following.
\begin{proposition}
	\label{PROP: OperatorDelayCompoundSpectra}
	For any delay operator $A$ given by \eqref{EQ: OperatorAScalarDelayEquations}, all conclusions of Theorems \ref{TH: SpectrumAdditiveCompounds} and \ref{TH: AddCompoundMultiplicities} are valid.
\end{proposition}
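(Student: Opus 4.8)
The plan is to observe that Theorems \ref{TH: SpectrumAdditiveCompounds} and \ref{TH: AddCompoundMultiplicities} are stated under a single hypothesis on the ``base'' semigroup, namely that $G$ is eventually compact (Theorem \ref{TH: AddCompoundMultiplicities} being placed ``in the context of Theorem \ref{TH: SpectrumAdditiveCompounds}''). Hence the entire assertion reduces to verifying that the $C_{0}$-semigroup $G$ generated by the operator $A$ from \eqref{EQ: OperatorAScalarDelayEquations} is eventually compact; once this is done, both theorems apply verbatim and all their conclusions hold for $A^{[\otimes m]}$ and $A^{[\wedge m]}$.

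First I would recall why $G$ is eventually compact, which is the only point requiring an argument. For the Hilbert-space realization of \eqref{EQ: OperatorAScalarDelayEquations} this is established in \cite{Anikushin2022Semigroups}; the mechanism is the familiar one for delay equations. Writing $G(t)\phi$ in terms of the solution $x(\cdot)=x(\cdot;\phi)$ of $\dot{x}(t)=\widetilde{A}x_{t}$, one has that for $t\ge\tau$ the history segment $x_{t}$ is the restriction to $[-\tau,0]$ of a continuous function which is moreover absolutely continuous with $L_{2}$-derivative, uniformly in $\phi$ on bounded sets; concretely, for $t\ge\tau$ the time-$t$ map factors as a bounded linear map $\mathbb{H}\to W^{1,2}(-\tau,0;\mathbb{R}^{n})$ followed by the canonical embedding into $\mathbb{H}$ (both the $L_{2}$-component $R^{(1)}_{1}$ and the point component $R^{(1)}_{0}$ being recovered from this $W^{1,2}$-function). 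Since the embedding $W^{1,2}(-\tau,0;\mathbb{R}^{n})\hookrightarrow\mathbb{H}$ is compact by the Rellich--Kondrachov theorem, $G(t)$ is compact for every $t\ge\tau$. (As $G$ is a semigroup, i.e.\ a cocycle over a one-point base, the ``uniform'' eventual compactness of Section \ref{SEC: CocyclesSemigroupsAdditiveCompounds} is automatic.) If one prefers to stay self-contained, this is exactly Proposition \ref{PROP: EventuallyCompactCocycles} applied trivially together with the cited well-posedness and compactness statements from \cite{Anikushin2022Semigroups}.

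With eventual compactness of $G$ in hand, Proposition \ref{PROP: EventuallyCompactCocycles} gives that $G^{\otimes m}$ (and hence $G^{\wedge m}$) is eventually compact, which is precisely the ingredient used in the proof of Theorem \ref{TH: SpectrumAdditiveCompounds}; therefore the spectral description \eqref{EQ: SpectralSetAdditiveCompound}, the decompositions \eqref{EQ: AdditiveCompoundIsolatedPointDesc}, \eqref{EQ: SpectralSubspaceIsolatedAdditiveCompound} and the antisymmetric relations \eqref{EQ: SpectralSubspaceAntisymmetricCompound} hold for $A^{[\otimes m]}$ and $A^{[\wedge m]}$. Likewise, being set in the context of Theorem \ref{TH: SpectrumAdditiveCompounds}, Theorem \ref{TH: AddCompoundMultiplicities} applies and yields the stated multiplicity formulae. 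I do not expect any serious obstacle here: the only place demanding care is confirming that the Hilbert-space delay semigroup from \cite{Anikushin2022Semigroups} genuinely has the $W^{1,2}$-regularizing property including at the $\delta_{0}$-coordinate $R^{(1)}_{0}$, which is exactly what the construction in that paper provides.
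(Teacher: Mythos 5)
Your proposal is correct and follows essentially the same route as the paper: the paper simply notes that the semigroup $G$ generated by $A$ is eventually compact (citing \cite{Anikushin2022Semigroups}) and then applies Theorems \ref{TH: SpectrumAdditiveCompounds} and \ref{TH: AddCompoundMultiplicities}. Your added sketch of the $W^{1,2}$-smoothing mechanism for $t\ge\tau$ and the compact embedding is just an expanded justification of the same citation, so there is nothing to flag.
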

\section{Structural Cauchy formula for linear inhomogeneous problems}
\label{SEC: LinearInhomogeneousDelayCompounds}

Let $A$ be the delay operator given by \eqref{EQ: OperatorAScalarDelayEquations}. Recall that it acts in the Hilbert space $\mathbb{H}$ from \eqref{EQ: HilbertSpaceDelayEqDefinition}. In this section, we consider the $m$-fold additive compound $A^{[\otimes m]}$ of $A$ as an operator in the space $\mathcal{L}^{\otimes}_{m}$ from \eqref{EQ: L2SpaceTensorCompoundDefinition}, as described in Theorems \ref{TH: AdditiveCompoundDelayDescription} and \ref{TH: CompoundDelayDomainDescription}. 

We are going to study structural properties of solutions to the linear inhomogeneous evolutionary system in $\mathcal{L}^{\otimes}_{m}$:
\begin{equation}
	\label{EQ: ControlSystemMCompoundDelay}
	\dot{\Phi}(t) = (A^{[\otimes m]} + \nu I)\Phi(t) + \eta(t),
\end{equation}
where $I$ denotes the identity operator in $\mathcal{L}^{\otimes}_{m}$, $\nu \in \mathbb{R}$ is fixed, and $\eta(\cdot) \in L_{2}(0,T;\mathcal{L}^{\otimes}_{m})$ for some $T>0$. 

Recall the $C_{0}$-semigroup $G^{\otimes m}$ generated by $A^{[\otimes m]}$ and its time-$t$ mappings $G^{\otimes m}(t)$, where $t \geq 0$. Then for any $\Phi_{0} \in \mathcal{L}^{\otimes}_{m}$ there exists a unique mild solution $\Phi(t)=\Phi(t;\Phi_{0},\eta)$ to the Cauchy problem $\Phi(0)=\Phi_{0}$ for \eqref{EQ: ControlSystemMCompoundDelay} on $[0,T]$. It is delivered by the Cauchy formula
\begin{equation}
	\label{EQ: CauchyInhomogeneousUsualDelaCompoundFormula}
	\Phi(t) = e^{\nu t}G^{\otimes m}(t) \Phi_{0} + \int_{0}^{t}e^{\nu(t-s)}G^{\otimes m}(t-s)\eta(s)ds.
\end{equation}
For brevity, we will say that the pair $(\Phi(\cdot),\eta(\cdot))$ \textit{solves} \eqref{EQ: ControlSystemMCompoundDelay} on $[0,T]$.

\begin{remark}
	Clearly, for any pair $(\Phi(t),\eta(t))=(\Phi_{\nu}(t),\eta_{\nu}(t))$ which solves \eqref{EQ: ControlSystemMCompoundDelay} on $[0,T]$, the pair $(e^{-\nu t}\Phi_{\nu}(t),e^{-\nu t}\eta_{\nu}(t))$ solves \eqref{EQ: ControlSystemMCompoundDelay} with $\nu = 0$ on $[0,T]$. 
\end{remark}

Recall here the space $\mathcal{Y}^{2}_{\rho}(0,T;\mathbb{F})$ of $\rho$-adorned $\mathbb{F}$-valued functions on $[0,T]$ and the space $\mathcal{T}^{2}_{\rho}(0,T;\mathbb{F})$ of $\rho$-twisted $\mathbb{F}$-valued functions on $[0,T]$, defined above \eqref{EQ: NormInWindowsSpaces} and \eqref{EQ: NormOnTwistedFunctionsDef}, respectively. Below, we consider these spaces for $\rho(t) = \rho_{\nu}(t) \coloneq e^{\nu t}$ and $\mathbb{F}$ which is $L_{2}((-\tau,0)^{k};(\mathbb{R}^{n})^{\otimes m})$ for some $k \in \{1,\ldots, m\}$.

Now we are ready to formulate the main result of this section, which is the cornerstone of the entire paper. This is the decomposition \eqref{EQ: ProperFunctionSolutionAsWeightedFunction} of solutions to the linear inhomogeneous problem \eqref{EQ: ControlSystemMCompoundDelay}, which we call a \textit{structural Cauchy formula}. Here, the interior and boundary parts of the solution are decomposed into the sum of $\rho_{\nu}$-adorned and $\rho_{\nu}$-twisted functions. Note that this decomposition is unique according to Proposition \ref{PROP: UniquenessOfSumAdornedTwisted}. Moreover, the decomposition differs from \eqref{EQ: CauchyInhomogeneousUsualDelaCompoundFormula} that can be seen from the fact that $\Phi_{X_{j_{1}\ldots j_{k}},\rho_{\nu}}$ in \eqref{EQ: ProperFunctionSolutionAsWeightedFunction} depends on the entire solution $\Phi$ (and hence $\eta$) in general, see \eqref{EQ: TheoremWeightedCompoundDelaySolutionsFunctionXForK} for an explicit construction. 

However, each formula \eqref{EQ: ProperFunctionSolutionAsWeightedFunction}, when properly read, is the usual Cauchy formula for a linear inhomogeneous problem associated with the generator $A_{T_{k}}$ of the diagonal translation semigroup $T_{k}$ in $L_{2}((-\tau,0)^{k};(\mathbb{R}^{n})^{\otimes m})$ given by Theorem \ref{TH: DiagonalTranslatesSquareDelay}, see \eqref{EQ: StructuralCauchyFormulaLastLIProblemForK}.

\begin{theorem}[Structural Cauchy formula]
	\label{TH: StructuralCauchyFormulaCompoundDelay}
	Suppose that $\nu \in \mathbb{R}$, $T>0$, $\Phi_{0} \in \mathcal{L}^{\otimes}_{m}$, and $\eta_{\nu}(\cdot) \in L_{2}(0,T;\mathcal{L}^{\otimes}_{m})$ are given. Let $\Phi_{\nu}(\cdot)$ be the mild solution on $[0,T]$ to \eqref{EQ: ControlSystemMCompoundDelay} with $\eta = \eta_{\nu}$ such that $\Phi_{\nu}(0)=\Phi_{0}$. Then for any $k \in \{1,\ldots,m\}$ and $1 \leq j_{1} < \cdots < j_{k} \leq m$, there exist functions\footnote{Here $\mathcal{C}^{k}_{T}$ is given by \eqref{EQ: TheSetDiagonalDomainDefinition}.} $X_{j_{1}\ldots j_{k}} \in L_{2}(\mathcal{C}^{k}_{T};(\mathbb{R}^{n})^{\otimes m})$ and $Y_{j_{1}\ldots j_{k}} \in L_{2}(0,T;L_{2}((-\tau,0)^{k};(\mathbb{R}^{n})^{\otimes m}))$ such that $R_{j_{1}\ldots j_{k}}\Phi_{\nu}$ is the sum of the $\rho_{\nu}$-adornment of $X_{j_{1}\ldots j_{k}}$ and the $\rho_{\nu}$-twisting of $Y_{j_{1}\ldots j_{k}}$ for $\rho_{\nu}(t) \coloneq e^{\nu t}$, i.e., in terms of \eqref{EQ: WindowFunctionDefinition} and \eqref{EQ: TwistedFunctionDefinition}, we have
	\begin{equation}
		\label{EQ: ProperFunctionSolutionAsWeightedFunction}
		R_{j_{1} \ldots j_{k}}\Phi_{\nu}(t) = \Phi_{X_{j_{1}\ldots j_{k}},\rho_{\nu}}(t) + \Psi_{Y_{j_{1}\ldots j_{k}}, \rho_{\nu}}(t) \qquad \text{for all} \ t \in [0,T].
	\end{equation}
    In particular, $R_{j_{1} \ldots j_{k}}\Phi_{\nu}$ belongs to the space $\mathcal{A}^{2}_{\rho_{\nu}}(0,T;L_{2}((-\tau,0)^{k}; (\mathbb{R}^{n})^{\otimes m} )$ of $\rho_{\nu}$-agalmanated functions, see \eqref{EQ: AgalmanatedSpaceDefinition}. Furthermore, 
    \begin{equation}
    	\label{EQ: StructuralCauchyFormulaDelayCompoundYformula}
    	\begin{split}
    		\rho_{\nu}(t)Y_{j_{1}\ldots j_{k}}(t) =\\= R_{j_{1} \ldots j_{k}}\eta_{\nu}(t) + \sum_{j \notin \{j_{1}, \ldots, j_{k}\}} \widetilde{A}^{(k)}_{j,J(j)} R_{j j_{1}\ldots j_{k}} \Phi_{\nu}(t) \qquad \text{for almost all} \ t \in [0,T],
    	\end{split}
    \end{equation}
    where $\widetilde{A}^{(k)}_{j,J(j)}$ as in \eqref{EQ: OperatorAmFormula}, and its action is understood according to Theorem \ref{TH: PointwiseMeasurementOperatorOnAgalmanatedSpace}. 
    
    In addition, for $\Phi_{0} \in \mathcal{D}(A^{[\otimes m]})$ and $\eta_{\nu}(\cdot) \in C^{1}([0,T];\mathcal{L}^{\otimes}_{m})$, we have that\footnote{Here $\mathring{\mathcal{C}}^{k}_{T}$ denotes the interior of $\mathcal{C}^{k}_{T}$.}
    \begin{equation}
    	\label{EQ: SolutionsDelayCompoundSmoothingSpaces}
    	\begin{split}
    		X_{j_{1}\ldots j_{k}} &\in \mathcal{W}^{2}_{D}(\mathring{\mathcal{C}}^{k}_{T};(\mathbb{R}^{n})^{\otimes m}), \\ \Phi_{X_{j_{1},\ldots,j_{k}},\rho_{\nu}}(\cdot) &\in C^{1}([0,T];L_{2}) \cap C([0,T];\mathcal{W}^{2}_{D}), \\
    		\Psi_{Y_{j_{1}\ldots j_{k}},\rho_{\nu}}(\cdot) &\in C^{1}([0,T];L_{2}) \cap C([0,T];\mathcal{W}^{2}_{D_{0}}) ,
    	\end{split}
    \end{equation}
    where $L_{2}$ stands for $L_{2}((-\tau,0)^{k};(\mathbb{R}^{n})^{\otimes m})$, $\mathcal{W}^{2}_{D}$ stands for $\mathcal{W}^{2}_{D}((-\tau,0)^{k};(\mathbb{R}^{n})^{\otimes m})$, and $\mathcal{W}^{2}_{D_{0}}$ stands for $\mathcal{W}^{2}_{D_{0}}((-\tau,0)^{k};(\mathbb{R}^{n})^{\otimes m})$ given by \eqref{EQ: DomainNilponentDiagonalTranslates}.
\end{theorem}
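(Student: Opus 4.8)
The plan is to argue by descending induction on $k$, from $k=m$ down to $k=1$, after reducing to the case $\nu=0$: by the Remark following \eqref{EQ: CauchyInhomogeneousUsualDelaCompoundFormula} the pair $(e^{-\nu t}\Phi_\nu(t),e^{-\nu t}\eta_\nu(t))$ solves \eqref{EQ: ControlSystemMCompoundDelay} with $\nu=0$, and since $\rho_\nu$-adornments and $\rho_\nu$-twistings are $e^{\nu t}$ times the weight-free ones, the decomposition for general $\nu$ follows from the case $\nu=0$ by multiplying through by $e^{\nu t}$. The mechanism driving the induction is formula \eqref{EQ: OperatorAmFormula}: the action of $A^{[\otimes m]}$ on the $k$-face restriction $R_{j_1\ldots j_k}\Phi$ equals the diagonal derivative $\sum_{l=1}^{k}\partial_{\theta_l}R_{j_1\ldots j_k}\Phi$ plus a sum of pointwise-in-time measurement terms $\widetilde A^{(k)}_{j,J(j)}R_{jj_1\ldots j_k}\Phi$ involving only restrictions over $(k+1)$-faces. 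Hence, along the evolution \eqref{EQ: ControlSystemMCompoundDelay}, each $R_{j_1\ldots j_k}\Phi_\nu(\cdot)$ solves an inhomogeneous problem governed by the diagonal translation semigroup $T_k$ on $L_2((-\tau,0)^k;(\mathbb{R}^n)^{\otimes m})$ (Theorem \ref{TH: DiagonalTranslatesSquareDelay}), with a source built out of $\eta_\nu$ and of the $(k+1)$-face restrictions that the induction hypothesis will already have controlled.

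For the base case $k=m$ the last sum in \eqref{EQ: OperatorAmFormula} is empty, so $R_{1\ldots m}\Phi_\nu$ solves the inhomogeneous diagonal translation problem with source $R_{1\ldots m}\eta_\nu\in L_2(0,T;L_2((-\tau,0)^m;(\mathbb{R}^n)^{\otimes m}))$. I would first verify this for $\Phi_0\in\mathcal D(A)^{\odot m}$ and smooth $\eta_\nu$, where $\Phi_\nu$ is a classical solution and the restrictions and their traces behave nicely by Theorem \ref{TH: CompoundDelayDomainDescription}, and then pass to general data by density of $\mathcal D(A)^{\odot m}$, continuity of the restriction operators, and continuous dependence of mild solutions on the data. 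The structural Cauchy formula for $T_m$ from Appendix \ref{SEC: DiagonalTranslationSemigroups} (in the form indicated around \eqref{EQ: StructuralCauchyFormulaLastLIProblemForK}) then yields the decomposition \eqref{EQ: ProperFunctionSolutionAsWeightedFunction} of $R_{1\ldots m}\Phi_\nu$ into the $\rho_\nu$-adornment of an explicit $X_{1\ldots m}\in L_2(\mathcal C^m_T;(\mathbb{R}^n)^{\otimes m})$ (given by \eqref{EQ: TheoremWeightedCompoundDelaySolutionsFunctionXForK}) and the $\rho_\nu$-twisting of $Y_{1\ldots m}$ with $\rho_\nu Y_{1\ldots m}=R_{1\ldots m}\eta_\nu$, i.e. \eqref{EQ: StructuralCauchyFormulaDelayCompoundYformula} in its degenerate case.

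For the inductive step at level $k<m$, assume the claim at level $k+1$, so every $R_{jj_1\ldots j_k}\Phi_\nu$ is $\rho_\nu$-agalmanated. Then by Theorem \ref{TH: PointwiseMeasurementOperatorOnAgalmanatedSpace} the pointwise measurement operator $\widetilde A^{(k)}_{j,J(j)}$ applies to it and yields an element of $L_2(0,T;L_2((-\tau,0)^k;(\mathbb{R}^n)^{\otimes m}))$, so the source $g(t):=R_{j_1\ldots j_k}\eta_\nu(t)+\sum_{j\notin\{j_1,\ldots,j_k\}}\widetilde A^{(k)}_{j,J(j)}R_{jj_1\ldots j_k}\Phi_\nu(t)$ lies in that $L_2$ space; using \eqref{EQ: OperatorAmFormula} (again first for smooth data, then by approximation together with the continuity of the measurement operators on the agalmanated spaces) one sees that $R_{j_1\ldots j_k}\Phi_\nu$ solves the corresponding inhomogeneous diagonal translation problem with source $g$. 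Applying the structural Cauchy formula for $T_k$ once more gives \eqref{EQ: ProperFunctionSolutionAsWeightedFunction} with $\rho_\nu Y_{j_1\ldots j_k}=g$, which is \eqref{EQ: StructuralCauchyFormulaDelayCompoundYformula}; uniqueness of the adorned-plus-twisted decomposition is Proposition \ref{PROP: UniquenessOfSumAdornedTwisted}, and membership of $R_{j_1\ldots j_k}\Phi_\nu$ in $\mathcal A^2_{\rho_\nu}(0,T;L_2((-\tau,0)^k;(\mathbb{R}^n)^{\otimes m}))$ is then immediate. For the last assertion \eqref{EQ: SolutionsDelayCompoundSmoothingSpaces}, with $\Phi_0\in\mathcal D(A^{[\otimes m]})$ and $\eta_\nu\in C^1$ the mild solution is classical with $\Phi_\nu(t)\in\mathcal D(A^{[\otimes m]})$, so the restrictions lie in $\mathcal W^2_D$ by Theorem \ref{TH: CompoundDelayDomainDescription}, the source $g$ is now $C^1$ in time (the measurement operators preserving this regularity, again by Theorem \ref{TH: PointwiseMeasurementOperatorOnAgalmanatedSpace}), and the smoothing part of the diagonal translation structural Cauchy formula gives the claimed regularity of $X_{j_1\ldots j_k}$, of its adornment, and of the twisting of $Y_{j_1\ldots j_k}$ (the latter landing in $\mathcal W^2_{D_0}$ since a twisting of an $L_2$-in-time source vanishes together with its diagonal derivative on the relevant faces).

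I expect the main difficulty to be the rigorous identification, at each level $k$, of $R_{j_1\ldots j_k}\Phi_\nu$ as a solution of the inhomogeneous diagonal translation problem with the precise source $g$ — in particular the well-posedness of the term $\widetilde A^{(k)}_{j,J(j)}R_{jj_1\ldots j_k}\Phi_\nu(\cdot)$ as an $L_2$-in-time function. This is exactly what forces the descending induction (the measurement operator at level $k$ is meaningful only because the level-$(k+1)$ restriction has already been shown to be $\rho_\nu$-agalmanated) and where the whole Appendix apparatus — diagonal translation semigroups and diagonal Sobolev spaces, embracing and agalmanated spaces, and the well-posedness and continuity of pointwise measurement operators — is indispensable. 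The delicate point is that $\widetilde A$ is unbounded and only of bounded variation, so $\widetilde A^{(k)}_{j,J(j)}$ is an unbounded pointwise operator for which, for $m>1$, no classical convolution-theorem substitute is available; controlling it requires applying it not to arbitrary $L_2$-in-time functions but precisely to the agalmanated functions furnished by the induction.
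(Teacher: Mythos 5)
Your overall route is the paper's: reduction to $\nu=0$, treatment of regular data first, identification of the source in \eqref{EQ: StructuralCauchyFormulaDelayCompoundYformula} via pointwise measurement operators on agalmanated functions, the explicit $X_{j_1\ldots j_k}$ of \eqref{EQ: TheoremWeightedCompoundDelaySolutionsFunctionXForK}, and a density passage for general data. Your only organizational deviation, running a descending induction on $k$ already in the existence argument, is harmless but unnecessary for regular data: there $\Phi_\nu\in C([0,T];\mathcal{D}(A^{[\otimes m]}))$, so Theorem \ref{TH: CompoundDelayDomainDescription} and Proposition \ref{PROP: EmbeddingWDiagToEmDelay} make every measurement term continuous in time at all levels $k$ simultaneously; the induction is genuinely needed only for the uniform bounds \eqref{EQ: StructuralCauchyFormulaNormEstimatesAdornedTwisted} of Theorem \ref{TH: DelayCompoundStructuralCauchyFormulaNormEstimate}, which are also what your density argument silently relies on.

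The genuine soft spot is that you invoke a ``structural Cauchy formula for $T_k$ from Appendix \ref{SEC: DiagonalTranslationSemigroups}'' as an off-the-shelf tool. No such result exists there: the Appendix supplies only the semigroup $T_k$ (Theorem \ref{TH: DiagonalTranslatesSquareDelay}), the group $\mathcal{T}_k$ on $\mathbb{R}^k$ (Theorem \ref{TH: DiagonalTranslationInRm}), the extension operator (Lemma \ref{LEM: ExtensionOperatorFromSobolevDiagonalToRm}) and the trace theory (Theorem \ref{TH: TraceOperatorForDiagonalTranslatesFinite}); the decomposition itself is exactly what Theorem \ref{TH: StructuralCauchyFormulaCompoundDelay} asserts and has to be established. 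Note also that $R_{j_1\ldots j_k}\Phi_\nu$ is \emph{not} a mild solution of $\dot u=A_{T_k}u+g$ in the standard sense: by \eqref{EQ: DomainCompoundDelayBoundaryAndTraceIdentity} its traces on the faces $\mathcal{B}^{(k)}_{\widehat{l}}$ equal the lower-order restrictions rather than zero, so the usual formula $T_k(t)u(0)+\int_0^t T_k(t-s)g(s)\,ds$ does not reproduce it. The missing step is precisely the core of the paper's proof: show that $X_{j_1\ldots j_k}$ from \eqref{EQ: TheoremWeightedCompoundDelaySolutionsFunctionXForK} lies in $\mathcal{W}^{2}_{D}(\mathring{\mathcal{C}}^{k}_{T};(\mathbb{R}^{n})^{\otimes m})$ (a Newton--Leibniz computation along the diagonal), extend it by Lemma \ref{LEM: ExtensionOperatorFromSobolevDiagonalToRm} and use the group $\mathcal{T}_k$ to conclude that its adornment \eqref{EQ: WindowFunctionDefinition} is a $C^{1}$-in-time, $\mathcal{W}^{2}_{D}$-valued solution of the homogeneous diagonal transport equation; then verify via Theorem \ref{TH: CompoundDelayDomainDescription} and Theorem \ref{TH: TraceOperatorForDiagonalTranslatesFinite} that the difference $\Delta(t)=R_{j_1\ldots j_k}\Phi_\nu(t)-\Phi_{X_{j_1\ldots j_k},\rho_\nu}(t)$ has vanishing traces and $\Delta(0)=0$, so that it is a classical solution of \eqref{EQ: StructuralCauchyFormulaLastLIProblemForK} and the ordinary Duhamel formula identifies it with the twisting \eqref{EQ: TwistedFunctionDefinition} of $Y=\rho_\nu^{-1}g$. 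Once you supply this subtraction-and-trace argument in place of the cited ``Appendix formula,'' your plan coincides with the paper's proof.
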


Before proving the theorem, we establish that $\Phi_{X_{j_{1} \ldots j_{k}},\rho_{\nu}}$ and $\Psi_{Y_{j_{1}\ldots j_{k}},\rho_{\nu}}$ in \eqref{EQ: ProperFunctionSolutionAsWeightedFunction} must depend continuously on the data $(\Phi_{0},\eta_{\nu})$ from $\mathcal{L}^{\otimes}_{m} \times L_{2}(0,T; \mathcal{L}^{\otimes}_{m})$. This will be done by deriving the estimate \eqref{EQ: StructuralCauchyFormulaNormEstimatesAdornedTwisted} in terms of the data and the solution $\Phi_{\nu}$ itself. For $T = \infty$, the estimate will be considered in Section \ref{SUBSEC: DelayCompPropertiesOfComplexificatedProblem}. Moreover, the proof introduces an explicit construction of $X_{j_{1}\ldots j_{k}}$ in \eqref{EQ: StructuralCauchyFormulaNormEstimateXmDescription} and \eqref{EQ: StructuralCauchyFormulaNormEstimateXmDescriptionGENERALK}, which will be used to prove Theorem \ref{TH: StructuralCauchyFormulaCompoundDelay}.

For the following theorem, $\mathcal{Y}^{2}_{\rho_{\nu}}$ stands for $\mathcal{Y}^{2}_{\rho_{\nu}}(0,T;L_{2}((-\tau,0)^{k};(\mathbb{R}^{n})^{\otimes m}))$, and $\mathcal{T}^{2}_{\rho_{\nu}}$ stands for $\mathcal{T}^{2}_{\rho_{\nu}}(0,T;L_{2}( (-\tau,0)^{k};(\mathbb{R}^{n})^{\otimes m}))$, where $T$ and $k$ are understood from the context.
\begin{theorem}
	\label{TH: DelayCompoundStructuralCauchyFormulaNormEstimate}
	In terms of Theorem \ref{TH: StructuralCauchyFormulaCompoundDelay}, assume that the decompositions \eqref{EQ: ProperFunctionSolutionAsWeightedFunction} and \eqref{EQ: StructuralCauchyFormulaDelayCompoundYformula} and the property from \eqref{EQ: SolutionsDelayCompoundSmoothingSpaces} are valid for all $k \in \{1,\ldots,m\}$ and $1 \leq j_{1} < \cdots < j_{k} \leq m$. Then the norms  $\| \Phi_{X_{j_{1}\ldots j_{k}},\rho_{\nu}} \|_{\mathcal{Y}^{2}_{\rho_{\nu}}}$ and $\|\Psi_{Y_{j_{1}\ldots j_{k}},\rho_{\nu}} \|_{\mathcal{T}^{2}_{\rho_{\nu}}}$ defined in \eqref{EQ: NormAdornedSobolev} and \eqref{EQ: NormOnTwistedFunctionsDef}, respectively, admit the estimate
	\begin{equation}
		\label{EQ: StructuralCauchyFormulaNormEstimatesAdornedTwisted}
		\begin{split}
			\left\| \Phi_{X_{j_{1}\ldots j_{k}},\rho_{\nu}} \right\|^{2}_{\mathcal{Y}^{2}_{\rho_{\nu}}} +
			\left\| \Psi_{Y_{j_{1}\ldots j_{k}},\rho_{\nu}} \right\|^{2}_{ \mathcal{T}^{2}_{\rho_{\nu}}} \leq \\ 
			\leq C_{k} \cdot \left( | \Phi_{\nu}(0) |^{2}_{\mathcal{L}^{\otimes}_{m}} + \int_{0}^{T} | \Phi_{\nu}(t) |^{2}_{\mathcal{L}^{\otimes}_{m}}dt + \int_{0}^{T}|\eta_{\nu}(t)|^{2}_{\mathcal{L}^{\otimes}_{m}}dt \right),
		\end{split}
	\end{equation}
	where the constant $C_{k}>0$ depends on $\max\{1,e^{\nu \tau}\}$, $\tau$, and the total variation $\operatorname{Var}_{[-\tau,0]}(\alpha)$ of $\alpha$ from \eqref{EQ: DelayOperatorRieszRepresentation} in a monotonically increasing manner and does not depend on $T$.
\end{theorem}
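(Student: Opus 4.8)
The plan is to prove the estimate by \emph{downward} induction on $k$, from $k=m$ to $k=1$, bounding the adorned summand $\Phi_{X_{j_1\ldots j_k},\rho_\nu}$ and the twisted summand $\Psi_{Y_{j_1\ldots j_k},\rho_\nu}$ separately; the smoothing properties \eqref{EQ: SolutionsDelayCompoundSmoothingSpaces} assumed in the hypotheses guarantee that all the traces used below are classical objects. One may first reduce to $\nu=0$ by passing to $(e^{-\nu t}\Phi_\nu(t),e^{-\nu t}\eta_\nu(t))$ as in the remark after Theorem \ref{TH: StructuralCauchyFormulaCompoundDelay}, since the weight $\rho_\nu$ enters $\|\cdot\|_{\mathcal{Y}^{2}_{\rho_\nu}}$ and $\|\cdot\|_{\mathcal{T}^{2}_{\rho_\nu}}$ exactly through that factor; the price is a multiplicative factor $\max\{1,e^{\nu\tau}\}$ from the ratio of $\rho_\nu$ over time windows of length $\leq\tau$ (the memory of the characteristics of the transport part of \eqref{EQ: OperatorAmFormula}). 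Alternatively one simply carries $\rho_\nu$ through the computation; either way the constants never see $T$.

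\emph{Twisted part.} Here the formula \eqref{EQ: StructuralCauchyFormulaDelayCompoundYformula} does the work: $\|\rho_\nu Y_{j_1\ldots j_k}\|_{L_2(0,T;L_2((-\tau,0)^k;(\mathbb{R}^n)^{\otimes m}))}$ is bounded by $\|R_{j_1\ldots j_k}\eta_\nu\|_{L_2(0,T;L_2)}+\sum_{j\notin\{j_1,\ldots,j_k\}}\|\widetilde{A}^{(k)}_{j,J(j)}R_{jj_1\ldots j_k}\Phi_\nu\|_{L_2(0,T;L_2)}$. The first term is $\leq\|\eta_\nu\|_{L_2(0,T;\mathcal{L}^{\otimes}_m)}$ by \eqref{EQ: TensorSpaceDelayCompoundDecompositionBoundarySubspaces}. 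For each term in the sum, $\widetilde{A}^{(k)}_{j,J(j)}$ is precisely the pointwise measurement operator $C^{\alpha_j}_{J(j)}$ of Theorem \ref{TH: OperatorCExntesionOntoWDiagonal}, and $R_{jj_1\ldots j_k}\Phi_\nu$ is $\rho_\nu$-agalmanated by the level-$(k+1)$ instance of \eqref{EQ: ProperFunctionSolutionAsWeightedFunction}; hence by Theorem \ref{TH: PointwiseMeasurementOperatorOnAgalmanatedSpace} this term is bounded by $C\operatorname{Var}_{[-\tau,0]}(\alpha)$ times $\big(\|\Phi_{X_{jj_1\ldots j_k},\rho_\nu}\|^{2}_{\mathcal{Y}^{2}_{\rho_\nu}}+\|\Psi_{Y_{jj_1\ldots j_k},\rho_\nu}\|^{2}_{\mathcal{T}^{2}_{\rho_\nu}}\big)^{1/2}$, which by the induction hypothesis is already controlled by the right-hand side of \eqref{EQ: StructuralCauchyFormulaNormEstimatesAdornedTwisted} (for $k=m$ the sum is empty and this step is vacuous). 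Finally, the defining properties of the twisting operation from Appendix \ref{SEC: MeasurementOperatorsOnAgalmanated} give $\|\Psi_{Y_{j_1\ldots j_k},\rho_\nu}\|_{\mathcal{T}^{2}_{\rho_\nu}}\leq C\|\rho_\nu Y_{j_1\ldots j_k}\|_{L_2(0,T;L_2)}$, with $C$ depending only on $\tau$ and $\max\{1,e^{\nu\tau}\}$.

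\emph{Adorned part.} We use the explicit construction of $X_{j_1\ldots j_k}$ on the diagonal domain $\mathcal{C}^{k}_{T}$ furnished in the proof of Theorem \ref{TH: StructuralCauchyFormulaCompoundDelay} (see \eqref{EQ: StructuralCauchyFormulaNormEstimateXmDescription}): it is assembled along the characteristics of $\sum_{l=1}^{k}\partial/\partial\theta_l$ from the incoming data — the initial slice $R_{j_1\ldots j_k}\Phi_0$ and the face traces $\operatorname{Tr}_{\mathcal{B}^{(k)}_{\widehat{l}}}R_{j_1\ldots j_k}\Phi_\nu(t)$ — corrected by the (already estimated) twisted contribution. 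The norm comparison $\|\Phi_{X_{j_1\ldots j_k},\rho_\nu}\|_{\mathcal{Y}^{2}_{\rho_\nu}}\leq C\|X_{j_1\ldots j_k}\|_{L_2(\mathcal{C}^{k}_{T};(\mathbb{R}^n)^{\otimes m})}$, with $C=C(\tau,\max\{1,e^{\nu\tau}\})$ and $T$-independent, is a property of the adornment operation from Appendix \ref{SEC: MeasurementOperatorsOnAgalmanated}. It remains to bound $\|X_{j_1\ldots j_k}\|_{L_2(\mathcal{C}^{k}_{T})}$: the initial-slice contribution is $\leq|R_{j_1\ldots j_k}\Phi_0|_{L_2}\leq|\Phi_\nu(0)|_{\mathcal{L}^{\otimes}_m}$; by \eqref{EQ: DomainCompoundDelayBoundaryAndTraceIdentity} the face traces coincide with the lower restrictions $R_{j_1\ldots\widehat{j_l}\ldots j_k}\Phi_\nu(t)$, which, being components of $\Phi_\nu(t)$ in the decomposition \eqref{EQ: TensorSpaceDelayCompoundDecompositionBoundarySubspaces}, contribute at most $\|\Phi_\nu(\cdot)\|_{L_2(0,T;\mathcal{L}^{\otimes}_m)}$ — note that this is a \emph{direct} $L_2$ bound, so the induction stays downward and no structural information at level $k-1$ is required; the twisted-correction contribution was bounded in the previous step. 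Summing the finitely many estimates over $k\in\{1,\ldots,m\}$ and $1\leq j_1<\ldots<j_k\leq m$ yields \eqref{EQ: StructuralCauchyFormulaNormEstimatesAdornedTwisted}, with the claimed monotone dependence of $C_k$ on $\max\{1,e^{\nu\tau}\}$, $\tau$ and $\operatorname{Var}_{[-\tau,0]}(\alpha)$, and none on $T$.

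\emph{Main obstacle.} The crux is not the bookkeeping of the induction but the two analytic inputs from Appendix \ref{SEC: MeasurementOperatorsOnAgalmanated}: (i) the $T$-independent bound of the $\mathcal{Y}^{2}_{\rho_\nu}$-norm of an adornment by the $L_2(\mathcal{C}^{k}_{T})$-norm of its generating function (and the analogous bound for twistings), which is exactly what forces the constant $C_k$ to be independent of the horizon; and (ii) the boundedness of the pointwise measurement operator $\widetilde{A}^{(k)}_{j,J(j)}=C^{\alpha_j}_{J(j)}$ from the agalmanated space into $L_2$ in time, which is where the bounded-variation structure of $\alpha$ and the agalmanated-function machinery are indispensable. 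A secondary point needing care is that the traces entering the construction of $X_{j_1\ldots j_k}$ are legitimate only under the smoothing \eqref{EQ: SolutionsDelayCompoundSmoothingSpaces}; this is why the estimate is first obtained for $\Phi_0\in\mathcal{D}(A^{[\otimes m]})$ and $\eta_\nu\in C^1([0,T];\mathcal{L}^{\otimes}_m)$, the general case being recovered by density once the $T$-uniform bound is available, which is also how Theorem \ref{TH: StructuralCauchyFormulaCompoundDelay} itself is then deduced.
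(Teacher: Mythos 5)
Your overall architecture is the same as the paper's (downward induction from $k=m$, the twisted part bounded through \eqref{EQ: StructuralCauchyFormulaDelayCompoundYformula} together with the pointwise measurement bound of Theorem \ref{TH: PointwiseMeasurementOperatorOnAgalmanatedSpace} applied to the level-$(k+1)$ agalmanated structure, regular data first and density at the end), and that half of your argument is fine. The genuine problem is in the adorned part. You factor the estimate through the claim that $\| \Phi_{X_{j_{1}\ldots j_{k}},\rho_{\nu}}\|_{\mathcal{Y}^{2}_{\rho_{\nu}}} \leq C\,\|X_{j_{1}\ldots j_{k}}\|_{L_{2}(\mathcal{C}^{k}_{T};(\mathbb{R}^{n})^{\otimes m})}$ with $C=C(\tau,\max\{1,e^{\nu\tau}\})$ independent of $T$, attributing this to Appendix \ref{SEC: MeasurementOperatorsOnAgalmanated}. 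No such comparison is in the appendix, and it is false for $\nu>0$: by \eqref{EQ: NormInWindowsSpaces} the $\mathcal{Y}^{2}_{\rho_{\nu}}$-norm carries the weight $e^{2\nu t}$ on the face-sweep integrals (after the shear change of variables, the weight at a point $\overline{s}=\overline{\theta}+\underline{t}$ of the sweep of $\mathcal{B}^{(k)}_{\widehat{l}}$ is $e^{2\nu s_{l}}$), so for $X$ concentrated near the far end of $\mathcal{C}^{k}_{T}$ the ratio of the two norms grows like $e^{2\nu T}$. Your fallback, reducing to $\nu=0$ ``at the price of a factor $\max\{1,e^{\nu\tau}\}$'', fails for the same reason: by uniqueness (Proposition \ref{PROP: UniquenessOfSumAdornedTwisted}) the generating functions $X,Y$ for the $\nu$-problem and the $0$-problem coincide, so the weighted inequality you need is not a constant multiple of the unweighted one — the exchange rate between $\|\cdot\|_{\mathcal{Y}^{2}_{\rho_{\nu}}}$ and $\|\cdot\|_{\mathcal{Y}^{2}_{1}}$ over the whole horizon is $e^{2\nu T}$, not $\max\{1,e^{\nu\tau}\}$. (The $\nu$-rescaling is legitimately used in the paper only for the identities of Theorem \ref{TH: StructuralCauchyFormulaCompoundDelay}, not for this weighted norm estimate.)

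The fix is exactly the paper's computation, and you already have every ingredient for it: do not pass through the unweighted $L_{2}(\mathcal{C}^{k}_{T})$-norm of $X$, but bound the $\mathcal{Y}^{2}_{\rho_{\nu}}$-norm directly from the weighted trace identity $\rho_{\nu}(t)X_{j_{1}\ldots j_{k}}(\overline{\theta}+\underline{t}) = \bigl(R_{j_{1}\ldots \widehat{j_{l}}\ldots j_{k}}\Phi_{\nu}(t)\bigr)(\overline{\theta}_{\widehat{l}})$ (obtained by applying $\operatorname{Tr}_{\mathcal{B}^{(k)}_{\widehat{l}}}$ to \eqref{EQ: ProperFunctionSolutionAsWeightedFunction} for regular data, using \eqref{EQ: DomainCompoundDelayBoundaryAndTraceIdentity} and the vanishing of the traces of the twisted part). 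Then Fubini in \eqref{EQ: NormInWindowsSpaces} gives the face-sweep contribution exactly as $\int_{0}^{T}\sum_{l}\|R_{j_{1}\ldots\widehat{j_{l}}\ldots j_{k}}\Phi_{\nu}(t)\|^{2}_{L_{2}}\,dt \leq \int_{0}^{T}|\Phi_{\nu}(t)|^{2}_{\mathcal{L}^{\otimes}_{m}}\,dt$, with no horizon-dependent factor; the weight $\rho_{\nu}$ cancels against the solution rather than being estimated, and the only place $\max\{1,e^{\nu\tau}\}$ enters is through $\rho_{0}$ in the agalmanated measurement bound for the twisted part, which is where the claimed dependence of $C_{k}$ actually comes from. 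A minor additional inaccuracy: $X_{j_{1}\ldots j_{k}}$ in \eqref{EQ: TheoremWeightedCompoundDelaySolutionsFunctionXForK} is built from the initial slice and the lower restrictions only; there is no ``twisted correction'' in it, precisely because the twisted summand lies in $\mathcal{W}^{2}_{D_{0}}$ and has zero face traces.
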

\begin{proof}
	We give a proof by induction from $k=m$ to $k=1$. 
	
	For $k=m$, \eqref{EQ: StructuralCauchyFormulaDelayCompoundYformula} reads as $\rho_{\nu}(t)Y_{1\ldots m}(t) = R_{1\ldots m}\eta_{\nu}(t)$. Consequently,
	\begin{equation}
		\label{EQ: StructuralCauchyFormulaEstimateTwistedKm}
		\begin{split}
			\left\| \Psi_{Y_{1\ldots m},\rho_{\nu}} \right\|^{2}_{ \mathcal{T}^{2}_{\rho_{\nu}}} \coloneq& \int_{0}^{T}\| \rho_{\nu}(t) Y_{1\ldots m}(t) \|^{2}_{L_{2}}dt =\\=& \int_{0}^{T} \| R_{1\ldots m}\eta_{\nu}(t) \|^{2}_{L_{2}}dt \leq \int_{0}^{T} | \eta_{\nu}(t) |^{2}_{\mathcal{L}^{\otimes}_{m}}dt,
		\end{split}
	\end{equation}
    where $L_{2}$ stands for $L_{2}((-\tau,0)^{m}; (\mathbb{R}^{n})^{\otimes m})$.
    
    Now consider $\Phi_{0} \in \mathcal{D}(A^{[\otimes m]})$ and $\eta_{\nu}(\cdot) \in C^{1}([0,T];\mathcal{L}^{\otimes}_{m})$. For such data, the solution $\Phi_{\nu}(\cdot)$ is classical, according to \cite[Chapter I, Theorem 6.5]{Krein1971}, and, in particular, satisfies $\Phi_{\nu}(\cdot) \in C([0,T];\mathcal{D}(A^{[\otimes m]}))$. Thanks to \eqref{EQ: SolutionsDelayCompoundSmoothingSpaces}, for any $j \in \{ 1,\ldots, m\}$ we may apply the trace operator $\operatorname{Tr}_{\mathcal{B}^{(m)}_{\hat{j}}}$ given by Theorem \ref{TH: TraceOperatorForDiagonalTranslatesFinite} to both sides of \eqref{EQ: ProperFunctionSolutionAsWeightedFunction}. From this, according to Theorem \ref{TH: CompoundDelayDomainDescription} and the definition of $\Phi_{X_{1\ldots m},\rho_{\nu}}$ in \eqref{EQ: WindowFunctionDefinition}, we obtain
    \begin{equation}
    	\label{EQ: StructuralCauchyFormulaNormEstimateXmDescription}
    	\begin{split}
    		 \rho_{\nu}(t)X_{1\ldots m}(\bar{\theta} + \underline{t}) = (\operatorname{Tr}_{\mathcal{B}^{(m)}_{\hat{j}}} \Phi_{X_{1\ldots m},\rho_{\nu}}(t))(\bar{\theta}) =\\= (\operatorname{Tr}_{\mathcal{B}^{(m)}_{\hat{j}}}R_{1 \ldots m}\Phi(t))(\bar{\theta}) = R_{\hat{j}}\Phi_{\nu}(t)(\bar{\theta}_{\hat{j}})
    	\end{split}
    \end{equation}
    for almost all $t \in [0,T]$ and $\mu^{m-1}_{L}$-almost all $\bar{\theta} \in \mathcal{B}^{(m)}_{\hat{j}}$. From this, by applying the Fubini theorem in \eqref{EQ: NormInWindowsSpaces}, we get
    \begin{equation}
    	\label{EQ: StructuralCauchyFormulaEstimateAdornedKm}
    	\begin{split}
    		&\| \Phi_{X_{1\ldots m},\rho_{\nu}} \|^{2}_{\mathcal{Y}^{2}_{\rho_{\nu}}} =\\&= \|R_{1\ldots m}\Phi_{0}\|^{2}_{L_{2}} + \int_{0}^{T}e^{2 \nu t} \left(\sum_{j=1}^{m} \int_{\mathcal{B}_{\hat{j}}} |X_{1\ldots m}(\bar{\theta}+\underline{t})|^{2} d\mu^{m-1}_{L}(\bar{\theta}) \right) dt =\\&= \|R_{1\ldots m}\Phi_{0}\|^{2}_{L_{2}} + \int_{0}^{T} \sum_{j=1}^{m}\|R_{\hat{j}}\Phi_{\nu}(t)\|^{2}_{L_{2}}dt \leq \\ &\leq  | \Phi_{0} |^{2}_{\mathcal{L}^{\otimes}_{m}} + \int_{0}^{T} | \Phi_{\nu}(t) |^{2}_{\mathcal{L}^{\otimes}_{m}}dt,
    	\end{split}
    \end{equation}
    where $L_{2}$ stands for appropriate $L_{2}$-spaces in the ranges of the applied restriction operators.
    
    By combining \eqref{EQ: StructuralCauchyFormulaEstimateTwistedKm} and \eqref{EQ: StructuralCauchyFormulaEstimateAdornedKm}, we obtain \eqref{EQ: StructuralCauchyFormulaNormEstimatesAdornedTwisted} with $k=m$ and $C_{m} = 1$ for solutions with regular data: $\Phi_{0} \in \mathcal{D}(A^{[\otimes m]})$ and $\eta_{\nu}(\cdot) \in C^{1}([0,T];\mathcal{L}^{\otimes}_{m})$. For arbitrary data, the estimate is established using the continuity argument.
    
    Now consider $k \in \{1,\ldots, m-1\}$ and assume that \eqref{EQ: StructuralCauchyFormulaNormEstimatesAdornedTwisted} has already been proven for all larger $k$. From \eqref{EQ: ProperFunctionSolutionAsWeightedFunction}, we know that $R_{j j_{1} \ldots j_{k}} \Phi_{\nu}$ is a $\rho_{\nu}$-agalmanated function for $j \notin \{j_{1}, \ldots, j_{k}\}$. From this, we may apply Theorem \ref{TH: PointwiseMeasurementOperatorOnAgalmanatedSpace} to each operator $\widetilde{A}^{(k)}_{j,J(j)}$ in \eqref{EQ: StructuralCauchyFormulaDelayCompoundYformula} to estimate the terms in \eqref{EQ: StructuralCauchyFormulaDelayCompoundYformula} as follows:
    \begin{equation}
    	\label{EQ: StructuralCauchyFormulaInductionYestimate}
    	\begin{split}
    		\left\| \Psi_{Y_{j_{1}\ldots j_{k}},\rho_{\nu}} \right\|_{ \mathcal{T}^{2}_{\rho_{\nu}}} \coloneq \left(\int_{0}^{T}\| \rho_{\nu}(t) Y_{j_{1}\ldots j_{k}}(t) \|^{2}_{L_{2}}dt\right)^{1/2} \leq \\ \leq \left(\int_{0}^{T} \| R_{j_{1}\ldots j_{k}}\eta_{\nu}(t) \|^{2}_{L_{2}}dt\right)^{1/2} + \widetilde{C} \cdot \sum_{j \notin \{j_{1},\ldots j_{k}\}} \| R_{j j_{1} \ldots j_{k}} \Phi_{\nu} \|_{\mathcal{A}^{2}_{\rho_{\nu}}},
    	\end{split}
    \end{equation}
    where $L_{2}$ stands for $L_{2}((-\tau,0)^{k};(\mathbb{R}^{n})^{\otimes m})$, $\mathcal{A}^{2}_{\rho_{\nu}}$ stands for the space of $\rho_{\nu}$-agalmanated functions on $[0,T]$ with values in $L_{2}((-\tau,0)^{k+1}; (\mathbb{R}^{n})^{\otimes m}))$, and $\widetilde{C}>0$ depends only on $\operatorname{Var}_{[-\tau,0]}(\alpha)$, $\tau$, and $\max\{1, e^{\nu \tau} \}$, where the latter value is $\rho_{0}$ in terms of Theorem \ref{TH: PointwiseMeasurementOperatorOnAgalmanatedSpace}.
    
    For regular data, similarly to \eqref{EQ: StructuralCauchyFormulaNormEstimateXmDescription}, for any $l \in \{1,\ldots, k\}$ we obtain
    \begin{equation}
    	\label{EQ: StructuralCauchyFormulaNormEstimateXmDescriptionGENERALK}
    	\rho_{\nu}(t)X_{j_{1}\ldots j_{k}}(\bar{\theta}+\underline{t}) = R_{j_{1}\ldots \hat{j_{l}} \ldots j_{k}}\Phi_{\nu}(t)(\bar{\theta}_{\hat{l}})
    \end{equation}
    for almost all $t \in [0,T]$ and $\mu^{k-1}_{L}$-almost all $\bar{\theta} \in \mathcal{B}^{(k)}_{\hat{l}}$. Then, similarly to \eqref{EQ: StructuralCauchyFormulaEstimateAdornedKm}, we derive
    \begin{equation}
    	\label{EQ: StructuralCauchyFormulaInductionXestimate}
    	\| \Phi_{X_{j_{1}\ldots j_{k}},\rho_{\nu}} \|^{2}_{\mathcal{Y}^{2}_{\rho_{\nu}}} \leq |\Phi_{0}|^{2}_{\mathcal{L}^{\otimes}_{m}} + \int_{0}^{T}|\Phi_{\nu}(t)|^{2}_{\mathcal{L}^{\otimes}_{m}}dt.
    \end{equation}

    Note that the norm $\| R_{j j_{1} \ldots j_{k}} \Phi_{\nu}(\cdot) \|_{\mathcal{A}^{2}_{\rho_{\nu}}}$ in \eqref{EQ: StructuralCauchyFormulaInductionYestimate} can be estimated from the previous step, i.e., \eqref{EQ: StructuralCauchyFormulaNormEstimatesAdornedTwisted} with $k$ replaced by $k+1$. Combining this with \eqref{EQ: StructuralCauchyFormulaInductionXestimate} results in validity of \eqref{EQ: StructuralCauchyFormulaNormEstimatesAdornedTwisted} for the given $k$.
\end{proof}

\begin{proof}[Proof of Theorem \ref{TH: StructuralCauchyFormulaCompoundDelay}]
	Set $\Phi(t) \coloneq e^{-\nu t} \Phi_{\nu}(t)$ and $\eta(t) = e^{-\nu t}\eta_{\nu}(t)$. Then $(\Phi, \eta)$ solve \eqref{EQ: ControlSystemMCompoundDelay} with $\nu = 0$ on $[0,T]$. Thus, it suffices to show the statement for $\nu = 0$. Moreover, we can also assume that the initial data are regular: $\eta(\cdot) \in C^{1}([0,T];\mathcal{L}^{\otimes}_{m})$ and $\Phi(0) = \Phi_{0} \in \mathcal{D}(A^{[\otimes m]})$. In particular, the solution $\Phi(\cdot)$ is classical and hence $\Phi(\cdot) \in C^{1}([0,T];\mathcal{L}^{\otimes}_{m}) \cap C([0,T];\mathcal{D}(A^{[\otimes m]}))$. For arbitrary data, the continuity argument can be used together with the already established estimate \eqref{EQ: StructuralCauchyFormulaNormEstimatesAdornedTwisted}.
	
	First, we give a proof for $k=m$. Define $X_{1\ldots m} \in L_{2}(\mathcal{C}^{m}_{T};(\mathbb{R}^{n})^{\otimes m})$ for almost all $\bar{s} \in \mathcal{C}^{m}_{T}$ as follows:
	\begin{equation}
		\label{EQ: TheoremWeightedCompoundDelaySolutionsFunctionX}
		X_{1\ldots m}(\bar{s}) \coloneq \begin{cases}
			(R_{1\ldots m}\Phi_{0})(\bar{s}) \qquad &\text{if} \quad \bar{s} \in (-\tau,0)^{m},\\
			\left(R_{\hat{j}}\Phi(t)\right)(\bar{s}_{\hat{j}} - \underline{t}) \qquad &\text{if} \quad (\bar{s} - \underline{t}) \in  \mathcal{B}^{(m)}_{\hat{j}},
		\end{cases}
	\end{equation}  
    where the second condition is taken over all $j \in \{1,\ldots,m\}$ and $t \in [0,T]$.
    
    Let $\mathring{\mathcal{C}}^{m}_{T}$ be the interior of $\mathcal{C}^{m}_{T}$. We aim to show that $X_{1\ldots m} \in \mathcal{W}^{2}_{D}(\mathring{\mathcal{C}}^{m}_{T};(\mathbb{R}^{n})^{\otimes m})$. For this, define for each $j \in \{1,\ldots,m\}$ the sets
    \begin{equation}
	\mathcal{C}_{j} \coloneq \bigcup_{t \in [0,T]}( \mathcal{B}^{(m)}_{\hat{j}} + \underline{t}),
    \end{equation}
    and let $\mathring{\mathcal{C}}_{j}$ be the interior of $\mathcal{C}_{j}$.
    Since $\Phi(\cdot) \in C^{1}([0,T];\mathcal{L}^{\otimes}_{m})$, we have that the mapping 
    \begin{equation}
    	[0,T] \ni t \mapsto R_{\hat{j}}\Phi(t) \in L_{2}((-\tau,0)^{m};(\mathbb{R}^{n})^{\otimes m})	
    \end{equation}
    is $C^{1}$-differentiable. From this and \eqref{EQ: TheoremWeightedCompoundDelaySolutionsFunctionX}, it is not hard to see that $\restr{X_{1\ldots m}}{\mathring{\mathcal{C}}_{j}}$ must belong to $\mathcal{W}^{2}_{D}(\mathring{\mathcal{C}}_{j};(\mathbb{R}^{n})^{\otimes m})$, and its diagonal derivative is given by
    \begin{equation}
    	\label{EQ: StructuralCauchyFormulaDerivativeXJ}
    	(D^{j}X_{1 \ldots m})(\bar{\theta} + \underline{t}) \coloneq \left(\frac{d}{dt}R_{\hat{j}}\Phi(t)\right)(\bar{\theta}_{\hat{j}})
    \end{equation}
    for $\mu^{m-1}_{L}$-almost all $\bar{\theta} \in \mathcal{B}^{(m)}_{\hat{j}}$ and all $t \in [0,T]$. Indeed, by the Newton-Liebniz formula, for any $0 \leq a \leq b \leq T$ we have
    \begin{equation}
    	R_{\hat{j}}\Phi(b) - R_{\hat{j}}\Phi(a) = \int_{a}^{b}\frac{d}{ds}R_{\hat{j}}\Phi(s)ds.
    \end{equation}
    Substituting $\bar{\theta}_{\hat{j}}$ with $\bar{\theta} \in \mathcal{B}^{(m)}_{\hat{j}}$ into the functions from the formula above, we obtain
    \begin{equation}
    	R_{\hat{j}}\Phi(b)(\bar{\theta}_{\hat{j}}) - R_{\hat{j}}\Phi(a)(\bar{\theta}_{\hat{j}}) = \int_{a}^{b}\left(\frac{d}{ds}R_{\hat{j}}\Phi(s)\right)(\bar{\theta}_{\hat{j}})ds,
    \end{equation}
    which makes sense for $\mu^{m-1}_{L}$-almost all $\bar{\theta} \in \mathcal{B}^{(m)}_{\hat{j}}$ and according to \eqref{EQ: TheoremWeightedCompoundDelaySolutionsFunctionX} and \eqref{EQ: StructuralCauchyFormulaDerivativeXJ} gives
    \begin{equation}
    	X_{1\ldots m}(\bar{\theta} + \underline{b}) - X_{1\ldots m}(\bar{\theta} + \underline{a}) = \int_{a}^{b}(D^{j}X_{1\ldots m})(\bar{\theta}+\underline{s})ds.
    \end{equation}
   This implies that $\restr{X_{1\ldots m}}{\mathring{\mathcal{C}}_{j}} \in \mathcal{W}^{2}_{D}(\mathring{\mathcal{C}}_{j};(\mathbb{R}^{n})^{\otimes m})$ according to \eqref{EQ: DiagonalSobolevPropEquivalent} and Lemma \ref{LEM: ExtensionOperatorFromSobolevDiagonalToRm}.
   
   Next, note that $\restr{X_{1\ldots m}}{(-\tau,0)^{m}} = R_{1\ldots m}\Phi_{0}$ lies in $\mathcal{W}^{2}_{D}((-\tau,0)^{m};(\mathbb{R}^{n})^{\otimes m})$, thanks to Theorem \ref{TH: CompoundDelayDomainDescription} and the inclusion $\Phi_{0} \in \mathcal{D}(A^{[\otimes m]})$. Since
   \begin{equation}
   	\mathcal{C}^{m}_{T} = \bigcup_{j \in \{1,\ldots, m\}} \mathcal{C}_{j} \cup [-\tau,0]^{m}
   \end{equation}
   and the trace of $\restr{X_{1\ldots m}}{(-\tau,0)^{m}}$ on $\mathcal{B}^{(m)}_{\hat{j}}$ as an element of $\mathcal{W}^{2}_{D}((-\tau,0)^{m};(\mathbb{R}^{n})^{\otimes m})$ agrees with the trace of $\restr{X_{1\ldots m}}{\mathring{\mathcal{C}}_{j}}$ on $\mathcal{B}^{(m)}_{\hat{j}}$ as an element of $\mathcal{W}^{2}_{D}(\mathring{\mathcal{C}}_{j};(\mathbb{R}^{n})^{\otimes m})$, we get that $X_{1\ldots m}$ belongs to $\mathcal{W}^{2}_{D}(\mathring{\mathcal{C}}^{m}_{T};(\mathbb{R}^{n})^{\otimes m})$. This shows the first inclusion in \eqref{EQ: SolutionsDelayCompoundSmoothingSpaces} with $k=m$.
        
   By Lemma \ref{LEM: ExtensionOperatorFromSobolevDiagonalToRm}, there exists an element $\hat{X}_{1\ldots m}$ from $\mathcal{W}^{2}_{D}(\mathbb{R}^{m}; (\mathbb{R}^{n})^{\otimes m} )$ that extends $X_{1\ldots m}$. By Theorem \ref{TH: DiagonalTranslationInRm}, the latter space is the domain $\mathcal{D}(A_{\mathcal{T}_{m}})$ of the generator $A_{\mathcal{T}_{m}}$ of the diagonal translation group $\mathcal{T}_{m}(t)$ in $L_{2}(\mathbb{R}^{m};(\mathbb{R}^{n})^{\otimes m})$. Consequently, the function $[0,T] \ni t \mapsto \mathcal{T}_{m}(t)\hat{X}_{1\ldots m}$ is a classical solution to the Cauchy problem associated with $A_{\mathcal{T}_{m}}$. Thus, considering $\hat{X}_{1\ldots m}$ as a function of $(s_{1},\ldots,s_{m}) \in \mathbb{R}^{m}$, we obtain
    \begin{equation}
    	\label{EQ: WeightedSolutionsCompoundTheorem1}
    	\frac{d}{dt}(\mathcal{T}_{m}(t)\hat{X}_{1\ldots m}) = \left(\sum_{j=1}^{m} \frac{\partial}{\partial s_{j}}\right)\mathcal{T}_{m}(t)\hat{X}_{1\ldots m} \qquad \text{for all} \ t \in [0,T].
    \end{equation}
    Moreover, this gives the second inclusion in \eqref{EQ: SolutionsDelayCompoundSmoothingSpaces} with $k=m$.

    Let $\mathcal{R} \colon L_{2}(\mathbb{R}^{m};(\mathbb{R}^{n})^{\otimes m}) \to L_{2}((-\tau,0)^{m};(\mathbb{R}^{n})^{\otimes m})$ be the operator that restricts functions from $\mathbb{R}^{m}$ to $(-\tau,0)^{m}$. Then we have that the function (here $\rho_{0}$ is $\rho_{\nu}$ for $\nu=0$)
    \begin{equation}
    	\label{EQ: WeigthedWindowFunctionCompoundContDiff}
    	[0,T] \ni t \mapsto \Phi_{X_{1\ldots m},\rho_{0}}(t) = \mathcal{R}\mathcal{T}_{m}(t)\hat{X}_{1\ldots m} \in L_{2}((-\tau,0)^{m};(\mathbb{R}^{n})^{\otimes m})
    \end{equation}
    is $C^{1}$-differentiable and it is continuous as a $\mathcal{W}^{2}_{D}((-\tau,0)^{m};(\mathbb{R}^{n})^{\otimes m})$-valued function. Moreover, applying $\mathcal{R}$ to both sides of \eqref{EQ: WeightedSolutionsCompoundTheorem1}, we get for any $t \in [0,T]$ that
    \begin{equation}
    	\label{EQ: WeightedSolutionsCompoundTheorem2}
    	\begin{split}
    		 \frac{d}{dt}\Phi_{X_{1\ldots m},\rho_{0}}(t) &= \frac{d}{dt}( \mathcal{R}\mathcal{T}(t)\hat{X}_{1\ldots m} ) =\\= \mathcal{R}A_{\mathcal{T}_{m}}\mathcal{T}_{m}(t)\hat{X}_{1\ldots m} &= \left(\sum_{j=1}^{m}\frac{\partial}{\partial \theta_{j}}\right)\Phi_{X_{1\ldots m},\rho_{0}}(t),
    	\end{split}
    \end{equation}
    where $\Phi_{X_{1\ldots m},\rho_{0}}(t)$ is a function of $(\theta_{1},\ldots,\theta_{m}) \in (-\tau,0)^{m}$.
    
   	From \eqref{EQ: WeightedSolutionsCompoundTheorem2} and \eqref{EQ: OperatorAmFormula}, it follows that $\Delta(t) \coloneq R_{1\ldots m}\Phi(t) - \Phi_{X_{1\ldots m},\rho_{0}}(t)$ satisfies
    \begin{equation}
    	\frac{d}{dt}\Delta(t) = \left(\sum_{j=1}^{m} \frac{\partial}{\partial \theta_{j}}\right) \Delta(t) + R_{1\ldots m}\eta(t) \qquad \text{for all} \ t \in [0,T].
    \end{equation}
    According to Theorem \ref{TH: CompoundDelayDomainDescription}, we have $\operatorname{Tr}_{\mathcal{B}_{\hat{j}}}R_{1\ldots m}\Phi(t)(\bar{\theta}) = R_{\hat{j}}\Phi(t)(\bar{\theta}_{\hat{j}})$ for $\mu^{m-1}_{L}$-almost all $\bar{\theta} \in \mathcal{B}_{\hat{j}}$. Moreover, from \eqref{EQ: TheoremWeightedCompoundDelaySolutionsFunctionX} and Theorem \ref{TH: TraceOperatorForDiagonalTranslatesFinite}, we obtain that $\operatorname{Tr}_{\mathcal{B}_{\hat{j}}}\Phi_{X_{1\ldots m},\rho_{0}}(t)(\bar{\theta}) = R_{\hat{j}}\Phi(t)(\bar{\theta}_{\hat{j}})$ for $\mu^{m-1}_{L}$-almost all $\bar{\theta} \in \mathcal{B}_{\hat{j}}$. Thus, $\operatorname{Tr}_{\mathcal{B}_{\hat{j}}}\Delta(t) = 0$ for all $t \in [0,T]$.
    
    Now recall the diagonal translation semigroup $T_{m}(t)$ in $L_{2}((-\tau,0)^{m};(\mathbb{R}^{n})^{\otimes m})$ and its generator $A_{T_{m}}$, see Theorem \ref{TH: DiagonalTranslatesSquareDelay}. From the above it follows that $\Delta(\cdot)$ is a classical solution on $[0,T]$ to the inhomogeneous Cauchy problem associated with $A_{T_{m}}$. Since $\Delta(0) = 0$, the Cauchy formula for this problem yields
    \begin{equation}
    	R_{1\ldots m}\Phi(t) - \Phi_{X_{1\ldots m},\rho_{0}}(t) = \int_{0}^{t}T_{m}(t-s)R_{1\ldots m}\eta(s)ds \eqcolon \Psi_{Y_{1\ldots m},\rho_{0}}(t)
    \end{equation}
    for all $t \in [0,T]$. This shows \eqref{EQ: ProperFunctionSolutionAsWeightedFunction}, \eqref{EQ: StructuralCauchyFormulaDelayCompoundYformula}, and the third inclusion in \eqref{EQ: SolutionsDelayCompoundSmoothingSpaces} with $k=m$.
    
    Now we suppose that $k \in \{1, \ldots , m-1\}$, and the statement has already been proven for all larger $k$. Analogously to \eqref{EQ: TheoremWeightedCompoundDelaySolutionsFunctionX}, we define $X_{j_{1} \ldots j_{k}} \in L_{2}(\mathcal{C}^{k}_{T};(\mathbb{R}^{n})^{\otimes m})$ for almost all $\bar{s} \in \mathcal{C}^{k}_{T}$ as follows:
    \begin{equation}
    	\label{EQ: TheoremWeightedCompoundDelaySolutionsFunctionXForK}
    	X_{j_{1}\ldots j_{k}}(\bar{s}) \coloneq \begin{cases}
    		(R_{j_{1}\ldots j_{k}}\Phi_{0})(\bar{s}) \qquad &\text{if} \quad \bar{s} \in (-\tau,0)^{k},\\
    		\left(R_{j_{1} \ldots \hat{j}_{l} \ldots j_{k}}\Phi(t)\right)(\bar{s}_{\hat{l}} - \underline{t}) \qquad &\text{if} \quad (\bar{s} - \underline{t}) \in  \mathcal{B}^{(k)}_{\hat{l}},
    	\end{cases}
    \end{equation}  
    where the second condition is taken over all $l \in \{1,\ldots,k\}$ and $t \in [0,T]$.
    
    Similarly, one can show that $X_{j_{1}\ldots j_{k}}$ belongs to $\mathcal{W}^{2}_{D}(\mathring{\mathcal{C}}^{k}_{T};(\mathbb{R}^{n})^{\otimes m})$ and further obtain that the difference $\Delta(t) = R_{j_{1}\ldots j_{k}}\Phi(t) - \Phi_{X_{j_{1}\ldots j_{k}},\rho_{0}}(t)$ is a classical solution to the inhomogeneous Cauchy problem for $A_{T_{k}}$ such that
    \begin{equation}
    	\label{EQ: StructuralCauchyFormulaLastLIProblemForK}
    	\frac{d}{dt}\Delta(t) = A_{T_{k}}\Delta(t) + R_{j_{1}\ldots j_{k}}\eta(t) + \sum_{j \notin \{j_{1}\ldots j_{k}\}}\widetilde{A}^{(k)}_{j,J(j)}R_{jj_{1}\ldots j_{k}}\Phi(t)
    \end{equation}
    and $\Delta(0) = 0$. Here the last term is a continuous function of $t$ by Proposition \ref{PROP: EmbeddingWDiagToEmDelay} and since $R_{jj_{1}\ldots j_{k}}\Phi(\cdot)$ belongs to $C([0,T]; \mathcal{W}^{2}_{D}((-\tau,0)^{k+1};(\mathbb{R}^{n})^{\otimes m}))$. Then \eqref{EQ: ProperFunctionSolutionAsWeightedFunction} is the Cauchy formula for \eqref{EQ: StructuralCauchyFormulaLastLIProblemForK}.
\end{proof}

\begin{remark}
	\label{REM: CauchyFormulaCompundInverseStatement}
	In the context of Theorem \ref{TH: StructuralCauchyFormulaCompoundDelay}, for $\Phi_{0} \in \mathcal{D}(A^{[\otimes m]})$, according to Theorem \ref{TH: CompoundDelayDomainDescription}, we have\footnote{Here $\mathcal{I}_{\delta^{l}_{0}}$ is as in \eqref{EQ: NormEmbracingSpaceCube}, i.e., it acts by putting $0$ to the $l$th argument.}
	\begin{equation}
		\operatorname{Tr}_{\mathcal{B}^{(k)}_{\hat{l}}}R_{j_{1}\ldots j_{k}}\Phi_{\nu}(t)(\bar{\theta}) = (\mathcal{I}_{\delta^{l}_{0}}R_{j_{1}\ldots j_{k}}\Phi_{\nu})(t)(\bar{\theta}_{\hat{l}}) = R_{j_{1}\ldots \hat{j}_{l}\ldots j_{k}}\Phi_{\nu}(t)(\bar{\theta}_{\hat{l}})
	\end{equation}
	for all $t \in [0,T]$, $k \in \{1,\ldots,m\}$, $l \in \{1,\ldots,k\}$, and $\mu^{k-1}_{L}$-almost all $\bar{\theta} \in \mathcal{B}^{(k)}_{\hat{l}}$. By continuity, for general $\Phi_{0} \in \mathcal{L}^{\otimes}_{m}$, the second identity takes the form
	\begin{equation}
		\label{EQ: DelayCompoundAgreementRestrGeneral}
		\mathcal{I}_{\delta^{l}_{0}}R_{j_{1}\ldots j_{k}}\Phi_{\nu} = R_{j_{1}\ldots \hat{j}_{l}\ldots j_{k}}\Phi_{\nu} \quad \text{in} \quad L_{2}(0,T;L_{2}((-\tau,0)^{k};(\mathbb{R}^{n})^{\otimes m})).
	\end{equation}
	In this sense, the agreement of restrictions and ``traces'' for mild solutions can be understood. It can be shown the converse statement, i.e., if a continuous $\mathcal{L}^{\otimes}_{m}$-valued function $\Phi_{\nu}(\cdot)$ on $[0,T]$ has all the restrictions $R_{j_{1}\ldots j_{k}}\Phi_{\nu}(\cdot)$ satisfying \eqref{EQ: ProperFunctionSolutionAsWeightedFunction}, \eqref{EQ: StructuralCauchyFormulaDelayCompoundYformula}, and \eqref{EQ: DelayCompoundAgreementRestrGeneral} for any $k \in \{1,\ldots,m\}$ and $l \in \{1,\ldots,k\}$, and for $k=0$ the restriction $R_{0}\Phi_{\nu}$ satisfies
	\begin{equation}
		\label{EQ: GeneralizedSolutionCharacterizationEqZeroK}
		R_{0}\Phi_{\nu}(t) = R_{0}\Phi_{\nu}(0) + \int_{0}^{t}\left(\sum_{j=1}^{m}\widetilde{A}^{(1)}_{1,1}R_{j}\Phi_{\nu}(s) + R_{0}\eta_{\nu}(s) \right)ds \quad \text{for} \ t \in [0,T],
	\end{equation}
	then $\Phi_{\nu}(\cdot)$ is a mild solution to \eqref{EQ: ControlSystemMCompoundDelay} on $[0,T]$. Indeed, taking the difference of such $\Phi_{\nu}$ with the true mild solution, it is not hard to see that it suffices to consider the case $\nu = 0$, $\eta_{\nu} = 0$, and $\Phi_{\nu}(0) = 0$ and establish $\Phi_{\nu}(\cdot) \equiv 0$. In this case, from \eqref{EQ: ProperFunctionSolutionAsWeightedFunction} with $k=1$ and \eqref{EQ: DelayCompoundAgreementRestrGeneral} with $k=1$, we obtain that the right-hand side of \eqref{EQ: GeneralizedSolutionCharacterizationEqZeroK} is a bounded linear operator acting on $R_{0}\Phi_{\nu}(\cdot)$ in an appropriate $L_{2}$-space, and it is a contraction provided that the space is considered over the time interval $[0,\varepsilon]$ with sufficiently small $\varepsilon$. Thus, $R_{0}\Phi_{\nu}(t) = 0$ for $t \in [0,\varepsilon]$ and, using  \eqref{EQ: ProperFunctionSolutionAsWeightedFunction} and \eqref{EQ: DelayCompoundAgreementRestrGeneral}, by induction from $k=1$ to $k=m$, we obtain $\Phi_{\nu}(t) = 0$ for $t \in [0,\varepsilon]$. Then the same argument can be applied in the interval $[\varepsilon,2\varepsilon]$ and so on.
\end{remark}
\section{Nonautonomous perturbations of additive compounds for delay equations}
\label{SEC: NonautonomousPerturbationsAdditiveCompounds}
In this section, we study cocycles $\Xi$ generated by delay equations in $\mathbb{R}^{n}$ over a semiflow $(\mathcal{P},\pi)$ on a complete metric space $\mathcal{P}$. In Section \ref{SUBSEC: DelayCompoundInfinitesimalDescription}, after introducing the class of equations in \eqref{EQ: DelayRnLinearized} and their evolutionary form in \eqref{EQ: DelayLinearCocAbsract} with a distinguished delay operator $A$, we aim to derive similar equations, see \eqref{EQ: DelayCompoundCocyclePertWedgeOperatorForm}, generating the $m$-fold compound cocycle $\Xi_{m}$ in $\mathcal{L}^{\wedge}_{m}$. To do this, it is necessary to introduce certain operators in terms of the space $\mathcal{L}^{\wedge}_{m}$. 

In \eqref{EQ: DelayCompoundCocyclePertWedgeOperatorForm}, the generator of $\Xi_{m}$ is described as a nonautonomous boundary perturbation of $A^{[\wedge m]}$. In Section \ref{SUBSEC: AssociatedLIPquadraticConstr}, we formulate linear inhomogeneous problems with quadratic constraints arising from some perturbations. In Section \ref{SUBSEC: DelayCompPropertiesOfComplexificatedProblem}, we consider associated infinite-horizon quadratic regulator problems. In Section \ref{SUBSEC: DelayCompoundFrequencyInequalities}, we introduce frequency conditions ensuring that certain dichotomy properties of the semigroup $G^{\wedge m}$ generated by $A^{[\wedge m]}$ are preserved for $\Xi_{m}$, as contained in Theorem \ref{TH: QuadraticFunctionalDelayCompoundTheorem}.

\subsection{Infinitesimal description of compound cocycles}
\label{SUBSEC: DelayCompoundInfinitesimalDescription}
Let $\Eta \coloneq \mathbb{R}^{r_{1}}$ and $\mathbb{M} \coloneq \mathbb{R}^{r_{2}}$, where $r_{1},r_{2} > 0$, be endowed with some inner products. Consider the class of nonautonomous delay equations in $\mathbb{R}^{n}$ over $(\mathcal{P},\pi)$ described by
\begin{equation}
	\label{EQ: DelayRnLinearized}
	\dot{x}(t) = \widetilde{A}x_{t} + \widetilde{B}F'(\pi^{t}(\wp))Cx_{t},
\end{equation}
where $\wp \in \mathcal{P}$; $\tau>0$ is a constant; $x(\cdot) \colon [-\tau,T] \to \mathbb{R}^{n}$ for some $T>0$ with $x_{t}(\theta) \coloneq x(t+\theta)$ for all $t \in [0,T]$ and $\theta \in [-\tau,0]$ denoting the $\tau$-history segment of $x(\cdot)$ at $t$; $\widetilde{A} \colon C([-\tau,0];\mathbb{R}^{n}) \to \mathbb{R}^{n}$ and $C \colon C([-\tau,0];\mathbb{R}^{n}) \to \mathbb{M}$ are bounded linear operators; $\widetilde{B} \colon \Eta \to \mathbb{R}^{n}$ is an $(n \times r_{1})$-times matrix, and $F' \colon \mathcal{P} \to \mathcal{L}(\mathbb{M};\Eta)$ is a continuous mapping\footnote{In fact, it suffices to consider $F'(\pi^{t}(\wp))$ as the mapping $\mathcal{P} \ni \wp \mapsto F'(\pi^{\cdot}\wp) \in L_{2}(0,T;\mathcal{L}(\mathbb{M};\Eta))$, which is continuous for any $T>0$. In other words, $F'(\cdot)$ suffices to be defined on the trajectories of $\pi$, rather than at the points from $\mathcal{P}$. Such a relaxation allows one to consider linearized equations over semiflows $\pi$ generated by delay equations in Hilbert spaces. In our case, the class of equations \eqref{EQ: DelayRnNonlinear}, which generate $\pi$ in applications, is smoothing in finite time, so any invariant set $\mathcal{P}$ lies in the space of continuous functions, where $F'(\cdot)$ is defined pointwise.} such that for some $\Lambda>0$ we have
\begin{equation}
	\label{EQ: LipschitzFprimeDelay}
	\|F'(\wp)\|_{\mathcal{L}(\mathbb{M};\Eta)} \leq \Lambda \qquad \text{for all} \ \wp \in \mathcal{P}.
\end{equation}
\begin{remark}
	\label{REM: DerivativeCocycleExample}
	Equations as \eqref{EQ: DelayRnLinearized} arise after the linearization of nonlinear nonautonomous delay equations over a semiflow $(\mathcal{Q},\vartheta)$ on a complete metric space $\mathcal{Q}$, which can be described as follows:
	\begin{equation}
		\label{EQ: DelayRnNonlinear}
		\dot{z} = \widetilde{A}z_{t} + \widetilde{B}F(\vartheta^{t}(q),Cz_{t}) + \widetilde{W}(\vartheta^{t}(q)),
	\end{equation}
    where $\widetilde{W} \colon \mathcal{Q} \to \mathbb{R}^{n}$ is a bounded continuous function (exterior forcing) and $F \colon \mathcal{Q} \times \mathbb{M} \to \Eta$ is a $C^{1}$-differentiable in the second argument continuous mapping satisfying
    \begin{equation}
    	|F(q,y_{1})-F(q,y_{2})|_{\Eta} \leq \Lambda |y_{1}-y_{2}|_{\mathbb{M}} \qquad \text{for all} \ q \in \mathcal{Q} \ \text{and} \ y_{1},y_{2} \in \mathbb{M}.
    \end{equation}
    For example, periodic equations are covered by the case when $(\mathcal{Q},\vartheta)$ is a periodic flow. In terms of \eqref{EQ: DelayRnLinearized}, we can take $\pi$ as the skew-product semiflow on $\mathcal{Q} \times C([-\tau,0];\mathbb{R}^{n})$ generated by \eqref{EQ: DelayRnNonlinear}, or its restriction to any closed positively invariant subset $\mathcal{P}$, and $F'(\wp) \coloneq F'(q,C\phi)$ for $\wp = (q,\phi) \in \mathcal{P}$, see \cite[Section 6.1]{Anikushin2023LyapExp}.
\end{remark}

Recall the Hilbert space $\mathbb{H} = L_{2}([-\tau,0];\mu;\mathbb{R}^{n})$ from \eqref{EQ: HilbertSpaceDelayEqDefinition} and consider the delay operator $A$ in $\mathbb{H}$ corresponding via \eqref{EQ: OperatorAScalarDelayEquations} to $\widetilde{A}$ from \eqref{EQ: DelayRnLinearized}. In terms of the restriction operators $R^{(1)}_{1}$ and $R^{(1)}_{0}$ defined in \eqref{EQ: RestrictionOperatorDelayTensor}, we associate with $\widetilde{B}$ from \eqref{EQ: DelayRnLinearized} a bounded linear operator $B \colon \Eta \to \mathbb{H}$, which is defined by $R^{(1)}_{0}B\eta \coloneq \widetilde{B} \eta$ and $R^{(1)}_{1}B\eta \coloneq 0$ for any $\eta \in \Eta$. 

There is a natural embedding of $\mathbb{E} \coloneq C([-\tau,0];\mathbb{R}^{n})$ into $\mathbb{H}$ that takes any $\psi \in \mathbb{E}$ to $\phi \in \mathbb{H}$ such that $R^{(1)}_{0}\phi = \psi(0)$ and $R^{(1)}_{1}\phi = \psi$. Identifying elements of $\mathbb{E}$ with their images under the embedding, we obtain $\mathcal{D}(A) \subset \mathbb{E}$. It is convenient to use the same notation for operators in $\mathbb{H}$ induced by the embedding from operators on $\mathbb{E}$. In particular, this will be used for the operator $C$, i.e., we set $C\phi \coloneq CR^{(1)}_{1}\phi$ for any $\phi \in \mathbb{E}$ considered as an element of $\mathbb{H}$.

Using the introduced notation, \eqref{EQ: DelayRnLinearized} can be viewed as an abstract evolution equation in $\mathbb{H}$ as follows:
\begin{equation}
	\label{EQ: DelayLinearCocAbsract}
	\dot{\xi}(t) = A\xi(t) + BF'(\pi^{t}(\wp))C\xi(t).
\end{equation}
It can be shown, see\footnote{Since \cite[Theorem 1]{Anikushin2022Semigroups} is stated only in terms of processes, it should be noted that all the required cocycle properties may be derived through the variation of constants formula and a priori integral estimates. See  \cite[Equation (1.10)]{Anikushin2022Semigroups} and its further use in \cite[Section 3]{Anikushin2022Semigroups}.} \cite[Theorem 1]{Anikushin2022Semigroups}, that \eqref{EQ: DelayLinearCocAbsract} generates a uniformly continuous and uniformly eventually compact linear cocycle $\Xi$ in $\mathbb{H}$ over $(\mathcal{P},\pi)$. Specifically, $\Xi^{t}(\wp,\xi_{0}) \coloneq \xi(t;\wp;\xi_{0})$, where $\xi(t;\wp;\xi_{0})$ for $t \geq 0$ is a generalized solution to \eqref{EQ: DelayLinearCocAbsract} on $[0,+\infty)$ with $\xi(0;\wp;\xi_{0}) = \xi_{0}$.

For what follows, we need to discuss the sense in which classical and generalized solutions exist. Regarding the existence of classical solutions in $\mathbb{H}$, we have the following lemma.
\begin{lemma}\cite[Theorem 1]{Anikushin2022Semigroups}
	\label{LEM: ClassicalSolutionsDelay}
	For any $\xi_{0} \in \mathcal{D}(A)$ and $\wp \in \mathcal{P}$, there exists a unique classical solution $\xi(\cdot)=\xi(\cdot;\wp;\xi_{0})$ to \eqref{EQ: DelayLinearCocAbsract} on $[0,+\infty)$ with $\xi(0)=\xi_{0}$, i.e., such that $\xi(\cdot) \in C^{1} ([0,+\infty);\mathbb{H}) \cap C([0,+\infty); \mathcal{D}(A))$ and $\xi(t)$ satisfies \eqref{EQ: DelayLinearCocAbsract} for all $t \geq 0$.
\end{lemma}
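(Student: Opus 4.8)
The plan is to reduce the inhomogeneous abstract equation \eqref{EQ: DelayLinearCocAbsract} to the setting of classical semigroup perturbation theory. First I would fix $\wp \in \mathcal{P}$ and set $f(t) := BF'(\pi^{t}(\wp))C\xi(t)$, observing that the right-hand side is a perturbation of $A$ by the time-dependent bounded-on-$\mathbb{E}$ operator $B F'(\pi^{t}(\wp)) C$. Since $C$ is only bounded on $\mathbb{E} = C([-\tau,0];\mathbb{R}^{n})$ (not on all of $\mathbb{H}$) but $\mathcal{D}(A) \subset \mathbb{E}$ with the embedding continuous (as noted after \eqref{EQ: OperatorAScalarDelayEquations}), the composition $\wp' \mapsto B F'(\wp') C$ is a bounded operator from $\mathcal{D}(A)$ (graph norm) into $\mathbb{H}$, with norm controlled uniformly by $\|B\| \cdot \Lambda \cdot \|C\|_{\mathcal{L}(\mathbb{E};\mathbb{M})}$ via \eqref{EQ: LipschitzFprimeDelay}. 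The key structural fact is that $\operatorname{ran}(B) \subset \mathbb{H}$ consists of functions supported on the $0$-face (i.e.\ $R^{(1)}_{1}B\eta = 0$), which is exactly the type of boundary perturbation handled by the well-posedness theory of \cite{Anikushin2022Semigroups}.

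Next I would invoke the variation of constants / mild solution framework: any candidate solution must satisfy
\begin{equation}
	\label{EQ: DelayVariationOfConstantsSingle}
	\xi(t) = G(t)\xi_{0} + \int_{0}^{t} G(t-s) B F'(\pi^{s}(\wp)) C \xi(s)\, ds,
\end{equation}
where $G$ is the $C_{0}$-semigroup generated by $A$. The plan is to solve \eqref{EQ: DelayVariationOfConstantsSingle} by a Picard iteration in the space $C([0,T_{0}];\mathbb{E})$ for a small $T_{0}>0$; here one uses that $G(t)$ maps $\mathbb{H}$ into $\mathbb{E}$ for $t>0$ with an integrable (in fact, by the smoothing properties of the delay semigroup, locally bounded for $t$ bounded away from $0$, and with $L_{1}$-type singularity at $t=0$) operator-norm bound, which makes the integral operator on the right of \eqref{EQ: DelayVariationOfConstantsSingle} a contraction on a short interval. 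A priori integral estimates of Gronwall type, using \eqref{EQ: LipschitzFprimeDelay}, then extend the local solution to all of $[0,+\infty)$ and furnish the uniform-in-finite-time bounds that yield the cocycle properties \nameref{DESC: UC1}, \nameref{DESC: UC2} referenced in the paragraph preceding the lemma.

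Finally, for the regularity claim $\xi(\cdot) \in C^{1}([0,+\infty);\mathbb{H}) \cap C([0,+\infty);\mathcal{D}(A))$ when $\xi_{0} \in \mathcal{D}(A)$, I would argue as follows: once the mild solution is known to lie in $C([0,+\infty);\mathbb{E})$, the inhomogeneity $f(t) = B F'(\pi^{t}(\wp)) C\xi(t)$ is a continuous $\mathbb{H}$-valued function of $t$; combined with $\xi_{0} \in \mathcal{D}(A)$, the standard regularity theorem for inhomogeneous Cauchy problems (Theorem 6.5, Chapter I in \cite{Krein1971}, as used elsewhere in the paper) gives that the mild solution is classical, i.e.\ continuously differentiable into $\mathbb{H}$ and continuous into $\mathcal{D}(A)$, and solves \eqref{EQ: DelayLinearCocAbsract} pointwise. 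Uniqueness follows from uniqueness of mild solutions to \eqref{EQ: DelayVariationOfConstantsSingle}, which is immediate from the contraction argument plus Gronwall. The main obstacle is the first step: justifying that the nonlocal, only-$\mathbb{E}$-bounded operator $C$ composed with the boundary operator $B$ is compatible with the smoothing action of $G$, so that \eqref{EQ: DelayVariationOfConstantsSingle} is well-posed in $C([0,T_{0}];\mathbb{E})$ rather than merely in $C([0,T_{0}];\mathbb{H})$ — but this is precisely the content of Theorem 1 in \cite{Anikushin2022Semigroups}, which may be cited.
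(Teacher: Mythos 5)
Your fallback — citing Theorem 1 of \cite{Anikushin2022Semigroups} — is in fact exactly what the paper does: the lemma is stated with that reference and no proof is given in the present text, so on that level your proposal is consistent with the paper. However, the self-contained plan you sketch has two genuine gaps. First, the mechanism you propose for the Picard iteration fails: the delay semigroup $G$ does \emph{not} map $\mathbb{H}$ into $\mathbb{E}$ for all $t>0$. For $0<t<\tau$ the state $G(t)\xi_{0}$ still carries the untouched part of the initial history, which for general $\xi_{0}\in\mathbb{H}$ is only $L_{2}$; there is no operator-norm bound of $G(t)$ from $\mathbb{H}$ to $\mathbb{E}$ on $(0,\tau)$, integrable or otherwise. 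What makes \eqref{EQ: DelayVariationOfConstantsSingle} well posed in $C([0,T_{0}];\mathbb{E})$ is not smoothing of $G(t)$ itself but the structure of the convolution term $\int_{0}^{t}G(t-s)B u(s)\,ds$ with $u$ continuous (it is the history segment of a solution of $\dot x=\widetilde{A}x_{t}+\widetilde{B}u(t)$ with zero data, hence continuous), i.e.\ precisely the structural/variation-of-constants description that Theorem 1 of \cite{Anikushin2022Semigroups} (or, for $m=1$, the structural Cauchy formula) encodes; the contraction should be run at the level of the underlying delay equation in $\mathbb{R}^{n}$, not via an $\mathbb{H}\to\mathbb{E}$ bound.

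Second, the regularity step is wrong as stated. For a general (non-analytic) $C_{0}$-semigroup, continuity of the inhomogeneity $f(t)=BF'(\pi^{t}(\wp))C\xi(t)$ together with $\xi_{0}\in\mathcal{D}(A)$ does \emph{not} give a classical solution; the theorem you invoke (Theorem 6.5, Chapter I in \cite{Krein1971}, which the paper applies only with $C^{1}$-in-time inhomogeneities) requires $f\in C^{1}$ or $f$ taking values in $\mathcal{D}(A)$ with $Af$ continuous. Here neither holds: $F'(\pi^{t}(\wp))$ is merely continuous in $t$, and $f(t)$ lies in $\operatorname{ran}B$, which consists of elements with vanishing $R^{(1)}_{1}$-component and hence is not contained in $\mathcal{D}(A)$. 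The classical differentiability in the lemma is a special feature of the delay structure: for $\xi_{0}=(\psi(0),\psi)$ with $\psi\in W^{1,2}$ one solves the delay equation \eqref{EQ: DelayRnLinearized} directly, gets $x\in C^{1}([0,\infty);\mathbb{R}^{n})$ with $x_{t}\in W^{1,2}$, and verifies by hand that $t\mapsto\xi(t)=(x(t),x_{t})$ is $C^{1}$ into $\mathbb{H}$ and continuous into $\mathcal{D}(A)$ — which is the content of the cited Theorem 1, not a consequence of the abstract inhomogeneous Cauchy theory with merely continuous forcing.
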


In \cite{Anikushin2022Semigroups}, generalized solutions are obtained by continuity from classical ones. However, a more meaningful way for understanding generalized solutions is provided by the structural Cauchy formula as follows\footnote{In this case, the formula reduces to the fact that the solutions are traveling waves, which is well known and has been used since classical works.}. For any $T>0$, let $\mathcal{Y}^{2}(0,T;L_{2}(-\tau,0;\mathbb{R}^{n}))$ be the space of $1$-adorned $L_{2}(-\tau,0;\mathbb{R}^{n})$-valued functions on $[0,T]$ defined in \eqref{EQ: NormInWindowsSpaces} with $\rho \equiv 1$ and $\mathbb{F} =\mathbb{R}^{n}$. Then, a continuous $\mathbb{H}$-valued function $\xi(\cdot)$ is a \textit{generalized solution} to \eqref{EQ: DelayLinearCocAbsract} on $[0,T]$ if $R^{(1)}_{1}\xi(\cdot) \in \mathcal{Y}^{2}(0,T;L_{2}(-\tau,0;\mathbb{R}^{n}))$, $\xi(t)$ satisfies
\begin{equation}
	\label{EQ: DelayVariationOfConstantsSingle}
	R^{(1)}_{0}\xi(t) = R^{(1)}_{0}\xi(0) + \int_{0}^{t}\left(\widetilde{A}R^{(1)}_{1}\xi(s) + BF'(\pi^{s}(\wp))CR^{(1)}_{1}\xi(s)\right)ds
\end{equation}
for any $t \in [0,T]$, and $(R^{(1)}_{1}\xi(s))(0) = R^{(1)}_{0}\xi(s)$ for almost all $s \in [0,T]$. By Theorem \ref{TH: PointwiseMeasurementOperatorOnAdornedSpace}, it is possible to interpret the functions $[0,T] \ni s \mapsto CR^{(1)}_{1}\xi(s) \in \mathbb{M}$ and $[0,T] \ni s \mapsto (R^{(1)}_{1}\xi(s))(0) \in \mathbb{R}^{n}$ as elements of appropriate $L_{2}$-spaces, so the above definition is correct. This classification of generalized solutions is a consequence of the existence of classical solutions, obviously satisfying the above definition and approximating the generalized ones. For the uniqueness of generalized solutions, one can use the contraction principle as in Remark \ref{REM: CauchyFormulaCompundInverseStatement}.

For $\xi(0) \in \mathbb{E}$, the function $x(\cdot) \colon [-\tau, T] \to \mathbb{R}^{n}$ given by 
\begin{equation}
	x(s) = \begin{cases}
		(R^{(1)}_{1}\xi_{0})(s) \qquad &\text{for} \ s \in [-\tau,0],\\
		R^{(1)}_{0}\xi(s) \qquad &\text{for} \ s \in [0,T],
	\end{cases}
\end{equation}
is a classical solution to \eqref{EQ: DelayRnLinearized} in the usual sense, see \cite{HaleLunel1993}.

By virtue of the above, \eqref{EQ: DelayLinearCocAbsract} can be viewed as an abstract evolutionary form of the equation $\frac{\partial}{\partial t}\phi(t,\theta) = \frac{\partial}{\partial \theta} \phi(t,\theta)$, where $t \in [0,T]$ and $\theta \in [-\tau,0]$, with a nonlocal nonautonomous Neumann boundary condition at $\theta = 0$ represented by \eqref{EQ: DelayRnLinearized}.

Let $\Xi_{m}$ the $m$-fold multiplicative compound of $\Xi$ in $\mathbb{H}^{\otimes m}$ defined in Section \ref{SEC: CocyclesSemigroupsAdditiveCompounds}. Below, we aim to describe $\Xi_{m}$ on the infinitesimal level in a form similar to \eqref{EQ: DelayLinearCocAbsract}. According to Theorem \ref{TH: TensorProductDelayDescription}, $\mathbb{H}^{\otimes m}$ is naturally isomorphic to the space $\mathcal{L}^{\otimes}_{m}$ from \eqref{EQ: L2SpaceTensorCompoundDefinition}, and the description will be given in terms of the latter space. For a better understanding of the forthcoming definitions, the reader is advised to start with the proof of Theorem \ref{TH: TensorCompoundCocycleDelayDescription} below and look at the resulting abstract form \eqref{EQ: DelayCompoundCocyclePertTensorOperatorForm}.

First, using the Riesz representation theorem, similarly to \eqref{EQ: DelayOperatorRieszRepresentation}, we obtain an $(r_{2} \times n)$-matrix-valued function $c(\cdot)$ of bounded variation on $[-\tau,0]$, which represents the operator $C$ from \eqref{EQ: DelayRnLinearized} as follows:
\begin{equation}
	C\phi = \int_{-\tau}^{0}dc(\theta)\phi(\theta) \qquad \text{for any} \ \phi \in C([-\tau,0];\mathbb{R}^{n}).
\end{equation}
For any $j \in \{1,\ldots,m \}$, we set $\mathbb{R}_{1,j} \coloneq (\mathbb{R}^{n})^{\otimes(j-1)}$, $\mathbb{R}_{2,j} \coloneq (\mathbb{R}^{n})^{\otimes (m-j)}$, and $\mathbb{M}_{j} \coloneq \mathbb{R}_{1,j} \otimes \mathbb{M} \otimes \mathbb{R}_{2,j}$. Then we associate with $c(\cdot)$ the operator-valued function $c_{j}(\cdot)$ of bounded variation on $[-\tau,0]$ defined by
\begin{equation}
	c_{j}(\theta) \coloneq \operatorname{Id}_{\mathbb{R}_{1,j}} \otimes c(\theta) \otimes \operatorname{Id}_{\mathbb{R}_{2,j}} \qquad \text{for} \ \theta \in [-\tau,0].
\end{equation}
Note that $c_{j}(\theta)$ is a linear operator from $(\mathbb{R}^{n})^{\otimes m}$ to $\mathbb{M}_{j}$.

Next, for $k \in \{0,\ldots,m-1\}$ and $J \in \{1,\ldots,k+1\}$, we define a linear operator $C^{(k)}_{j,J}$, which takes elements $\Phi$ from $C([-\tau,0]^{k+1}; (\mathbb{R}^{n})^{\otimes m})$ to $C([-\tau,0]^{k}; \mathbb{M}_{j})$ according to the formula:
\begin{equation}
	\label{EQ: DelayCjDefinition}
	(C^{(k)}_{j,J}\Phi)(\theta_{1}, \ldots, \hat{\theta}_{J}, \ldots, \theta_{k+1}) \coloneq \int_{-\tau}^{0}dc_{j}(\theta_{J})\Phi(\theta_{1},\ldots,\theta_{k+1}),
\end{equation}
for all $(\theta_{1}, \ldots, \hat{\theta}_{J}, \ldots, \theta_{k+1}) \in [-\tau,0]^{k}$. 

Recall the operators $T_{\sigma}$ and $\Theta_{\sigma}$ defined in \eqref{EQ: TranspoitionValuesDefinition} and \eqref{EQ: ThetaSigmaDefinition}, respectively. Clearly, we have $c_{j}(\theta)T_{\sigma} = T_{\sigma} c_{\sigma(j)}(\theta)$ and hence $C^{(k)}_{j,J} T_{\sigma} = T_{\sigma} C^{(k)}_{\sigma(j),J}$. Furthermore, for any $\widetilde{\sigma} \in \mathbb{S}_{k+1}$ we have
\begin{equation}
	\label{EQ: MeasurementPartialOperatorProperty}
	C^{(k)}_{j,J}\Theta^{(k+1)}_{\widetilde{\sigma}} = \Theta^{(k)}_{\widetilde{\sigma}_{J}} C^{(k)}_{j,\widetilde{\sigma}^{-1}(J)},
\end{equation}
where $\widetilde{\sigma}_{J} \in \mathbb{S}_{k}$ is obtained from $\widetilde{\sigma}$ by removing $\widetilde{\sigma}^{-1}(J)$th element in its preimage and $J$th element in its image. More rigorously, $\widetilde{\sigma}_{J} = (h^{(k)}_{J})^{-1} \circ \widetilde{\sigma} \circ h^{(k)}_{\widetilde{\sigma}^{-1}(J)}$, where $h^{(k)}_{i}$ is the order-preserving bijection from $\{1,\ldots,k\}$ to $\{ 1,\ldots,k+1 \} \setminus \{i\}$.

For any $j \in \{1,\ldots,m\}$, we also set $\Eta_{j} \coloneq \mathbb{R}_{1,j} \otimes \Eta \otimes \mathbb{R}_{2,j}$. Recall the boundary subspace $\partial_{j_{1}\ldots j_{k}}\mathcal{L}^{\otimes}_{m}$ from \eqref{EQ: TensorSpaceDelayCompoundDecompositionBoundarySubspaces}. For all $k \in \{0,\ldots,m-1\}$ and integers $1 \leq j_{1} < \cdots < j_{k} \leq m$, we associate with $\widetilde{B} \in \mathcal{L}(\mathbb{U};\mathbb{R}^{n})$ from \eqref{EQ: DelayRnLinearized} a bounded linear operator $B^{j_{1} \ldots j_{k}}_{j}$, which takes elements $\Phi_{\Eta}$ from $L_{2}( (-\tau,0)^{k}; \Eta_{j} )$ to $\partial_{j_{1} \ldots j_{k}} \mathcal{L}^{\otimes}_{m}$ according to the formula:
\begin{equation}
	\label{EQ: BoundaryOpeartorCompoundDelay}
	\left(B^{j_{1}\ldots j_{k}}_{j}\Phi_{\Eta}\right)(\theta_{1},\ldots,\theta_{m}) \coloneq (\operatorname{Id}_{\mathbb{R}_{1,j}} \otimes \widetilde{B} \otimes \operatorname{Id}_{ \mathbb{R}_{2,j} } )\Phi_{\Eta}(\theta_{j_{1}},\ldots,\theta_{j_{k}})
\end{equation}
for $\mu^{k}_{L}$-almost all $(\theta_{1},\ldots,\theta_{m}) \in \mathcal{B}_{j_{1}\ldots j_{k}}$.

We associate with $F'(\wp)$ from \eqref{EQ: DelayRnLinearized} a bounded linear operator $F'_{j}(\wp)$, which takes elements $\Phi_{\mathbb{M}}$ from $L_{2}((-\tau,0)^{k}; \mathbb{M}_{j} )$ to $L_{2}((-\tau,0)^{k};\Eta_{j})$ according to the formula:
\begin{equation}
	\label{EQ: DelayCompoundOperatorFJdefinition}
	(F'_{j}(\wp) \Phi_{\mathbb{M}})(\theta_{1},\ldots,\theta_{k}) \coloneq ( \operatorname{Id}_{\mathbb{R}_{1,j}} \otimes F'(\wp) \otimes \operatorname{Id}_{ \mathbb{R}_{2,j} } )\Phi_{\mathbb{M}}(\theta_{1},\ldots,\theta_{k})
\end{equation}
for almost all $(\theta_{1},\ldots,\theta_{k}) \in (-\tau,0)^{k}$. For convenience, we have omitted the dependence of $F'_{j}(\wp)$ on $k$ in the notation, and this should be understood from the context.

Note that either of $B^{j_{1} \ldots j_{k}}_{j}$ or $F'_{j}(\wp)$ is a bounded operator. It is only the operator $C$ that causes problems in the study of delay equations. Before we get into more details, let us describe, as promised, the compound cocycle $\Xi_{m}$ on the infinitesimal level.

For what follows, recall that summation over multi-indices $j_{1}\ldots j_{k}$ (with fixed $k$) are always taken over all $1 \leq j_{1} < \ldots < j_{k} \leq m$, and $j_{1}\ldots j_{k} \coloneq 0$ for $k=0$. Moreover, when summing over $j \not\in \{j_{1},\ldots, j_{k}\}$ (for a fixed multi-index $j_{1}\ldots j_{k}$), it is always assumed that $j \in \{1,\ldots,m\}$.
\begin{theorem}
	\label{TH: TensorCompoundCocycleDelayDescription}
	Let $\xi_{1}(t), \ldots, \xi_{m}(t)$ be solutions to \eqref{EQ: DelayLinearCocAbsract} with initial data $\xi_{1}(0)$, \ldots, $\xi_{m}(0)$ from $\mathcal{D}(A)$. Then the $\mathcal{L}^{\otimes}_{m}$-valued function $\Phi(t)$ of $t \geq 0$ given by
	\begin{equation}
		\Phi(t) \coloneq \xi_{1}(t) \otimes \cdots \otimes \xi_{m}(t) = \Xi^{t}_{m}(\wp, \xi_{1}(0)\otimes \cdots \otimes \xi_{m}(0))
	\end{equation}
    is $C^{1}$-differentiable and belongs to $C([0,\infty);\mathcal{D}(A^{[\otimes m]}))$. Furthermore, its restriction $R_{j_{1}\ldots j_{k}}\Phi$ belongs\footnote{Here we naturally embed the space $C([-\tau,0]^{k};(\mathbb{R}^{n})^{\otimes m})$ into $\mathcal{L}^{\otimes}_{m}$.} to $C([0,\infty); C([-\tau,0]^{k};(\mathbb{R}^{n})^{\otimes m}))$ for any $k \in \{0,\ldots,m\}$ and $1 \leq j_{1} < \cdots < j_{k} \leq m$, and for any $t \geq 0$ we have\footnote{Here, as before, $J(j)=J(j;j_{1}\ldots j_{k})$ denotes the integer $J$ such that $j$ is the $J$th element of the set $\{ j,j_{1},\ldots, j_{k} \}$ arranged by increasing.}
	\begin{equation}
		\label{EQ: CompoundInfinitesimalDescription}
		\begin{split}
			\dot{\Phi}(t) = A^{[\otimes m]}\Phi(t) + \\ + \sum_{k=0}^{m-1}\sum_{j_{1} \ldots j_{k}}\sum_{j \notin \{j_{1},\ldots,j_{k}\}} B^{j_{1}\ldots j_{k}}_{j}F'_{j}(\pi^{t}(\wp))C^{(k)}_{j,J(j)} R_{j j_{1}\ldots j_{k}}\Phi(t).
		\end{split}
	\end{equation}
\end{theorem}
\begin{proof}
	By Lemma \ref{LEM: ClassicalSolutionsDelay}, $\xi_{j}(\cdot)$ is a classical solution for any $j \in \{1, \ldots, m\}$. From this, for any $t \geq 0$ we have $\Phi(t) \in \mathcal{D}(A)^{\odot m} \subset \mathcal{D}(A^{[\otimes m]})$ and hence $R_{j_{1}\ldots j_{k}}\Phi(t) \in C([-\tau,0]^{k};(\mathbb{R}^{n})^{\otimes m})$, and the functions continuously depend on $t \geq 0$ in that spaces. Furthermore, $\Phi(\cdot)$ is a $C^{1}$-differentiable $\mathcal{L}^{\otimes}_{m}$-valued function, and straightforward calculations yield
	\begin{equation}
		\label{EQ: GeneratorCompoundCocycleDescriptionAuxiliaryEq}
		\dot{\Phi}(t) = A^{[\otimes m]}\Phi(t) + \Phi_{0}(t),
	\end{equation}
    where
    \begin{equation}
    	\label{EQ: GeneratorCompoundCocycleDescription}
    	\Phi_{0}(t) = \sum_{j=1}^{m} \xi_{1}(t) \otimes \cdots \otimes BF'(\pi^{t}(\wp))C\xi_{j}(t) \otimes \cdots \otimes \xi_{m}(t).
    \end{equation}
    Note that $BF'(\pi^{t}(\wp))C\xi_{j}(t)$ as an element of $\mathbb{H} = L_{2}([-\tau,0];\mu;\mathbb{R}^{n})$ vanishes in $(-\tau,0)$ or, in other words, after applying $R^{(1)}_{1}$. Thus, the $j$th summand in \eqref{EQ: GeneratorCompoundCocycleDescription} vanishes after applying $R_{j_{1} \ldots j_{k}}$, provided that $j \in \{ j_{1}, \ldots, j_{k} \}$. It is a straightforward verification that for $j \notin \{ j_{1}, \ldots, j_{k} \}$, the operator $R_{j_{1} \ldots j_{k}}$ applied to the $j$th summand in \eqref{EQ: GeneratorCompoundCocycleDescription} corresponds to the $j$th summand from the inner sum in \eqref{EQ: CompoundInfinitesimalDescription}, according to the definitions in \eqref{EQ: DelayCjDefinition}, \eqref{EQ: BoundaryOpeartorCompoundDelay}, and \eqref{EQ: DelayCompoundOperatorFJdefinition}.
\end{proof}

Next, we aim to write \eqref{EQ: CompoundInfinitesimalDescription} in an operator form. To do this, consider the \textit{control space} $\mathbb{U}^{\otimes}_{m}$ defined by the outer orthogonal sum as follows:
\begin{equation}
	\label{EQ: ControlSpaceDelayCompoundDefinition}
	\mathbb{U}^{\otimes}_{m} \coloneq \bigoplus_{k=0}^{m-1}\bigoplus_{j_{1} \ldots j_{k}} \bigoplus_{j \notin \{ j_{1},\ldots,j_{k} \}} L_{2}((-\tau,0)^{k};\mathbb{U}_{j}).
\end{equation}
For any element $\eta \in \mathbb{U}^{\otimes}_{m}$, we write $\eta = (\eta^{j}_{j_{1}\ldots j_{k}})$, meaning that the indices vary in appropriate ranges, and each $\eta^{j}_{j_{1}\ldots j_{k}}$ belongs to the corresponding summand from \eqref{EQ: ControlSpaceDelayCompoundDefinition}. 

Recalling the operators $B^{j_{1}\ldots j_{k}}_{j}$ from \eqref{EQ: BoundaryOpeartorCompoundDelay}, we define the \textit{control operator} $B^{\otimes}_{m} \in \mathcal{L}(\mathbb{U}^{\otimes}_{m};\mathcal{L}^{\otimes}_{m})$ as follows:
\begin{equation}
	\label{EQ: OperatorBDelayCompoundDefinition}
	B^{\otimes}_{m}\eta \coloneq  \sum_{k=0}^{m-1}\sum_{j_{1} \ldots j_{k}}\sum_{j \notin \{j_{1},\ldots,j_{k}\}} B^{j_{1}\ldots j_{k}}_{j}\eta^{j}_{j_{1}\ldots j_{k}} \qquad \text{for} \ \eta = (\eta^{j}_{j_{1}\ldots j_{k}}) \in \mathbb{U}^{\otimes}_{m}.
\end{equation}

\begin{remark}
	For $\nu = 0$, the linear inhomogeneous system \eqref{EQ: ControlSystemMCompoundDelay}, where $\eta(t)$ is exchanged with $B^{\otimes}_{m} \eta(t)$, is related to \eqref{EQ: CompoundInfinitesimalDescription} via the closed feedback $\eta(t)=( \eta^{j}_{j_{1}\ldots j_{k}}(t) )$ with
	\begin{equation}
		\label{EQ: ClosedFedbackDelayCompound}
		\eta^{j}_{j_{1}\ldots j_{k}}(t) \coloneq F'_{j}(\pi^{t}(\wp))C^{(k)}_{j,J(j)} R_{j j_{1}\ldots j_{k}}\Phi(t).
	\end{equation}
\end{remark}

Similarly to \eqref{EQ: ControlSpaceDelayCompoundDefinition}, we define the \textit{measurement space} $\mathbb{M}^{\otimes}_{m}$ by the outer orthogonal sum as follows:
\begin{equation}
	\label{EQ: MeasurementSpaceCompoundTensorDefinition}
	\mathbb{M}^{\otimes}_{m}\coloneq\bigoplus_{k=0}^{m-1}\bigoplus_{j_{1}\ldots j_{k}} \bigoplus_{j \notin \{j_{1},\ldots,j_{k}\}}L_{2}((-\tau,0)^{k};\mathbb{M}_{j}),
\end{equation}
We similarly write $M = (M^{j}_{j_{1}\ldots j_{k}})$ for an element $M$ of $\mathbb{M}^{\otimes}_{m}$.

Recalling the operators $C^{(k)}_{j,J}$ from \eqref{EQ: DelayCjDefinition} and the space $\mathbb{E}^{\otimes}_{m}$ from \eqref{EQ: SpaceEmDefinitionDelayCompoundGeneral}, we define the \textit{measurement operator} $C^{\otimes}_{m} \in \mathcal{L}(\mathbb{E}^{\otimes}_{m};\mathbb{M}^{\otimes}_{m})$ by
\begin{equation}
	\label{EQ: DelayMeasumerentAntisymmetricSumSpace}
	C^{\otimes}_{m}\Phi \coloneq \sum_{k=0}^{m-1}\sum_{j_{1}\ldots j_{k}}\sum_{j \notin \{j_{1},\ldots,j_{k}\}} C^{(k)}_{j,J(j)}R_{jj_{1}\ldots j_{k}}\Phi,
\end{equation}
where the sum is taken in $\mathbb{M}^{\otimes}_{m}$ according to \eqref{EQ: MeasurementSpaceCompoundTensorDefinition}, and the action of $C^{(k)}_{j,J(j)}$ is understood in the sense of Theorem \ref{TH: OperatorCExntesionOntoWDiagonal}.

Recalling the operators $F'_{j}(\wp)$ from \eqref{EQ: DelayCompoundOperatorFJdefinition}, we define an operator $F^{\otimes}_{m}(\wp)$, which takes elements $M = (M^{j}_{j_{1}\ldots j_{k}})$ from $\mathbb{M}^{\otimes}_{m}$ to $\mathbb{U}^{\otimes}_{m}$ according to the formula:
\begin{equation}
	\label{EQ: FPrimeTensorOperatorDef}
	F^{\otimes}_{m} M \coloneq \sum_{k=0}^{m-1}\sum_{j_{1}\ldots j_{k}} \sum_{ j \notin \{j_{1}\dots j_{k}\} } F'_{j}(\wp) M^{j}_{j_{1}\ldots j_{k}},
\end{equation}
where the overall sum is taken in $\mathbb{U}^{\otimes}_{m}$ according to \eqref{EQ: ControlSpaceDelayCompoundDefinition}.

Using the above notation, we can rewrite \eqref{EQ: CompoundInfinitesimalDescription} as follows:
\begin{equation}
	\label{EQ: DelayCompoundCocyclePertTensorOperatorForm}
	\dot{\Phi}(t) = A^{[\otimes m]}\Phi(t) + B^{\otimes}_{m} F^{\otimes}_{m}(\pi^{t}(\wp))C^{\otimes}_{m}\Phi(t).
\end{equation}
From \eqref{EQ: DelayCompoundCocyclePertTensorOperatorForm} it is clear that the generator of $\Xi_{m}$ in $\mathcal{L}^{\otimes}_{m}$ is given by a nonautonomous boundary perturbation of $A^{[\otimes m]}$.

\begin{remark}
	As in the case of \eqref{EQ: DelayVariationOfConstantsSingle}, using the structural Cauchy formula, one can also establish in what sense generalized solutions to \eqref{EQ: DelayCompoundCocyclePertTensorOperatorForm}, i.e., the trajectories of $\Xi_{m}$, can be understood.
\end{remark}

Now we move on to the antisymmetric space $\mathcal{L}^{\wedge}_{m}$. First, we write an analog of \eqref{EQ: DelayCompoundCocyclePertTensorOperatorForm} in this space.

To do this, consider $\eta=(\eta^{j}_{j_{1}\ldots j_{k}}) \in \mathbb{U}^{\otimes}_{m}$, satisfying the antisymmetric relations induced by \eqref{EQ: AntisymmetricRelationsGeneral} when applying the closed feedback \eqref{EQ: ClosedFedbackDelayCompound}. This provides for all $k \in \{0, \ldots, m-1\}$, $1 \leq j_{1} < \cdots < j_{k} \leq m$, $j \notin \{ j_{1},\ldots, j_{k} \}$, and $\sigma \in \mathbb{S}_{m}$ the relations
\begin{equation}
	\label{EQ: ControlSpaceRelationsAntisymmetricCompoundDelay}
	\eta^{j}_{j_{1}\ldots j_{k}} = (-1)^{\sigma}T_{\sigma} \Theta^{(k)}_{\bar{\sigma}} \eta^{\sigma(j)}_{\sigma(j_{\bar{\sigma}(1)})\ldots \sigma(j_{\bar{\sigma}(k)})}
\end{equation}
where $\bar{\sigma} \in \mathbb{S}_{k}$ is such that $\sigma(j_{\bar{\sigma}(1)}) < \cdots < \sigma(j_{ \bar{\sigma}(k)})$.

Recall that $k \in \{ 0, \ldots, m\}$ is called improper if the subspace $\partial_{k}\mathcal{L}^{\wedge}_{m}$ from \eqref{EQ: AntisymmetricSubspaceOverKfaces} is zero. Define a subspace $\mathbb{U}^{\wedge}_{m}$ of $\mathbb{U}^{\otimes}_{m}$ as follows:
\begin{equation}
	\label{EQ: AntisymmetricControlSpaceDefinition}
	\begin{split}
		\mathbb{U}^{\wedge}_{m}\coloneq\{ \eta=(\eta^{j}_{j_{1}\ldots j_{k}}) \in \mathbb{U}^{\otimes}_{m} \ | \ &\eta \ \text{satisfies} \ \eqref{EQ: ControlSpaceRelationsAntisymmetricCompoundDelay} \ \text{and} \ \\&\eta^{j}_{j_{1}\ldots j_{k}}=0 \ \text{for improper} \ k \}.
	\end{split}
\end{equation}

Let $B^{\wedge}_{m}$ denote the restriction of the operator $B^{\otimes}_{m}$ from \eqref{EQ: OperatorBDelayCompoundDefinition} to $\mathbb{U}^{\wedge}_{m}$. 
\begin{proposition}
	\label{PROP: ControlOperatorWedgeCorectness}
	Let $\eta=(\eta^{j}_{j_{1}\ldots j_{k}}) \in \mathbb{U}^{\otimes}_{m}$ satisfy all the antisymmetric relations from \eqref{EQ: ControlSpaceRelationsAntisymmetricCompoundDelay}. Then $B^{\otimes}_{m}\eta \in \mathcal{L}^{\wedge}_{m}$. In particular, $B^{\wedge}_{m} \in \mathcal{L}(\mathbb{U}^{\wedge}_{m};\mathcal{L}^{\wedge}_{m})$.
\end{proposition}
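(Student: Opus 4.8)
The plan is to verify directly that $\Phi := B^{\otimes}_{m}\eta$ is $\mu^{\otimes m}$-antisymmetric, so that $\Phi \in \mathcal{L}^{\wedge}_{m}$ by Theorem~\ref{TH: TensorProductDelayDescription}; the boundedness and codomain of $B^{\wedge}_{m}$ then follow formally. By Proposition~\ref{PROP: AntisymmetricRelationsGeneral} it suffices to check that every restriction $R_{j_{1}\ldots j_{k}}\Phi$ satisfies the antisymmetry relations~\eqref{EQ: AntisymmetricRelationsGeneral}. First I would compute these restrictions. Each summand $B^{j'_{1}\ldots j'_{k'}}_{j}\eta^{j}_{j'_{1}\ldots j'_{k'}}$ in~\eqref{EQ: OperatorBDelayCompoundDefinition} lies, by~\eqref{EQ: BoundaryOpeartorCompoundDelay}, in the single boundary subspace $\partial_{j'_{1}\ldots j'_{k'}}\mathcal{L}^{\otimes}_{m}$, so by orthogonality of the decomposition~\eqref{EQ: TensorSpaceDelayCompoundDecompositionBoundarySubspaces} only the terms over the face $\{j_{1},\ldots,j_{k}\}$ survive after applying $R_{j_{1}\ldots j_{k}}$, and~\eqref{EQ: BoundaryOpeartorCompoundDelay} yields $R_{j_{1}\ldots j_{k}}\Phi(\theta_{j_{1}},\ldots,\theta_{j_{k}}) = \sum_{j\notin\{j_{1},\ldots,j_{k}\}}\widetilde{B}_{j}\,\eta^{j}_{j_{1}\ldots j_{k}}(\theta_{j_{1}},\ldots,\theta_{j_{k}})$, where $\widetilde{B}_{j}:=\operatorname{Id}_{\mathbb{R}_{1,j}}\otimes\widetilde{B}\otimes\operatorname{Id}_{\mathbb{R}_{2,j}}\in\mathcal{L}(\mathbb{U}_{j};(\mathbb{R}^{n})^{\otimes m})$. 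Thus the whole matter is reduced to pointwise (fiberwise) linear algebra in $(\mathbb{R}^{n})^{\otimes m}$.

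The key auxiliary fact is the intertwining identity $T_{\sigma^{-1}}\widetilde{B}_{j}=\widetilde{B}_{\sigma(j)}T_{\sigma^{-1}}$, valid for every $\sigma\in\mathbb{S}_{m}$ and $j\in\{1,\ldots,m\}$, where on the right $T_{\sigma^{-1}}$ is the transposition operator carrying $\mathbb{U}_{j}$ onto $\mathbb{U}_{\sigma(j)}$ and on the left it is the transposition operator on $(\mathbb{R}^{n})^{\otimes m}$; it is checked at once on decomposable tensors, since both sides send $\psi_{1}\otimes\ldots\otimes\psi_{m}$ (with $\psi_{j}\in\mathbb{U}$ and the remaining factors in $\mathbb{R}^{n}$) to the tensor carrying $\widetilde{B}\psi_{j}$ in slot $\sigma(j)$ and $\psi_{\sigma^{-1}(p)}$ in every other slot $p$. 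Armed with this, I would fix $k$, a face $j_{1}<\ldots<j_{k}$ and a permutation $\sigma$, substitute the hypothesis~\eqref{EQ: ControlSpaceRelationsAntisymmetricCompoundDelay} for each component $\eta^{j}_{j_{1}\ldots j_{k}}$ into the formula above, move the transposition operators past the $\widetilde{B}_{j}$'s by the intertwining identity, and reindex the finite sum over $j$. Because~\eqref{EQ: ControlSpaceRelationsAntisymmetricCompoundDelay} is patterned exactly on~\eqref{EQ: AntisymmetricRelationsGeneral} (the induced argument permutation $\overline{\sigma}\in\mathbb{S}_{k}$ being the same on both sides), a routine bookkeeping computation identifies the outcome with the right-hand side of the corresponding instance of~\eqref{EQ: AntisymmetricRelationsGeneral} for $\Phi$. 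Since this holds for all faces and all $\sigma$, Proposition~\ref{PROP: AntisymmetricRelationsGeneral} gives $\Phi\in\mathcal{L}^{\wedge}_{m}$, proving the first assertion.

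The last claim of the proposition is then immediate: by~\eqref{EQ: AntisymmetricControlSpaceDefinition} every $\eta\in\mathbb{U}^{\wedge}_{m}$ satisfies~\eqref{EQ: ControlSpaceRelationsAntisymmetricCompoundDelay}, so $B^{\wedge}_{m}\eta=B^{\otimes}_{m}\eta\in\mathcal{L}^{\wedge}_{m}$, and $B^{\wedge}_{m}$ is bounded, being the restriction of the bounded operator $B^{\otimes}_{m}$ to the closed subspace $\mathbb{U}^{\wedge}_{m}$ (indeed $\|B^{\wedge}_{m}\eta\|_{\mathcal{L}^{\wedge}_{m}}=\|B^{\otimes}_{m}\eta\|_{\mathcal{L}^{\otimes}_{m}}$, the norm on $\mathcal{L}^{\wedge}_{m}$ being inherited). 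I expect the main obstacle to be exactly the index bookkeeping inside the second paragraph --- coordinating the permutation acting on the face $\{j_{1},\ldots,j_{k}\}$ with the permutation acting on the free slot $j$ where $\widetilde{B}$ operates, and keeping the $T_{\sigma}$ versus $T_{\sigma^{-1}}$ conventions of~\eqref{EQ: AntisymmetricRelationsGeneral} and~\eqref{EQ: ControlSpaceRelationsAntisymmetricCompoundDelay} consistent. There is no analytic difficulty whatsoever, as $\widetilde{B}$ is bounded, $B^{\otimes}_{m}$ is bounded by construction, and every identity involved is pointwise in the finite-dimensional fiber $(\mathbb{R}^{n})^{\otimes m}$.
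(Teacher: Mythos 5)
Your proposal is correct and follows essentially the same route as the paper's proof: the paper likewise expands $B^{\otimes}_{m}\eta$ via \eqref{EQ: OperatorBDelayCompoundDefinition}, substitutes the relations \eqref{EQ: ControlSpaceRelationsAntisymmetricCompoundDelay}, and commutes the $B^{j_{1}\ldots j_{k}}_{j}$ past $T_{\sigma^{-1}}$ (your intertwining identity $T_{\sigma^{-1}}\widetilde{B}_{j}=\widetilde{B}_{\sigma(j)}T_{\sigma^{-1}}$ is exactly the displayed identity at the end of the paper's proof), then reindexes the finite sum to recover the antisymmetry of $B^{\otimes}_{m}\eta$. Checking this face-by-face through Proposition \ref{PROP: AntisymmetricRelationsGeneral}, as you do, is equivalent to the paper's direct verification of \eqref{EQ: AntisymmetricRelationsPropPreparatory}, and the boundedness of $B^{\wedge}_{m}$ is obtained in both cases simply by restriction of the bounded operator $B^{\otimes}_{m}$.
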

\begin{proof}
	Given $1 \leq j_{1} < \cdots < j_{k} \leq m$ and $j \notin\{j_{1},\ldots,j_{k}\}$, let $\sigma$ and $\bar{\sigma}$ be as in \eqref{EQ: ControlSpaceRelationsAntisymmetricCompoundDelay}. Then
	\begin{equation}
		\begin{split}
			B^{j_{1}\ldots j_{k}}_{j} \eta^{j}_{j_{1}\ldots j_{k}} &= B^{j_{1}\ldots j_{k}}_{j} (-1)^{\sigma}T_{\sigma} \Theta^{(k)}_{\bar{\sigma}} \eta^{\sigma(j)}_{\sigma(j_{\bar{\sigma}(1)})\ldots \sigma(j_{\bar{\sigma}(k)})} \\
			&= (-1)^{\sigma}T_{\sigma} \Theta^{(m)}_{\sigma^{-1}} B^{\sigma(j_{\bar{\sigma}(1)})\ldots \sigma(j_{\bar{\sigma}(m)})}_{\sigma(j)}\eta^{\sigma(j)}_{\sigma(j_{\bar{\sigma}(1)})\ldots \sigma(j_{\bar{\sigma}(k)})}.
		\end{split}
	\end{equation}
	By applying the restriction operator $R_{j_{1}\ldots j_{k}}$ to both sides of the above identity, and then using \eqref{EQ: RestrictionAndThetaSigmaIdentity} and summing over all $j \notin \{ j_{1},\ldots,j_{k}\}$, we obtain the antisymmetric relations \eqref{EQ: AntisymmetricRelationsGeneral} for $\Phi = B^{\otimes}_{m}\eta$ according to \eqref{EQ: OperatorBDelayCompoundDefinition}. By Proposition \ref{PROP: AntisymmetricRelationsGeneral}, this is equivalent to $\Phi \in \mathcal{L}^{\wedge}_{m}$.
\end{proof}

\begin{remark}
	For $\eta=(\eta^{j}_{j_{1}\ldots j_{k}}) \in \mathbb{U}^{\otimes}_{m}$ satisfying \eqref{EQ: ControlSpaceRelationsAntisymmetricCompoundDelay}, it is not necessary that $\eta^{j}_{j_{1}\ldots j_{k}} = 0$ for improper $k$. However, the influence of such components on the system, i.e., $R_{j_{1}\ldots j_{k}}B^{\otimes}_{m}\eta \in \mathcal{L}^{\wedge}_{m}$, must vanish for improper $k$. This is why we exclude them from consideration in the control space $\mathbb{U}^{\wedge}_{m}$ --- otherwise they will result in rougher frequency conditions.
\end{remark}

Now consider elements $M=(M^{j}_{j_{1}\ldots j_{k}}) \in \mathbb{M}^{\otimes}_{m}$, which satisfy similar to \eqref{EQ: ControlSpaceRelationsAntisymmetricCompoundDelay} relations, i.e., for all $k \in \{0, \ldots, m-1\}$, $1 \leq j_{1} < \cdots < j_{k} \leq m$, $j \notin \{ j_{1},\ldots, j_{k} \}$, and $\sigma \in \mathbb{S}_{m}$, we have
\begin{equation}
	\label{EQ: MeasurementSpaceRelationsAntisymmetricCompoundDelay}
		M^{j}_{j_{1}\ldots j_{k}} = (-1)^{\sigma}T_{\sigma} \Theta^{(k)}_{\bar{\sigma}} M^{\sigma(j)}_{\sigma(j_{\bar{\sigma}(1)})\ldots \sigma(j_{\bar{\sigma}(k)})}
\end{equation}
where $\bar{\sigma} \in \mathbb{S}_{k}$ is such that $\sigma(j_{\bar{\sigma}(1)}) < \cdots < \sigma(j_{ \bar{\sigma}(k)})$.

We define $\mathbb{M}^{\wedge}_{m}$ as follows:
\begin{equation}
	\label{EQ: AntisymmetricMeasurementSpaceDefinition}
	\begin{split}
		\mathbb{M}^{\wedge}_{m}\coloneq\{ M=(M^{j}_{j_{1}\ldots j_{k}}) \in \mathbb{M}^{\otimes}_{m} \ | \ &M \ \text{satisfies} \ \eqref{EQ: MeasurementSpaceRelationsAntisymmetricCompoundDelay} \text{ and } \\&M^{j}_{j_{1}\ldots j_{k}}=0 \ \text{for improper} \ k \}.
	\end{split}
\end{equation}

Recall the space $\mathbb{E}^{\otimes}_{m}$ from \eqref{EQ: SpaceEmDefinitionDelayCompoundGeneral} and set $\mathbb{E}^{\wedge}_{m} \coloneq \mathcal{L}^{\wedge}_{m} \cap \mathbb{E}^{\otimes}_{m}$. Clearly, $\mathbb{E}^{\wedge}_{m}$ is a closed subspace of $\mathbb{E}^{\otimes}_{m}$. For $\Phi \in \mathbb{E}^{\wedge}_{m}$, we define $C^{\wedge}_{m}\Phi \in \mathbb{M}^{\otimes}_{m}$ as follows:
\begin{equation}
	(C^{\wedge}_{m}\Phi)^{j}_{j_{1}\ldots j_{k}} \coloneq \begin{cases}
		(C^{\otimes}_{m}\Phi)^{j}_{j_{1}\ldots j_{k}} \qquad &\text{if $k$ is proper},\\
		0 \qquad &\text{if $k$ is improper},
	\end{cases}
\end{equation} 
where the indices vary as above. In fact, we must have $C^{\wedge}_{m}\Phi \in \mathbb{M}^{\wedge}_{m}$, as the following proposition confirms.
\begin{proposition}
	\label{PROP: OperatorCWedgeDelayCompoundCorrectness}
	For any $\Phi \in \mathbb{E}^{\wedge}_{m}$, the element $M = C^{\otimes}_{m}\Phi$ of $\mathbb{M}^{\otimes}_{m}$ satisfies all the antisymmetric relations from \eqref{EQ: MeasurementSpaceRelationsAntisymmetricCompoundDelay}. In particular, $C^{\wedge}_{m} \in \mathcal{L}(\mathbb{E}^{\wedge}_{m};\mathbb{M}^{\wedge}_{m})$.
\end{proposition}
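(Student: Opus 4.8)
The plan is to follow the scheme of the proof of Proposition~\ref{PROP: ControlOperatorWedgeCorectness}, now pushing the antisymmetry relations through the measurement operators rather than the control operators. Fix $\Phi \in \mathbb{E}^{\wedge}_{m}$ and $\sigma \in \mathbb{S}_{m}$, and verify \eqref{EQ: MeasurementSpaceRelationsAntisymmetricCompoundDelay} component by component for $M = C^{\otimes}_{m}\Phi$, i.e. for $M^{j}_{j_{1}\ldots j_{k}} = C^{(k)}_{j,J(j)}R_{jj_{1}\ldots j_{k}}\Phi$. The two ingredients are: (i) the restrictions $R_{jj_{1}\ldots j_{k}}\Phi$ (to the $(k+1)$-face indexed by $\{j,j_{1},\ldots,j_{k}\}$) inherit the antisymmetry relations \eqref{EQ: AntisymmetricRelationsGeneral} from $\Phi \in \mathcal{L}^{\wedge}_{m}$, by Proposition~\ref{PROP: AntisymmetricRelationsGeneral}; and (ii) a commutation identity between the operators $C^{(k)}_{j,J(j)}$ and the transposition operators $T_{\sigma}$.

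For (ii), recall from \eqref{EQ: DelayCjDefinition} that $C^{(k)}_{j,J}$ integrates out the $J$-th spatial argument against $dc_{j}(\cdot)$, where $c_{j}(\theta) = \operatorname{Id}_{\mathbb{R}_{1,j}} \otimes c(\theta) \otimes \operatorname{Id}_{\mathbb{R}_{2,j}}$ acts only on the $j$-th tensor slot of $(\mathbb{R}^{n})^{\otimes m}$, while $T_{\sigma}$ permutes tensor slots and leaves the $\theta$-arguments untouched. Hence $T_{\sigma^{-1}}c_{\sigma(j)}(\theta)T_{\sigma} = c_{j}(\theta)$, and a direct computation on functions from $C([-\tau,0]^{k+1};(\mathbb{R}^{n})^{\otimes m})$ yields
\begin{equation*}
	C^{(k)}_{j,J(j)}T_{\sigma^{-1}} = T_{\sigma^{-1}}C^{(k)}_{\sigma(j),J(\sigma(j))},
\end{equation*}
where $J(\sigma(j))$ denotes the position of $\sigma(j)$ in the sorted set $\{\sigma(j),\sigma(j_{1}),\ldots,\sigma(j_{k})\}$. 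By the continuity of the extension of $C^{(k)}_{j,J}$ from $C([-\tau,0]^{k+1};(\mathbb{R}^{n})^{\otimes m})$ to $\mathbb{E}^{2}_{k+1}((\mathbb{R}^{n})^{\otimes m})$ furnished by Theorem~\ref{TH: OperatorCExntesionOntoWDiagonal}, this identity persists on all of $\mathbb{E}^{2}_{k+1}((\mathbb{R}^{n})^{\otimes m})$, and the relevant restrictions of $\Phi \in \mathbb{E}^{\wedge}_{m} \subset \mathbb{E}^{\otimes}_{m}$ lie in exactly that space by \eqref{EQ: SpaceEmDefinitionDelayCompoundGeneral}. (Alternatively, since $\Pi^{\wedge}_{m}\mathcal{D}(A)^{\odot m}$ is dense in $\mathbb{E}^{\wedge}_{m}$ in the $\mathbb{E}^{\otimes}_{m}$-norm — by Theorem~\ref{TH: CompoundDescriptionBasicAction}, \eqref{EQ: MyLovelyScaleDelayCompoundGeneral} and Proposition~\ref{PROP: EmbeddingWDiagToEmDelay} — and since $C^{\otimes}_{m}$ is bounded while the relations \eqref{EQ: MeasurementSpaceRelationsAntisymmetricCompoundDelay} cut out a closed subspace, it would suffice to verify everything on smooth decomposable generators, where \eqref{EQ: DelayCjDefinition} applies literally.)

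Combining (i) and (ii): apply \eqref{EQ: AntisymmetricRelationsGeneral} to express $R_{\sigma(j)\sigma(j_{1})\ldots\sigma(j_{k})}\Phi$ in terms of $R_{jj_{1}\ldots j_{k}}\Phi$ (picking up the sign $(-1)^{\sigma}$ and the operator $T_{\sigma^{-1}}$, after reordering the spatial arguments by $\overline{\sigma}$), then apply $C^{(k)}_{\sigma(j),J(\sigma(j))}$ and use the commutation identity to move $C$ past $T_{\sigma^{-1}}$ and turn it into $C^{(k)}_{j,J(j)}$ acting on $R_{jj_{1}\ldots j_{k}}\Phi$; the outcome is precisely \eqref{EQ: MeasurementSpaceRelationsAntisymmetricCompoundDelay} for $M = C^{\otimes}_{m}\Phi$. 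For the ``in particular'' part, $C^{\wedge}_{m}\Phi$ agrees with $C^{\otimes}_{m}\Phi$ on the proper-$k$ components and vanishes on the improper-$k$ ones; since the relations \eqref{EQ: MeasurementSpaceRelationsAntisymmetricCompoundDelay} never couple components with different $k$, the family $C^{\wedge}_{m}\Phi$ still satisfies all of them, and it has zero improper components by construction, so $C^{\wedge}_{m}\Phi \in \mathbb{M}^{\wedge}_{m}$ by \eqref{EQ: AntisymmetricMeasurementSpaceDefinition}. Boundedness of $C^{\wedge}_{m}$ is inherited from $C^{\otimes}_{m} \in \mathcal{L}(\mathbb{E}^{\otimes}_{m};\mathbb{M}^{\otimes}_{m})$, as $C^{\wedge}_{m}$ is the composition of $C^{\otimes}_{m}$ with the coordinate projection killing the improper-$k$ summands, restricted to the closed subspace $\mathbb{E}^{\wedge}_{m}$.

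The main obstacle I expect is purely combinatorial bookkeeping: keeping straight how the position index $J(j)$ and the induced spatial permutation $\overline{\sigma}$ transform when the index set $\{j,j_{1},\ldots,j_{k}\}$ is moved by $\sigma$, and confirming that the transposition operators $T_{\sigma}$ (acting on tensor values only) genuinely commute with the unbounded measurement operators in their extended $\mathbb{E}^{2}_{k+1}$-sense. The cleanest way to dispose of the latter is the density reduction to $\Pi^{\wedge}_{m}\mathcal{D}(A)^{\odot m}$, where \eqref{EQ: DelayCjDefinition} holds verbatim and the commutation becomes an elementary change-of-variables identity; the sign and index bookkeeping is then identical to that already carried out for $B^{\otimes}_{m}$ in the proof of Proposition~\ref{PROP: ControlOperatorWedgeCorectness}.
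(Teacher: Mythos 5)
Your proposal is correct and follows essentially the same route as the paper's proof: verify the relations \eqref{EQ: MeasurementSpaceRelationsAntisymmetricCompoundDelay} componentwise by combining the antisymmetry relations \eqref{EQ: AntisymmetricRelationsGeneral} for the restrictions of $\Phi$ with the slot-permutation identity $c_{\sigma(j)}(\theta)T_{\sigma}=T_{\sigma}c_{j}(\theta)$ (the paper carries out exactly this computation on continuous restrictions and handles general $\Phi$ by approximation, which your density/extension remarks cover). The observation that the relations do not couple components with different $k$, so that zeroing the improper components keeps $C^{\wedge}_{m}\Phi$ in $\mathbb{M}^{\wedge}_{m}$, matches the paper's definition of $C^{\wedge}_{m}$ and its boundedness claim.
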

\begin{proof}
	It suffices to consider only those $\Phi \in \mathbb{E}^{\wedge}_{m}$ for which all restrictions $R_{j_{1}\ldots j_{k}}\Phi$ are continuous functions, since for general $\Phi$ we can apply the approximation argument. Let $1 \leq j_{1} < \cdots < j_{k} \leq m$, $j \notin \{j_{1},\ldots, j_{k}\}$, $\sigma \in \mathbb{S}_{m}$, and $\bar{\sigma} \in \mathbb{S}_{k}$ be as in \eqref{EQ: MeasurementSpaceRelationsAntisymmetricCompoundDelay}. Consider $J=J(j;j_{1},\ldots,j_{k})$ and for any $l \in \{1,\ldots, k+1\}$ set
	\begin{equation}
		\widetilde{j}_{l} \coloneq \begin{cases}
			j_{l} \qquad &\text{if} \quad l < J,\\
			j \qquad &\text{if} \quad l=J,\\
			j_{l-1} \qquad &\text{if} \quad l > J.
		\end{cases}
	\end{equation}
	Let $\widetilde{\sigma} \in \mathbb{S}^{k+1}$ be such that $\sigma(\widetilde{j}_{\widetilde{\sigma}(1)}) < \cdots < \sigma(\widetilde{j}_{\widetilde{\sigma}(k+1)})$. Note that $\widetilde{\sigma}^{-1}(J) = J(\sigma(j)) \coloneq J(\sigma(j); \sigma(j_{1}), \ldots, \sigma(j_{k}) )$ and $\widetilde{\sigma}_{J} = \bar{\sigma}$ in terms of \eqref{EQ: MeasurementPartialOperatorProperty}. Then using \eqref{EQ: AntisymmetricRelationsGeneral} and \eqref{EQ: MeasurementPartialOperatorProperty}, we obtain
	\begin{equation}
		\begin{split}
			M^{j}_{j_{1}\ldots j_{k}} = C^{(k)}_{j,J} R_{j j_{1}\ldots j_{k}}\Phi &= 	C^{(k)}_{j,J} (-1)^{\sigma}T_{\sigma} \Theta^{(k+1)}_{\widetilde{\sigma}} R_{\sigma(j) \sigma(j_{1}) \ldots \sigma(j_{k})}\Phi \\
			&= (-1)^{\sigma}T_{\sigma} \Theta^{(k)}_{\bar{\sigma}} C^{(k)}_{\sigma(j),J(\sigma(j))} R_{\sigma(j) \sigma(j_{1}) \ldots \sigma(j_{k})}\Phi \\
			&= (-1)^{\sigma}T_{\sigma} \Theta^{(k)}_{\bar{\sigma}} M^{\sigma(j)}_{\sigma(j_{\bar{\sigma}(1)}) \ldots \sigma(j_{\bar{\sigma}(k)})}
		\end{split}
	\end{equation}
	that gives the relations \eqref{EQ: MeasurementSpaceRelationsAntisymmetricCompoundDelay}.
\end{proof}

Finally, let $F^{\wedge}_{m}$ be the restriction of $F^{\otimes}_{m}$ to $\mathbb{M}^{\wedge}_{m}$. Clearly, $F^{\wedge}_{m} \in \mathcal{L}(\mathbb{M}^{\wedge}_{m};\mathbb{U}^{\wedge}_{m})$. Using \eqref{EQ: DelayCompoundCocyclePertTensorOperatorForm} and Propositions \ref{PROP: ControlOperatorWedgeCorectness} and \ref{PROP: OperatorCWedgeDelayCompoundCorrectness}, we obtain the infinitesimal description  for the cocycle $\Xi_{m}$ in $\mathcal{L}^{\wedge}_{m}$ as follows:
\begin{equation}
	\label{EQ: DelayCompoundCocyclePertWedgeOperatorForm}
	\dot{\Phi}(t) = A^{[\wedge m]}\Phi(t) + B^{\wedge}_{m} F^{\wedge}_{m}(\pi^{t}(\wp))C^{\wedge}_{m}\Phi(t).
\end{equation}
This system will be used below to study the cocycle $\Xi_{m}$ in $\mathcal{L}^{\wedge}_{m}$ using the frequency theorem.

\subsection{Associated linear inhomogeneous problems with quadratic constraints}
\label{SUBSEC: AssociatedLIPquadraticConstr}

We associate with \eqref{EQ: DelayCompoundCocyclePertWedgeOperatorForm} the control system as follows:
\begin{equation}
\label{EQ: CompoundDelayControlSystem}
	\dot{\Phi}(t) = (A^{[\wedge m]} + \nu I) \Phi(t) + B^{\wedge}_{m}\eta(t),
\end{equation}
where $I$ denotes the identity operator in $\mathcal{L}^{\wedge}_{m}$, $\nu \in \mathbb{R}$ is fixed, and $\eta(\cdot) \in L_{2}(0,T;\mathbb{U}^{\wedge}_{m})$ for some $T>0$.

Similarly to \eqref{EQ: MyLovelyScaleDelayCompoundGeneral}, we have
\begin{equation}
	\label{EQ: MyLovelyScaleDelayCompoundAntiSymmetric}
	\mathcal{D}(A^{[\wedge m]}) \subset \mathbb{E}^{\wedge}_{m} \subset \mathcal{L}^{\wedge}_{m},
\end{equation}
where all the embeddings are continuous and dense in $\mathcal{L}^{\wedge}_{m}$.

To relate \eqref{EQ: CompoundDelayControlSystem} with \eqref{EQ: DelayCompoundCocyclePertWedgeOperatorForm} appropriately, we consider the quadratic form $\mathcal{F}(\Phi,\eta)$ of $\Phi \in \mathbb{E}^{\wedge}_{m}$ and $\eta \in \mathbb{U}^{\wedge}_{m}$ defined by
\begin{equation}
	\label{EQ: DelayCompoundQuadraticFormsAbstractGeneral}
	\mathcal{F}(\Phi,\eta) \coloneq \Lambda^{2}\|C^{\wedge}_{m} \Phi\|^{2}_{\mathbb{M}^{\wedge}_{m}} - \|\eta\|^{2}_{\mathbb{U}^{\wedge}_{m}}.
\end{equation}

For any $\wp \in \mathcal{P}$ and $\Phi \in \mathbb{E}^{\wedge}_{m}$, it can be seen from \eqref{EQ: LipschitzFprimeDelay} that the following relation holds:
\begin{equation}
	\label{EQ: QuadraticConstraintCompoundDelay}
	\begin{split}
		\mathcal{F}(\Phi,\eta) \geq 0 \qquad \text{if} \ \eta = F^{\wedge}_{m}(\wp) C^{\wedge}_{m}\Phi.
	\end{split}
\end{equation}
In this case, $\mathcal{F}$ is said to define a \textit{quadratic constraint} for \eqref{EQ: CompoundDelayControlSystem} associated with the closed feedback rule $\eta = F^{\wedge}_{m}(\wp) C^{\wedge}_{m}\Phi$. Under additional assumptions on $F'(\wp)$, one may consider more subtle quadratic constraints, see \cite[Section 4.1]{AnikushinRomanov2023FreqConds}.

We can generalize \eqref{EQ: DelayCompoundQuadraticFormsAbstractGeneral} as follows. Consider a bounded quadratic form $\mathcal{G}(M,\eta)$ of $M \in \mathbb{M}^{\wedge}_{m}$ and $\eta \in \mathbb{U}^{\wedge}_{m}$. Then we set
\begin{equation}
	\label{EQ: QuadraticFormRealCompoundDelayGeneral}
	\mathcal{F}(\Phi,\eta) \coloneq \mathcal{G}(C^{\wedge}_{m}\Phi,\eta) \qquad \text{for all} \ \Phi \in \mathbb{E}^{\wedge}_{m} \ \text{and} \ \eta \in \mathbb{U}^{\wedge}_{m}. 
\end{equation}
Let us describe the Hermitian extension $\mathcal{F}^{\mathbb{C}}$ of such $\mathcal{F}$. Recall that $\mathcal{F}^{\mathbb{C}}$ is a quadratic form on $\left(\mathbb{E}^{\wedge}_{m}\right)^{\mathbb{C}} \times \left(\mathbb{U}^{\wedge}_{m}\right)^{\mathbb{C}}$ given by $\mathcal{F}^{\mathbb{C}}(\Phi_{1} + i \Phi_{2},\eta_{1} + i\eta_{2}) \coloneq \mathcal{F}(\Phi_{1},\eta_{1}) + \mathcal{F}(\Phi_{2},\eta_{2})$ for any $\Phi_{1},\Phi_{2} \in \mathbb{E}^{\wedge}_{m}$ and $\eta_{1},\eta_{2} \in \mathbb{U}^{\wedge}_{m}$. First, any $\mathcal{G}$ as above is given by
\begin{equation}
	\mathcal{G}(M,\eta) = \langle M, \mathcal{G}_{1} M \rangle_{\mathbb{M}^{\wedge}_{m}} + \langle\eta,\mathcal{G}_{2}M\rangle_{\mathbb{U}^{\wedge}_{m}} + \langle\eta, \mathcal{G}_{3}\eta\rangle_{ \mathbb{U}^{\wedge}_{m}},
\end{equation}
where $\mathcal{G}_{1} \in \mathcal{L}(\mathbb{M}^{\wedge}_{m})$ and $\mathcal{G}_{3} \in \mathcal{L}( \mathbb{U}^{\wedge}_{m})$ are self-adjoint and $\mathcal{G}_{2} \in \mathcal{L}(\mathbb{M}^{\wedge}_{m}; \mathbb{U}^{\wedge}_{m})$. Then for any $\Phi \in (\mathbb{M}^{\wedge}_{m})^{\mathbb{C}}$ and $\eta \in (\mathbb{U}^{\wedge}_{m})^{\mathbb{C}}$, the value $\mathcal{F}^{\mathbb{C}}(\Phi,\eta)$ is given by
\begin{equation}
	\label{EQ: QuadraticFormComplexificationCompoundDelayGeneral}
	\begin{split}
	\mathcal{F}^{\mathbb{C}}(\Phi,\eta) = \mathcal{G}^{\mathbb{C}}(C^{\wedge}_{m}\Phi,\eta) =\\= \langle C^{\wedge}_{m}\Phi, \mathcal{G}_{1} C^{\wedge}_{m}\Phi\rangle_{(\mathbb{M}^{\wedge}_{m})^{\mathbb{C}}} + \operatorname{Re}\langle\eta, \mathcal{G}_{2} C^{\wedge}_{m}\Phi\rangle_{(\mathbb{U}^{\wedge}_{m})^{\mathbb{C}}} + \langle \eta, \mathcal{G}_{3}\eta\rangle_{(\mathbb{U}^{\wedge}_{m})^{\mathbb{C}}},
	\end{split}
\end{equation}
where for convenience we have omitted mentioning complexifications of the operators $C^{\wedge}_{m}$, $\mathcal{G}_{1}$, $\mathcal{G}_{2}$, and $\mathcal{G}_{3}$.

For the forthcoming applications, it is important that $\mathcal{F}$ is bounded on $\mathbb{E}^{\wedge}_{m} \times \mathbb{U}^{\wedge}_{m}$, and $\mathbb{E}^{\wedge}_{m}$ is an intermediate Banach space, as in \eqref{EQ: MyLovelyScaleDelayCompoundAntiSymmetric}. Since such $\mathcal{F}$ carries the unbounded nature of perturbations in \eqref{EQ: DelayCompoundCocyclePertWedgeOperatorForm}, a certain specificity of the unperturbed problem \eqref{EQ: CompoundDelayControlSystem} is required to control its perturbations. As discussed in the introduction, this has to do with regularity and structure.

Regarding regularity, Theorem \ref{TH: ResolventDelayCompoundBound} delivers uniform bounds for the resolvent of $A^{[\wedge m]}$ in $\mathcal{L}(\mathcal{L}^{\wedge}_{m};\mathbb{E}^{\wedge}_{m})$ on vertical lines, see Corollary \ref{COR: DelayCompoundRESProperty}. In contrast to parabolic problems, we do not have similar bounds in $\mathcal{L}(\mathcal{L}^{\wedge}_{m};\mathcal{D}(A^{[\wedge m]}))$, and this can already be seen for $m=1$.

Now we are going to exploit structural properties to guarantee the well-posedness of integral quadratic functionals associated with $\mathcal{F}$.

\subsection{Properties of the complexificated problem}
\label{SUBSEC: DelayCompPropertiesOfComplexificatedProblem}
In this section, we work with the complexificated problem. For convenience, we omit mention of complexifications of spaces and operators. It can be assumed (during this section) that they are all complexificated by default. We emphasize only the Hermitian extensions $\mathcal{F}^{\mathbb{C}}$ and $\mathcal{G}^{\mathbb{C}}$ of $\mathcal{F}$ and $\mathcal{G}$ from \eqref{EQ: QuadraticFormComplexificationCompoundDelayGeneral}, respectively.

First, we have the following uniform bounds for the resolvent of $A^{[\wedge m]}$ on vertical lines.
\begin{corollary}
	\label{COR: DelayCompoundRESProperty}
	For some $\nu_{0} \in \mathbb{R}$, suppose that the operator $A^{[\wedge m]}$ does not have eigenvalues on the line $-\nu_{0} + i \mathbb{R}$. Then
	\begin{equation}
		\sup_{ \omega \in \mathbb{R}}\left\| \left((A^{[\wedge m]} + \nu_{0}I) - i\omega I\right)^{-1} \right\|_{\mathcal{L}(\mathcal{L}^{\wedge}_{m};\mathbb{E}^{\wedge}_{m}) } < \infty.
	\end{equation}
\end{corollary}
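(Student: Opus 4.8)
The plan is to combine Theorem \ref{TH: ResolventDelayCompoundBound} (applied to $A^{[\wedge m]}$ and its regular points) with the hypothesis that $A^{[\wedge m]}$ has no eigenvalues on the line $-\nu_{0}+i\mathbb{R}$. First I would note that, since $G$ is eventually compact (see the discussion before Proposition \ref{PROP: OperatorDelayCompoundSpectra}), so is $G^{\wedge m}$ by Proposition \ref{PROP: EventuallyCompactCocycles}; hence by Theorem \ref{TH: SpectrumAdditiveCompounds} (via Proposition \ref{PROP: OperatorDelayCompoundSpectra}) the spectrum of $A^{[\wedge m]}$ consists only of eigenvalues. Therefore the absence of eigenvalues on $-\nu_{0}+i\mathbb{R}$ means every point $p=-\nu_{0}+i\omega$, $\omega\in\mathbb{R}$, is a regular (non-spectral) point of $A^{[\wedge m]}$, so the resolvent $\bigl((A^{[\wedge m]}+\nu_{0}I)-i\omega I\bigr)^{-1}=(A^{[\wedge m]}-pI)^{-1}$ is well-defined in $\mathcal{L}(\mathcal{L}^{\wedge}_{m})$ for all $\omega$, and Theorem \ref{TH: ResolventDelayCompoundBound} applies with that $p$.

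Next, by the last sentence of Theorem \ref{TH: ResolventDelayCompoundBound}, for each such $p$ we have
\begin{equation}
	\bigl\| (A^{[\wedge m]}-pI)^{-1} \bigr\|_{\mathcal{L}(\mathcal{L}^{\wedge}_{m};\mathbb{E}^{\wedge}_{m})} \leq C_{1}(p)\cdot \bigl\| (A^{[\wedge m]}-pI)^{-1} \bigr\|_{\mathcal{L}(\mathcal{L}^{\wedge}_{m})} + C_{2}(p),
\end{equation}
where $C_{1}(p),C_{2}(p)$ depend monotonically on $\max\{1,e^{-\tau\operatorname{Re}p}\}$ and on the fixed data $\tau,m,\operatorname{Var}_{[-\tau,0]}(\alpha)$. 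Since $\operatorname{Re}p=-\nu_{0}$ is constant along the line, $\max\{1,e^{-\tau\operatorname{Re}p}\}=\max\{1,e^{\tau\nu_{0}}\}$ is a single finite number independent of $\omega$, so $C_{1}:=\sup_{\omega}C_{1}(p)<\infty$ and $C_{2}:=\sup_{\omega}C_{2}(p)<\infty$; in fact $C_{1}(p)$ and $C_{2}(p)$ are just fixed constants on this line. Thus it remains to bound $\sup_{\omega\in\mathbb{R}}\| (A^{[\wedge m]}-pI)^{-1}\|_{\mathcal{L}(\mathcal{L}^{\wedge}_{m})}$, i.e. the $\mathcal{L}^{\wedge}_{m}$-operator-norm of the resolvent of $A^{[\wedge m]}+\nu_{0}I$ on the imaginary axis.

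For that I would invoke the standard fact that the resolvent $R(\lambda,A^{[\wedge m]})$ is holomorphic hence continuous in $\lambda$ on the resolvent set, and that for an eventually (norm) continuous $C_{0}$-semigroup the resolvent of its generator is uniformly bounded on vertical lines lying in the resolvent set — equivalently, $\|R(\lambda,A^{[\wedge m]})\|\to 0$ as $|\operatorname{Im}\lambda|\to\infty$ along a fixed vertical line (this is a classical consequence of eventual norm continuity; see e.g. the spectral theory in \cite{EngelNagel2000}, Chapter II, the same monograph used throughout the paper, applied to $G^{\wedge m}$ which is eventually compact, hence eventually norm continuous, by the proposition following Theorem \ref{TH: CompoundDescriptionBasicAction}). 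Combined with continuity in $\omega$ and the fact that the whole line $-\nu_{0}+i\mathbb{R}$ is in the resolvent set, this gives $\sup_{\omega\in\mathbb{R}}\|R(-\nu_{0}+i\omega,A^{[\wedge m]})\|_{\mathcal{L}(\mathcal{L}^{\wedge}_{m})}<\infty$. Feeding this into the displayed inequality yields the claimed finiteness of $\sup_{\omega}\|((A^{[\wedge m]}+\nu_{0}I)-i\omega I)^{-1}\|_{\mathcal{L}(\mathcal{L}^{\wedge}_{m};\mathbb{E}^{\wedge}_{m})}$.

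The main obstacle is the uniform boundedness of the $\mathcal{L}^{\wedge}_{m}$-resolvent on the vertical line: pointwise finiteness is immediate from the no-eigenvalue hypothesis, but uniformity as $\omega\to\pm\infty$ requires the decay-at-infinity property of resolvents of eventually norm continuous semigroups; alternatively one could derive this decay directly from the structural/Cauchy-formula representation of $(A^{[\wedge m]}-pI)^{-1}$ used in the proof of Theorem \ref{TH: ResolventDelayCompoundBound}, where the explicit $e^{p\zeta}$ and convolution kernels make the $\omega\to\infty$ decay visible via a Riemann–Lebesgue-type argument. Once that uniform $\mathcal{L}^{\wedge}_{m}$-bound is in hand, the passage to the $\mathcal{L}(\mathcal{L}^{\wedge}_{m};\mathbb{E}^{\wedge}_{m})$-bound is purely a matter of quoting Theorem \ref{TH: ResolventDelayCompoundBound} with the constants frozen along the line.
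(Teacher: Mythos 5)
Your proposal is correct and takes essentially the same route as the paper: apply Theorem \ref{TH: ResolventDelayCompoundBound} along the line $\operatorname{Re}p=-\nu_{0}$ (where the constants are uniform since they depend only on $\operatorname{Re}p$), and reduce everything to the uniform bound of $\left\|\left((A^{[\wedge m]}+\nu_{0}I)-i\omega I\right)^{-1}\right\|_{\mathcal{L}(\mathcal{L}^{\wedge}_{m})}$ over $\omega\in\mathbb{R}$. The paper obtains that bound exactly by the mechanism you gesture at — eventual compactness of $G^{\wedge m}$, a spectral decomposition into a finite-dimensional part to the right of the line and a stable part whose growth bound (equal to its spectral bound) is strictly below $-\nu_{0}$, plus the Laplace-transform representation of the resolvent — so your appeal to this fact for eventually norm-continuous (here eventually compact) semigroups matches the paper's argument, with only the caveat that uniform boundedness, not decay, is what is needed and what this argument yields.
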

\begin{proof}
	The statement follows from the analogue of \eqref{EQ: ResolventEstimateEmDelayCompound} for $A^{[\wedge m]}$ and the fact that
	\begin{equation}
	\label{EQ: ResolventAddCompBoundMainSpace}
	\sup_{ \omega \in \mathbb{R}}\left\| \left((A^{[\wedge m]} + \nu_{0}I) - i\omega I\right)^{-1} \right\|_{\mathcal{L}(\mathcal{L}^{\wedge}_{m})} < \infty,
    \end{equation}
    which follows from spectral decompositions and the representation of the resolvent through the Laplace transform of the semigroup, see \cite[Theorem 4.2]{Anikushin2020FreqDelay} for similar arguments.
\end{proof}

Next, we consider the extended control system associated with the pair $(A^{[\wedge m]} + \nu I,B^{\wedge}_{m})$ for some $\nu \in \mathbb{R}$, defined as follows:
\begin{equation}
	\label{EQ: ExtendedControlSysDelayCompoundWedge}
	\dot{\Phi}(t) = (A^{[\wedge m]} +\nu I)\Phi(t) + B^{\wedge}_{m}\eta(t) + \zeta(t).
\end{equation}
Given $T>0$, let $\mathfrak{M}^{T}_{\Phi_{0}}(\nu)$ be the space of processes of \eqref{EQ: ExtendedControlSysDelayCompoundWedge} on $[0,T]$ through $\Phi_{0} \in \mathcal{L}^{\wedge}_{m}$, i.e., the space of all $(\Phi(\cdot), (\eta(\cdot),\zeta(\cdot)))$ such that $\eta(\cdot) \in L_{2}(0,T;\mathbb{U}^{\wedge}_{m})$, $\zeta(\cdot) \in L_{2}(0,T;\mathcal{L}^{\wedge}_{m})$, and $\Phi(\cdot)$ is the mild solution to \eqref{EQ: ExtendedControlSysDelayCompoundWedge} with $\Phi(0) = \Phi_{0}$. For $T=\infty$, we simply write $\mathfrak{M}_{\Phi_{0}}(\nu)$ and additionally require\footnote{Since $A^{[\wedge m]}$ generates the $C_{0}$-semigroup $G^{\wedge m}$, and hence the growth exponent $\omega(G^{\wedge m})$ of $G^{\wedge m}$ is finite, it is clear that $\mathfrak{M}_{\Phi_{0}}(\nu)$ is not empty. Indeed, just consider $\eta(\cdot) \equiv 0$ and $\zeta(\cdot) \equiv \varkappa \Phi(\cdot)$ for any $\varkappa \in \mathbb{R}$ such that $\varkappa + \nu + \omega(G^{\wedge m}) < 0$. This is why we study the extended control system, since for the original system the analogous space of processes may be empty.} that $\Phi(\cdot)$ belong to $L_{2}(0,\infty;\mathcal{L}^{\wedge}_{m})$.

From this, we define the space $\mathcal{Z}^{T}_{0}(\nu)$ of processes on $[0,T]$ as follows:
\begin{equation}
	\mathcal{Z}^{T}_{0}(\nu) \coloneq \bigcup_{\Phi_{0} \in \mathcal{L}^{\wedge}_{m}}\mathfrak{M}^{T}_{\Phi_{0}}(\nu)
\end{equation}
Clearly, the norm
\begin{equation}
	\begin{split}
		\|(\Phi(\cdot),(\eta(\cdot),\zeta(\cdot)))\|^{2}_{\mathcal{Z}^{T}_{0}} \coloneq\\= |\Phi(0)|^{2}_{\mathcal{L}^{\wedge}_{m}} + \| \Phi(\cdot) \|^{2}_{L_{2}(0,\infty;\mathcal{L}^{\wedge}_{m})} + \| \eta(\cdot) \|^{2}_{L_{2}(0,\infty;\mathbb{U}^{\wedge}_{m})} + \| \zeta(\cdot) \|^{2}_{L_{2}(0,\infty;\mathcal{L}^{\wedge}_{m})}
	\end{split}
\end{equation} 
endows $\mathcal{Z}^{T}_{0}(\nu)$ with a Hilbert space structure. Similarly, we define such a space for $T=\infty$ and denote it simply as $\mathcal{Z}_{0}(\nu)$.

With any Hermitian form $\mathcal{F}^{\mathbb{C}}$ as in \eqref{EQ: QuadraticFormComplexificationCompoundDelayGeneral}, we associate the quadratic functional $\mathcal{J}^{T}_{\mathcal{F}^{\mathbb{C}}}$ on $\mathcal{Z}^{T}_{0}(\nu)$ as follows:
\begin{equation}
	\label{EQ: DelayCompIntQuadFuncFiniteT}
	\mathcal{J}^{T}_{\mathcal{F}^{\mathbb{C}}}(\Phi(\cdot),(\eta(\cdot),\zeta(\cdot))) \coloneq \int_{0}^{T} \mathcal{G}^{\mathbb{C}}\left((\mathcal{I}_{C^{\wedge}_{m}}\Phi)(t),\eta(t)\right)dt.
\end{equation}
Here, $(\mathcal{I}_{C^{\wedge}_{m}}\Phi)(t)$ is given for almost all $t \in [0,T]$ by the following sum in $\mathbb{M}^{\wedge}_{m}$:
\begin{equation}
	\label{EQ: DelayCompoundOperatorICWedgeDefinition}
	\left(\mathcal{I}_{C^{\wedge}_{m}}\Phi\right)(t) \coloneq \sum_{k=0}^{m-1}\sum_{j_{1}\ldots j_{k}}\sum_{j \notin \{j_{1}\ldots j_{k}\}} \left(\mathcal{I}_{C^{(k)}_{j,J(j)}}R_{jj_{1}\ldots j_{k}}\Phi\right)(t),
\end{equation}
where the operators $\mathcal{I}_{C^{(k)}_{j,J(j)}}$ are delivered by Theorem \ref{TH: PointwiseMeasurementOperatorOnAgalmanatedSpace} applied to $C^{(k)}_{j,J(j)}$ from \eqref{EQ: DelayCjDefinition}, $p=2$, and $\rho=\rho_{\nu}$ with $\rho_{\nu}(t)=e^{\nu t}$. By Theorem \ref{TH: StructuralCauchyFormulaCompoundDelay}, the functional $\mathcal{J}^{T}_{\mathcal{F}^{\mathbb{C}}}$ is well defined on $\mathcal{Z}^{T}_{0}(\nu)$. Furthermore, Theorem \ref{TH: DelayCompoundStructuralCauchyFormulaNormEstimate} gives a constant $C_{\mathcal{F}}>0$, which is independent of $T$, such that
\begin{equation}
	\int_{0}^{T}\left|\mathcal{G}^{\mathbb{C}}\left((\mathcal{I}_{C^{\wedge}_{m}}\Phi)(t),\eta(t)\right)\right|dt \leq C_{\mathcal{F}} \cdot \| (\Phi(\cdot),( \eta(\cdot),\zeta(\cdot))) \|^{2}_{\mathcal{Z}^{T}_{0}}.
\end{equation}
For $\Phi_{0} \in \mathcal{D}(A^{[\wedge m]})$, $\eta(\cdot) \in C^{1}([0,T];\mathbb{U}^{\wedge}_{m})$, and $\zeta(\cdot) \in C^{1}([0,T];\mathcal{L}^{\wedge}_{m})$, we have that
\begin{equation}
	\mathcal{J}^{T}_{\mathcal{F}^{\mathbb{C}}}(\Phi(\cdot),(\eta(\cdot),\zeta(\cdot))) = \int_{0}^{T}\mathcal{G}^{\mathbb{C}}\left(C^{\wedge}_{m}\Phi(t),\eta(t)\right)dt,
\end{equation}
what follows from \eqref{EQ: SolutionsDelayCompoundSmoothingSpaces}, \eqref{EQ: OperatorIGammaIdentityContAgalmanated}, and Proposition \ref{PROP: EmbeddingWDiagToEmDelay}.

The above considerations also apply to the case $T=\infty$. This yields a quadratic functional on $\mathcal{Z}_{0}(\nu)$, denoted by $\mathcal{J}_{\mathcal{F}^{\mathbb{C}}}$. We write it as follows:
\begin{equation}
	\label{EQ: QuadraticFunctionalDelayCompoundOnProcesses}
	\mathcal{J}_{\mathcal{F}^{\mathbb{C}}}(\Phi(\cdot),(\eta(\cdot),\zeta(\cdot))) \coloneq \int_{0}^{\infty}\mathcal{G}^{\mathbb{C}}\left((\mathcal{I}_{C^{\wedge}_{m}}\Phi)(t),\eta(t) \right) dt.
\end{equation}
Note that using the same symbol for the operator $\mathcal{I}_{C^{\wedge}_{m}}$ on different time intervals is justified by Lemma \ref{LEM: CommutativeOperatorIGammaEmbr} and Theorem \ref{TH: EmbeddingAgalmanatedIntoEmbracing}. Furthermore, let $\mathcal{R}_{T} \colon \mathcal{Z}_{0}(\nu) \to \mathcal{Z}^{T}_{0}$ be the operator that restricts functions to $[0,T]$. Then it is clear that $\mathcal{J}_{\mathcal{F}^{\mathbb{C}}}$ is the pointwise limit of $\mathcal{J}^{T}_{\mathcal{F}^{\mathbb{C}}} \circ \mathcal{R}_{T}$ as $T \to \infty$. Thus, the integral quadratic functionals are well defined on the spaces of processes and agree in the limit.

For any $(\Phi(\cdot),(\eta(\cdot),\zeta(\cdot))) \in \mathfrak{M}_{0}(\nu)$, consider the Fourier transforms $\hat{\Phi}(\cdot) \in L_{2}(\mathbb{R};\mathcal{L}^{\wedge}_{m})$, $\hat{\eta}(\cdot) \in L_{2}(\mathbb{R};\mathbb{U}^{\wedge}_{m})$, and $\hat{\zeta}(\cdot) \in L_{2}(\mathbb{R};\mathcal{L}^{\wedge}_{m})$ of $\Phi(\cdot)$, $\eta(\cdot)$, and $\zeta(\cdot)$, respectively, after extending them by zero to the negative semiaxis. Since $A^{[\wedge m]}$ is the generator of a $C_{0}$-semigroup, we have $\hat{\Phi}(\omega) \in \mathcal{D}(A^{[\wedge m]})$ for almost all $\omega \in \mathbb{R}$ and 
\begin{equation}
	\label{EQ: SolutionsCompoundDelayInhomogeneousFourierIdentity}
	i\omega \hat{\Phi}(\omega) = (A^{[\wedge m]}+\nu I)\hat{\Phi}(\omega) + B^{\wedge}_{m} \hat{\eta}(\omega) + \hat{\zeta}(\omega).
\end{equation}
We finalize this section, by establishing the following key lemma.
\begin{lemma}
	\label{EQ: FourierTransformCompoundDelay}
	For any $(\Phi(\cdot),(\eta(\cdot),\zeta(\cdot))) \in \mathfrak{M}_{0}(\nu)$ we have
	\begin{equation}
		\mathcal{J}_{\mathcal{F}^{\mathbb{C}}}(\Phi(\cdot),(\eta(\cdot),\zeta(\cdot))) = \int_{-\infty}^{+\infty}\mathcal{G}^{\mathbb{C}}(C^{\wedge}_{m}\hat{\Phi}(\omega),\hat{\eta}(\omega))d\omega.
	\end{equation}
\end{lemma}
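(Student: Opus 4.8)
The plan is to reduce the identity to Plancherel's theorem on $L_{2}(\mathbb{R})$, the essential point being that the Fourier transform intertwines the pointwise measurement operator $\mathcal{I}_{C^{\wedge}_{m}}$ from \eqref{EQ: DelayCompoundOperatorICWedgeDefinition} with the (extended) classical measurement operator $C^{\wedge}_{m}$ from \eqref{EQ: DelayMeasumerentAntisymmetricSumSpace}. First I would extend $\Phi(\cdot)$, $\eta(\cdot)$ and $\zeta(\cdot)$ by zero to the whole line; since $(\Phi(\cdot),(\eta(\cdot),\zeta(\cdot)))\in\mathfrak{M}_{0}(\nu)$ we have $\Phi(0)=0$ and $\Phi(\cdot)\in L_{2}(0,\infty;\mathcal{L}^{\wedge}_{m})$, so the zero-extensions are genuine elements of the corresponding $L_{2}(\mathbb{R};\cdot)$-spaces and, because $\Phi(0)=0$, no jump is introduced at the origin. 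By the structural Cauchy formula (Theorem \ref{TH: StructuralCauchyFormulaCompoundDelay}), on every $[0,T]$ each restriction $R_{jj_{1}\ldots j_{k}}\Phi$ is a $\rho_{\nu}$-agalmanated function; combining this with the $T$-uniform estimate of Theorem \ref{TH: DelayCompoundStructuralCauchyFormulaNormEstimate} and letting $T\to\infty$, the zero-extension of $R_{jj_{1}\ldots j_{k}}\Phi$ lies in the embracing space over $\mathbb{R}$ (via Theorem \ref{TH: EmbeddingAgalmanatedIntoEmbracing} and the identification of the finite- and infinite-horizon measurements provided by Lemma \ref{LEM: CommutativeOperatorIGammaEmbr}), and by Theorem \ref{TH: PointwiseMeasurementOperatorOnAgalmanatedSpace} each $\mathcal{I}_{C^{(k)}_{j,J(j)}}R_{jj_{1}\ldots j_{k}}\Phi$ belongs to $L_{2}(\mathbb{R};\mathbb{M}_{j})$; summing as in \eqref{EQ: DelayCompoundOperatorICWedgeDefinition} gives $\mathcal{I}_{C^{\wedge}_{m}}\Phi\in L_{2}(\mathbb{R};\mathbb{M}^{\wedge}_{m})$.

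Next I would compute $\widehat{\mathcal{I}_{C^{\wedge}_{m}}\Phi}$. Applying Theorem \ref{TH: EmbracingFourierCommutesIGamma} to each operator $\mathcal{I}_{C^{(k)}_{j,J(j)}}$ yields, for a.e. $\omega$, that $\widehat{\mathcal{I}_{C^{(k)}_{j,J(j)}}R_{jj_{1}\ldots j_{k}}\Phi}(\omega)$ equals $\mathcal{I}_{C^{(k)}_{j,J(j)}}$ of the Fourier transform of $R_{jj_{1}\ldots j_{k}}\Phi$; since the Fourier transform commutes with the bounded restriction operator $R_{jj_{1}\ldots j_{k}}$, the latter is $R_{jj_{1}\ldots j_{k}}\widehat{\Phi}(\omega)$. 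As noted before \eqref{EQ: SolutionsCompoundDelayInhomogeneousFourierIdentity}, $\widehat{\Phi}(\omega)\in\mathcal{D}(A^{[\wedge m]})\subset\mathbb{E}^{\wedge}_{m}$ for a.e. $\omega$ (cf. also Corollary \ref{COR: DelayCompoundRESProperty} and Theorem \ref{TH: ResolventDelayCompoundBound}), so $R_{jj_{1}\ldots j_{k}}\widehat{\Phi}(\omega)$ lies in the relevant diagonal Sobolev space and the pointwise measurement $\mathcal{I}_{C^{(k)}_{j,J(j)}}$ applied to this $t$-independent object reduces to the extended classical operator $C^{(k)}_{j,J(j)}$ of Theorem \ref{TH: OperatorCExntesionOntoWDiagonal} (see \eqref{EQ: DelayCjDefinition}). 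Summing over $k$, $j_{1}\ldots j_{k}$ and $j$ as in \eqref{EQ: DelayMeasumerentAntisymmetricSumSpace}, we obtain $\widehat{\mathcal{I}_{C^{\wedge}_{m}}\Phi}(\omega)=C^{\wedge}_{m}\widehat{\Phi}(\omega)$ for a.e. $\omega$.

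Finally I would expand the integrand of $\mathcal{J}_{\mathcal{F}^{\mathbb{C}}}$ from \eqref{EQ: QuadraticFunctionalDelayCompoundOnProcesses}: writing $M(t):=(\mathcal{I}_{C^{\wedge}_{m}}\Phi)(t)$, formula \eqref{EQ: QuadraticFormComplexificationCompoundDelayGeneral} gives
\[
\mathcal{G}^{\mathbb{C}}(M(t),\eta(t))=(M(t),\mathcal{G}_{1}M(t))+\operatorname{Re}(\eta(t),\mathcal{G}_{2}M(t))+(\eta(t),\mathcal{G}_{3}\eta(t)),
\]
and, since $\mathcal{G}_{1},\mathcal{G}_{2},\mathcal{G}_{3}$ are bounded and $M(\cdot),\eta(\cdot)$ are (zero-extended) $L_{2}(\mathbb{R};\cdot)$-functions, Plancherel's identity applies to each of the three sesquilinear terms (with the unitary normalization of the Fourier transform, for which it holds without a multiplicative constant, matching its use above). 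Hence $\int_{0}^{\infty}\mathcal{G}^{\mathbb{C}}(M(t),\eta(t))\,dt=\int_{-\infty}^{+\infty}\mathcal{G}^{\mathbb{C}}(\widehat{M}(\omega),\widehat{\eta}(\omega))\,d\omega$, and substituting $\widehat{M}(\omega)=C^{\wedge}_{m}\widehat{\Phi}(\omega)$ yields the assertion. I expect the second step to be the main obstacle: rigorously placing the zero-extended solution components in the embracing space over $\mathbb{R}$ and reconciling the two meanings of ``applying $C^{(k)}_{j,J(j)}$'' on the Fourier side — precisely the situation Theorems \ref{TH: EmbracingFourierCommutesIGamma} and \ref{TH: EmbeddingAgalmanatedIntoEmbracing} are designed for. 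An alternative route is to verify the identity first for regular data (where $\mathcal{I}_{C^{\wedge}_{m}}\Phi=C^{\wedge}_{m}\Phi$ and the intertwining is a plain Fubini argument) and then pass to the limit, using the a priori estimate of Theorem \ref{TH: DelayCompoundStructuralCauchyFormulaNormEstimate} for continuity of the left-hand side and Theorem \ref{TH: ResolventDelayCompoundBound} for that of the right-hand side; but the density of such regular processes in $\mathcal{Z}_{0}(\nu)$ would require extra care.
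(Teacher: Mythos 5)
Your proposal is correct and follows essentially the same route as the paper's own proof: Parseval/Plancherel applied to the bounded form $\mathcal{G}^{\mathbb{C}}$, followed by the identification $\widehat{\mathcal{I}_{C^{\wedge}_{m}}\Phi}(\omega)=C^{\wedge}_{m}\widehat{\Phi}(\omega)$ obtained from the structural Cauchy formula, the embedding of agalmanated functions into the embracing space, Theorem \ref{TH: EmbracingFourierCommutesIGamma}, and the fact that $\widehat{\Phi}(\omega)\in\mathcal{D}(A^{[\wedge m]})$ so that the pointwise measurement reduces to $C^{(k)}_{j,J(j)}$ via Proposition \ref{PROP: EmbeddingWDiagToEmDelay} and Corollary \ref{COR: RelaxedComputationMeasurementOperatorsOnEmbracing}. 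The only cosmetic difference is your explicit $T\to\infty$ passage and the closing remarks on an alternative density argument, which the paper does not need.
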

\begin{proof}
    From the boundedness of $\mathcal{G}^{\mathbb{C}}$ and the Parseval identity, we obtain
    \begin{equation}
    	\int_{0}^{\infty} \mathcal{G}^{\mathbb{C}}\left((\mathcal{I}_{C^{\wedge}_{m}}\Phi)(t),\eta(t) \right) dt = \int_{-\infty}^{+\infty}\mathcal{G}^{\mathbb{C}}\left( (\widehat{\mathcal{I}_{C^{\wedge}_{m}}\Phi})(\omega), \hat{\eta}(\omega) \right)d\omega,
    \end{equation}
    where $\widehat{\mathcal{I}_{C^{\wedge}_{m}}\Phi}$ is the Fourier transform in $L_{2}(\mathbb{R};\mathbb{M}^{\wedge}_{m})$ of $\mathcal{I}_{C^{\wedge}_{m}}\Phi \in L_{2}(0,\infty;\mathbb{M}^{\wedge}_{m})$ after extending the latter by zero to the negative semiaxis. It remains to show that $(\widehat{\mathcal{I}_{C^{\wedge}_{m}}\Phi})(\omega) = C^{\wedge}_{m}\hat{\Phi}(\omega)$ for almost all $\omega \in \mathbb{R}$. 
    
    From \eqref{EQ: DelayCompoundOperatorICWedgeDefinition} with $T = \infty$, we obtain
    \begin{equation}
    	\label{EQ: DelayCompoundFourierLemmaICwedgeDecomp}
    	\left(\widehat{\mathcal{I}_{C^{\wedge}_{m}}\Phi}\right)(\omega) \coloneq \sum_{k=0}^{m-1}\sum_{j_{1}\ldots j_{k}}\sum_{j \notin \{j_{1}\ldots j_{k}\}} \widehat{\left(\mathcal{I}_{C^{(k)}_{j,J(j)}}R_{jj_{1}\ldots j_{k}}\Phi\right)}(\omega),
    \end{equation}
    where the hats denote Fourier transforms in appropriate spaces.
    
    Let $L_{2}$ stand for $L_{2}((-\tau,0)^{k+1};(\mathbb{C}^{n})^{\otimes m} )$ for a given $k \in \{0,\ldots,m-1\}$. By Theorem \ref{TH: StructuralCauchyFormulaCompoundDelay}, the restriction $R_{jj_{1}\ldots j_{k}}\Phi$ belongs to the space $\mathcal{A}^{2}_{\rho_{\nu}}(0,\infty; L_{2} )$, which is continuously embedded into $\mathcal{E}_{2}(0,\infty;L_{2})$, thanks to Theorem \ref{TH: EmbeddingAgalmanatedIntoEmbracing}. Moreover, the latter space is embedded into $\mathcal{E}_{2}(\mathbb{R};L_{2})$ by extending functions by zero to the negative semiaxis. Up to the embedding, by Theorem \ref{TH: EmbracingFourierCommutesIGamma}, the Fourier transform $R_{jj_{1}\ldots j_{k}}\hat{\Phi}$ of $R_{jj_{1}\ldots j_{k}}\Phi$ also belongs to $\mathcal{E}_{2}(\mathbb{R};L_{2})$. From \eqref{EQ: SolutionsCompoundDelayInhomogeneousFourierIdentity} we have $\hat{\Phi}(\cdot) \in L_{2,loc}(\mathbb{R};\mathcal{D}(A^{[\wedge m]}))$ and hence, by Theorem \ref{TH: CompoundDelayDomainDescription}, $R_{jj_{1}\ldots j_{k}}\hat{\Phi}(\cdot)$ belongs to $L_{2,loc}(\mathbb{R};\mathcal{W}^{2}_{D}((-\tau,0)^{k+1};(\mathbb{C}^{n})^{\otimes m}))$ for any indices as in \eqref{EQ: DelayCompoundFourierLemmaICwedgeDecomp}. Finally, Theorem \ref{TH: EmbracingFourierCommutesIGamma}, Proposition \ref{PROP: EmbeddingWDiagToEmDelay}, and Corollary \ref{COR: RelaxedComputationMeasurementOperatorsOnEmbracing} give that
    \begin{equation}
    	\widehat{\left(\mathcal{I}_{C^{(k)}_{j,J(j)}}R_{jj_{1}\ldots j_{k}}\Phi\right)}(\omega) = \left(\mathcal{I}_{C^{(k)}_{j,J(j)}}R_{jj_{1}\ldots j_{k}}\hat{\Phi}\right)(\omega) = C^{(k)}_{j,J(j)}R_{jj_{1}\ldots j_{k}}\hat{\Phi}(\omega)
    \end{equation}
    for almost all $\omega \in \mathbb{R}$. According to \eqref{EQ: DelayCompoundFourierLemmaICwedgeDecomp}, this gives $(\widehat{\mathcal{I}_{C^{\wedge}_{m}}\Phi})(\omega) = C^{\wedge}_{m}\hat{\Phi}(\omega)$ for almost all $\omega \in \mathbb{R}$.
\end{proof}

\subsection{Frequency inequalities for spectral comparison}
\label{SUBSEC: DelayCompoundFrequencyInequalities}
In this section, we return to the context of real spaces and operators.

With any quadratic form $\mathcal{F}$, as in \eqref{EQ: QuadraticFormRealCompoundDelayGeneral}, and $\nu_{0} \in \mathbb{R}$ such that the vertical line $-\nu_{0} + i \mathbb{R}$ avoids the spectrum of $A^{[\wedge m]}$, we associate the following frequency inequality:
\begin{description}[before=\let\makelabel\descriptionlabel]
	\item[\textbf{(FI)}\refstepcounter{desccount}\label{DESC: FREQINEQ}] For some $\delta>0$ and any $p \in \mathbb{C}$ with $\operatorname{Re}p = -\nu_{0}$, we have
	\begin{equation}
		\label{EQ: FreqIneqDelayCompoundGeneralForm}
		\mathcal{F}^{\mathbb{C}}(-(A^{[\wedge m]} - p I)^{-1}B^{\wedge}_{m}\eta , \eta ) \leq -\delta \left|\eta\right|^{2}_{(\mathbb{U}^{\wedge}_{m})^{\mathbb{C}}} \qquad \text{for all} \ \eta \in \left(\mathbb{U}^{\wedge}_{m}\right)^{\mathbb{C}}.
	\end{equation}
\end{description}

One can describe \eqref{EQ: FreqIneqDelayCompoundGeneralForm} in terms of the transfer operator $W(p) \coloneq C^{\wedge}_{m}(A^{[\wedge m]} - p I)^{-1}B^{\wedge}_{m}$, defined at least for regular points $p \in \mathbb{C}$ of $A^{[\wedge m]}$. Note that $W(p)$ is a bounded linear operator between the complexifications $\left(\mathbb{U}^{\wedge}_{m}\right)^{\mathbb{C}} = \mathbb{U}^{\wedge}_{m} \otimes \mathbb{C}$ and $\left( \mathbb{M}^{\wedge}_{m} \right)^{\mathbb{C}} = \mathbb{M}^{\wedge}_{m} \otimes \mathbb{C}$. Again, for convenience we have omitted mentioning the complexifications of $A^{[\wedge m]}$, $B^{\wedge}_{m}$, and $C^{\wedge}_{m}$. Then it is clear that \eqref{EQ: FreqIneqDelayCompoundGeneralForm} is equivalent to 
\begin{equation}
	\label{EQ: FreqIneqDelayCompoundGeneralFormViaGW}
	\sup_{ \omega \in \mathbb{R}} \mathcal{G}^{\mathbb{C}}(-W(-\nu_{0}+i\omega)\eta,\eta ) \leq -\delta \left|\eta\right|^{2}_{\left(\mathbb{U}^{\wedge}_{m}\right)^{\mathbb{C}}} \qquad \text{for all} \ \eta \in \left(\mathbb{U}^{\wedge}_{m}\right)^{\mathbb{C}}.
\end{equation}

Recall that $A^{[\wedge m]}$ generates an eventually compact semigroup $G^{\wedge m}$, and the spectrum of $A^{[\wedge m]}$ is completely described in Proposition \ref{PROP: OperatorDelayCompoundSpectra} through the spectrum of $A$. By \cite[Chapter V, Theorem 3.1]{EngelNagel2000}, for any $\nu_{0} \in \mathbb{R}$ there are the finite-dimensional spectral subspace $\mathcal{L}^{u}_{m}(\nu_{0})$ of $A^{[\wedge m]}$ corresponding to the eigenvalues $\lambda$ with $\operatorname{Re}\lambda > -\nu_{0}$ and the complementary spectral subspace $\mathcal{L}^{s}_{m}(\nu_{0})$, i.e., $\mathcal{L}^{\wedge}_{m} = \mathcal{L}^{u}_{m}(\nu_{0}) \oplus \mathcal{L}^{s}_{m}(\nu_{0})$. Both spectral subspaces are positively invariant with respect to $G^{\wedge m}$, and the growth bounds of $G^{\wedge m}$ on them are determined by the spectral bounds of the corresponding restrictions of $A^{[\wedge m]}$. In particular, for any $\varepsilon > 0$ there exists $M_{\varepsilon}>0$ such that for all $t \geq 0$ the following holds:
\begin{equation}
	\label{EQ: DelayCompoundLinearExponentialDichotomy}
	\begin{split}
		\left|e^{\nu_{0} t}G^{\wedge m}(t) \Phi_{0} \right|_{\mathcal{L}^{\wedge}_{m}} &\leq M_{\varepsilon} e^{-\varepsilon t}|\Phi_{0}|_{\mathcal{L}^{\wedge}_{m}} \qquad \text{for all} \ \Phi_{0} \in \mathcal{L}^{s}_{m}(\nu_{0}),\\
		\left|e^{-\nu_{0} t}G^{\wedge m}(-t) \Phi_{0} \right|_{\mathcal{L}^{\wedge}_{m}} &\leq M_{\varepsilon} e^{-\varepsilon t}|\Phi_{0}|_{\mathcal{L}^{\wedge}_{m}} \qquad \text{for all} \ \Phi_{0} \in \mathcal{L}^{u}_{m}(\nu_{0}),\\
	\end{split}
\end{equation}
where the past $G^{\wedge m}(-t) \Phi_{0}$ of $\Phi_{0} \in \mathcal{L}^{u}_{m}(\nu_{0})$ on $\mathcal{L}^{u}_{m}(\nu_{0})$ with respect to $G^{\wedge m}$ is uniquely determined since $\mathcal{L}^{u}_{m}(\nu_{0})$ is finite dimensional.

For the next theorem, we assume that $\mathcal{F}$ has the form as in \eqref{EQ: QuadraticFormRealCompoundDelayGeneral}, satisfies \eqref{EQ: QuadraticConstraintCompoundDelay}, and $\mathcal{F}(\Phi,0) \geq 0$ for all $\Phi \in \mathbb{E}^{\wedge}_{m}$. For brevity, any such $\mathcal{F}$ is called \textit{admissible}.
\begin{theorem}
	\label{TH: QuadraticFunctionalDelayCompoundTheorem}
	For an admissible quadratic form $\mathcal{F}$, let the frequency inequality in
	\nameref{DESC: FREQINEQ} be satisfied for some $\nu_{0} \in \mathbb{R}$. Then there exists a bounded self-adjoint operator $P$ in $\mathcal{L}^{\wedge}_{m}$ such that for its quadratic form $V(\Phi)\coloneq\langle\Phi,P\Phi\rangle_{\mathcal{L}^{\wedge}_{m}}$ and some $\delta_{V}>0$, the cocycle $\Xi_{m}$ in $\mathcal{L}^{\wedge}_{m}$ generated by \eqref{EQ: DelayCompoundCocyclePertWedgeOperatorForm} satisfies
    \begin{equation}
    	\label{EQ: QuadraticIneqDelayCompound}
    	e^{2\nu_{0} t}V( \Xi^{t}_{m}(\wp, \Phi ) ) - V(\Phi) \leq -\delta_{V} \int_{0}^{t} e^{2\nu_{0} s} \left| \Xi^{s}_{m}(\wp,\Phi) \right|^{2}_{\mathcal{L}^{\wedge}_{m}}ds.
    \end{equation}
    for any $t \geq 0$, $\wp \in \mathcal{P}$, and $\Phi \in \mathcal{L}^{\wedge}_{m}$. 
    
    Furthermore, $V(\cdot)$ is positive on $\mathcal{L}^{s}_{m}(\nu_{0})$ and negative on $\mathcal{L}^{u}_{m}(\nu_{0})$, i.e., $V(\Phi) > 0$ for any nonzero $\Phi \in \mathcal{L}^{s}_{m}(\nu_{0})$ and $V(\Phi) < 0$ for any nonzero $\Phi \in \mathcal{L}^{u}_{m}(\nu_{0})$.
\end{theorem}
\begin{proof}
	We need to show the validity of the conditions of \cite[Theorem 2.1]{Anikushin2020FreqDelay}, designated therein as \textbf{(RES)}, \textbf{(FT)}, and \textbf{(QF)}. First, in terms of the referenced theorem, we set the subspaces $\mathbb{E}_{0}$, $\mathbb{H}$, and $\mathbb{W}$ to be equal to $\mathcal{L}^{\wedge}_{m}$ and take $\mathbb{E}$ to be equal to $\mathbb{E}^{\wedge}_{m}$ from \eqref{EQ: MyLovelyScaleDelayCompoundAntiSymmetric}. Then the validity of \textbf{(RES)} follows from Corollary \ref{COR: DelayCompoundRESProperty}, \textbf{(FT)} follows from Lemma \ref{EQ: FourierTransformCompoundDelay}, and \textbf{(QF)} is discussed below \eqref{EQ: QuadraticFunctionalDelayCompoundOnProcesses}. This, together with the fulfillment of the frequency inequality in
	\nameref{DESC: FREQINEQ}, constitutes the conditions of the referenced theorem. 
	
	Then, \cite[Theorem 2.1]{Anikushin2020FreqDelay} yield the existence of a bounded self-adjoint operator $P$ in $\mathcal{L}^{\wedge}_{m}$ such that for its quadratic form $V(\Phi) \coloneq \langle\Phi,P\Phi\rangle_{\mathcal{L}^{\wedge}_{m}}$ and some $\delta_{V}>0$ the following holds:
	\begin{equation}
		\label{EQ: DelayCompoundFreqThAppl1}
		\begin{split}
		      V(\Phi_{\nu_{0}}(t)) - V(\Phi_{0}) + \int_{0}^{t}\mathcal{F}(\Phi_{\nu_{0}}(s),\eta_{\nu_{0}}(s))ds \leq \\ \leq -\delta_{V}\int_{0}^{t}\left(|\Phi_{\nu_{0}}(s)|^{2}_{\mathcal{L}^{\wedge}_{m}} + |\eta_{\nu_{0}}(s)|^{2}_{\mathbb{U}^{\wedge}_{m}}\right)ds
		\end{split}
	\end{equation}
    for all $T>0$, $t \in [0,T]$, $\Phi_{0} \in \mathcal{L}^{\wedge}_{m}$, and $(\Phi_{\nu_{0}}(\cdot),\eta_{\nu_{0}}(\cdot))$ solving \eqref{EQ: CompoundDelayControlSystem} with $\nu \coloneq \nu_{0}$ on $[0,T]$ such that $\Phi_{\nu_{0}}(0) = \Phi_{0}$. More rigorously, the integral part with $\mathcal{F}$ in \eqref{EQ: DelayCompoundFreqThAppl1} should be interpreted similarly to \eqref{EQ: DelayCompIntQuadFuncFiniteT}.
    
    Since we have $\Phi_{\nu_{0}}(t) = e^{\nu_{0} t}\Phi(t)$ and $\eta_{\nu_{0}}(t) = e^{\nu_{0}t}\eta(t)$, where the pair $(\Phi(\cdot), \eta(\cdot))$ solves \eqref{EQ: CompoundDelayControlSystem} with $\nu \coloneq 0$, from \eqref{EQ: DelayCompoundFreqThAppl1} we obtain
    \begin{equation}
    	e^{2\nu_{0}t} V(\Phi(t)) - V(\Phi_{0}) + \int_{0}^{t}e^{2\nu_{0}s}\mathcal{F}(\Phi(s),\eta(s))ds \leq -\delta_{V} \int_{0}^{t}e^{2\nu_{0} s}|\Phi(s)|^{2}_{\mathcal{L}^{\wedge}_{m}}ds.
    \end{equation}
    Substituting $\eta(t) \coloneq F^{\wedge}_{m}(\pi^{t}(\wp))C^{\wedge}_{m}\Phi(t)$ in the above inequality and using \eqref{EQ: QuadraticConstraintCompoundDelay}, we obtain \eqref{EQ: QuadraticIneqDelayCompound}. More rigorously, this substitution is justified for $\Phi_{0} \in \mathcal{D}(A)^{\odot m} \cap \mathcal{L}^{\wedge}_{m}$ by Theorem \ref{TH: TensorCompoundCocycleDelayDescription}, and the resulting inequality is obtained by continuity for all $\Phi_{0} \in \mathcal{L}^{\wedge}_{m}$.
    
    Substituting $\eta_{\nu_{0}}(\cdot) \equiv 0$ into \eqref{EQ: DelayCompoundFreqThAppl1} and using that $\mathcal{F}(\Phi,0) \geq 0$ for all $\Phi \in \mathbb{E}^{\wedge}_{m}$, we obtain
    \begin{equation}
    	\label{EQ: DelayCompoundFreqThApplLyapIneq}
    	V(\Phi_{\nu_{0}}(t)) - V(\Phi_{0}) \leq -\delta_{V} \int_{0}^{t}|\Phi_{\nu_{0}}(s)|^{2}_{\mathcal{L}^{\wedge}_{m}}ds
    \end{equation}
    for any $\Phi_{0} \in \mathcal{L}^{\wedge}_{m}$. Using the exponential dichotomy from \eqref{EQ: DelayCompoundLinearExponentialDichotomy}, we obtain the desired sign properties of $V(\cdot)$ by passing to the limits as $t \to \pm \infty$ for appropriate initial data $\Phi$ in \eqref{EQ: DelayCompoundFreqThApplLyapIneq}, see \cite[Theorem 5]{Anikushin2020FreqParab}.
\end{proof}

\begin{remark}
	If $\nu_{0} > 0$ and $j \coloneq \operatorname{dim}\mathcal{L}^{u}_{m}(\nu_{0}) = 0$, from \eqref{EQ: QuadraticIneqDelayCompound} it can be deduced that $\Xi_{m}$ is uniformly exponentially stable with the exponent $\nu_{0}$, i.e., for some $M(\nu_{0})>0$ we have
	\begin{equation}
		\label{EQ: CompoundCocyclePertExponentialDecay}
		|\Xi^{t}_{m}(\wp, \Phi) |_{\mathcal{L}^{\wedge}_{m}} \leq M(\nu_{0}) e^{-\nu_{0} t} \cdot | \Phi |_{\mathcal{L}^{\wedge}_{m}}
	\end{equation}
	for all $t \geq 0$, $\wp \in \mathcal{P}$, and $\Phi \in \mathcal{L}^{\wedge}_{m}$; see \cite[Corollary 3.2]{AnikushinRomanov2023FreqConds}.
\end{remark}
\begin{remark}
	In the case $(\mathcal{P},\pi)$ is a flow, from \eqref{EQ: QuadraticIneqDelayCompound} it can be obtained that $-\nu_{0}$ is a gap of rank $j = \operatorname{dim}\mathcal{L}^{u}_{m}(\nu_{0})$ in the Sacker--Sell spectrum of $\Xi_{m}$ (see \cite{SackerSell1994}), i.e., the cocycle with time $t$-mappings $e^{\nu_{0} t}\Xi^{t}_{m}(\wp, \cdot)$ admits a uniform exponential dichotomy with unstable bundle of rank $j$. To construct the corresponding bundles, one can use the results of our work \cite{Anikushin2020Geom}, where it is important that the cocycle is uniformly eventually compact.
\end{remark}

For $\mathcal{F}$ given by \eqref{EQ: DelayCompoundQuadraticFormsAbstractGeneral}, the frequency inequality in \eqref{EQ: FreqIneqDelayCompoundGeneralFormViaGW} takes the form
\begin{equation}
	\label{EQ: FrequencyInequalityCompoundCBounded}
	\sup_{\omega \in \mathbb{R}}\| W(-\nu_{0} + i \omega) \|_{\mathcal{L}((\mathbb{U}^{\wedge}_{m})^{\mathbb{C}}, (\mathbb{M}^{\wedge}_{m})^{\mathbb{C}})}  < \Lambda^{-1}.
\end{equation}
It should be noted that \eqref{EQ: FrequencyInequalityCompoundCBounded} is always fulfilled provided that $\Lambda$ is sufficiently small. This reflects the fact that uniform exponential dichotomies are robust under small perturbations. In our case, \eqref{EQ: FrequencyInequalityCompoundCBounded} is an explicit condition for the preservation of similar properties of a stationary system in the considered class of nonautonomous perturbations. Such frequency conditions are optimal in the class of perturbations, see \cite{Anikushin2020FreqParab} for related discussions.
\section{Discussion}
\label{SEC: PerspectivesDelayCompound}
%\subsection{Numerical computation of frequency inequalities}
%\label{SEC: NumericalComputationsDelayCompound}

In this section, we discuss various computational nuances. Some specific results will pertain to \eqref{EQ: FrequencyInequalityCompoundCBounded}, but the general scheme applies to other classes of frequency inequalities as well.

In \eqref{EQ: FrequencyInequalityCompoundCBounded}, it is required to compute or at least estimate the norm of the transfer operator $W(p) = C^{\wedge}_{m}(A^{[\wedge m]}- p I)^{-1}B^{\wedge}_{m}$ for $p=-\nu_{0} + i \omega$ with some $\nu_{0} \in \mathbb{R}$ and all $\omega \in \mathbb{R}$ as an operator from $(\mathbb{U}^{\wedge}_{m})^{\mathbb{C}}$ to $(\mathbb{M}^{\wedge}_{m})^{\mathbb{C}}$. This problem is related to the computation of the resolvent $(A^{[\wedge m]}- p I)^{-1}$. According to Theorem \ref{TH: AdditiveCompoundDelayDescription}, it reduces to solving a first-order PDE on the $m$-cube $(-\tau,0)^{m}$ with boundary conditions involving partial derivatives and delays. Furthermore, the solutions to such problems are not the usual smooth functions. Except for special cases, which will be discussed below, we do not know whether the equations can be significantly simplified.

As already discussed in Remark \ref{REM: DelayCompoundBoundActResolvent}, in the special case $m=2$ the transfer operator is compact. In addition to this, $W(-\nu_{0}+i\omega)$ is known to be an integral operator with an $L_{2}$-summable kernel $K$ (dependent on $\omega$), at least in the single delay case. For general $n$, the kernel is obtained through resolving a boundary-value problem for a first-order linear ODE in $(\mathbb{C}^{n})^{\otimes 2} \times (\mathbb{C}^{n})^{\otimes 2}$ on $[-\tau,0]$, which reduces to a system with constant coefficients. For $n=1$, an explicit formula for the kernel is obtained in \cite[Section 4.5]{AnikushinRomanov2023FreqConds}, and it is shown that the kernel is asymptotic to a kernel $\bar{K}$ as $\omega \to \infty$. However, even the norm of the integral operator with the asymptotic kernel $\bar{K}$ is unlikely to be explicitly computable\footnote{This is  \href{https://mathoverflow.net/q/500667}{discussed} on MathOverflow: https://mathoverflow.net/q/500667.}.

Therefore, solving the problem purely analytically is difficult, even in the simplest cases. From a general perspective, it is natural to approximate the operator $W(p)$ by finite-dimensional truncations in appropriately chosen orthonormal bases in $(\mathbb{U}^{\wedge}_{m})^{\mathbb{C}}$ and $(\mathbb{M}^{\wedge}_{m})^{\mathbb{C}}$. For \eqref{EQ: FrequencyInequalityCompoundCBounded}, this is justified by the following simple lemma, but analogs can be established for other types of inequalities.
\begin{lemma}
	\label{LEM: RaleighQuotientSelfAdjointOptimizationApproximation}
	Suppose $\mathbb{H}_{1}$ and $\mathbb{H}_{2}$ are separable complex Hilbert spaces with orthonormal bases $\{ e^{1}_{k} \}_{k \geq 1}$ and $\{ e^{2}_{l} \}_{l \geq 1}$ respectively. Let $W$ be a bounded linear operator from $\mathbb{H}_{1}$ to $\mathbb{H}_{2}$. For any positive integer $N$, consider the orthogonal projectors $P^{1}_{N}$ and $P^{2}_{N}$ onto $\operatorname{Span}\{e^{1}_{1},\ldots,e^{1}_{N}\}$ and $\operatorname{Span}\{e^{2}_{1},\ldots,e^{2}_{N}\}$ respectively. Then we have
	\begin{equation}
		\label{EQ: RaleighQuotientOptimizationApproximationConvergence}
		\alpha_{N} \coloneq \|P^{2}_{N} \circ  W \circ  P^{1}_{N}\|_{\mathcal{L}(\mathbb{H}_{1};\mathbb{H}_{2})} \to \alpha \coloneq \| W \|_{\mathcal{L}(\mathbb{H}_{1};\mathbb{H}_{2})} \qquad \text{as} \ N \to \infty.
	\end{equation}
    Furthermore, $\alpha_{N} \leq \alpha_{N+1}$ for any $N$.
\end{lemma}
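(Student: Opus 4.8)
The plan is to exploit the variational characterization $\|W\|_{\mathcal{L}(\mathbb{H}_{1};\mathbb{H}_{2})} = \sup\{ |(W u, v)_{\mathbb{H}_{2}}| : |u|_{\mathbb{H}_{1}} \leq 1,\ |v|_{\mathbb{H}_{2}} \leq 1 \}$ together with the fact that $P^1_N$ and $P^2_N$ converge strongly to the identity operators on $\mathbb{H}_1$ and $\mathbb{H}_2$ respectively. First I would observe that $\|P^{2}_{N} W P^{1}_{N}\| = \sup\{ |(W P^1_N u, P^2_N v)_{\mathbb{H}_2}| : |u| \le 1, |v| \le 1\}$, since $P^1_N, P^2_N$ are self-adjoint; equivalently the supremum may be taken over unit vectors $u \in \operatorname{Span}\{e^1_1,\dots,e^1_N\}$ and $v \in \operatorname{Span}\{e^2_1,\dots,e^2_N\}$.

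The monotonicity $\alpha_N \le \alpha_{N+1}$ is immediate: every unit vector in $\operatorname{Span}\{e^1_1,\dots,e^1_N\}$ lies in $\operatorname{Span}\{e^1_1,\dots,e^1_{N+1}\}$, and likewise on the second side, so the supremum defining $\alpha_{N+1}$ is over a larger set than that defining $\alpha_N$. The bound $\alpha_N \le \alpha$ is equally clear since $|W P^1_N u| \le \|W\|\,|P^1_N u| \le \|W\|\,|u|$ and $P^2_N$ is a contraction. It remains to prove $\alpha_N \to \alpha$, i.e. $\liminf_N \alpha_N \ge \alpha$. Fix $\varepsilon > 0$ and choose unit vectors $u \in \mathbb{H}_1$, $v \in \mathbb{H}_2$ with $|(Wu,v)_{\mathbb{H}_2}| \ge \alpha - \varepsilon$. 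Since $P^1_N u \to u$ and $P^2_N v \to v$ (strong convergence of the projectors, using that $\{e^i_k\}$ is an orthonormal basis), we get $W P^1_N u \to W u$ in $\mathbb{H}_2$ by continuity of $W$, hence $(W P^1_N u, P^2_N v)_{\mathbb{H}_2} \to (Wu,v)_{\mathbb{H}_2}$. Therefore for all sufficiently large $N$ we have $\alpha_N \ge |(W P^1_N u, P^2_N v)_{\mathbb{H}_2}| \ge \alpha - 2\varepsilon$ (normalizing $P^1_N u$ and $P^2_N v$ if one wishes, which only improves the estimate since their norms tend to $1$ from below). As $\varepsilon$ was arbitrary, $\liminf_N \alpha_N \ge \alpha$, and combined with $\alpha_N \le \alpha$ this yields \eqref{EQ: RaleighQuotientOptimizationApproximationConvergence}.

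There is no real obstacle here; the only point requiring a word of care is that $P^2_N v$ and $P^1_N u$ need not be exactly unit vectors, so when passing from the inner-product estimate to a statement about the operator norm one should either divide by $|P^1_N u|\,|P^2_N v|$ (both of which converge to $1$) or simply absorb the discrepancy into the $\varepsilon$-bookkeeping. I expect to carry this out in the order: (i) variational formula and reduction to finite spans; (ii) monotonicity and the upper bound $\alpha_N \le \alpha$; (iii) the $\liminf$ estimate via strong convergence of the projectors and continuity of $W$.
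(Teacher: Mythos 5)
Your proof is correct, and since the paper states this as a ``simple lemma'' without giving any proof, there is nothing to compare it against; your argument via the variational formula $\|W\|=\sup_{|u|\leq 1,\,|v|\leq 1}|(Wu,v)_{\mathbb{H}_{2}}|$, the nestedness of the spans for monotonicity, and strong convergence $P^{1}_{N}u\to u$, $P^{2}_{N}v\to v$ for the lower bound is exactly the standard argument the author evidently had in mind. The only point of care you flag (that $P^{1}_{N}u$, $P^{2}_{N}v$ need not be unit vectors) is in fact harmless as written, since the supremum defining $\alpha_{N}$ is over the closed unit balls of the spans and these vectors have norm at most one.
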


Applying the lemma to $\mathbb{H}_{1} \coloneq (\mathbb{U}^{\wedge}_{m})^{\mathbb{C}}$, $\mathbb{H}_{2} \coloneq (\mathbb{M}^{\wedge}_{m})^{\mathbb{C}}$, and $W \coloneq W(p)$ with the orthonormal bases chosen independently of $p$, we obtain approximations $\alpha_{N} = \alpha_{N}(\omega)$ to the norm $\alpha = \alpha(\omega)$ of $W(-\nu_{0} + i\omega)$.
\begin{lemma}
	In the above context, the functions $\alpha_{N}$ and $\alpha$ are Lipschitz on $\mathbb{R}$ with a common Lipschitz constant.
\end{lemma}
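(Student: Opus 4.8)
The plan is to show that the map $\omega \mapsto W(-\nu_{0}+i\omega)$ is Lipschitz from $\mathbb{R}$ into the Banach space $\mathcal{L}((\mathbb{U}^{\wedge}_{m})^{\mathbb{C}};(\mathbb{M}^{\wedge}_{m})^{\mathbb{C}})$ with a constant that depends only on quantities already controlled (via Theorem \ref{TH: ResolventDelayCompoundBound} and Corollary \ref{COR: DelayCompoundRESProperty}), and then transfer this Lipschitz bound to each $\alpha_{N}(\cdot)$ and to $\alpha(\cdot)$ using that operator norms and compositions with the fixed projectors $P^{1}_{N},P^{2}_{N}$ are $1$-Lipschitz. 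First I would write, for $p = -\nu_{0}+i\omega$ and $p' = -\nu_{0}+i\omega'$, the resolvent identity
\begin{equation*}
	(A^{[\wedge m]}-pI)^{-1} - (A^{[\wedge m]}-p'I)^{-1} = (p-p')(A^{[\wedge m]}-pI)^{-1}(A^{[\wedge m]}-p'I)^{-1},
\end{equation*}
so that
\begin{equation*}
	W(p)-W(p') = (p-p')\, C^{\wedge}_{m}(A^{[\wedge m]}-pI)^{-1}(A^{[\wedge m]}-p'I)^{-1}B^{\wedge}_{m}.
\end{equation*}
Then $\|W(p)-W(p')\| \leq |\omega-\omega'| \cdot \|B^{\wedge}_{m}\| \cdot \|C^{\wedge}_{m}\|_{\mathcal{L}(\mathbb{E}^{\wedge}_{m};\mathbb{M}^{\wedge}_{m})} \cdot \|(A^{[\wedge m]}-pI)^{-1}\|_{\mathcal{L}(\mathcal{L}^{\wedge}_{m};\mathbb{E}^{\wedge}_{m})} \cdot \|(A^{[\wedge m]}-p'I)^{-1}\|_{\mathcal{L}(\mathcal{L}^{\wedge}_{m})}$.

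The two resolvent factors are exactly what is controlled: Corollary \ref{COR: DelayCompoundRESProperty} gives $\sup_{\omega}\|(A^{[\wedge m]}+\nu_{0}I - i\omega I)^{-1}\|_{\mathcal{L}(\mathcal{L}^{\wedge}_{m};\mathbb{E}^{\wedge}_{m})} =: K_{1} < \infty$, and (as used inside its proof) $\sup_{\omega}\|(A^{[\wedge m]}+\nu_{0}I-i\omega I)^{-1}\|_{\mathcal{L}(\mathcal{L}^{\wedge}_{m})} =: K_{0} < \infty$; here the line $-\nu_{0}+i\mathbb{R}$ avoids the spectrum of $A^{[\wedge m]}$, which is the standing hypothesis. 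Hence $W(\cdot)$ is Lipschitz on $\mathbb{R}$ with constant $L := \|B^{\wedge}_{m}\| \cdot \|C^{\wedge}_{m}\|_{\mathcal{L}(\mathbb{E}^{\wedge}_{m};\mathbb{M}^{\wedge}_{m})} \cdot K_{1} \cdot K_{0}$, which is independent of $N$ (indeed independent of any discretization). For the passage to $\alpha_{N}$, note $P^{2}_{N}\circ W(p)\circ P^{1}_{N} - P^{2}_{N}\circ W(p')\circ P^{1}_{N} = P^{2}_{N}\circ (W(p)-W(p'))\circ P^{1}_{N}$, and since $\|P^{i}_{N}\|\leq 1$, the reverse triangle inequality for the operator norm gives
\begin{equation*}
	|\alpha_{N}(\omega) - \alpha_{N}(\omega')| \leq \| P^{2}_{N}(W(p)-W(p'))P^{1}_{N} \| \leq \|W(p)-W(p')\| \leq L\,|\omega-\omega'|.
\end{equation*}
The same estimate with $N$ removed (or by letting $N\to\infty$ and invoking Lemma \ref{LEM: RaleighQuotientSelfAdjointOptimizationApproximation}) yields $|\alpha(\omega)-\alpha(\omega')| \leq L\,|\omega-\omega'|$.

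I do not expect a genuine obstacle here; the only point requiring a word of care is making sure the composed resolvent $C^{\wedge}_{m}(A^{[\wedge m]}-pI)^{-1}(A^{[\wedge m]}-p'I)^{-1}B^{\wedge}_{m}$ is legitimately bounded $(\mathbb{U}^{\wedge}_{m})^{\mathbb{C}}\to(\mathbb{M}^{\wedge}_{m})^{\mathbb{C}}$ — one reads it as: $B^{\wedge}_{m}$ maps into $\mathcal{L}^{\wedge}_{m}$, the first resolvent maps $\mathcal{L}^{\wedge}_{m}\to\mathcal{L}^{\wedge}_{m}$ boundedly (range in $\mathcal{D}(A^{[\wedge m]})\subset\mathbb{E}^{\wedge}_{m}$), the second resolvent maps $\mathcal{L}^{\wedge}_{m}\to\mathbb{E}^{\wedge}_{m}$ boundedly, and $C^{\wedge}_{m}\in\mathcal{L}(\mathbb{E}^{\wedge}_{m};\mathbb{M}^{\wedge}_{m})$ — so the factorization order in the norm estimate above must place $K_{1}$ on the resolvent adjacent to $C^{\wedge}_{m}$ and $K_{0}$ on the other, exactly as written. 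Everything else is the standard resolvent identity plus $1$-Lipschitzness of $\|\cdot\|$ and of $T\mapsto P^{2}_{N}TP^{1}_{N}$.
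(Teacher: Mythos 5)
Your proposal is correct and follows essentially the same route as the paper: the first resolvent identity for $W(p)-W(p')$, the uniform bounds from Corollary \ref{COR: DelayCompoundRESProperty} (in $\mathcal{L}(\mathcal{L}^{\wedge}_{m};\mathbb{E}^{\wedge}_{m})$ and in $\mathcal{L}(\mathcal{L}^{\wedge}_{m})$) combined with $C^{\wedge}_{m}\in\mathcal{L}((\mathbb{E}^{\wedge}_{m})^{\mathbb{C}};(\mathbb{M}^{\wedge}_{m})^{\mathbb{C}})$, and the $1$-Lipschitzness of $T\mapsto P^{2}_{N}TP^{1}_{N}$ and of the operator norm. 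You merely spell out the factorization detail (which resolvent factor is measured in which norm) that the paper leaves implicit, so there is nothing to correct.
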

\begin{proof}
	Consider $p_{1}=-\nu_{0}+i\omega_{1}$ and $p_{2} = -\nu_{0} + i\omega_{2}$ for some $\omega_{1}, \omega_{2}\in \mathbb{R}$. Using the first resolvent identity, we obtain
	\begin{equation}
		\begin{split}
			P^{2}_{N}C^{\wedge}_{m}(A^{[\wedge m]} - p_{1} I)^{-1}B^{\wedge}_{m} P^{1}_{N} - P^{2}_{N}C^{\wedge}_{m}(A^{[\wedge m]} - p_{2} I)^{-1}B^{\wedge}_{m} P^{1}_{N} =\\= (\omega_{1}-\omega_{2}) P^{2}_{N} C^{\wedge}_{m} (A^{[\wedge m]} - p_{1}I)^{-1} (A^{[\wedge m]} - p_{2} I)^{-1} B^{\wedge}_{m}P^{1}_{N}.
		\end{split}
	\end{equation}
    From this and since $C^{\wedge}_{m} \in \mathcal{L}((\mathbb{E}^{\wedge}_{m})^{\mathbb{C}};(\mathbb{M}^{\wedge}_{m})^{\mathbb{C}})$, the conclusion follows from Corollary \ref{COR: DelayCompoundRESProperty}.
\end{proof}

In particular, the above lemma ensures that $\alpha_{N}(\omega)$ converges to $\alpha(\omega)$ uniformly in $\omega$ from compact segments. However, \eqref{EQ: FrequencyInequalityCompoundCBounded} requires investigating the entire $\mathbb{R}$. For this, we have the following conjecture, analogues of which can be formulated for other types of inequalities.
\begin{conjecture}
	The function $\alpha(\omega)$ is asymptotically almost periodic (in the sense of Bohr) as $|\omega| \to \infty$ or even asymptotically constant.
\end{conjecture}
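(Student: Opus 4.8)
The plan is to analyze the structure of $W(p) = C^{\wedge}_{m}(A^{[\wedge m]} - pI)^{-1}B^{\wedge}_{m}$ along the line $p = -\nu_{0} + i\omega$ by reducing the resolvent computation to the system of transport-type PDEs from Theorem \ref{TH: AdditiveCompoundDelayDescription}. Solving $(A^{[\wedge m]} - pI)\Phi = B^{\wedge}_{m}\eta$ on each $k$-face via the Cauchy formula along diagonal lines (exactly as in the proof of Theorem \ref{TH: ResolventDelayCompoundBound}) expresses $W(p)\eta$ through iterated integrals whose kernels depend on $\omega$ only through factors of the form $e^{-i\omega s}$ with $s$ ranging over bounded sets (fractions of $[-\tau,0]^{k}$), together with the finitely many characteristic roots of the scalar equation $\det(\lambda I - \widetilde{A}(\lambda)) = 0$ that govern the boundary interactions. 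The first step would therefore be to write down explicitly, for each pair of boundary indices, the entries of $W(-\nu_{0}+i\omega)$ as a finite sum of terms of the shape (polynomial in $\omega$)$\times e^{i\omega a}$ divided by expressions built from the characteristic function evaluated at shifted arguments.

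Next I would invoke the Brown--Pearcy / additive-compound spectral description (Theorem \ref{TH: SpectrumAdditiveCompounds}, Proposition \ref{PROP: OperatorDelayCompoundSpectra}): the poles of $p \mapsto W(p)$ are among the finite sums $\lambda_{j_1}^{\mathbb{C}} + \cdots + \lambda_{j_m}^{\mathbb{C}}$ of characteristic roots of $\widetilde{A}$. Since the characteristic function is an exponential polynomial, its roots in any right half-plane are finite in number and, more importantly, the denominator appearing in $W(-\nu_0 + i\omega)$ is itself an exponential polynomial in $\omega$ whose modulus is bounded below away from zero on $\{\operatorname{Re} p = -\nu_0\}$ precisely because that line avoids the spectrum. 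Combining this with \eqref{EQ: ResolventAddCompBoundMainSpace} and Corollary \ref{COR: DelayCompoundRESProperty} gives that $W(-\nu_0 + i\omega)$ extends to an entry-wise exponential-polynomial-ratio (an \emph{almost periodic} function of $\omega$ in the sense of Bohr, being a uniform limit of trigonometric polynomials) up to a term that decays as $|\omega|\to\infty$. The operator norm $\alpha(\omega) = \|W(-\nu_0+i\omega)\|$ is then the norm of an almost periodic operator-valued function plus a vanishing perturbation; one shows the norm of a Bohr-almost-periodic Banach-space-valued function is again asymptotically almost periodic by approximating with trigonometric polynomials (using the uniform Lipschitz bound from the preceding lemma to control the approximation) and noting that $\|\cdot\|$ is $1$-Lipschitz, hence preserves uniform approximation and the relative compactness of the translate family.

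The main obstacle is the denominator: unlike the scalar $m=1$ situation where the resolvent kernel involves a single exponential polynomial $\Delta(p) = \det(pI - \widetilde{A}(p))$, for $m>1$ the diagonal Cauchy formula on lower-dimensional faces forces \emph{convolutions} of such kernels across faces, so the relevant denominators are products and compositions of exponential polynomials evaluated at arguments shifted by the diagonal parameters. One must verify that these remain, after the face-by-face induction, finite trigonometric-polynomial combinations with frequencies in a fixed finitely-generated group, and that the lower bound on $|\Delta|$ propagates to the composite denominators uniformly in $\omega$ — this is where the absence of spectrum on the line, plus the resolvent bound in the intermediate space $\mathbb{E}^{\wedge}_{m}$ from Theorem \ref{TH: ResolventDelayCompoundBound}, must be used carefully to rule out near-cancellations at infinity. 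A secondary subtlety is showing that the genuinely decaying contributions (the ``twisted'' parts, in the language of the structural Cauchy formula) vanish as $|\omega|\to\infty$ rather than merely staying bounded; this should follow from the Riemann--Lebesgue lemma applied to the $L_2$-integrand representations afforded by Theorem \ref{TH: StructuralCauchyFormulaCompoundDelay}, but making it uniform across all boundary blocks requires the norm estimates of Theorem \ref{TH: DelayCompoundStructuralCauchyFormulaNormEstimate}.
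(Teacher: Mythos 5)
You should first note that the paper does not prove this statement at all: it is posed as an open conjecture in Section \ref{SEC: PerspectivesDelayCompound}, and the surrounding discussion explicitly says that it would be of practical interest to prove it ``at least for some classes of operators''. So there is no proof in the paper to compare yours with, and what you have written is a research programme rather than a proof: the steps you yourself flag as needing verification (that the face-by-face induction produces only trigonometric-polynomial combinations with frequencies in a fixed finitely generated group, that the lower bound on the characteristic function propagates to the composite denominators uniformly in $\omega$, and that the decaying contributions vanish uniformly over all boundary blocks) are precisely the open core of the problem, not technical afterthoughts.

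Beyond incompleteness, one step of the plan would fail as stated. Your route is: $W(-\nu_0+i\omega)$ equals a Bohr almost periodic operator-valued function plus a term vanishing at infinity, and then $\alpha(\omega)=\|W(-\nu_0+i\omega)\|$ is asymptotically almost periodic because the norm is $1$-Lipschitz. For $m\geq 2$ the spaces $(\mathbb{U}^{\wedge}_{m})^{\mathbb{C}}$ and $(\mathbb{M}^{\wedge}_{m})^{\mathbb{C}}$ are infinite-dimensional $L_2$-spaces over faces, and the $\omega$-dependence produced by the diagonal Cauchy formula in the proof of Theorem \ref{TH: ResolventDelayCompoundBound} enters through kernels of the form $e^{i\omega(\zeta-s)}$ with $\zeta,s$ ranging over continua, i.e.\ through modulation/convolution operator families acting on $L_2$ of the faces. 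Such families are Lipschitz in $\omega$ in operator norm (this is what the Lipschitz lemma captures), but they are \emph{not} almost periodic in operator norm: an $\epsilon$-almost period $T$ would require $\sup_{\theta\in[-\tau,0]}|e^{iT\theta}-1|<\epsilon$, which fails for every $|T|\geq 2\pi/\tau$, so almost periods are not relatively dense. Hence the ``entrywise exponential-polynomial ratio'' picture, which is adequate for the finite-dimensional transfer matrix of the case $m=1$, cannot be promoted to operator-norm almost periodicity of $W(-\nu_0+i\cdot)$ for $m>1$, and your final approximation argument has nothing to act on; any proof of the conjecture must extract asymptotic almost periodicity of the scalar function $\alpha(\omega)$ directly, which is exactly why it is left open. (The secondary Riemann--Lebesgue step is plausible but also not automatic: the required decay must be uniform over the infinite family of face components, and Theorems \ref{TH: StructuralCauchyFormulaCompoundDelay} and \ref{TH: DelayCompoundStructuralCauchyFormulaNormEstimate} by themselves only give boundedness, not vanishing, of those blocks as $|\omega|\to\infty$.)
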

\noindent According to this conjecture, frequency inequalities can be tested on a finite segment. It is well known that for $m=1$ or in some other infinite-dimensional problems, the analogues of $\alpha(\omega)$ tend to $0$ as $|\omega| \to \infty$, see \cite{Anikushin2020FreqDelay,Anikushin2020FreqParab}, which ceases to be true if $m \geq 2$. In the discussed case of single delays with $m=2$ and $n=1$, the conjecture about asymptotically constant behavior is proved in \cite[Section 4.5]{AnikushinRomanov2023FreqConds}, but the proof relies on the explicit formulas for the kernels, which are not available for $n > 1$.

In \cite{AnikushinRomanov2023FreqConds}, we presented two approaches for computing the truncations of $W(p)$. In the first one, where $W(p)$ is represented by an integral operator, we have to just approximate the involved integrals, and this seems to be limited to the case $m=2$. In the second one, we consider only the case $-\nu_{0} > \omega(G^{\wedge m})$, but with general $m$, and use the representation of the resolvent via the Laplace transform of the semigroup applied to decomposable tensors as follows:
\begin{equation}
	-(A^{[\wedge m]} - p I)^{-1}(\phi_{1} \wedge \cdots \wedge \phi_{m}) = \int_{0}^{\infty} e^{-pt}G^{\wedge m}(t)(\phi_{1} \wedge \cdots \wedge \phi_{m})dt,
\end{equation}
where $\phi_{1},\ldots,\phi_{m} \in \mathbb{H}$ and $p = -\nu_{0} + i \omega$ with $\omega \in \mathbb{R}$. Then, the integral is truncated to the integral over a finite segment $[0,T]$, and the remainder is shown to be exponentially decaying in the norm of $(\mathbb{E}^{\wedge m})^{\mathbb{C}}$ uniformly on the unit ball of $(\mathcal{L}^{\wedge}_{m})^{\mathbb{C}}$ and in $\omega \in \mathbb{R}$. Since $G^{\wedge m}(t)(\phi_{1} \wedge \cdots \wedge \phi_{m}) = G(t)\phi_{1} \wedge \cdots \wedge G(t)\phi_{m}$, we need to compute only the trajectories of the semigroup $G$ generated by $A$, i.e., solve the usual linear delay equations.

To construct orthonormal bases, we use the trigonometric basis $e^{i 2\pi k\theta/\tau}$, $k \in \mathbb{Z}$, in $L_{2}(-\tau,0;\mathbb{C})$ and associated bases in tensor products. In calculations, only the elements corresponding to $-N_{0} \leq k \leq N_{0}$ are involved. In the conducted experiments for $m=2$, the choice of $N_{0}=10$ already provides a good approximation for $\alpha(\omega)$ over a large segment of $\omega$, where the most interesting behavior occurs.

However, the truncations provide only lower bounds for $\alpha(\omega)$. More relatable to the problem upper bounds can be obtained if it is possible to compute the integral kernels representing $W(p)$. In \cite{AnikushinRomanov2025Schur}, we exploited the optimization of Schur test functions to get such bounds. At least in the considered scalar examples, this delivers surprisingly sharp estimates in the most interesting segment of $\omega$. Moreover, one can use this method to estimate the difference between $W(p)$ and its truncation, which may be helpful for other types of inequalities. Having at one's disposal explicit formulas for the integral kernels and the test functions, it shall be possible to rigorously validate the frequency inequalities using interval arithmetic.

In specific applications, we use the developed machinery to provide frequency inequalities for the uniform exponential stability of twofold compound cocycles associated with derivative cocycles over semiflows $(\mathcal{P},\pi)$ generated by autonomous nonlinear delay equations, as discussed in Remark \ref{REM: DerivativeCocycleExample}. This delivers robust conditions for the applicability of the generalized Bendixson criterion for attractors \cite{LiMuldowney1995LowBounds}, which prevents invariant contours to exist in the nonlinear system $(\mathcal{P},\pi)$. As in finite dimensions \cite{Smith1986HD, LiMuldowney1996SIAMGlobStab}, it is expected that such conditions would imply the global stability\footnote{This should be understood as the convergence of any trajectory to an equilibrium.} due to the robustness and variants of Pugh's closing lemma, which are still awaiting developments in infinite dimensions.

Now we discuss two specific examples of such applications studied in \cite{AnikushinRomanov2023FreqConds}. 

Consider the Suarez--Schopf delayed oscillator introduced in \cite{Suarez1988}:
\begin{equation}
	\label{EQ: SuarezSchopfStandard}
	\dot{x}(t) = x(t) - \alpha x(t-\tau) - x^{3}(t),
\end{equation}
where $\alpha \in (0,1)$ is a parameter. Here, the developed method indicates that \eqref{EQ: SuarezSchopfStandard} is globally stable\footnote{Using the Poincar\'{e}--Bendixson theorem developed in \cite{MalletParetSell1996}, which is applicable to \eqref{EQ: SuarezSchopfStandard}, the global stability can be derived without appealing to any variants of the closing lemma, see \cite[Proposition 5.1]{AnikushinRomanov2023FreqConds}.} if $2 \alpha \tau < 1$ and $\alpha \in [0.5,1)$. This significantly improves the purely analytical results on the global stability obtained in \cite{Anikushin2023LyapExp}, which uses explicitly constructed adapted metrics, or \cite[Theorem 10]{Anikushin2022Semigroups}, which is based on the comparison principle from \cite{MalletParretNussbaum2013}. Limitations for applications outside the region $2 \alpha \tau < 1$ are concerned with the problem of constructing sharper domains localizing the global attractor of \eqref{EQ: SuarezSchopfStandard}. Moreover, it is expected that the region of global stability in \eqref{EQ: SuarezSchopfStandard} is much larger than the mentioned result, although it is not determined by local bifurcations, see \cite{AnikushinRom2023SS}.

Now consider the Mackey--Glass equations introduced in \cite{MackeyGlass1977}:
\begin{equation}
	\label{EQ: MackeyGlassExample}
	\dot{x}(t) =  - \gamma x(t) + \beta \frac{ x(t-\tau) }{ 1 + |x(t-\tau)|^{\kappa} },
\end{equation}
where $\gamma$, $\beta > 0$, and $\kappa > 1$ are parameters. For the classical parameters $\gamma=0.1$, $\beta = 0.2$, and $\kappa = 10$, our method indicates the global stability for all $\tau \in (0, 4.55]$ through the Schur test \cite{AnikushinRomanov2025Schur}. Note that the bound is close to the bifurcation parameter $\tau = \tau^{*} \approx 4.8626$, at which the symmetric equilibria lose their stability and a supercritical Andronov--Hopf bifurcation occurs. By the method of \cite{Anikushin2023LyapExp}, the global stability can be established in the interval close to $(0, 1]$, which is significantly smaller.

A more interesting analysis is offered from the method of \cite{LizTkachenkoTrofimchuk2003}, which generalizes the famous Myshkis stability criterion to nonlinear scalar equations with a single equilibrium. It can be applied to \eqref{EQ: MackeyGlassExample} in the cone of positive functions, which is justified\footnote{We caution the reader that condition (iii) of \cite[Theorem 3.2]{LizetAl2005}, relevant to the discussion, contains an error, which is likely caused by an incorrect computation of the involved derivative. The correct condition is obtained by substituting the derivative into \cite[Equation (1.3)]{LizetAl2005}.} in \cite{LizetAl2005}. For the above parameters, this yields the following bound for the global stability:
\begin{equation}
	\tau < -10 \cdot \left[\ln 4 + \ln\ln(20/17)\right] \approx 4.3066,
\end{equation}
which is smaller than our bound.

It should be mentioned that the method of \cite{LizTkachenkoTrofimchuk2003} is based on a comparison, although not on the level of compound cocycles, with a linear system with constant coefficients and variable (time-dependent) delays, and the resulting conditions are optimal in the class of systems with variable delays. Since our method is more specific to constant delays, it is not surprising that it may improve the results in some situations. However, this is not always the case, as can be demonstrated by means of the Nicholson blowflies model, see \cite{AnikushinRomanov2024EffEst}.

However, our method is not limited to scalar equations with a single equilibrium. We plan to present experimental results for systems of equations in a forthcoming paper and introduce a Python package for testing frequency inequalities using the methods discussed.

\appendix
\section{Diagonal translation semigroups}
\label{SEC: DiagonalTranslationSemigroups}
Throughout this section, we fix a separable Hilbert space $\mathbb{F}$, a positive integer $m > 0$, and some reals $\tau > 0$ and $p \geq 1$. Let $\Omega$ be an open bounded subset of $\mathbb{R}^{m}$. Consider the \textit{preparatory diagonal Sobolev space}\footnote{Here the subscript $D$ stands for the ``diagonal derivative''.}:
\begin{equation}
	\label{EQ: W2PreparatoryDiagonalDefinition}
	\widehat{\mathcal{W}}^{p}_{D}(\Omega;\mathbb{F})\coloneq \left\{ \Phi \in L_{p}(\Omega;\mathbb{F}) \mid \left(\sum_{j=1}^{m}\frac{\partial}{\partial \theta_{j}}\right)\Phi \in L_{p}(\Omega;\mathbb{F}) \right\}.
\end{equation}
It should be emphasized in what sense the diagonal derivative $(\sum_{j=1}^{m}\frac{\partial}{\partial \theta_{j}})\Phi$ of $\Phi$ is understood. For this, let $\mathcal{L}_{0} \coloneq \{ \underline{t}=(t,\ldots,t) \in \mathbb{R}^{m} \ | \ t \in \mathbb{R} \}$ be the diagonal line in $\mathbb{R}^{m}$, and let $\mathcal{L}^{\perp}_{0}$ be its orthogonal complement. For $\bar{s} \in \mathcal{L}^{\perp}_{0}$, we set $\Omega(\bar{s}) \coloneq (\mathcal{L}_{0} + \bar{s}) \cap \Omega$.

Any function $\Phi$ on $\Omega(\bar{s})$ can be identified with the function $\Psi(t) \coloneq \Phi(\underline{t}+\bar{s})$ defined on an open subset of $t \in \mathcal{I}(\bar{s}) \subset \mathbb{R}$ such that $\Omega(\bar{s})$ is the union of $\{\underline{t}+\bar{s}\}$ over all $t \in \mathcal{I}(\bar{s})$. Then, by definition, $\Phi \in W^{1,p}(\Omega(\bar{s});\mathbb{F})$ if and only if $\Psi \in W^{1,p}(\mathcal{I}(\bar{s});\mathbb{F})$. Moreover, the norm in the former is induced by that correspondence. Below, we are interested in convex domains $\Omega$, for which $\mathcal{I}(\bar{s})$ is an open interval.

Then, a function $\Phi \in L_{p}(\Omega;\mathbb{F})$ belongs to $\widehat{\mathcal{W}}^{p}_{D}(\Omega;\mathbb{F})$ if and only if there exists $\Psi \in L_{p}(\Omega;\mathbb{F})$ such that\footnote{We assume that any identities between functions restricted to the empty set are satisfied by definition.}
\begin{equation}
	\label{EQ: DiagonalSobolevPropEquivalent}
	\restr{\Phi}{\Omega(\bar{s})} \in W^{1,p}(\Omega(\bar{s});\mathbb{F}) \qquad \text{and} \qquad \frac{d}{d\underline{t}}\restr{\Phi}{\Omega(\bar{s})} = \restr{\Psi}{\Omega(\bar{s})} \quad \text{in} \ L_{p}(\Omega(\bar{s});\mathbb{F}).
\end{equation}
hold for $\mu^{m-1}_{L}$-almost all $\bar{s} \in \mathcal{L}^{\perp}_{0}$, where $\frac{d}{d\underline{t}}$ is the derivative along $\underline{1} \in \mathbb{R}^{m}$. In this context, we set $(\sum_{j=1}^{m} \frac{\partial}{\partial \theta_{j}}) \Phi \coloneq \Psi$.

We consider the natural norm $\| \cdot \|_{\widehat{\mathcal{W}}^{p}_{D}(\Omega;\mathbb{F})}$ on $\widehat{\mathcal{W}}^{p}_{D}(\Omega;\mathbb{F})$, defined as follows:
\begin{equation}
	\label{EQ: NormW2Diagonal}
	\| \Phi \|^{p}_{\widehat{\mathcal{W}}^{p}_{D}(\Omega;\mathbb{F})} \coloneq \|\Phi\|^{p}_{L_{p}(\Omega;\mathbb{F})} + \left\| \left(\sum_{j=1}^{m} \frac{\partial}{\partial \theta_{j}}\right) \Phi \right\|^{p}_{L_{p}(\Omega;\mathbb{F})}.
\end{equation}
Clearly, the space $\widehat{\mathcal{W}}^{p}_{D}(\Omega;\mathbb{F})$, equipped with the above norm, becomes a Banach space. Moreover, for $p=2$ it is a Hilbert space.

For $\Omega = \mathbb{R}^{m}$, we simply write $\mathcal{W}^{p}_{D}(\mathbb{R}^{m};\mathbb{F})$ and call this space the \textit{diagonal Sobolev space} on $\mathbb{R}^{m}$. From here, we define the \textit{diagonal Sobolev space} $\mathcal{W}^{p}_{D}(\Omega;\mathbb{F})$ on a general open domain $\Omega \subset \mathbb{R}^{m}$ as follows:
\begin{equation}
	\label{EQ: W2DiagonalDefinition}
	\mathcal{W}^{p}_{D}(\Omega;\mathbb{F}) \coloneq \left\{ \Phi \in L_{p}(\Omega;\mathbb{F}) \mid \ \Phi = \restr{\Psi}{\Omega} \ \text{for some} \ \Psi \in \mathcal{W}^{p}_{D}(\mathbb{R}^{m};\mathbb{F}) \right\}
\end{equation}
and endow it with the norm
\begin{equation}
	\label{EQ: SobolevDiagonalTrueNorm}
	\| \Phi \|_{\mathcal{W}^{p}_{D}(\Omega;\mathbb{F})} \coloneq \inf \| \Psi \|_{\mathcal{W}^{p}_{D}(\mathbb{R}^{m};\mathbb{F})},
\end{equation}
where the infimum is taken over all $\Psi \in \mathcal{W}^{p}_{D}(\mathbb{R}^{m};\mathbb{F})$ such that $\Phi = \restr{\Psi}{\Omega}$. Note that the right-hand side of \eqref{EQ: SobolevDiagonalTrueNorm} is the norm in the factor space of $\mathcal{W}^{p}_{D}(\mathbb{R}^{m};\mathbb{F})$ over the closed subspace of functions vanishing on $\Omega$. Thus, $\mathcal{W}^{p}_{D}(\Omega;\mathbb{F})$ is a Banach space.

\begin{lemma}
	\label{LEM: ExtensionOperatorFromSobolevDiagonalToRm}
	Let $\Omega$ be a bounded convex open subset of $\mathbb{R}^{m}$ such that the lengths of $\mathcal{I}(\bar{s})$ are bounded from below uniformly in $\bar{s}$ for which $\mathcal{I}(\bar{s})$ is nonempty. Then there exists a bounded linear operator (an extension operator)
	\begin{equation}
		\label{EQ: ContinuationOperatorDiagonalTranslates}
		\mathfrak{C} \colon \widehat{\mathcal{W}}^{p}_{D}(\Omega;\mathbb{F}) \to \mathcal{W}^{p}_{D}(\mathbb{R}^{m};\mathbb{F})
	\end{equation}
    such that for any $\Phi \in \mathcal{W}^{p}_{D}(\Omega;\mathbb{F})$ we have
    \begin{enumerate}
    	\item[1)] $(\mathfrak{C}\Phi)(\bar{s}) = \Phi(\bar{s})$ for almost all $\bar{s} \in \Omega$;
    	\item[2)] $(\mathfrak{C}\Phi)(\bar{s}) = 0$ for almost all $\bar{s} \in \mathbb{R}^{m}$ with $|\bar{s}|_{\infty} \geq r(\Omega)$, where $|\bar{s}|_{\infty}$ is the supremum norm of $\bar{s}$, and $r(\Omega)>0$ is a constant depending on $\Omega$.
    \end{enumerate} 
    In particular, the spaces $\widehat{\mathcal{W}}^{p}_{D}(\Omega;\mathbb{F})$ and $\mathcal{W}^{p}_{D}(\Omega;\mathbb{F})$ coincide as sets, and their norms are equivalent.
\end{lemma}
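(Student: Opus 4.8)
The plan is to reduce the statement to a one–dimensional extension problem carried out fibrewise along the diagonal direction and then reassembled via Fubini's theorem. Recall from \eqref{EQ: DiagonalSobolevPropEquivalent} that $\Phi \in \widehat{\mathcal{W}}^{p}_{D}(\Omega;\mathbb{F})$ means precisely that for almost every $\overline{s} \in \mathcal{L}^{\perp}_{0}$ the section $\Psi_{\overline{s}}(t) := \Phi(\underline{t}+\overline{s})$ belongs to $W^{1,p}(\mathcal{I}(\overline{s});\mathbb{F})$ and, by Fubini's theorem applied to \eqref{EQ: NormW2Diagonal} together with the orthogonal splitting $\mathbb{R}^{m} = \mathcal{L}_{0} \oplus \mathcal{L}^{\perp}_{0}$ (under which the derivative along $\underline{1}$ becomes $\tfrac{d}{dt}$ in the parameter $t$, and $d\overline{\theta} = \sqrt{m}\, dt\, d\overline{s}$), the quantities $\|\Phi\|^{p}_{\widehat{\mathcal{W}}^{p}_{D}(\Omega;\mathbb{F})}$ and $\int_{\mathcal{L}^{\perp}_{0}} \|\Psi_{\overline{s}}\|^{p}_{W^{1,p}(\mathcal{I}(\overline{s});\mathbb{F})}\,d\overline{s}$ are comparable up to an absolute constant. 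Since $\Omega$ is bounded and convex, each $\Omega(\overline{s})$ is a single interval $(a(\overline{s}),b(\overline{s}))$ whose endpoints depend measurably (indeed semicontinuously) on $\overline{s}$; by hypothesis $0 < \ell_{0} \le L(\overline{s}) := b(\overline{s})-a(\overline{s}) \le \operatorname{diam}\Omega$ over those $\overline{s}$ with $\mathcal{I}(\overline{s}) \ne \varnothing$, and all such $\overline{s}$ lie in a fixed bounded set.

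Next I would construct a one–dimensional extension operator whose norm is controlled only by the length of the interval. Fix once and for all a cut-off $\chi \in C^{\infty}(\mathbb{R})$ with $0 \le \chi \le 1$, $\chi \equiv 1$ on $[-\tfrac12,\tfrac12]$ and $\operatorname{supp}\chi \subset (-\tfrac34,\tfrac34)$. For an interval $I = (a,b)$ of length $L$ and centre $c$, let $E_{I}\Psi$ be obtained from $\Psi \in W^{1,p}(I;\mathbb{F})$ by reflecting it evenly across both endpoints (this keeps one in $W^{1,p}$ of the enlarged interval $(a-L,b+L)$, since values match at $a$ and $b$) and then multiplying by the rescaled cut-off $\chi_{I}(t) := \chi\!\big(\tfrac{t-c}{2L}\big)$, which equals $1$ on $I$, is supported in $(a-L,b+L)$ and satisfies $\|\chi_{I}'\|_{\infty} \le C/L$. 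A routine product-rule estimate then gives $\|E_{I}\Psi\|_{W^{1,p}(\mathbb{R};\mathbb{F})} \le C(1+L^{-1})\,\|\Psi\|_{W^{1,p}(I;\mathbb{F})}$ with $C$ absolute, together with $E_{I}\Psi = \Psi$ on $I$ and $\operatorname{supp}E_{I}\Psi \subseteq [a-L,b+L]$.

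Then I define $\mathfrak{C}$ fibrewise by $(\mathfrak{C}\Phi)(\underline{t}+\overline{s}) := (E_{\mathcal{I}(\overline{s})}\Psi_{\overline{s}})(t)$ for a.e. $\overline{s}$, setting it to $0$ over $\overline{s}$ with $\mathcal{I}(\overline{s}) = \varnothing$; linearity in $\Phi$ is clear, and joint measurability of $(t,\overline{s})\mapsto(\mathfrak{C}\Phi)(\underline t+\overline s)$ follows from measurability of $\overline{s}\mapsto(a(\overline{s}),b(\overline{s}))$ and the explicit dependence of $E_{I}$ on $I$. Each section $E_{\mathcal{I}(\overline{s})}\Psi_{\overline{s}}$ lies in $W^{1,p}(\mathbb{R};\mathbb{F})$, so by \eqref{EQ: DiagonalSobolevPropEquivalent} the function $\mathfrak{C}\Phi$ has a diagonal derivative whose section over $\overline{s}$ is $\tfrac{d}{dt}E_{\mathcal{I}(\overline{s})}\Psi_{\overline{s}}$; integrating the fibrewise estimate and using $L(\overline{s})\ge\ell_{0}$ and Fubini's theorem yields
\[
	\|\mathfrak{C}\Phi\|^{p}_{\mathcal{W}^{p}_{D}(\mathbb{R}^{m};\mathbb{F})} \le C(1+\ell_{0}^{-1})^{p}\!\int_{\mathcal{L}^{\perp}_{0}} \|\Psi_{\overline{s}}\|^{p}_{W^{1,p}(\mathcal{I}(\overline{s});\mathbb{F})}\,d\overline{s} \le C'\,\|\Phi\|^{p}_{\widehat{\mathcal{W}}^{p}_{D}(\Omega;\mathbb{F})},
\]
so (recalling that $\mathcal{W}^{p}_{D}(\mathbb{R}^{m};\mathbb{F}) = \widehat{\mathcal{W}}^{p}_{D}(\mathbb{R}^{m};\mathbb{F})$ by definition) $\mathfrak{C}\Phi \in \mathcal{W}^{p}_{D}(\mathbb{R}^{m};\mathbb{F})$ and $\mathfrak{C}$ is bounded. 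Property 1) holds because $E_{I}\Psi = \Psi$ on $I$; property 2) holds because $\operatorname{supp}E_{\mathcal{I}(\overline{s})}\Psi_{\overline{s}} \subseteq [a(\overline{s})-L(\overline{s}),\,b(\overline{s})+L(\overline{s})]$ and both endpoints and the admissible $\overline{s}$ stay in a fixed bounded set, so one takes $r(\Omega)$ to be its $|\cdot|_{\infty}$-radius plus one. Finally, the coincidence of $\widehat{\mathcal{W}}^{p}_{D}(\Omega;\mathbb{F})$ and $\mathcal{W}^{p}_{D}(\Omega;\mathbb{F})$ as sets with equivalent norms is immediate: one inclusion (with $\|\cdot\|_{\widehat{\mathcal{W}}^{p}_{D}} \le \|\cdot\|_{\mathcal{W}^{p}_{D}}$) follows from \eqref{EQ: W2DiagonalDefinition}, \eqref{EQ: SobolevDiagonalTrueNorm} and the fact that restriction does not increase the diagonal Sobolev norm, and the reverse from $\|\Phi\|_{\mathcal{W}^{p}_{D}(\Omega;\mathbb{F})} \le \|\mathfrak{C}\Phi\|_{\mathcal{W}^{p}_{D}(\mathbb{R}^{m};\mathbb{F})} \le \|\mathfrak{C}\|\cdot\|\Phi\|_{\widehat{\mathcal{W}}^{p}_{D}(\Omega;\mathbb{F})}$.

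The main obstacle I expect is the assembly/measurability step: verifying that the fibrewise reflection–cut-off construction produces a genuinely Lebesgue-measurable function on $\mathbb{R}^{m}$ and that its diagonal derivative, in the weak sense of \eqref{EQ: DiagonalSobolevPropEquivalent}, is indeed the fibrewise derivative. This rests on measurability of the endpoint maps $\overline{s}\mapsto a(\overline{s}),\,b(\overline{s})$ (guaranteed by convexity and boundedness of $\Omega$) and on a careful reading of the defining property of $\widehat{\mathcal{W}}^{p}_{D}$; everything else — the one–dimensional reflection estimate, the cut-off bookkeeping, and the uniformity coming from $\ell_{0}\le L(\overline{s})\le\operatorname{diam}\Omega$ — is standard.
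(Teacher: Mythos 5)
Your proof is correct and follows essentially the same route as the paper: a fibrewise one-dimensional extension along the diagonal direction, with a bound uniform in the fibre thanks to the section lengths being bounded below (and above, by boundedness of $\Omega$), reassembled by Fubini, and concluded with the same norm-equivalence chain $\|\Phi\|_{\widehat{\mathcal{W}}^{p}_{D}} \leq \|\Phi\|_{\mathcal{W}^{p}_{D}} \leq \|\mathfrak{C}\Phi\|_{\mathcal{W}^{p}_{D}(\mathbb{R}^{m};\mathbb{F})} \leq \|\mathfrak{C}\|\cdot\|\Phi\|_{\widehat{\mathcal{W}}^{p}_{D}}$. The only difference is cosmetic: the paper conjugates a fixed abstract extension operator on $W^{1,p}(0,1;\mathbb{F})$ by translations and dilations, whereas you construct the one-dimensional extension explicitly via even reflection and a rescaled cut-off; both give the required uniform fibrewise bound.
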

\begin{proof}
	Consider any extension operator 
	\begin{equation}
		E \colon W^{1,p}(0,1;\mathbb{F}) \to W^{1,p}(\mathbb{R};\mathbb{F})
	\end{equation}
    such that $E\Phi$ vanishes outside the interval $(-2,2)$ for any $\Phi \in W^{1,p}(0,1;\mathbb{F})$, see \cite[Section 2.2]{LionsMagenesBVP11972}. For any $-\infty \leq a < b \leq +\infty$ and $c$, let the operator $T_{c}$ take $\Phi \in W^{1,p}(a,b;\mathbb{F})$ into $T_{c}\Phi \in W^{1,p}(a+c,b+c;\mathbb{F})$ by $(T_{c}\Phi)(\theta) \coloneq \Phi(\theta-c)$ for any $\theta \in (a+c,b+c)$. For $l>0$, let $H_{l}$ take $\Phi \in W^{1,p}(a,b;\mathbb{F})$ into $H_{l}\Phi \in W^{1,p}(a\cdot l, b\cdot l;\mathbb{F})$ by $(H_{l}\Phi)(\theta) \coloneq \Phi(\theta / l)$ for any $\theta \in (a \cdot l,b \cdot l)$.
    
    Since $\Omega$ is convex, the line section $\Omega(\bar{s})$ is an open interval of $\mathcal{L}_{0} + \bar{s}$, identified, as above, with the open interval $\mathcal{I}(\bar{s}) = (a(\bar{s}), b(\bar{s}))$ of $\mathbb{R}$. Then such identifications yield isometric isomorphisms $R(\bar{s}) \colon W^{1,p}(\Omega(\bar{s});\mathbb{F}) \to W^{1,p}(a(\bar{s}), b(\bar{s});\mathbb{F})$ and $L(\bar{s}) \colon W^{1,p}(\mathcal{L}_{0} + \bar{s};\mathbb{F}) \to W^{1,p}(\mathbb{R};\mathbb{F})$.
    
    For any $\bar{s} \in \mathcal{L}^{\perp}_{0}$, we define $E(\bar{s}) \colon W^{1,p}(\Omega(\bar{s});\mathbb{F}) \to W^{1,p}(\mathcal{L}_{0} + \bar{s};\mathbb{F})$ as follows:
	\begin{equation}
		E(\bar{s}) \coloneq (L(\bar{s}))^{-1} \circ T_{a(\bar{s})} \circ H_{b(\bar{s})-a(\bar{s})} \circ E \circ  H_{1/(b(\bar{s})-a(\bar{s}))} \circ T_{-a(\bar{s})} \circ R(\bar{s}).
	\end{equation}
	According to our assumptions, there exist positive constants $l_{1} < l_{2}$ such that $b(\bar{s}) - a(\bar{s}) \in [l_{1},l_{2}]$ uniformly in $\bar{s}$ such that $b(\bar{s}) - a(\bar{s})>0$. Consequently, the norms of $E(\bar{s})$ are bounded uniformly in such $\bar{s}$.
	
	Now we define $\mathfrak{C}\Phi$ as follows:
	\begin{equation}
		\label{EQ: ExtensionOperatorDiagonalTranslatesDefinition}
		(\mathfrak{C} \Phi)(\bar{s} + \underline{t}) \coloneq \begin{cases}
			\left(E(\bar{s})\restr{\Phi}{\Omega(\bar{s})}\right)(\bar{s}+\underline{t}) \qquad &\text{if} \quad \Omega(\bar{s}) \not= \emptyset,\\
			0 \qquad &\text{if} \quad \Omega(\bar{s}) = \emptyset,
		\end{cases}
	\end{equation}
    which makes sense for $\mu^{m-1}_{L}$-almost all $\bar{s} \in \mathcal{L}^{\perp}_{0}$ and all $t \in \mathbb{R}$. By the construction and the Fubini theorem, we obtain that $\mathfrak{C}$ is the desired extension operator. 
    
    Note that for any $\Phi \in \widehat{\mathcal{W}}^{p}_{D}(\Omega;\mathbb{F})$ we have the following relations:
    \begin{equation}
    	\|\Phi\|_{\widehat{\mathcal{W}}^{p}_{D}(\Omega;\mathbb{F})} \leq \| \Phi \|_{\mathcal{W}^{p}_{D}(\Omega;\mathbb{F})} \leq \|\mathfrak{C} \Phi \|_{\mathcal{W}^{p}_{D}(\mathbb{R}^{m};\mathbb{F})} \leq \| \mathfrak{C} \| \cdot \| \Phi \|_{\widehat{\mathcal{W}}^{p}_{D}(\Omega;\mathbb{F})},
    \end{equation}
    which show the equivalence of norms.
\end{proof}

In the study of delay equations, we encounter the domain $\Omega = (-\tau,0)^{m}$ for some $\tau > 0$. Clearly, it does not satisfy the assumptions of Lemma \ref{LEM: ExtensionOperatorFromSobolevDiagonalToRm}. As will be seen, this precludes the existence of an extension operator, since the functions from $\widehat{\mathcal{W}}^{p}_{D}(\Omega;\mathbb{F})$ do not necessarily have $L_{2}$-summable traces on the boundary of $\Omega$. In this case, the space $\mathcal{W}^{p}_{D}((-\tau,0)^{m};\mathbb{F})$ is more appropriate to work with, and it will be described in Proposition \ref{PROP: DiagonalSobolevCubeDescription} below.

\begin{remark}
	\label{REM: DelayCompoundLemmaExtensionApplication}
	We will apply Lemma \ref{LEM: ExtensionOperatorFromSobolevDiagonalToRm} in the case where $\Omega = \mathring{\mathcal{C}}^{m}_{T}$ is the interior of $\mathcal{C}^{m}_{T}$ from \eqref{EQ: TheSetDiagonalDomainDefinition}, i.e., $\Omega$ is given by the union of $(-\tau,0)^{m} + \underline{t}$ over all $t \in [0,T]$ for some $T>0$.
\end{remark}

Next, consider the \textit{diagonal translation group} $\mathcal{T}_{m}$ in $L_{p}(\mathbb{R}^{m};\mathbb{F})$ with its time-$t$ mappings $\mathcal{T}_{m}(t)$ for $t \in \mathbb{R}$ given by
\begin{equation}
	(\mathcal{T}_{m}(t)\Phi)(\bar{s}) \coloneq \Phi(\bar{s}+\underline{t}) \qquad \text{for} \quad \bar{s} = (s_{1},\ldots, s_{m}) \in \mathbb{R}^{m}.
\end{equation}
Recall that for $t \in \mathbb{R}$, the vector $\underline{t}$ has identical components, all of which are equal to $t$.

Since the action of $\mathbb{R}^{m}$ by translations (with respect to arguments) in $L_{p}(\mathbb{R}^{m};\mathbb{F})$ is strongly continuous, $\mathcal{T}_{m}$ is a $C_{0}$-group. For any $\Psi \in L_{p}(\mathbb{R}^{m};\mathbb{F})$, this yields the following relation:
\begin{equation}
	\label{EQ: DiagonalTranslationRmSmoothing}
	\lim_{h \to 0+} \frac{1}{h} \int_{0}^{h}\mathcal{T}_{m}(\theta)\Psi d\theta = \Psi \qquad \text{in} \quad \ L_{p}(\mathbb{R}^{m};\mathbb{F}).
\end{equation}

In the following theorem, we describe the generator of $\mathcal{T}_{m}$ in connection with the diagonal Sobolev space $\mathcal{W}^{p}_{D}(\mathbb{R}^{m};\mathbb{F})$.
\begin{theorem}
	\label{TH: DiagonalTranslationInRm}
	Let $A_{\mathcal{T}_{m}}$ be the generator of $\mathcal{T}_{m}$ in $L_{p}(\mathbb{R}^{m};\mathbb{F})$. Then it has the domain $\mathcal{D}(A_{\mathcal{T}_{m}}) = \mathcal{W}^{p}_{D}(\mathbb{R}^{m};\mathbb{F})$, for which the subspace $C^{\infty}_{0}(\mathbb{R}^{m};\mathbb{F})$ is a core\footnote{That is a subspace dense in the graph norm.}, and acts according to the formula\footnote{Here $\Phi$ is considered as a function of $(s_{1},\ldots,s_{m}) \in \mathbb{R}^{m}$.}:
	\begin{equation}
		A_{\mathcal{T}_{m}}\Phi = \left(\sum_{j=1}^{m}\frac{\partial}{\partial s_{j}}\right) \Phi \qquad \text{for} \quad \Phi \in \mathcal{D}(A_{\mathcal{T}_{m}}).
	\end{equation}
	In addition, let $\Gamma$ be an affine hyperplane in $\mathbb{R}^{m}$ that intersects transversely\footnote{In the sense that there exists a unique intersection point.} the diagonal line $\mathcal{L}_{0}$. Then there is a linear (trace) operator $\operatorname{Tr}_{\Gamma} \colon \mathcal{W}^{p}_{D}(\mathbb{R}^{m};\mathbb{F}) \to L_{p}(\Gamma;\mathbb{F})$ defined on compactly supported functions $\Phi$. It is given for $\mu^{m-1}_{L}$-almost all $\bar{s} \in \Gamma$ as follows:
	\begin{equation}
		\label{EQ: TraceOperatorDiagonalTranslate}
		\operatorname{Tr}_{\Gamma}\Phi(\bar{s}) = \int_{-\infty}^{0} (A_{\mathcal{T}_{m}}\mathcal{T}(t)\Phi)(\bar{s})dt = \restr{\Phi}{\Gamma}(\bar{s}).
	\end{equation}
    In particular, for any $r > 0$ there exists a constant $C(r) > 0$ such that
    \begin{equation}
    	\label{EQ: EstimateTraceOperatorWithRadius}
    	\| \operatorname{Tr}_{\Gamma}\Phi \|_{L_{p}(\Gamma;\mathbb{F})} \leq C(r) \cdot \| \Phi \|_{\mathcal{W}^{p}_{D}(\mathbb{R}^{m};\mathbb{F})}
    \end{equation}
    is satisfied for any $\Gamma$ and any $\Phi$ whose its support is contained in the ball of radius $r$.
\end{theorem}
\begin{proof}
	Clearly, the subspace of smooth functions $C^{\infty}_{0}(\mathbb{R}^{m};\mathbb{F})$ with compact support is dense in $L_{p}(\mathbb{R}^{m};\mathbb{F})$ and invariant with respect to $\mathcal{T}_{m}(t)$ for any $t \in \mathbb{R}$. It is also obvious that for $\Phi \in C^{\infty}_{0}(\mathbb{R}^{m};\mathbb{F})$ there exists the limit
	\begin{equation}
		\label{EQ: DiagonalTranslateSemigroupRmLimitFiniteSmooth}
		\lim_{h \to 0+}\frac{1}{h}\left(\mathcal{T}_{m}(h)\Phi - \Phi\right) = \left(\sum_{j=1}^{m}\frac{\partial}{\partial s_{j}}\right) \Phi.
	\end{equation}
    Therefore, $\Phi \in \mathcal{D}(A_{\mathcal{T}_{m}})$. By  \cite[Chapter II, Proposition 1.7]{EngelNagel2000}, we obtain that $C^{\infty}_{0}(\mathbb{R}^{m};\mathbb{F})$ is a core for $\mathcal{D}(A_{\mathcal{T}_{m}})$. Now we aim to show that the limit \eqref{EQ: DiagonalTranslateSemigroupRmLimitFiniteSmooth} also exists for $\Phi \in \mathcal{W}^{p}_{D}(\mathbb{R}^{m};\mathbb{F})$. From this, $\mathcal{W}^{p}_{D}(\mathbb{R}^{m};\mathbb{F})$ must also be a core by the same argument and hence coincide with $A_{\mathcal{T}_{m}}$, since it is closed in the graph norm.
    
    Consider $\Phi \in \mathcal{W}^{p}_{D}(\mathbb{R}^{m};\mathbb{F})$. For convenience, we set $D\coloneq\sum_{j=1}^{m}\frac{\partial}{\partial s_{j}}$, and let $L_{p}$ stand for $L_{p}(\mathbb{R}^{m};\mathbb{F})$. By \eqref{EQ: DiagonalTranslationRmSmoothing}, for any $\varepsilon>0$ there exists $\delta > 0$ such that
    \begin{equation}
    	\frac{1}{h} \int_{0}^{h}\mathcal{T}_{m}(\theta)D\Phi d\theta = D\Phi + R_{h},
    \end{equation} 
    where $\|R_{h}\|_{L_{p}} < \varepsilon$, provided that $0 < h < \delta$. 
    
    Moreover, for $\mu^{m-1}_{L}$-almost all $\bar{s} \in \mathcal{L}^{\bot}_{0}$ and any $t \in \mathbb{R}$, we have
    \begin{equation}
    	\begin{split}
    		\frac{1}{h}\left[\Phi(\bar{s}+\underline{t} + \underline{h}) - \Phi(\bar{s}+\underline{t})\right] = \frac{1}{h}\int_{0}^{h}D\Phi(\bar{s}+\underline{t}+\underline{\theta})d\theta =\\= \frac{1}{h}\int_{0}^{h}(\mathcal{T}_{m}(\theta)D\Phi)(\bar{s}+\underline{t}) d\theta = D\Phi(\bar{s} + \underline{t}) + R_{h}(\bar{s}+\underline{t}).
    	\end{split}
    \end{equation}
    From this, using the Fubini theorem, we obtain
    \begin{equation}
    	\begin{split}
    		\left\| \frac{1}{h}\left(\mathcal{T}_{m}(h)\Phi - \Phi\right) - D\Phi \right\|^{p}_{L_{p}} =\\ \int_{\bar{s} \in \mathcal{L}^{\bot}_{0}}d\mu^{m-1}_{L}(\bar{s})\int_{t \in \mathbb{R}} \left| \frac{1}{h}\left[\Phi(\bar{s}+\underline{t}+\underline{h}) - \Phi(\bar{s}+\underline{t})\right] - D\Phi(\bar{s}+\underline{t}) \right|^{p}_{\mathbb{F}}dt= \\
    		\int_{\bar{s} \in \mathcal{L}^{\bot}_{0}}d\mu^{m-1}_{L}(\bar{s})\int_{t \in \mathbb{R}} \left| R_{h}(\bar{s}+\underline{t}) \right|^{p}_{\mathbb{F}}dt = \|R_{h}\|^{p}_{L_{p}} < \varepsilon^{p},
    	\end{split}
    \end{equation}
    which shows the required.
	
	For the second part, consider $\Phi \in C^{\infty}_{0}(\mathbb{R}^{m};\mathbb{F})$. Then the Newton--Leibniz formula gives that the restriction of $\Phi$ to $\Gamma$ can be described by \eqref{EQ: TraceOperatorDiagonalTranslate}, and the estimate \eqref{EQ: EstimateTraceOperatorWithRadius} is valid. Moreover, since such functions are dense in $\mathcal{D}(A_{\mathcal{T}_{m}}) = \mathcal{W}^{p}_{D}(\mathbb{R}^{m};\mathbb{F})$ by the previous, the formula can be extended for $\Phi \in \mathcal{W}^{p}_{D}(\mathbb{R}^{m};\mathbb{F})$ with compact support thanks to \eqref{EQ: EstimateTraceOperatorWithRadius}. Here the last equality in \eqref{EQ: TraceOperatorDiagonalTranslate} is satisfied because the restrictions of $\Phi$ to almost every line parallel to the diagonal $\mathcal{L}_{0}$ are well defined elements of appropriate $W^{1,p}$-spaces and can therefore be identified with continuous functions. This allows us to determine the values of $\Phi$ at $\mu^{m-1}_{L}$-almost every point of $\Gamma$.
\end{proof}

Now consider a hyperplane $\Gamma_{0}$ that transversely intersects the diagonal line $\mathcal{L}_{0}$. Then hyperplanes $\Gamma$ close to $\Gamma_{0}$ also transversely intersect $\mathcal{L}_{0}$. This allows us to identify $\Gamma$ and $\Gamma_{0}$ along the diagonal line, i.e., any $\bar{s} \in \Gamma$ is identified with the unique element from the intersection $\Gamma_{0} \cap (\mathcal{L}_{0}+\bar{s})$. This defines a linear isomorphism $E_{\Gamma,\Gamma_{0}}$ from $L_{p}(\Gamma;\mathbb{F})$ to $L_{p}(\Gamma_{0};\mathbb{F})$.
\begin{lemma}
	\label{LEM: ContinuityTraceOperatorGamma}
	Consider $\Gamma_{0}$ as above. Then for any $\Phi \in \mathcal{W}^{p}_{D}(\mathbb{R}^{m};\mathbb{F})$ with compact support, the mapping $\Gamma \mapsto E_{\Gamma,\Gamma_{0}} \circ \operatorname{Tr}_{\Gamma} \Phi \in L_{p}(\Gamma_{0};\mathbb{F})$ is continuous at $\Gamma_{0}$.
\end{lemma}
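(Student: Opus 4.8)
The plan is a standard three-$\varepsilon$ argument that combines the uniform trace estimate \eqref{EQ: EstimateTraceOperatorWithRadius} with the density of $C^{\infty}_{0}$ and the obvious continuity of the classical restriction $\restr{\Phi}{\Gamma}$ in $\Gamma$ for smooth $\Phi$. First I would fix a common compact support: let $B_{r}$ be a closed ball containing $\operatorname{supp}\Phi$ and $B_{2r}$ the concentric ball of twice the radius; pick $\chi \in C^{\infty}_{0}(\mathbb{R}^{m})$ with $\chi \equiv 1$ on $B_{r}$ and $\operatorname{supp}\chi \subset B_{2r}$, and note that multiplication by $\chi$ is bounded on $\mathcal{W}^{p}_{D}(\mathbb{R}^{m};\mathbb{F})$ (the diagonal derivative of $\chi\Psi$ is $\chi\,D\Psi + (D\chi)\Psi \in L_{p}$, where $D := \sum_{j=1}^{m}\partial/\partial s_{j}$). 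By Theorem \ref{TH: DiagonalTranslationInRm} the space $C^{\infty}_{0}(\mathbb{R}^{m};\mathbb{F})$ is dense in $\mathcal{W}^{p}_{D}(\mathbb{R}^{m};\mathbb{F})$, so there are $\widetilde{\Phi}_{l} \in C^{\infty}_{0}(\mathbb{R}^{m};\mathbb{F})$ with $\widetilde{\Phi}_{l} \to \Phi$; putting $\Phi_{l} := \chi\widetilde{\Phi}_{l}$ we obtain $\Phi_{l} \in C^{\infty}_{0}(\mathbb{R}^{m};\mathbb{F})$ with $\operatorname{supp}\Phi_{l} \subset B_{2r}$ and $\Phi_{l} \to \chi\Phi = \Phi$ in $\mathcal{W}^{p}_{D}(\mathbb{R}^{m};\mathbb{F})$.

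Second, I would isolate the two uniform bounds driving the estimate. By \eqref{EQ: EstimateTraceOperatorWithRadius} with radius $2r$ there is a constant $C = C(2r)$ with $\|\operatorname{Tr}_{\Gamma}\Psi\|_{L_{p}(\Gamma;\mathbb{F})} \le C\,\|\Psi\|_{\mathcal{W}^{p}_{D}(\mathbb{R}^{m};\mathbb{F})}$ for \emph{every} hyperplane $\Gamma$ transversal to $\mathcal{L}_{0}$ and every $\Psi$ supported in $B_{2r}$. Next, $E_{\Gamma,\Gamma_{0}}$ is the pushforward under the affine projection $\phi_{\Gamma}$ of $\Gamma$ onto $\Gamma_{0}$ along the fixed direction $\mathcal{L}_{0}$; since $\phi_{\Gamma}$ is affine, its Jacobian is a constant $j(\Gamma)$, this constant depends continuously on $\Gamma$ (in the parametrization of hyperplanes by unit normal and offset), and $j(\Gamma_{0}) = 1$. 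Hence there is a neighbourhood $\mathcal{U}$ of $\Gamma_{0}$ and a constant $K$ with $\|E_{\Gamma,\Gamma_{0}}\|_{\mathcal{L}(L_{p}(\Gamma;\mathbb{F});L_{p}(\Gamma_{0};\mathbb{F}))} = j(\Gamma)^{1/p} \le K$ for all $\Gamma \in \mathcal{U}$; moreover, after shrinking $\mathcal{U}$, the supports of all $E_{\Gamma,\Gamma_{0}}\operatorname{Tr}_{\Gamma}\Psi$ with $\operatorname{supp}\Psi \subset B_{2r}$ lie in one fixed set $S := \Gamma_{0}\cap B_{3r}$ with $\mu^{m-1}_{L}(S) < \infty$.

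Then the estimate is routine. For $\Gamma \in \mathcal{U}$, using $\operatorname{Tr}_{\Gamma}\Phi_{l} = \restr{\Phi_{l}}{\Gamma}$ from \eqref{EQ: TraceOperatorDiagonalTranslate} and the bound for $\operatorname{Tr}_{\Gamma}$ and $\operatorname{Tr}_{\Gamma_{0}}$,
\begin{equation*}
\begin{split}
\bigl\| E_{\Gamma,\Gamma_{0}}\operatorname{Tr}_{\Gamma}\Phi - \operatorname{Tr}_{\Gamma_{0}}\Phi \bigr\|_{L_{p}(\Gamma_{0};\mathbb{F})} \le (K+1)\, C\, \|\Phi - \Phi_{l}\|_{\mathcal{W}^{p}_{D}(\mathbb{R}^{m};\mathbb{F})} \\ {}+ \bigl\| E_{\Gamma,\Gamma_{0}}\restr{\Phi_{l}}{\Gamma} - \restr{\Phi_{l}}{\Gamma_{0}} \bigr\|_{L_{p}(\Gamma_{0};\mathbb{F})}.
\end{split}
\end{equation*}
I would choose $l$ so that the first term is $< \varepsilon/2$. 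For this fixed $l$, the value of $E_{\Gamma,\Gamma_{0}}\restr{\Phi_{l}}{\Gamma}$ at $\overline{s}_{0} \in \Gamma_{0}$ is $\Phi_{l}$ evaluated at the point $\Gamma \cap (\mathcal{L}_{0} + \overline{s}_{0})$, and as $\Gamma \to \Gamma_{0}$ this point tends to $\overline{s}_{0}$ uniformly for $\overline{s}_{0} \in S$. Since $\Phi_{l}$ is uniformly continuous and all functions in question are supported in $S$, we get $E_{\Gamma,\Gamma_{0}}\restr{\Phi_{l}}{\Gamma} \to \restr{\Phi_{l}}{\Gamma_{0}}$ uniformly on $S$, hence in $L_{p}(\Gamma_{0};\mathbb{F})$ because $\mu^{m-1}_{L}(S) < \infty$; so the second term is $< \varepsilon/2$ for $\Gamma$ sufficiently close to $\Gamma_{0}$, and continuity at $\Gamma_{0}$ follows.

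The main obstacle, such as it is, will be the second uniform bound: making precise that $E_{\Gamma,\Gamma_{0}}$ depends continuously on $\Gamma$ with $E_{\Gamma_{0},\Gamma_{0}} = \operatorname{Id}$, i.e. that the Jacobian of the affine projection along $\mathcal{L}_{0}$ between two nearby transversal hyperplanes is a continuous function of $\Gamma$ equal to $1$ at $\Gamma_{0}$, together with the bookkeeping that these projections keep the relevant supports inside a single set of finite measure. This is elementary linear algebra, but it is the one ingredient not already furnished by the trace estimate \eqref{EQ: EstimateTraceOperatorWithRadius} and the core property from Theorem \ref{TH: DiagonalTranslationInRm}.
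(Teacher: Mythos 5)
Your proof is correct, but it follows a different route from the paper. The paper proves the lemma in one direct estimate, with no density or approximation step: using the representation $\operatorname{Tr}_{\Gamma}\Phi(\overline{s}) = \int_{-\infty}^{0}(A_{\mathcal{T}_{m}}\mathcal{T}(t)\Phi)(\overline{s})\,dt$ from \eqref{EQ: TraceOperatorDiagonalTranslate}, the difference $E_{\Gamma,\Gamma_{0}}\operatorname{Tr}_{\Gamma}\Phi - \operatorname{Tr}_{\Gamma_{0}}\Phi$ at a point of $\Gamma_{0}$ is the integral of the diagonal derivative over the piece of the diagonal line lying between its crossings with $\Gamma$ and $\Gamma_{0}$; H\"older and Fubini then bound the $L_{p}(\Gamma_{0})$-norm of the difference by $C(r)\int_{\mathcal{S}(\Gamma_{0},\Gamma)\cap\mathcal{B}(r)}\|D\Phi\|^{p}_{\mathbb{F}}\,d\overline{s}$, where $\mathcal{S}(\Gamma_{0},\Gamma)$ is the sector between the two hyperplanes, and this vanishes as $\Gamma\to\Gamma_{0}$ by absolute continuity of the integral. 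Your three-$\varepsilon$ argument instead treats the trace bound \eqref{EQ: EstimateTraceOperatorWithRadius} as a black box, reduces to $C^{\infty}_{0}$ via the core property of Theorem \ref{TH: DiagonalTranslationInRm} plus a cutoff, and settles the smooth case by uniform continuity; this is softer and requires you to control the operator norm of $E_{\Gamma,\Gamma_{0}}$ (the Jacobian bookkeeping you flag), which the paper's pointwise cancellation along diagonal lines avoids entirely. What the paper's approach buys is an explicit, quantitative modulus of continuity in terms of the $L_{p}$-mass of $D\Phi$ on the shrinking sector; what yours buys is independence from the precise integral representation — it would survive with any uniform (in $\Gamma$ near $\Gamma_{0}$) trace estimate and the density of smooth functions. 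Both arguments tacitly use the same local uniformity: your constants $C(2r)$ and $K$, and the paper's $C(r)$ hidden in its Fubini/change-of-variables step, are uniform only for $\Gamma$ with transversality to $\mathcal{L}_{0}$ bounded away from degeneracy, which is all that is needed near $\Gamma_{0}$.
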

\begin{proof}
	Let $\mathcal{S}(\Gamma_{0},\Gamma)$ denote the sector between $\Gamma_{0}$ and $\Gamma$, i.e., the symmetric difference between $\bigcup_{t = -\infty}^{0}( \Gamma_{0} + \underline{t})$ and $\bigcup_{t = -\infty}^{0}( \Gamma + \underline{t})$. Let $\mathcal{B}(r)$ be the ball in $\mathbb{R}^{m}$ of radius $r>0$ centered at $0$ and containing the support of $\Phi$. Then from \eqref{EQ: TraceOperatorDiagonalTranslate}, the H\"{o}lder inequality, and the Fubini theorem, for some $C(r)>0$ we have
	\begin{equation}
		\begin{split}
			\| E_{\Gamma,\Gamma_{0}} \circ \operatorname{Tr}_{\Gamma}\Phi - \operatorname{Tr}_{\Gamma_{0}}\Phi \|^{p}_{L_{p}(\Gamma_{0};\mathbb{F})} \leq \\ \leq C(r) \cdot \int\limits_{\mathcal{S}(\Gamma_{0};\Gamma) \cap \mathcal{B}(r)} \left\| \left(\sum_{j=1}^{m} \frac{\partial}{\partial s_{j}}\right)\Phi(\bar{s}) \right\|^{p}_{\mathbb{F}}d\bar{s},
		\end{split}
	\end{equation}
    where the integral tends to $0$ as $\Gamma \to \Gamma_{0}$ due to absolute continuity of the integral.
\end{proof}

Next, we consider the case where $\Omega = (-\tau,0)^{m}$ for some $\tau > 0$. Recall here the subsets $\mathcal{B}_{\hat{j}} = \mathcal{B}^{(m)}_{\hat{j}}$ from \eqref{EQ: DefinitionOfBoundaryFace} consisting of all $\bar{\theta}=(\theta_{1},\ldots,\theta_{j}) \in [-\tau,0]^{m}$ with $\theta_{j} = 0$.

\begin{proposition}
	\label{PROP: DiagonalSobolevCubeDescription}
	The space $\mathcal{W}^{p}_{D}((-\tau,0)^{m};\mathbb{F})$ consists of exactly those elements $\Phi$ from $\widehat{\mathcal{W}}^{p}_{D}((-\tau,0)^{m};\mathbb{F})$ for which the restriction $\Phi_{j}$ of $\Phi$ to $\mathcal{B}^{(m)}_{\hat{j}}$ is an element of $L_{p}(\mathcal{B}^{(m)}_{\hat{j}};\mathbb{F})$ for any $j \in \{1,\ldots,m\}$. Furthermore, the norm
	\begin{equation}
		\label{EQ: NormInDiagonalSobolevOnCube}
		\|\Phi\|^{p} \coloneq \| \Phi \|^{p}_{\widehat{\mathcal{W}}^{p}_{D}((-\tau,0)^{m};\mathbb{F})} + \sum_{j=1}^{m}\| \Phi_{j} \|^{p}_{L_{p}(\mathcal{B}^{(m)}_{\hat{j}};\mathbb{F})}.
	\end{equation}
	is equivalent to the norm in $\mathcal{W}^{p}_{D}((-\tau,0)^{m};\mathbb{F})$. 
	
	In addition, there exists an extension operator
	\begin{equation}
		\label{EQ: ExtensionDiagonalSobolevCube}
		\mathfrak{C} \colon \mathcal{W}^{p}_{D}((-\tau,0)^{m};\mathbb{F}) \to \mathcal{W}^{p}_{D}(\mathbb{R}^{m};\mathbb{F})
	\end{equation}
	with the same properties as in items 1) and 2) of Lemma \ref{LEM: ExtensionOperatorFromSobolevDiagonalToRm}.
\end{proposition}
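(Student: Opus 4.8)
The plan is to establish the three claims simultaneously by constructing the extension operator $\mathfrak{C}$ by hand: its existence immediately yields the inclusion of the right-hand set into $\mathcal{W}^{p}_{D}((-\tau,0)^{m};\mathbb{F})$ together with the nontrivial half of the norm equivalence, while the reverse inclusion and the easy half of the equivalence follow from the trace operator of Theorem \ref{TH: DiagonalTranslationInRm}.

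First I would prove that $\mathcal{W}^{p}_{D}((-\tau,0)^{m};\mathbb{F})$ is contained in the set of $\Phi\in\widehat{\mathcal{W}}^{p}_{D}((-\tau,0)^{m};\mathbb{F})$ with $\Phi_{j}\in L_{p}(\mathcal{B}^{(m)}_{\widehat{j}};\mathbb{F})$ for all $j$, together with $\|\Phi\|\le C\|\Phi\|_{\mathcal{W}^{p}_{D}((-\tau,0)^{m};\mathbb{F})}$ for the norm $\|\cdot\|$ of \eqref{EQ: NormInDiagonalSobolevOnCube}. Write $\Phi=\restr{\Psi}{(-\tau,0)^{m}}$ with $\Psi\in\mathcal{W}^{p}_{D}(\mathbb{R}^{m};\mathbb{F})$, fix a cut-off $\chi\in C^{\infty}_{0}(\mathbb{R}^{m})$ equal to $1$ on a neighbourhood of $[-\tau,0]^{m}$, and note that $\chi\Psi\in\mathcal{W}^{p}_{D}(\mathbb{R}^{m};\mathbb{F})$ has compact support with $\|\chi\Psi\|_{\mathcal{W}^{p}_{D}(\mathbb{R}^{m};\mathbb{F})}\le C_{\chi}\|\Psi\|_{\mathcal{W}^{p}_{D}(\mathbb{R}^{m};\mathbb{F})}$. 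The hyperplane $\Gamma_{j}=\{\theta_{j}=0\}$ meets the diagonal line transversely, so Theorem \ref{TH: DiagonalTranslationInRm} applies and, since $\chi\Psi=\Phi$ on $(-\tau,0)^{m}$, identifies $\Phi_{j}$ with $\restr{(\operatorname{Tr}_{\Gamma_{j}}\chi\Psi)}{\mathcal{B}^{(m)}_{\widehat{j}}}$; the estimate \eqref{EQ: EstimateTraceOperatorWithRadius} then gives $\|\Phi_{j}\|_{L_{p}(\mathcal{B}^{(m)}_{\widehat{j}};\mathbb{F})}\le C(r)C_{\chi}\|\Psi\|_{\mathcal{W}^{p}_{D}(\mathbb{R}^{m};\mathbb{F})}$ with $r$ depending only on $\operatorname{supp}\chi$. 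Taking the infimum over admissible $\Psi$ and adding the trivial bound $\|\Phi\|_{\widehat{\mathcal{W}}^{p}_{D}}\le\|\Phi\|_{\mathcal{W}^{p}_{D}}$ finishes this part.

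For the converse I would construct $\mathfrak{C}$ section by section along the diagonal direction, in the Fubini spirit of the proof of Lemma \ref{LEM: ExtensionOperatorFromSobolevDiagonalToRm}, but replacing the length rescaling (which blows up on the short sections of the cube) by a fixed-length extension of the endpoint values. For $\overline{s}\in\mathcal{L}^{\perp}_{0}$ identify $\Omega(\overline{s})=(\mathcal{L}_{0}+\overline{s})\cap(-\tau,0)^{m}$ with an interval $(a(\overline{s}),b(\overline{s}))$ of length $\le\tau$; for a.e.\ $\overline{s}$ the function $\restr{\Phi}{\Omega(\overline{s})}$ is $W^{1,p}$ on that interval and so has continuous endpoint values $u(\overline{s})$ at $b(\overline{s})$ and $v(\overline{s})$ at $a(\overline{s})$, and, off a null set of $\overline{s}$, the map $\overline{s}\mapsto$(upper endpoint) is a piecewise affine bi-Lipschitz parametrisation of $\bigcup_{j}\mathcal{B}^{(m)}_{\widehat{j}}$ (the diagonal makes a fixed nonzero angle with every face), whence $\int|u|^{p}\,d\mu^{m-1}_{L}\asymp\sum_{j}\|\Phi_{j}\|^{p}_{L_{p}(\mathcal{B}^{(m)}_{\widehat{j}};\mathbb{F})}$, and similarly $\int|v|^{p}\,d\mu^{m-1}_{L}$ is comparable to the $L_{p}$-mass of $\Phi$ on the faces $\{\theta_{j}=-\tau\}$. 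The key observation is that the latter quantity is finite for free: on each section $|v(\overline{s})|\le|u(\overline{s})|+\tau^{1-1/p}\bigl\|(\sum_{l}\tfrac{\partial}{\partial\theta_{l}})\restr{\Phi}{\Omega(\overline{s})}\bigr\|_{L_{p}}$ by the Newton--Leibniz formula and H\"older, so $\int|v|^{p}\,d\mu^{m-1}_{L}\le C\bigl(\sum_{j}\|\Phi_{j}\|^{p}_{L_{p}}+\|(\sum_{l}\tfrac{\partial}{\partial\theta_{l}})\Phi\|^{p}_{L_{p}((-\tau,0)^{m})}\bigr)$. Now fix $\phi_{\pm}\in C^{\infty}([0,\infty))$ with $\phi_{\pm}(0)=1$ and $\phi_{\pm}\equiv0$ on $[1,\infty)$, and along each line $\mathcal{L}_{0}+\overline{s}$, parametrised by $\zeta\mapsto\overline{s}+\underline{\zeta}$, set
\[
(\mathfrak{C}\Phi)(\overline{s}+\underline{\zeta}):=\begin{cases}\Phi(\overline{s}+\underline{\zeta}),&a(\overline{s})<\zeta<b(\overline{s}),\\ u(\overline{s})\,\phi_{+}(\zeta-b(\overline{s})),&\zeta\ge b(\overline{s}),\\ v(\overline{s})\,\phi_{-}(a(\overline{s})-\zeta),&\zeta\le a(\overline{s}),\end{cases}
\]
and $(\mathfrak{C}\Phi)\equiv0$ on lines disjoint from $(-\tau,0)^{m}$. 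Continuity at $\zeta=a(\overline{s}),b(\overline{s})$ is built in, so $\restr{\mathfrak{C}\Phi}{\mathcal{L}_{0}+\overline{s}}\in W^{1,p}$ with both its $L_{p}$-norm and the $L_{p}$-norm of its diagonal derivative bounded by the corresponding quantities for $\restr{\Phi}{\Omega(\overline{s})}$ plus $C(|u(\overline{s})|^{p}+|v(\overline{s})|^{p})$; integrating over $\overline{s}$ and invoking \eqref{EQ: DiagonalSobolevPropEquivalent} shows $\mathfrak{C}\Phi\in\mathcal{W}^{p}_{D}(\mathbb{R}^{m};\mathbb{F})$ with $\|\mathfrak{C}\Phi\|_{\mathcal{W}^{p}_{D}(\mathbb{R}^{m};\mathbb{F})}\le C\|\Phi\|$. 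By construction $\mathfrak{C}$ is linear, $\restr{\mathfrak{C}\Phi}{(-\tau,0)^{m}}=\Phi$, and $\mathfrak{C}\Phi$ vanishes outside a fixed ball, which are exactly properties 1) and 2) of Lemma \ref{LEM: ExtensionOperatorFromSobolevDiagonalToRm}; in particular $\Phi\in\mathcal{W}^{p}_{D}((-\tau,0)^{m};\mathbb{F})$ with $\|\Phi\|_{\mathcal{W}^{p}_{D}}\le\|\mathfrak{C}\Phi\|_{\mathcal{W}^{p}_{D}(\mathbb{R}^{m};\mathbb{F})}\le C\|\Phi\|$, completing the characterisation and the norm equivalence.

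The main obstacle I anticipate is the measure-theoretic bookkeeping surrounding the short sections: that the diagonal sections fill $(-\tau,0)^{m}$ up to a null set, that the upper- and lower-endpoint maps are bi-Lipschitz onto the corresponding unions of faces after discarding the (null) set of $\overline{s}$ whose section meets a lower-dimensional stratum of $\partial[-\tau,0]^{m}$, and that "restriction of $\Phi$ to $\mathcal{B}^{(m)}_{\widehat{j}}$" in the statement indeed coincides with the diagonal endpoint limit $u$ (equivalently, with the trace \eqref{EQ: TraceOperatorDiagonalTranslate}); together with the elementary but crucial derivation of the automatic $L_{p}$-bound for the opposite endpoint values, which is precisely what makes a hypothesis only on the faces $\mathcal{B}^{(m)}_{\widehat{j}}$ sufficient.
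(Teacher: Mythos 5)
Your argument is correct, and its first half (traces exist and are controlled, via a cut-off and Theorem \ref{TH: DiagonalTranslationInRm} with the estimate \eqref{EQ: EstimateTraceOperatorWithRadius}, then taking the infimum over extensions) is essentially the same as the corresponding part of the paper's proof. Where you genuinely diverge is in the converse/extension step. The paper does not build the global extension by hand: it first prolongs $\Phi$ forward along the diagonal onto $\mathring{\mathcal{C}}^{m}_{\tau}$, setting $\Phi_{0}(\overline{s}) := \Phi_{j}(\overline{s}-\underline{t})$ for $\overline{s}-\underline{t} \in \mathcal{B}^{(m)}_{\widehat{j}}$, and then invokes Lemma \ref{LEM: ExtensionOperatorFromSobolevDiagonalToRm}, which applies because the diagonal sections of $\mathring{\mathcal{C}}^{m}_{\tau}$ have length bounded below; a Fubini estimate of $\|\Phi_{0}\|_{L_{p}(\mathring{\mathcal{C}}^{m}_{\tau}\setminus(-\tau,0)^{m})}$ by $\sum_{j}\|\Phi_{j}\|_{L_{p}}$ then closes the norm equivalence. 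You instead construct the two-sided extension directly on each diagonal line with smooth profiles $\phi_{\pm}$ times the endpoint values, which forces you to control the lower endpoint values $v(\overline{s})$; your Newton--Leibniz/H\"older bound $|v|\le|u|+\tau^{1-1/p}\|D\Phi\|_{L_{p}(\Omega(\overline{s}))}$ does exactly that and is sound. The paper's route buys economy (it reuses the already-proved lemma and never has to discuss the lower faces, since the $1$D extension operator on the long sections of $\mathcal{C}^{m}_{\tau}$ absorbs endpoint control via the one-dimensional Sobolev embedding), while your route is self-contained along each line and makes explicit the reason why hypotheses on the faces $\mathcal{B}^{(m)}_{\widehat{j}}$ alone suffice -- the opposite endpoint data is free. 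The measure-theoretic points you flag (a.e.\ sections, bi-Lipschitz identification of the endpoint map with the union of faces up to a constant Jacobian $1/\sqrt{m}$, measurability of $u,v$, and the identification of ``restriction to $\mathcal{B}^{(m)}_{\widehat{j}}$'' with the diagonal endpoint limit) are indeed the only things to be written out carefully, and none of them is an obstruction.
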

\begin{proof}
	By Theorem \ref{TH: DiagonalTranslationInRm}, any $\Phi \in \mathcal{W}^{p}_{D}((-\tau,0)^{m};\mathbb{F})$ has $L_{p}$-summable traces on the boundary faces. So, it is required to show that any $\Phi \in \widehat{\mathcal{W}}^{p}_{D}((-\tau,0)^{m};\mathbb{F})$ with $L_{p}$-summable traces $\Phi_{j}$ on each $\mathcal{B}^{(m)}_{\hat{j}}$, where $j \in \{1,\ldots,m\}$, belongs to $\mathcal{W}^{p}_{D}((-\tau,0)^{m};\mathbb{F})$.
	
	For such $\Phi$, let $\Phi_{0}$ be defined on $\mathring{\mathcal{C}}^{m}_{\tau}$ from Remark \ref{REM: DelayCompoundLemmaExtensionApplication} as follows:
	\begin{equation}
		\Phi_{0}(\bar{s}) \coloneq \begin{cases}
			\Phi(\bar{s}) \qquad &\text{for} \quad \bar{s} \in (-\tau,0)^{m},\\
			\Phi_{j}(\bar{s}-\underline{t}) \qquad &\text{for} \quad \bar{s}-\underline{t} \in \mathcal{B}^{(m)}_{\hat{j}} \ \text{and} \ t \in (0,\tau],
		\end{cases}
	\end{equation}
	where the second condition is taken over $j \in \{1,\ldots,m\}$. Clearly, $\Phi_{0} \in \widehat{\mathcal{W}}^{p}_{D}(\mathring{\mathcal{C}}^{m}_{\tau};\mathbb{F})$, and hence, by Lemma \ref{LEM: ExtensionOperatorFromSobolevDiagonalToRm}, there exists an extension $\mathfrak{C}\Phi_{0} \in \mathcal{W}^{p}_{D}(\mathbb{R}^{m};\mathbb{F})$ with compact support lying in the ball of radius $r$ depending only on $\tau$. In particular, $\Phi \in \mathcal{W}^{p}_{D}(\mathbb{R}^{m};\mathbb{F})$.
	
	Using the Fubini theorem, for some constant $C_{1} > 0$ depending only on $\tau$, we obtain
	\begin{equation}
		\| \Phi_{0} \|^{p}_{L_{p}(\mathring{\mathcal{C}}^{m}_{\tau} \setminus (-\tau,0)^{m};\mathbb{F})} \leq C_{1} \sum_{j=1}^{m}\| \Phi_{j} \|^{p}_{L_{p}(\mathcal{B}^{(m)}_{\hat{j}};\mathbb{F})}.
	\end{equation}
	Consequently, there exists a constant $C_{2}>0$ such that
	\begin{equation}
		\begin{split}
			\|\Phi\|^{p}_{\mathcal{W}^{p}_{D}(\mathbb{R}^{m};\mathbb{F})} \leq \| \mathfrak{C}\Phi_{0} \|^{p}_{\mathcal{W}^{p}_{D}(\mathbb{R}^{m};\mathbb{F})} \leq \| \mathfrak{C} \|^{p} \cdot \| \Phi_{0} \|^{p}_{\widehat{\mathcal{W}}^{p}_{D}(\mathring{\mathcal{C}}^{m}_{\tau};\mathbb{F})} = \\ = \| \mathfrak{C} \|^{p} \left( \|\Phi\|^{p}_{\widehat{\mathcal{W}}^{p}_{D}((-\tau,0)^{m};\mathbb{F})} + \| \Phi_{0} \|^{p}_{L_{p}(\mathring{\mathcal{C}}^{m}_{\tau} \setminus (-\tau,0)^{m};\mathbb{F})} \right) \\
			\leq C_{2} \|\Phi\|^{p}.
		\end{split}
	\end{equation}
	
	On the other hand, let $\Gamma_{j}$ be the hyperplane in $\mathbb{R}^{m}$ such that $\Gamma_{j} \cap [-\tau,0]^{m} = \mathcal{B}^{(m)}_{\hat{j}}$. Then for any extension $\hat{\Phi} \in \mathcal{W}^{p}_{D}(\mathbb{R}^{m};\mathbb{F})$ of $\Phi$ with support contained in the ball of radius $r$, from \eqref{EQ: EstimateTraceOperatorWithRadius} we obtain
	\begin{equation}
		\begin{split}
			\|\Phi\|^{p} \leq  \| \hat{\Phi} \|^{p}_{\mathcal{W}^{p}_{D}(\mathbb{R}^{m};\mathbb{F})} + \sum_{j=1}^{m}\| \operatorname{Tr}_{\Gamma_{j}}\hat{\Phi} \|^{p}_{L_{p}(\Gamma_{j};\mathbb{F})} \leq \\ \leq
			 \left(1 + mC^{p}(r)\right) \| \hat{\Phi} \|^{p}_{\mathcal{W}^{p}_{D}(\mathbb{R}^{m};\mathbb{F})}.
		\end{split}
	\end{equation}
	Since taking such $\hat{\Phi}$ is sufficient to compute the norm of $\Phi$ in $\mathcal{W}^{p}_{D}(\mathbb{R}^{m};\mathbb{F})$ up to a uniform constant (depending only on $\tau$ and $m$ or, more precisely, on derivatives of a proper cut-off function), this shows the equivalence of norms and hence the boundedness of the extension operator $\Phi \mapsto \mathfrak{C}\Phi_{0}$.
\end{proof}

It is convenient to consider the norm $\|\cdot\|$ from \eqref{EQ: NormInDiagonalSobolevOnCube} as the basic norm in the space $\mathcal{W}^{p}_{D}((-\tau,0)^{m};\mathbb{F})$ and denote it by $\| \cdot  \|_{\mathcal{W}^{p}_{D}((-\tau,0)^{m};\mathbb{F})}$.

We deduce the trace theorem for $\mathcal{W}^{p}_{D}((-\tau,0)^{m};\mathbb{F})$ as follows.

\begin{theorem}
	\label{TH: TraceOperatorForDiagonalTranslatesFinite}
	Let $\Gamma$ be an affine hyperplane that transversely intersects the diagonal line, and let the intersection $\mathcal{I} \coloneq \Gamma \cap [-\tau,0]^{m}$ have a nonempty interior in $\Gamma$. Then there exists a bounded linear operator
	\begin{equation}
		\operatorname{Tr}_{\mathcal{I}} \colon \mathcal{W}^{p}_{D}( (-\tau,0)^{m};\mathbb{F}) \to L_{p}(\mathcal{I}; \mathbb{F})
	\end{equation}
    defined by the restriction of $\Phi$ to $\mathcal{I}$. Moreover, its norm admits an upper bound that depends only on $\tau$ and $m$ and does not depend on $\mathcal{I}$.
\end{theorem}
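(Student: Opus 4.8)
The plan is to reduce the finite-domain trace statement to the trace theorem on $\mathbb{R}^{m}$ from Theorem \ref{TH: DiagonalTranslationInRm} via the extension operator constructed in Proposition \ref{PROP: DiagonalSobolevCubeDescription}. First I would take any $\Phi \in \mathcal{W}^{p}_{D}((-\tau,0)^{m};\mathbb{F})$ and apply the extension operator $\mathfrak{C}$ from \eqref{EQ: ExtensionDiagonalSobolevCube}, obtaining $\widehat{\Phi} = \mathfrak{C}\Phi \in \mathcal{W}^{p}_{D}(\mathbb{R}^{m};\mathbb{F})$ with compact support contained in a ball of radius $r$ depending only on $\tau$ and $m$ (since $(-\tau,0)^{m}$ is a fixed cube, $r(\Omega)$ in item 2) of Lemma \ref{LEM: ExtensionOperatorFromSobolevDiagonalToRm} is an absolute function of $\tau$ and $m$). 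Then I would define $\operatorname{Tr}_{\mathcal{I}}\Phi$ as the restriction to $\mathcal{I}$ of $\operatorname{Tr}_{\Gamma}\widehat{\Phi}$, where $\operatorname{Tr}_{\Gamma}$ is the trace operator on $\mathcal{W}^{p}_{D}(\mathbb{R}^{m};\mathbb{F})$ provided by \eqref{EQ: TraceOperatorDiagonalTranslate}; the estimate \eqref{EQ: EstimateTraceOperatorWithRadius} with this $r$ gives
\begin{equation}
	\| \operatorname{Tr}_{\mathcal{I}}\Phi \|_{L_{p}(\mathcal{I};\mathbb{F})} \leq \| \operatorname{Tr}_{\Gamma}\widehat{\Phi} \|_{L_{p}(\Gamma;\mathbb{F})} \leq C(r) \cdot \| \widehat{\Phi} \|_{\mathcal{W}^{p}_{D}(\mathbb{R}^{m};\mathbb{F})} \leq C(r) \cdot \|\mathfrak{C}\| \cdot \| \Phi \|_{\mathcal{W}^{p}_{D}((-\tau,0)^{m};\mathbb{F})},
\end{equation}
and since $C(r)$, $r$ and $\|\mathfrak{C}\|$ all depend only on $\tau$ and $m$ (not on the particular hyperplane $\Gamma$), this yields the required uniform bound on the operator norm.

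The next point to address is well-definedness: the value of $\operatorname{Tr}_{\mathcal{I}}\Phi$ must not depend on the choice of extension $\widehat{\Phi}$. I would argue this first on smooth functions and then pass to the limit. For $\Phi$ whose restrictions are $C^{1}$, the restriction to $\mathcal{I}$ coincides with the genuine pointwise restriction (by the Newton--Leibniz identification of $\Phi$ with a continuous function along almost every diagonal line, as in the last part of the proof of Theorem \ref{TH: DiagonalTranslationInRm}), hence is intrinsic to $\restr{\Phi}{(-\tau,0)^{m}}$ and independent of the extension. Such $\Phi$ are dense in $\mathcal{W}^{p}_{D}((-\tau,0)^{m};\mathbb{F})$ — e.g.\ by mollifying along the diagonal and using \eqref{EQ: DiagonalTranslationRmSmoothing} together with density of $C^{\infty}_{0}$ in $L_{p}$ and the equivalence of norms from Proposition \ref{PROP: DiagonalSobolevCubeDescription}; one may also invoke the density of $\mathcal{D}(A)^{\odot m}$-type functions used elsewhere. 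Then the bounded operator defined on this dense subspace extends uniquely by continuity, and the extension no longer depends on any choices.

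One technical subtlety worth isolating is the hypothesis that $\mathcal{I} = \Gamma \cap [-\tau,0]^{m}$ be genuinely $(m-1)$-dimensional: this guarantees that $\Gamma$ meets the cube in a set of positive $(m-1)$-Lebesgue measure, so that $L_{p}(\mathcal{I};\mathbb{F})$ is the natural codomain and the restriction of $\operatorname{Tr}_{\Gamma}\widehat{\Phi}$ to $\mathcal{I}$ loses no information one would want to keep; degenerate intersections (a vertex or a lower-dimensional face) are excluded precisely so the operator norm estimate is stated cleanly. I expect the main obstacle to be nothing deep but rather bookkeeping: making sure the radius $r$ and the constant $C(r)$ are genuinely uniform over all admissible hyperplanes $\Gamma$. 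This uses that $\operatorname{supp}\widehat{\Phi}$ lies in a fixed ball independent of $\Gamma$ (it depends only on $\Omega = (-\tau,0)^{m}$ through $\mathfrak{C}$), so \eqref{EQ: EstimateTraceOperatorWithRadius} applies with one and the same $r$; the continuity-in-$\Gamma$ statement of Lemma \ref{LEM: ContinuityTraceOperatorGamma} is not strictly needed for the bound but reassures that the family $\{\operatorname{Tr}_{\mathcal{I}}\}$ behaves coherently. Assembling these pieces completes the proof.
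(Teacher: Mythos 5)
Your proposal is correct and follows essentially the same route as the paper: define $\operatorname{Tr}_{\mathcal{I}}\Phi$ as the restriction to $\mathcal{I}$ of $\operatorname{Tr}_{\Gamma}\mathfrak{C}\Phi$, with the uniform bound coming from \eqref{EQ: EstimateTraceOperatorWithRadius} and the fixed support radius built into $\mathfrak{C}$. The only difference is cosmetic: since $\mathfrak{C}$ is one fixed extension operator, the paper settles the "given by restriction of $\Phi$" claim directly via the last identity in \eqref{EQ: TraceOperatorDiagonalTranslate}, so your separate density argument for independence of the extension, while sound, is not needed.
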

\begin{proof}
	Let $R_{\mathcal{I}} \colon L_{p}(\Gamma;\mathbb{F}) \to L_{p}(\mathcal{I};\mathbb{F})$ be the operator that restricts functions from $\Gamma$ to $\mathcal{I}$. We define $\operatorname{Tr}_{\mathcal{I}}$ as follows:
	\begin{equation}
		\label{EQ: DefinitionTraceOperatorDiagonalSobolevOnCube}
		\operatorname{Tr}_{\mathcal{I}}\Phi \coloneq R_{\mathcal{I}}\operatorname{Tr}_{\Gamma}\mathfrak{C}\Phi,
	\end{equation}
    where $\mathfrak{C}$ and $\operatorname{Tr}_{\Gamma}$ are defined in \eqref{EQ: ExtensionDiagonalSobolevCube} and \eqref{EQ: TraceOperatorDiagonalTranslate}, respectively. From \eqref{EQ: EstimateTraceOperatorWithRadius} and the construction of $\mathfrak{C}$, we obtain that the norm of $\operatorname{Tr}_{\mathcal{I}}$ can be estimated only in terms of $\tau$ and $m$. Moreover, $\operatorname{Tr}_{\mathcal{I}}$ is indeed determined by the restriction of $\Phi$ to $\mathcal{I}$, thanks to the last identity in \eqref{EQ: TraceOperatorDiagonalTranslate}.
\end{proof}

Let $e_{j}$ be the $j$th vector in the standard basis of $\mathbb{R}^{m}$. Then for  $\theta \in [-\tau,0]$, each subset $\mathcal{B}_{\hat{j}} + \theta e_{j}$ is naturally identified with $[-\tau,0]^{m-1}$ by omitting the $j$th coordinate.
\begin{lemma}
	\label{LEM: TraceContinuityAlongShiftedFaces}
	Under the above given identifications, the mapping
	\begin{equation}
		[-\tau,0] \ni \theta \mapsto \operatorname{Tr}_{\mathcal{B}_{\hat{j}} +\theta e_{j}}\Phi \in L_{p}((-\tau,0)^{m-1};\mathbb{F})
	\end{equation}
	is continuous for any $\Phi \in \mathcal{W}^{p}_{D}((-\tau,0)^{m};\mathbb{F})$ and $j = 1,\ldots, m$.
\end{lemma}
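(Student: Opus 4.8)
The statement asserts the continuity of the map $\theta \mapsto \operatorname{Tr}_{\mathcal{B}_{\widehat{j}} + \theta e_{j}}\Phi$ with values in $L_{p}((-\tau,0)^{m-1};\mathbb{F})$. The natural approach is to reduce it to the already-established continuity result for traces on varying hyperplanes, namely Lemma \ref{LEM: ContinuityTraceOperatorGamma}. The plan is as follows. First I would pass from $\Phi \in \mathcal{W}^{p}_{D}((-\tau,0)^{m};\mathbb{F})$ to its extension $\widehat{\Phi} := \mathfrak{C}\Phi \in \mathcal{W}^{p}_{D}(\mathbb{R}^{m};\mathbb{F})$ given by the extension operator from \eqref{EQ: ExtensionDiagonalSobolevCube} in Proposition \ref{PROP: DiagonalSobolevCubeDescription}; this $\widehat{\Phi}$ has compact support (item 2) of Lemma \ref{LEM: ExtensionOperatorFromSobolevDiagonalToRm}), so Theorem \ref{TH: DiagonalTranslationInRm} and Lemma \ref{LEM: ContinuityTraceOperatorGamma} apply to it. By the construction of $\operatorname{Tr}_{\mathcal{I}}$ in \eqref{EQ: DefinitionTraceOperatorDiagonalSobolevOnCube}, for each $\theta$ the trace $\operatorname{Tr}_{\mathcal{B}_{\widehat{j}}+\theta e_{j}}\Phi$ is the restriction to $\mathcal{B}_{\widehat{j}}+\theta e_{j}$ of $\operatorname{Tr}_{\Gamma_{j}(\theta)}\widehat{\Phi}$, where $\Gamma_{j}(\theta)$ is the affine hyperplane $\{ s_{j} = \theta \}$ (which transversely intersects the diagonal line for every $\theta$, since $\underline{1}$ is not orthogonal to $e_{j}$, and whose intersection with $[-\tau,0]^{m}$ is $(m-1)$-dimensional).

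Next I would exploit the fact that $\Gamma_{j}(\theta)$ is obtained from $\Gamma_{j}(0)$ by the translation $s \mapsto s + \theta e_{j}$, and that this family moves continuously as $\theta$ varies. For a fixed reference hyperplane $\Gamma_{0} := \Gamma_{j}(0)$ and a nearby $\Gamma = \Gamma_{j}(\theta)$, Lemma \ref{LEM: ContinuityTraceOperatorGamma} gives that $E_{\Gamma,\Gamma_{0}} \circ \operatorname{Tr}_{\Gamma}\widehat{\Phi} \to \operatorname{Tr}_{\Gamma_{0}}\widehat{\Phi}$ in $L_{p}(\Gamma_{0};\mathbb{F})$ as $\theta \to 0$, where $E_{\Gamma,\Gamma_{0}}$ is the identification of the two hyperplanes along the diagonal line. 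The only subtlety is bookkeeping: I want continuity after identifying each $\mathcal{B}_{\widehat{j}} + \theta e_{j}$ with $[-\tau,0]^{m-1}$ by dropping the $j$-th coordinate, whereas $E_{\Gamma,\Gamma_{0}}$ identifies points along the diagonal direction $\underline{1}$, not along $e_{j}$. I would reconcile these two identifications: the coordinate-dropping identification of $\mathcal{B}_{\widehat{j}} + \theta e_{j}$ with $[-\tau,0]^{m-1}$ differs from the diagonal identification with $\mathcal{B}_{\widehat{j}}$ by a further translation in $\mathbb{R}^{m-1}$ (a shift by $\underline{\theta}$ in the remaining coordinates), and translation by an amount tending to $0$ is a continuous operation on $L_{p}((-\tau,0)^{m-1};\mathbb{F})$ by strong continuity of the translation group. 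Composing the two continuity statements — the one from Lemma \ref{LEM: ContinuityTraceOperatorGamma} and the elementary translation-continuity — yields the claim at $\theta = 0$; applying the same argument with an arbitrary reference point $\theta_{0} \in [-\tau,0]$ in place of $0$ gives continuity at every $\theta_{0}$.

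The main obstacle, such as it is, is purely notational: carefully tracking the interplay between the two natural ways of parametrizing the shifted faces (the diagonal identification used in Theorem \ref{TH: DiagonalTranslationInRm} and Lemma \ref{LEM: ContinuityTraceOperatorGamma}, versus the drop-the-$j$-th-coordinate identification in the statement) and verifying that the discrepancy is a translation whose magnitude is controlled by $|\theta - \theta_{0}|$. Once this is in place, no new analysis is required — the substance is entirely contained in Lemma \ref{LEM: ContinuityTraceOperatorGamma}, the extension operator of Proposition \ref{PROP: DiagonalSobolevCubeDescription}, and the strong continuity of translations in $L_{p}$. I would therefore present the proof as: extend $\Phi$ to $\widehat{\Phi}$ with compact support; identify $\operatorname{Tr}_{\mathcal{B}_{\widehat{j}}+\theta e_{j}}\Phi$ with a suitably re-identified copy of $\operatorname{Tr}_{\Gamma_{j}(\theta)}\widehat{\Phi}$; apply Lemma \ref{LEM: ContinuityTraceOperatorGamma} together with translation-continuity; conclude.
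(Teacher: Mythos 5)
Your proposal is correct and matches the paper's own proof essentially step for step: extend via $\mathfrak{C}$ from Proposition \ref{PROP: DiagonalSobolevCubeDescription}, view $\operatorname{Tr}_{\mathcal{B}_{\widehat{j}}+\theta e_{j}}\Phi$ as the restriction of $\operatorname{Tr}_{\Gamma_{j}(\theta)}\mathfrak{C}\Phi$, invoke Lemma \ref{LEM: ContinuityTraceOperatorGamma} for the diagonal identification, and absorb the discrepancy with the drop-the-$j$-th-coordinate identification via a small shift and strong continuity of translations in $L_{p}$. No gaps; the bookkeeping point you flag is exactly the one the paper also handles.
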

\begin{proof}
	Let $\Gamma_{j}(\theta)$ be the hyperplane consisting of $(s_{1},\ldots,s_{m}) \in \mathbb{R}^{m}$ with $s_{j} = \theta$, i.e., $\Gamma_{j}(\theta) \cap [-\tau,0]^{m} = \mathcal{B}_{\hat{j}} +\theta e_{j}$. According to \eqref{EQ: DefinitionTraceOperatorDiagonalSobolevOnCube}, $\operatorname{Tr}_{\mathcal{B}_{\hat{j}} +\theta e_{j}}\Phi$ is obtained by restricting the trace $\operatorname{Tr}_{\Gamma_{j}(\theta)}\mathfrak{C}\Phi$ of the extension $\mathfrak{C}\Phi$ to $\mathcal{B}_{\hat{j}} +\theta e_{j}$. Then Lemma \ref{LEM: ContinuityTraceOperatorGamma} yields the continuity of $\operatorname{Tr}_{\Gamma_{j}(\theta)}\Phi$ in $\theta$ if the identification of $\Gamma_{j}(\theta)$ along the diagonal line $\mathcal{L}_{0}$ is used. Note that this identification differs from the identification along the $j$th axis in $\mathbb{R}^{m}$ only by a translation in the argument, which becomes arbitrarily small for hyperplanes $\Gamma_{j}(\theta)$ with close $\theta$. Since the action by translates is strongly continuous, this implies that the mapping $[-\tau,0] \ni \theta \mapsto \operatorname{Tr}_{\Gamma_{j}(\theta)}\mathfrak{C}\Phi$ is continuous if the identification along the $j$th axis is used.
\end{proof}

Next, we introduce certain spaces related to the continuous dependence of traces, as in Lemma \ref{LEM: TraceContinuityAlongShiftedFaces}, and operators on such spaces. In our study of delay equations, such spaces are used to construct intermediate spaces, as in \eqref{EQ: MyLovelyScaleDelayCompoundGeneral}, and to study pointwise measurement operators in Appendix \ref{SEC: MeasurementOperatorsOnAgalmanated}.

Let $\mathbb{M}_{\gamma}$ be a separable Hilbert space over the same field as $\mathbb{F}$, and consider an $\mathcal{L}(\mathbb{F};\mathbb{M}_{\gamma})$-valued function $\gamma(\cdot)$ of bounded variation on $[-\tau,0]$. For each $J \in \{1, \ldots, m\}$, we associate with $\gamma$ a bounded linear operator $C^{\gamma}_{J}$ from $C([-\tau,0]^{m};\mathbb{F})$ to $C([-\tau,0]^{m-1};\mathbb{M}_{\gamma})$, defined by the formula:
\begin{equation}
	\label{EQ: WeigthedFunctionsOperatorCgammaDef}
	(C^{\gamma}_{J}\Phi)(\theta_{1},\ldots, \hat{\theta}_{J}, \ldots, \theta_{m}) = \int_{-\tau}^{0}d\gamma(\theta_{J}) \Phi(\theta_{1},\ldots,\theta_{m}),
\end{equation}
where $(\theta_{1},\ldots,\hat{\theta}_{J},\ldots,\theta_{m}) \in [-\tau,0]^{m-1}$ and the integral is understood as the Riemann--Stieltjes integral.

We need to consider $C^{\gamma}_{J}$ in a broader context. For any $p \geq 1$, we define the space $\mathbb{E}^{p}_{m}(\mathbb{F})$ of all functions $\Phi \in L_{p}((-\tau,0)^{m}; \mathbb{F})$ such that for any $j \in \{1, \ldots, m\}$ there exists $\Phi^{b}_{j} \in C([-\tau,0];L_{p}((-\tau,0)^{m-1};\mathbb{F})$, called the \textit{function of the $j$th section}, satisfying the identity in $L_{p}( (-\tau,0)^{m-1};\mathbb{F} )$ as follows:
\begin{equation}
	\label{EQ: SpaceEmDelayPhiBDefinition}
	\restr{\Phi}{\mathcal{B}_{\hat{j}}+\theta e_{j}} = \Phi^{b}_{j}(\theta) \qquad \text{for almost all} \quad \theta \in [-\tau,0],
\end{equation}
where we have naturally identified $\mathcal{B}_{\hat{j}}+\theta e_{j}$ with $[-\tau,0]^{m-1}$ by omitting the $j$th argument.

We endow $\mathbb{E}^{p}_{m}(\mathbb{F})$ with the natural norm
\begin{equation}
	\label{EQ: NormInSpaceEmDelay}
	\| \Phi \|_{\mathbb{E}^{p}_{m}(\mathbb{F})} \coloneq \sup_{ j \in \{1,\ldots,m\}} \sup_{\theta \in [-\tau,0]}\| \Phi^{b}_{j}(\theta) \|_{L_{p}( (-\tau,0)^{m-1};\mathbb{F} )},
\end{equation}
which makes it a Banach space.

Since $\Phi^{b}_{j}(\theta)$ continuously depend on $\theta \in [-\tau,0]$, it is not hard to show that the space $C([-\tau,0]^{m};\mathbb{F})$ is dense in $\mathbb{E}^{p}_{m}(\mathbb{F})$. We have the following theorem.
\begin{theorem}
	\label{TH: OperatorCExntesionOntoWDiagonal}
	The operator $C^{\gamma}_{J}$ from \eqref{EQ: WeigthedFunctionsOperatorCgammaDef} can be extended to a bounded operator from $\mathbb{E}^{p}_{m}(\mathbb{F})$ to $L_{p}((-\tau,0)^{m-1};\mathbb{M}_{\gamma})$ whose norm does not exceed the total variation $\operatorname{Var}_{[-\tau,0]}(\gamma)$ of $\gamma$ on $[-\tau,0]$.
\end{theorem}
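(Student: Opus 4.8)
The plan is to establish the asserted norm estimate on the dense subspace $C([-\tau,0]^{m};\mathbb{F}) \subset \mathbb{E}^{p}_{m}(\mathbb{F})$ and then extend by continuity. First I would record that for $\Phi \in C([-\tau,0]^{m};\mathbb{F})$ the function of the $J$-th section is the genuine restriction, $\Phi^{b}_{J}(\theta) = \restr{\Phi}{\mathcal{B}_{\widehat{J}}+\theta e_{J}}$, viewed (after omitting the $J$-th argument) as an element of $C([-\tau,0]^{m-1};\mathbb{F}) \subset L_{p}((-\tau,0)^{m-1};\mathbb{F})$. Since $\Phi$ is uniformly continuous on the compact cube, $\theta \mapsto \Phi^{b}_{J}(\theta)$ is continuous (even uniformly continuous) with values in $L_{p}((-\tau,0)^{m-1};\mathbb{F})$, and by \eqref{EQ: NormInSpaceEmDelay} we have $\sup_{\theta \in [-\tau,0]}\|\Phi^{b}_{J}(\theta)\|_{L_{p}((-\tau,0)^{m-1};\mathbb{F})} \le \|\Phi\|_{\mathbb{E}^{p}_{m}(\mathbb{F})}$.

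Next, for a partition $P\colon -\tau = t_{0} < t_{1} < \ldots < t_{N} = 0$ with tags $\xi_{i} \in [t_{i},t_{i+1}]$, I form the Riemann--Stieltjes sum
\[
	S_{P} := \sum_{i=0}^{N-1}\bigl(\gamma(t_{i+1}) - \gamma(t_{i})\bigr)\Phi^{b}_{J}(\xi_{i}) \in C([-\tau,0]^{m-1};\mathbb{M}_{\gamma}),
\]
where $\gamma(t_{i+1}) - \gamma(t_{i}) \in \mathcal{L}(\mathbb{F};\mathbb{M}_{\gamma})$ acts pointwise. By the standard theory of the Riemann--Stieltjes integral of a continuous function against a function of bounded variation (the sums form a Cauchy net), $S_{P}$ converges, as $|P| \to 0$, uniformly on $[-\tau,0]^{m-1}$ to the pointwise integral from \eqref{EQ: WeigthedFunctionsOperatorCgammaDef}, i.e. to $C^{\gamma}_{J}\Phi$; in particular $S_{P} \to C^{\gamma}_{J}\Phi$ in $L_{p}((-\tau,0)^{m-1};\mathbb{M}_{\gamma})$. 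On the other hand, using that $\|\cdot\|_{L_{p}((-\tau,0)^{m-1};\mathbb{M}_{\gamma})}$ is a norm and that $\|(\gamma(t_{i+1}) - \gamma(t_{i}))g\|_{L_{p}(\,\cdot\,;\mathbb{M}_{\gamma})} \le \|\gamma(t_{i+1}) - \gamma(t_{i})\|_{\mathcal{L}(\mathbb{F};\mathbb{M}_{\gamma})}\|g\|_{L_{p}(\,\cdot\,;\mathbb{F})}$,
\[
	\|S_{P}\|_{L_{p}((-\tau,0)^{m-1};\mathbb{M}_{\gamma})} \le \sum_{i=0}^{N-1}\|\gamma(t_{i+1}) - \gamma(t_{i})\|_{\mathcal{L}(\mathbb{F};\mathbb{M}_{\gamma})}\,\|\Phi^{b}_{J}(\xi_{i})\|_{L_{p}((-\tau,0)^{m-1};\mathbb{F})} \le \operatorname{Var}_{[-\tau,0]}(\gamma)\,\|\Phi\|_{\mathbb{E}^{p}_{m}(\mathbb{F})}.
\]
Passing to the limit yields $\|C^{\gamma}_{J}\Phi\|_{L_{p}((-\tau,0)^{m-1};\mathbb{M}_{\gamma})} \le \operatorname{Var}_{[-\tau,0]}(\gamma)\,\|\Phi\|_{\mathbb{E}^{p}_{m}(\mathbb{F})}$ for all $\Phi \in C([-\tau,0]^{m};\mathbb{F})$.

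Finally, since $C([-\tau,0]^{m};\mathbb{F})$ is dense in $\mathbb{E}^{p}_{m}(\mathbb{F})$ (as noted above the theorem) and $C^{\gamma}_{J}$ is, by the previous paragraph, linear and bounded from this subspace --- equipped with the $\mathbb{E}^{p}_{m}(\mathbb{F})$-norm --- into the Banach space $L_{p}((-\tau,0)^{m-1};\mathbb{M}_{\gamma})$, with operator norm at most $\operatorname{Var}_{[-\tau,0]}(\gamma)$, it admits a unique continuous extension to all of $\mathbb{E}^{p}_{m}(\mathbb{F})$ with the same norm bound; this extension is the required operator. The only point that needs a little care is the joint justification that the Riemann--Stieltjes sums converge simultaneously to the pointwise integral \eqref{EQ: WeigthedFunctionsOperatorCgammaDef} and in $L_{p}$, and that the $L_{p}$-estimate survives the limit --- all of which follow from uniform continuity of $\Phi$ on the compact cube together with completeness of $L_{p}((-\tau,0)^{m-1};\mathbb{M}_{\gamma})$. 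Equivalently, one may interpret $C^{\gamma}_{J}\Phi$ from the outset as the $L_{p}((-\tau,0)^{m-1};\mathbb{F})$-valued Riemann--Stieltjes integral $\int_{-\tau}^{0}d\gamma(\theta)\,\Phi^{b}_{J}(\theta)$, the two descriptions agreeing for continuous $\Phi$.
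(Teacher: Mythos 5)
Your proof is correct and follows essentially the same route as the paper: the paper approximates $d\gamma$ by finite combinations $\sum_l \alpha^{(k)}_l\delta^{(k)}_l$ of $\delta$-functionals (whose action on continuous $\Phi$ is exactly your Riemann--Stieltjes sum $\sum_i(\gamma(t_{i+1})-\gamma(t_i))\Phi^{b}_J(\xi_i)$), uses the bound $\|\delta^{J}_{\theta}\Phi\|_{L_p}\le\|\Phi\|_{\mathbb{E}^{p}_{m}(\mathbb{F})}$, sums to obtain the total-variation estimate, and extends by density of $C([-\tau,0]^m;\mathbb{F})$ in $\mathbb{E}^{p}_{m}(\mathbb{F})$. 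No gaps to report.
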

\begin{proof}
	For convenience, by $d\gamma$ we denote the corresponding $\mathbb{M}_{\gamma}$-valued linear functional on $C([-\tau,0];\mathbb{F})$, obtained by integration as in \eqref{EQ: WeigthedFunctionsOperatorCgammaDef} with $m=1$. Let $\delta^{J}_{\tau_{0}}$ be the operator $C^{\gamma}_{J}$ in the case where $d\gamma = \delta_{\tau_{0}}$ is the $\mathbb{F}$-valued $\delta$-functional $\delta_{\tau_{0}}$ at some point $\tau_{0} \in [-\tau,0]$. In these terms, for all $\Phi \in C([-\tau,0]^{m};\mathbb{F})$ we have
	\begin{equation}
		\label{EQ: LemmaFunctionalCextensionWDiagonal1}
		\delta^{J}_{\tau_{0}}\Phi = \restr{\Phi}{\mathcal{B}_{\hat{J}}+\tau_{0} e_{J}} = \Phi^{b}_{J}(\tau_{0}).
	\end{equation}
	
	Using \eqref{EQ: LemmaFunctionalCextensionWDiagonal1} and \eqref{EQ: NormInSpaceEmDelay}, we obtain
	\begin{equation}
		\label{EQ: LemmaFunctionalCextensionWDiagonal2}
		\| \delta^{J}_{\tau_{0}}\Phi \|_{L_{p}((-\tau,0)^{m-1};\mathbb{F}) } \leq \|\Phi\|_{\mathbb{E}^{p}_{m}(\mathbb{F})} \qquad \text{for any} \quad \tau_{0} \in [-\tau,0],
	\end{equation}
	which shows the statement for $\delta$-functionals.

    For general $d\gamma$, we use a particular approximation by $\delta$-functionals. For $k=1,2,\ldots$, consider a partition of $[-\tau,0]$ by $N_{k}+1$ points $-\tau = \theta^{(k)}_{0} < \theta_{2} < \cdots < \theta^{(k)}_{N_{k}} = 0$ such that $\max_{1 \leq l \leq N_{k}}|\theta^{(k)}_{l}-\theta^{(k)}_{l-1}|$ tends to $0$ as $k \to \infty$. For each $l \in \{1,\ldots,N_{k}\}$, we set $\alpha^{(k)}_{l} \coloneq \gamma(\theta^{(k)}_{l}) - \gamma(\theta^{(k)}_{l-1})$ and $\delta^{(k)}_{l} \coloneq \delta_{\theta^{(k)}_{l}}$. Then
    \begin{equation}
    	\label{EQ: ApproximationByDeltaFunctionalsGamma}
    	d\gamma_{k} \coloneq \sum_{l=1}^{N_{k}} \alpha^{(k)}_{l} \delta^{(k)}_{l} \to d\gamma \qquad \text{pointwise in} \quad C([-\tau,0];\mathbb{F}).
    \end{equation}

	Using \eqref{EQ: ApproximationByDeltaFunctionalsGamma} and \eqref{EQ: LemmaFunctionalCextensionWDiagonal2}, for any $\Phi \in C([-\tau,0]^{m};\mathbb{F})$ we obtain
	\begin{equation}
		\begin{split}
			\| C^{\gamma}_{J} \Phi \|_{L_{p}( (-\tau,0)^{m-1};\mathbb{M}_{\gamma} )} =\\= \lim_{k \to \infty} \| C^{\gamma_{k}}_{J}\Phi \|_{L_{p}( (-\tau,0)^{m-1};\mathbb{M}_{\gamma} )} \leq \operatorname{Var}_{[-\tau,0]}(\gamma) \cdot \|\Phi\|_{\mathbb{E}^{p}_{m}(\mathbb{F})},
		\end{split}
	\end{equation}
	which shows the statement.
\end{proof}

Combining Theorem \ref{TH: TraceOperatorForDiagonalTranslatesFinite} and Lemma \ref{LEM: TraceContinuityAlongShiftedFaces}, we immediately obtain the following.
\begin{proposition}
	\label{PROP: EmbeddingWDiagToEmDelay}
	There is a natural continuous and dense embedding of $\mathcal{W}^{p}_{D}((-\tau,0)^{m};\mathbb{F})$ into $\mathbb{E}^{p}_{m}(\mathbb{F})$, and the embedding constant can be estimated only in terms of $\tau$ and $m$.
\end{proposition}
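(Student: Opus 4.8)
\textbf{Proof plan for Proposition \ref{PROP: EmbeddingWDiagToEmDelay}.}
The plan is to verify directly that every $\Phi \in \mathcal{W}^{p}_{D}((-\tau,0)^{m};\mathbb{F})$ defines an element of $\mathbb{E}^{p}_{m}(\mathbb{F})$, that the assignment is linear and continuous with an embedding constant depending only on $\tau$ and $m$, and finally that the image is dense. First I would construct, for a given $\Phi$, the required functions of $j$-th section. Fix $j \in \{1,\ldots,m\}$; using Theorem \ref{TH: TraceOperatorForDiagonalTranslatesFinite} with $\mathcal{I} = \mathcal{B}^{(m)}_{\widehat{j}} + \theta e_{j}$ (a genuine $(m-1)$-dimensional slice of $[-\tau,0]^{m}$ for each $\theta \in [-\tau,0]$), one obtains a bounded trace operator whose value is the pointwise restriction of $\Phi$; hence for almost all $\theta$ the restriction $\restr{\Phi}{\mathcal{B}^{(m)}_{\widehat{j}}+\theta e_{j}}$ is a well-defined element of $L_{p}((-\tau,0)^{m-1};\mathbb{F})$. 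Then Lemma \ref{LEM: TraceContinuityAlongShiftedFaces} says precisely that the map $\theta \mapsto \operatorname{Tr}_{\mathcal{B}^{(m)}_{\widehat{j}}+\theta e_{j}}\Phi$ is continuous from $[-\tau,0]$ into $L_{p}((-\tau,0)^{m-1};\mathbb{F})$, so setting $\Phi^{b}_{j}(\theta) := \operatorname{Tr}_{\mathcal{B}^{(m)}_{\widehat{j}}+\theta e_{j}}\Phi$ provides the element of $C([-\tau,0];L_{p}((-\tau,0)^{m-1};\mathbb{F}))$ required in \eqref{EQ: SpaceEmDelayPhiBDefinition}. This shows $\Phi \in \mathbb{E}^{p}_{m}(\mathbb{F})$ as a set-theoretic statement, and linearity in $\Phi$ is immediate from linearity of the trace operators.

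Next I would estimate the norm. For each fixed $j$ and $\theta$, the estimate from Theorem \ref{TH: TraceOperatorForDiagonalTranslatesFinite} gives
\begin{equation*}
	\| \Phi^{b}_{j}(\theta) \|_{L_{p}((-\tau,0)^{m-1};\mathbb{F})} = \| \operatorname{Tr}_{\mathcal{B}^{(m)}_{\widehat{j}}+\theta e_{j}}\Phi \|_{L_{p}} \leq C(\tau,m) \cdot \| \Phi \|_{\mathcal{W}^{p}_{D}((-\tau,0)^{m};\mathbb{F})},
\end{equation*}
where $C(\tau,m)$ is the uniform bound for the trace norm furnished by that theorem (it does not depend on the particular slice $\mathcal{I}$, only on $\tau$ and $m$). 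Taking the supremum over $j \in \{1,\ldots,m\}$ and $\theta \in [-\tau,0]$ and comparing with \eqref{EQ: NormInSpaceEmDelay} yields $\| \Phi \|_{\mathbb{E}^{p}_{m}(\mathbb{F})} \leq C(\tau,m) \cdot \| \Phi \|_{\mathcal{W}^{p}_{D}((-\tau,0)^{m};\mathbb{F})}$, which is exactly the asserted continuity with an embedding constant controlled solely by $\tau$ and $m$. (One should recall here that the working norm on $\mathcal{W}^{p}_{D}((-\tau,0)^{m};\mathbb{F})$ is the one from \eqref{EQ: NormInDiagonalSobolevOnCube} established in Proposition \ref{PROP: DiagonalSobolevCubeDescription}, so there is no hidden dependence on an extension operator.) Injectivity of the embedding is trivial since both norms dominate the $L_{p}((-\tau,0)^{m};\mathbb{F})$-norm.

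Finally, for density, I would use the common dense core. The space $C([-\tau,0]^{m};\mathbb{F})$ is dense in $\mathbb{E}^{p}_{m}(\mathbb{F})$ (this is the density remark stated just above Theorem \ref{TH: OperatorCExntesionOntoWDiagonal}, which follows from the continuity of $\theta \mapsto \Phi^{b}_{j}(\theta)$). On the other hand, smooth functions with compact support, and hence continuous functions on $[-\tau,0]^{m}$, lie in $\mathcal{W}^{p}_{D}((-\tau,0)^{m};\mathbb{F})$ (restrict the core $C^{\infty}_{0}(\mathbb{R}^{m};\mathbb{F})$ from Theorem \ref{TH: DiagonalTranslationInRm}, or use the extension operator \eqref{EQ: ExtensionDiagonalSobolevCube} of Proposition \ref{PROP: DiagonalSobolevCubeDescription}). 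Since $C([-\tau,0]^{m};\mathbb{F})$ sits inside $\mathcal{W}^{p}_{D}((-\tau,0)^{m};\mathbb{F})$, its image under the embedding is dense in $\mathbb{E}^{p}_{m}(\mathbb{F})$, which gives density of the embedding. I do not anticipate a genuine obstacle here: the entire content has been pre-packaged into Theorem \ref{TH: TraceOperatorForDiagonalTranslatesFinite} and Lemma \ref{LEM: TraceContinuityAlongShiftedFaces}; the only point requiring a little care is to make sure the supremum over the (infinitely many) slices $\theta \in [-\tau,0]$ is controlled by the \emph{single} uniform constant from Theorem \ref{TH: TraceOperatorForDiagonalTranslatesFinite} rather than by slice-dependent constants, which is precisely why the uniformity clause in that theorem was stated.
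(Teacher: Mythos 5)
Your argument for continuity is exactly the paper's: the paper proves this proposition in one line by combining Theorem \ref{TH: TraceOperatorForDiagonalTranslatesFinite} and Lemma \ref{LEM: TraceContinuityAlongShiftedFaces}, and your construction of $\Phi^{b}_{j}(\theta) := \operatorname{Tr}_{\mathcal{B}^{(m)}_{\widehat{j}}+\theta e_{j}}\Phi$, together with the observation that the trace bound of Theorem \ref{TH: TraceOperatorForDiagonalTranslatesFinite} is uniform over the slices, is precisely the intended proof of the embedding and of the bound $\| \Phi \|_{\mathbb{E}^{p}_{m}(\mathbb{F})} \leq C(\tau,m)\,\| \Phi \|_{\mathcal{W}^{p}_{D}((-\tau,0)^{m};\mathbb{F})}$.

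There is, however, one false step in your density argument: the assertion that ``smooth functions with compact support, and hence continuous functions on $[-\tau,0]^{m}$, lie in $\mathcal{W}^{p}_{D}((-\tau,0)^{m};\mathbb{F})$''. The ``hence'' is a non sequitur, and the inclusion $C([-\tau,0]^{m};\mathbb{F}) \subset \mathcal{W}^{p}_{D}((-\tau,0)^{m};\mathbb{F})$ is simply wrong: a continuous function need not have an $L_{p}$-summable weak diagonal derivative (already for $m=1$ your claim would say $C([-\tau,0];\mathbb{F}) \subset W^{1,p}(-\tau,0;\mathbb{F})$). The conclusion you want survives with a one-line repair: restrictions to $(-\tau,0)^{m}$ of functions from $C^{\infty}_{0}(\mathbb{R}^{m};\mathbb{F})$ (or any $C^{1}$ functions on $[-\tau,0]^{m}$) do lie in $\mathcal{W}^{p}_{D}((-\tau,0)^{m};\mathbb{F})$ by \eqref{EQ: W2DiagonalDefinition}, they are dense in $C([-\tau,0]^{m};\mathbb{F})$ in the sup norm, and sup-norm convergence on $[-\tau,0]^{m}$ dominates convergence in the $\mathbb{E}^{p}_{m}(\mathbb{F})$-norm \eqref{EQ: NormInSpaceEmDelay}; combined with the density of $C([-\tau,0]^{m};\mathbb{F})$ in $\mathbb{E}^{p}_{m}(\mathbb{F})$ noted above Theorem \ref{TH: OperatorCExntesionOntoWDiagonal}, this gives the density of the embedding. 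With that correction the proposal is complete and matches the paper's route.
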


Let $T_{m}$ be the diagonal translation semigroup in $L_{p}((-\tau,0)^{m};\mathbb{F})$ with the time-$t$ mappings $T_{m}(t)$ for $t \geq 0$ given by
\begin{equation}
	\label{EQ: SemigroupDiagonalTranslateFiniteCubeDef}
	(T_{m}(t)\Phi)(\bar{\theta}) = \begin{cases}
		\Phi(\bar{\theta} + \underline{t}) \qquad &\text{if} \quad \bar{\theta} + \underline{t} \in (-\tau,0)^{m},\\
		0 \qquad &\text{otherwise},
	\end{cases}
\end{equation}
where $\bar{\theta} = (\theta_{1},\ldots,\theta_{m}) \in (-\tau,0)^{m}$ and $\underline{t} = (t,\ldots,t) \in \mathbb{R}^{m}$.

Since the action of $\mathbb{R}^{m}$ by translations (in arguments) in $L_{p}(\mathbb{R}^{m};\mathbb{F})$ is strongly continuous, $T_{m}$ is a $C_{0}$-semigroup. Its generator is described in the following theorem. 
\begin{theorem}	
	\label{TH: DiagonalTranslatesSquareDelay}
	Let $A_{T_{m}}$ be the generator of $T_{m}$ in $L_{p}((-\tau,0)^{m};\mathbb{F})$. Then it has the domain $\mathcal{D}(A_{T_{m}})$ described as follows: (see Theorem \ref{TH: TraceOperatorForDiagonalTranslatesFinite})
	\begin{equation}
		\label{EQ: DomainNilponentDiagonalTranslates}
		\begin{split}
			\mathcal{D}(A_{T_{m}}) = \mathcal{W}^{p}_{D_{0}}((-\tau,0)^{m};\mathbb{F}) \coloneq \\ = \left\{ \Phi \in \mathcal{W}^{p}_{D}((-\tau,0)^{m};\mathbb{F}) \mid \operatorname{Tr}_{\mathcal{B}_{\hat{j}}}\Phi = 0 \quad \text{for all} \quad j \in \{1, \ldots, m\} \right\},
		\end{split}
	\end{equation}
    and it acts on $\Phi \in \mathcal{D}(A_{T_{m}})$ according to the formula:
    \begin{equation}
    	\label{EQ: GeneratorDiagonalTranslateSemigroupCubeAct}
    	A_{T_{m}}\Phi = \left(\sum_{j=1}^{m}\frac{\partial}{\partial \theta_{j}}\right) \Phi.
    \end{equation}
\end{theorem}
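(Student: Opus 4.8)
The plan is to reduce everything to the one‑dimensional nilpotent left‑translation semigroup on a bounded interval, by slicing along the lines parallel to the diagonal $\mathcal{L}_{0}$, and then to glue the fiberwise information together using the Fubini‑type description of $\mathcal{W}^{p}_{D}((-\tau,0)^{m};\mathbb{F})$ from Proposition \ref{PROP: DiagonalSobolevCubeDescription} and the trace theorem, Theorem \ref{TH: TraceOperatorForDiagonalTranslatesFinite}. First I would check that $T_{m}$ is a $C_{0}$‑semigroup. Writing $\iota$ for extension by zero $L_{p}((-\tau,0)^{m};\mathbb{F})\to L_{p}(\mathbb{R}^{m};\mathbb{F})$ and $\mathcal{R}$ for restriction to $(-\tau,0)^{m}$, one reads off from \eqref{EQ: SemigroupDiagonalTranslateFiniteCubeDef} that $T_{m}(t)=\mathcal{R}\,\mathcal{T}_{m}(t)\,\iota$, so each $T_{m}(t)$ is a contraction, $T_{m}(0)=I$, and the semigroup law $T_{m}(t)T_{m}(s)=T_{m}(t+s)$ follows by a direct pointwise verification based on the fact that for $t\ge 0$ the shift $\overline{\theta}\mapsto\overline{\theta}+\underline{t}$ only increases coordinates, so a point can leave $(-\tau,0)^{m}$ only through the faces where some coordinate reaches $0$ and never re‑enters. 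Strong continuity is inherited from $\mathcal{T}_{m}$: $\|T_{m}(t)\Phi-\Phi\|_{L_{p}((-\tau,0)^{m};\mathbb{F})}\le\|\mathcal{T}_{m}(t)\iota\Phi-\iota\Phi\|_{L_{p}(\mathbb{R}^{m};\mathbb{F})}\to 0$ by Theorem \ref{TH: DiagonalTranslationInRm}. Since moreover $T_{m}(t)=0$ for $t\ge\tau$, the semigroup is nilpotent and $\rho(A_{T_{m}})=\mathbb{C}$, which will be used below.

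Next I would establish $\mathcal{W}^{p}_{D_{0}}((-\tau,0)^{m};\mathbb{F})\subseteq\mathcal{D}(A_{T_{m}})$ together with \eqref{EQ: GeneratorDiagonalTranslateSemigroupCubeAct}. Fix $\Phi\in\mathcal{W}^{p}_{D_{0}}$. For $\mu^{m-1}_{L}$‑almost every $\overline{s}\in\mathcal{L}_{0}^{\perp}$ the section $\Omega(\overline{s})=(\mathcal{L}_{0}+\overline{s})\cap(-\tau,0)^{m}$ is a segment, identified with an interval $(a(\overline{s}),b(\overline{s}))$ of length $\le\tau$, and $\psi_{\overline{s}}:=\restr{\Phi}{\Omega(\overline{s})}\in W^{1,p}(a(\overline{s}),b(\overline{s});\mathbb{F})$ by \eqref{EQ: DiagonalSobolevPropEquivalent}; by Theorem \ref{TH: TraceOperatorForDiagonalTranslatesFinite} and the last identity in \eqref{EQ: TraceOperatorDiagonalTranslate}, the hypothesis $\operatorname{Tr}_{\mathcal{B}_{\widehat{j}}}\Phi=0$ for all $j$ amounts to $\psi_{\overline{s}}(b(\overline{s}))=0$ for a.e.\ $\overline{s}$, since the upper endpoint $b(\overline{s})$ lies (for a.e.\ $\overline{s}$) on exactly one of the faces $\mathcal{B}_{\widehat{j}}$. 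Extending $\psi_{\overline{s}}$ by zero past $b(\overline{s})$ gives $\widetilde{\psi}_{\overline{s}}\in W^{1,p}(a(\overline{s}),\infty;\mathbb{F})$ with $\widetilde{\psi}_{\overline{s}}'=\psi_{\overline{s}}'\chi_{(a(\overline{s}),b(\overline{s}))}$, and \eqref{EQ: SemigroupDiagonalTranslateFiniteCubeDef} gives $\restr{(T_{m}(h)\Phi)}{\Omega(\overline{s})}(t)=\widetilde{\psi}_{\overline{s}}(t+h)$ on $(a(\overline{s}),b(\overline{s}))$. Hence $\tfrac1h\restr{(T_{m}(h)\Phi-\Phi)}{\Omega(\overline{s})}(t)=\tfrac1h\int_{0}^{h}\widetilde{\psi}_{\overline{s}}'(t+\sigma)\,d\sigma$, which converges in $L_{p}(a(\overline{s}),b(\overline{s});\mathbb{F})$ to $\psi_{\overline{s}}'$ and has $L_{p}$‑norm $\le\|\psi_{\overline{s}}'\|_{L_{p}(a(\overline{s}),b(\overline{s});\mathbb{F})}$ uniformly in $h$ (Minkowski's integral inequality). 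As $\int_{\mathcal{L}_{0}^{\perp}}\|\psi_{\overline{s}}'\|^{p}_{L_{p}}\,d\mu^{m-1}_{L}(\overline{s})=\|(\sum_{j}\partial/\partial\theta_{j})\Phi\|^{p}_{L_{p}((-\tau,0)^{m};\mathbb{F})}<\infty$, dominated convergence (after the Fubini identification of $L_{p}((-\tau,0)^{m};\mathbb{F})$ with an $L_{p}$‑space of sections) yields $\tfrac1h(T_{m}(h)\Phi-\Phi)\to(\sum_{j}\partial/\partial\theta_{j})\Phi$ in $L_{p}((-\tau,0)^{m};\mathbb{F})$, i.e.\ $\Phi\in\mathcal{D}(A_{T_{m}})$ and $A_{T_{m}}\Phi=(\sum_{j}\partial/\partial\theta_{j})\Phi$.

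For the reverse inclusion I would argue by solving the resolvent equation explicitly on sections. Let $B$ denote the operator $\Phi\mapsto(\sum_{j}\partial/\partial\theta_{j})\Phi$ with domain $\mathcal{W}^{p}_{D_{0}}$; the previous step says $B\subseteq A_{T_{m}}$. Fix any $\lambda\in\mathbb{C}$; I claim $\lambda-B$ is onto $L_{p}((-\tau,0)^{m};\mathbb{F})$. Given $\Psi$, on $\mu^{m-1}_{L}$‑a.e.\ section put $\psi_{\overline{s}}(t):=\int_{t}^{b(\overline{s})}e^{\lambda(t-\sigma)}(\restr{\Psi}{\Omega(\overline{s})})(\sigma)\,d\sigma$, which solves $\lambda\psi_{\overline{s}}-\psi_{\overline{s}}'=\restr{\Psi}{\Omega(\overline{s})}$ on $(a(\overline{s}),b(\overline{s}))$ with $\psi_{\overline{s}}(b(\overline{s}))=0$; the solution operator is bounded on $L_{p}$ of each section uniformly (Young's inequality, length $\le\tau$), so the glued function $\Phi$ lies in $L_{p}$, $(\sum_{j}\partial/\partial\theta_{j})\Phi=\lambda\Phi-\Psi\in L_{p}$ (hence $\Phi\in\widehat{\mathcal{W}}^{p}_{D}((-\tau,0)^{m};\mathbb{F})$ by the converse part of \eqref{EQ: DiagonalSobolevPropEquivalent}), and the endpoint conditions $\psi_{\overline{s}}(b(\overline{s}))=0$ with Theorem \ref{TH: TraceOperatorForDiagonalTranslatesFinite} make the traces $\operatorname{Tr}_{\mathcal{B}_{\widehat{j}}}\Phi$ well defined and equal to $0$; thus $\Phi\in\mathcal{W}^{p}_{D_{0}}$ by Proposition \ref{PROP: DiagonalSobolevCubeDescription} and $(\lambda-B)\Phi=\Psi$. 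Since $\lambda-A_{T_{m}}$ is injective and $\lambda-B$ is its restriction to $\mathcal{D}(B)$, any $\Phi_{0}\in\mathcal{D}(A_{T_{m}})$ coincides with the unique $\Phi\in\mathcal{D}(B)$ satisfying $(\lambda-B)\Phi=(\lambda-A_{T_{m}})\Phi_{0}$, so $\mathcal{D}(A_{T_{m}})=\mathcal{D}(B)=\mathcal{W}^{p}_{D_{0}}((-\tau,0)^{m};\mathbb{F})$, which is \eqref{EQ: DomainNilponentDiagonalTranslates}, and \eqref{EQ: GeneratorDiagonalTranslateSemigroupCubeAct} is the formula obtained in the previous step.

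The main obstacle I anticipate is the fiberwise bookkeeping in the two middle steps: making rigorous the identification of the traces $\operatorname{Tr}_{\mathcal{B}_{\widehat{j}}}\Phi$ with the one‑sided endpoint values $\psi_{\overline{s}}(b(\overline{s}))$ of the sections (this is where the asymmetry between the faces $\mathcal{B}_{\widehat{j}}$ and the opposite faces, and hence the absence of a boundary condition there, shows up), and checking that the glued functions are genuinely measurable and lie in $\widehat{\mathcal{W}}^{p}_{D}$ with the claimed diagonal derivative. The rest is the classical computation for the nilpotent left‑translation semigroup on a bounded interval, run uniformly over the sections.
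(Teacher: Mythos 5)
Your proposal is correct, and it reaches the statement by a route that differs from the paper's in both halves of the argument. For the inclusion $\mathcal{W}^{p}_{D_{0}}((-\tau,0)^{m};\mathbb{F})\subseteq\mathcal{D}(A_{T_{m}})$ the paper compares the difference quotient of $T_{m}$ with that of the group $\mathcal{T}_{m}$ applied to a global extension $\widehat{\Phi}\in\mathcal{W}^{p}_{D}(\mathbb{R}^{m};\mathbb{F})$ (Proposition \ref{PROP: DiagonalSobolevCubeDescription}, Theorem \ref{TH: DiagonalTranslationInRm}), whereas you slice along the diagonal lines and extend each section by zero past its exit endpoint; the mechanism is the same exploitation of the zero traces, and your fiberwise version has the virtue of making explicit where $\operatorname{Tr}_{\mathcal{B}_{\widehat{j}}}\Phi=0$ is actually used (the paper's ``any extension'' should really be read as the extension of Proposition \ref{PROP: DiagonalSobolevCubeDescription}, which propagates the vanishing traces forward along the diagonal, exactly as your zero extension does). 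For the equality of domains the two proofs genuinely diverge: the paper notes that $\mathcal{W}^{p}_{D_{0}}$ is dense and $T_{m}$-invariant, invokes the core criterion (Proposition 1.7 in \cite{EngelNagel2000}) and the closedness of $\mathcal{W}^{p}_{D_{0}}$ in the graph norm, while you solve the resolvent equation explicitly on the diagonal sections and conclude by the standard argument that a surjective restriction of an injective operator coincides with it (your nilpotency remark $T_{m}(t)=0$ for $t\geq\tau$ gives $\rho(A_{T_{m}})=\mathbb{C}$, though a single real $\lambda>0$ already suffices since $T_{m}$ is a contraction semigroup). Your route is more self-contained, avoiding the core proposition and the graph-norm-closedness claim, but it pays for this in the fiberwise bookkeeping you correctly flag: identifying $\operatorname{Tr}_{\mathcal{B}_{\widehat{j}}}\Phi$ with the endpoint values $\psi_{\overline{s}}(b(\overline{s}))$ via the last identity in \eqref{EQ: TraceOperatorDiagonalTranslate} together with Proposition \ref{PROP: DiagonalSobolevCubeDescription}, and verifying measurability and the uniform fiberwise bounds for the glued resolvent and for the dominated-convergence step; all of this is available within the paper's framework, so the plan goes through.
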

\begin{proof}
	It is not hard to see that the space $\mathcal{W}^{p}_{D_{0}}$ given by the right-hand side of \eqref{EQ: DomainNilponentDiagonalTranslates} is invariant with respect to $T_{m}$ and dense in $L_{p}((-\tau,0)^{m};\mathbb{F})$. Consider $\Phi \in \mathcal{W}^{p}_{D_{0}}$, and let $\hat{\Phi} \in \mathcal{W}^{p}_{D}(\mathbb{R}^{m};\mathbb{F})$ be any extension of $\Phi$, which exists due to Proposition \ref{PROP: DiagonalSobolevCubeDescription}. For convenience, we set $D_{\theta} \coloneq \sum_{j=1}^{m}\frac{\partial}{\partial \theta_{j}}$ and $D_{s} \coloneq \sum_{j=1}^{m}\frac{\partial}{\partial s_{j}}$. Then, by Theorem \ref{TH: DiagonalTranslationInRm}, we have
	\begin{equation}
		\begin{split}
			\left\| \frac{1}{h}\left[T_{m}(h)\Phi - \Phi \right] - D_{\theta}\Phi \right\|_{L_{p}((-\tau,0)^{m};\mathbb{F})} \leq \\ \leq \left\| \frac{1}{h}\left[\mathcal{T}_{m}(h)\hat{\Phi} - \hat{\Phi} \right] - D_{s}\hat{\Phi} \right\|_{L_{p}(\mathbb{R}^{m};\mathbb{F})} \to 0 \text{ as } h \to 0+.
		\end{split}
	\end{equation}
	Therefore, $\Phi$ lies in $\mathcal{D}(A_{T_{m}})$ and satisfies \eqref{EQ: GeneratorDiagonalTranslateSemigroupCubeAct}. By \cite[Chapter II, Proposition 1.7]{EngelNagel2000}, the space $\mathcal{W}^{p}_{D_{0}}$ is dense in $\mathcal{D}(A_{T_{m}})$ in the graph norm. Since it is also closed in the graph norm, it must coincide with the domain.
\end{proof}
\section{Pointwise measurement operators}
\label{SEC: MeasurementOperatorsOnAgalmanated}

Throughout this section, we fix real or complex separable Hilbert spaces $\mathbb{F}$ and $\mathbb{M}_{\gamma}$. For $\tau > 0$, consider an $\mathcal{L}(\mathbb{F};\mathbb{M}_{\gamma})$-valued function $\gamma(\cdot)$ of bounded variation on $[-\tau,0]$. Let a positive integer $m$ be fixed. For each $J \in \{ 1,\ldots, m \}$, let $C^{\gamma}_{J}$ be the operator defined in \eqref{EQ: WeigthedFunctionsOperatorCgammaDef}, i.e., $C^{\gamma}_{J}$ takes continuous functions of $m$ arguments into continuous functions of $m-1$ arguments by integrating over $d\gamma$ with respect to the $J$th argument. By Theorem \ref{TH: OperatorCExntesionOntoWDiagonal}, it can be extended to a bounded linear operator from $\mathbb{E}^{p}_{m}(\mathbb{F})$ to $L_{p}((-\tau,0)^{m-1};\mathbb{M}_{\gamma})$, where $\mathbb{E}^{p}_{m}(\mathbb{F})$ is defined above \eqref{EQ: NormInSpaceEmDelay}.

In what follows, we are interested in interpreting the pointwise measurement operator $\Phi(\cdot) \mapsto C^{\gamma}_{J}\Phi(\cdot)$ for some classes of $L_{2}((-\tau,0)^{m};\mathbb{F})$-valued functions $\Phi(\cdot)$ of time, whose values do not belong to the space $\mathbb{E}^{p}_{m}(\mathbb{F})$ in general. For $m=1$, such a theory was constructed in our paper \cite{Anikushin2020FreqDelay}, and below we present its generalization.

\subsection{Pointwise measurement operators on embracing spaces}

We first construct a space that is, in a sense, maximal on which pointwise measurement operators can be defined. For this, consider two real numbers $-\infty \leq a < b \leq +\infty$ and the corresponding time interval $(a,b)$. For any $p \geq 1$, we define the \textit{embracing space} $\mathcal{E}_{p}(a,b;L_{p}((-\tau,0)^{m};\mathbb{F}))$ or, for short, $\mathcal{E}_{p}(a,b;L_{p})$ by taking the completion of the space $L_{p}(a,b;\mathbb{E}^{p}_{m}(\mathbb{F}))$ in the norm
\begin{equation}
	\label{EQ: NormEmbracingSpaceCube}
	\| \Phi(\cdot) \|_{\mathcal{E}_{p}(a,b;L_{p})} \coloneq \sup_{J \in \{1,\ldots,m\}}\sup_{\theta \in [-\tau,0]} \| (\mathcal{I}_{\delta^{J}_{\theta}}\Phi)(\cdot) \|_{L_{p}(a,b;L_{p}((-\tau,0)^{m-1};\mathbb{F}))},
\end{equation}
where $(\mathcal{I}_{\delta^{J}_{\theta}}\Phi)(t) \coloneq \delta^{J}_{\theta}\Phi(t)$ for almost all $t \in (a,b)$, where $\delta^{J}_{\theta}$ coincides with $C^{\gamma}_{J}$ for $\gamma$ such that $d\gamma = \delta_{\theta}$ is the $\mathbb{F}$-valued $\delta$-functional at $\theta$, i.e., $\mathbb{M}_{\gamma} = \mathbb{F}$, $\gamma(\theta) = \operatorname{Id}_{\mathbb{F}} \in \mathcal{L}(\mathbb{F})$, and $\gamma(\cdot)$ is the zero operator in $[-\tau,0] \setminus \{\theta\}$. Since the total variation of such $\gamma$ equals to $1$, from Theorem \ref{TH: OperatorCExntesionOntoWDiagonal} for any $\Phi \in L_{p}(a,b;\mathbb{E}^{p}_{m}(\mathbb{F}))$, $\theta \in [-\tau,0]$, and $J \in \{1,\ldots, m\}$, we have
\begin{equation}
	\begin{split}
		\int_{a}^{b}\| (\mathcal{I}_{\delta^{J}_{\theta}}\Phi)(t) \|^{p}_{L_{p}((-\tau,0)^{m-1};\mathbb{F})}dt \leq \\ \leq \int_{a}^{b}\| \Phi(t) \|^{p}_{\mathbb{E}^{p}_{m}(\mathbb{F})}dt = \| \Phi(\cdot) \|^{p}_{L_{p}(a,b;\mathbb{E}^{p}_{m}(\mathbb{F}))}.
	\end{split}
\end{equation}
Thus, the norm in \eqref{EQ: NormEmbracingSpaceCube} is well defined.

To characterize $\mathcal{E}_{p}(a,b;L_{p})$, consider the space $\mathbb{E}^{p}_{m}(a,b;\mathbb{F})$ consisting of all $\Phi \in L_{p}(a,b;L_{p}((-\tau,0)^{m};\mathbb{F}))$ such that for any $J \in \{1,\ldots, m\}$ there exists a continuous function
\begin{equation}
	\label{EQ: EmbracingFunctionOfJthSection}
	[-\tau,0] \ni \theta \mapsto \mathcal{R}^{J}_{\Phi}(\theta) \in L_{p}(a,b;L_{p}((-\tau,0)^{m-1};\mathbb{F})), 
\end{equation}
called the \textit{function of the $J$th section} of $\Phi$, which satisfies
\begin{equation}
	\Phi(t)(\bar{\theta}) = \mathcal{R}^{J}_{\Phi}(\theta)(t)(\bar{\theta}_{\hat{J}})
\end{equation}
for $\mu^{1}_{L}$-almost all $\theta \in [-\tau,0]$, $\mu^{m-1}_{L}$-almost all $\bar{\theta} = (\theta_{1},\ldots,\theta_{m}) \in (-\tau,0)^{m}$ with $\theta_{J} = \theta$, and $\mu^{1}_{L}$-almost all $t \in (a,b)$.

We endow the space $\mathbb{E}^{p}_{m}(a,b;\mathbb{F})$ with the norm
\begin{equation}
	\| \Phi(\cdot) \|_{\mathbb{E}^{p}_{m}(a,b;\mathbb{F})} \coloneq \sup_{J \in \{1,\ldots, m\}} \sup_{ \theta \in [-\tau,0] } \|\mathcal{R}^{J}_{\Phi}(\cdot)\|_{L_{p}(a,b;L_{p}((-\tau,0)^{m-1};\mathbb{F}))}
\end{equation}
which obviously makes it a Banach space.

\begin{lemma}
	\label{LEM: CharacterizationEmbacingSpace}
	The spaces  $\mathcal{E}_{p}(a,b;L_{p})$ and $\mathbb{E}^{p}_{m}(a,b;\mathbb{F})$ are naturally isometric isomorphic.
\end{lemma}
\begin{proof}
	Consider $\Phi \in L_{p}(a,b;\mathbb{E}^{p}_{m}(\mathbb{F}))$. For each $J \in \{1,\ldots, m\}$, there is a well-defined function $\mathcal{R}^{J}_{\Phi}(\theta) \coloneq \mathcal{I}_{\delta^{J}_{\theta}}\Phi \in L_{p}(a,b;L_{p}((-\tau,0)^{m-1};\mathbb{F}))$ of $\theta \in [-\tau,0]$. From the dominated convergence theorem, it is not hard to see that the corresponding mapping as in \eqref{EQ: EmbracingFunctionOfJthSection} is continuous. Therefore, $\Phi \in \mathbb{E}^{p}_{m}(a,b;\mathbb{F})$.
	
	Note that the norms of $\mathcal{E}_{p}(a,b;L_{p})$ and $ \mathbb{E}^{p}_{m}(a,b;\mathbb{F})$ are identical on the common subspace $L_{p}(a,b;\mathbb{E}^{p}_{m}(\mathbb{F}))$. Since this subspace is dense in $\mathcal{E}_{p}(a,b;L_{p})$ (by definition) and in $ \mathbb{E}^{p}_{m}(a,b;\mathbb{F})$ (by the approximation argument), the conclusion of the lemma follows.
\end{proof}

\begin{lemma}
	\label{LEM: EbracingSpaceEmbeddingLp}
	There is a natural continuous embedding
	\begin{equation}
		\label{EQ: EmbeddibngEmbracingLp}
		\mathcal{E}_{p}(a,b;L_{p}((-\tau,0)^{m};\mathbb{F})) \hookrightarrow L_{p}(a,b;L_{p}((-\tau,0)^{m};\mathbb{F})),
	\end{equation}
	whose constant does not exceed $\tau^{1/p}$.
\end{lemma}
\begin{proof}
	For $\Phi \in L_{p}(a,b;\mathbb{E}^{p}_{m}(\mathbb{F}))$ and $J \in \{1,\ldots, m\}$, the Fubini theorem yields
	\begin{equation}
		\begin{split}
			\int_{a}^{b}\|\Phi(t)\|^{p}_{ L_{p}((-\tau,0)^{m};\mathbb{F}) } dt  =\\= \int_{-\tau}^{0} \|(\mathcal{I}_{\delta^{J}_{\theta}}\Phi)(\cdot) \|^{p}_{L_{p}(a,b;L_{p}((-\tau,0)^{m-1};\mathbb{F}))}d\theta \leq \tau \|\Phi(\cdot)\|^{p}_{\mathcal{E}_{p}(a,b;L_{p})}.
		\end{split}
	\end{equation}
	By Lemma \ref{LEM: CharacterizationEmbacingSpace}, the embedding from the dense subspace extends to the entire space.
\end{proof}

\begin{theorem}
	\label{TH: PointwiseMeasurementOperatorOnEmbracingSpace}
	Let $\gamma$ and $C^{\gamma}_{J}$ with $J \in \{1,\ldots,m\}$ be as in Theorem \ref{TH: OperatorCExntesionOntoWDiagonal} with $p \geq 1$. Then there exists a bounded linear operator
	\begin{equation}
		\mathcal{I}_{C^{\gamma}_{J}} \colon \mathcal{E}_{p}(a,b;L_{p}((-\tau,0)^{m};\mathbb{F})) \to L_{p}(a,b; L_{p}((-\tau,0)^{m-1};\mathbb{M}_{\gamma})),
	\end{equation}
    whose norm does not exceed the total variation $\operatorname{Var}_{[-\tau,0]}(\gamma)$ of $\gamma$ on $[-\tau,0]$ and such that for any $\Phi(\cdot) \in L_{p}(a,b;\mathbb{E}^{p}_{m}(\mathbb{F}))$ we have 
    \begin{equation}
    	\label{EQ: OperatorIGammaIdentityContEmbr}
    	(\mathcal{I}_{C^{\gamma}_{J}}\Phi)(t) = C^{\gamma}_{J} \Phi(t) \qquad \text{for almost all} \quad t \in (a,b).
    \end{equation}
\end{theorem}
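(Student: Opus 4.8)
The plan is to build the operator $\mathcal{I}_{C^{\gamma}_{J}}$ by approximating the measure $d\gamma$ with finite linear combinations of delta-functionals, exactly as in the proof of Theorem \ref{TH: OperatorCExntesionOntoWDiagonal}, and then transferring the resulting uniform bound to the completion $\mathcal{E}_{p}(a,b;L_{p})$. First I would recall the approximation \eqref{EQ: ApproximationByDeltaFunctionalsGamma}: pick partitions $-\tau=\theta^{(k)}_{0}<\ldots<\theta^{(k)}_{N_{k}}=0$ with mesh tending to $0$, set $\alpha^{(k)}_{l}:=\gamma(\theta^{(k)}_{l})-\gamma(\theta^{(k)}_{l-1})\in\mathcal{L}(\mathbb{F};\mathbb{M}_{\gamma})$ and $d\gamma_{k}:=\sum_{l=1}^{N_{k}}\alpha^{(k)}_{l}\delta_{\theta^{(k)}_{l}}$, so that $d\gamma_{k}\to d\gamma$ pointwise on $C([-\tau,0];\mathbb{F})$. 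For $\Phi(\cdot)\in L_{p}(a,b;\mathbb{E}^{p}_{m}(\mathbb{F}))$ one has $C^{\gamma_{k}}_{J}\Phi(t)=\sum_{l=1}^{N_{k}}\alpha^{(k)}_{l}(\mathcal{I}_{\delta^{J}_{\theta^{(k)}_{l}}}\Phi)(t)$ for almost all $t$, which by the very definition \eqref{EQ: NormEmbracingSpaceCube} of the embracing norm satisfies
\begin{equation}
	\label{EQ: EmbracingOperatorApproxEstimate}
	\| C^{\gamma_{k}}_{J}\Phi(\cdot) \|_{L_{p}(a,b;L_{p}((-\tau,0)^{m-1};\mathbb{M}_{\gamma}))} \leq \left(\sum_{l=1}^{N_{k}}\|\alpha^{(k)}_{l}\|\right) \cdot \| \Phi(\cdot) \|_{\mathcal{E}_{p}(a,b;L_{p})} \leq \operatorname{Var}_{[-\tau,0]}(\gamma) \cdot \| \Phi(\cdot) \|_{\mathcal{E}_{p}(a,b;L_{p})}.
\end{equation}

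Next I would pass to the limit $k\to\infty$ on the common dense subspace. For fixed $\Phi(\cdot)\in L_{p}(a,b;\mathbb{E}^{p}_{m}(\mathbb{F}))$ and almost every $t$, the function $\Phi(t)$ lies in $\mathbb{E}^{p}_{m}(\mathbb{F})$; using the continuity of the section functions $\Phi(t)^{b}_{J}(\cdot)$ one gets $C^{\gamma_{k}}_{J}\Phi(t)\to C^{\gamma}_{J}\Phi(t)$ in $L_{p}((-\tau,0)^{m-1};\mathbb{M}_{\gamma})$ pointwise in $t$, while \eqref{EQ: EmbracingOperatorApproxEstimate} together with Lemma \ref{LEM: EbracingSpaceEmbeddingLp} furnishes an $L_{p}$-dominating bound uniform in $k$; the Dominated Convergence Theorem then yields $C^{\gamma_{k}}_{J}\Phi(\cdot)\to C^{\gamma}_{J}\Phi(\cdot)$ in $L_{p}(a,b;L_{p}((-\tau,0)^{m-1};\mathbb{M}_{\gamma}))$. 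Hence the a priori estimate $\| C^{\gamma}_{J}\Phi(\cdot)\|\leq \operatorname{Var}_{[-\tau,0]}(\gamma)\cdot\|\Phi(\cdot)\|_{\mathcal{E}_{p}(a,b;L_{p})}$ holds on the dense subspace $L_{p}(a,b;\mathbb{E}^{p}_{m}(\mathbb{F}))$. Since this subspace is dense in $\mathcal{E}_{p}(a,b;L_{p})$ by definition, the map $\Phi(\cdot)\mapsto C^{\gamma}_{J}\Phi(\cdot)$ extends uniquely to a bounded linear operator $\mathcal{I}_{C^{\gamma}_{J}}$ on $\mathcal{E}_{p}(a,b;L_{p})$ with norm at most $\operatorname{Var}_{[-\tau,0]}(\gamma)$, and \eqref{EQ: OperatorIGammaIdentityContEmbr} holds by construction on the dense subspace.

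The only genuinely delicate point is the interchange of the pointwise-in-$t$ limit $C^{\gamma_{k}}_{J}\Phi(t)\to C^{\gamma}_{J}\Phi(t)$ with the $L_{p}$-in-$t$ norm; I expect this to be the main obstacle, since for general $\Phi(t)\in\mathbb{E}^{p}_{m}(\mathbb{F})$ the convergence $C^{\gamma_{k}}_{J}\Phi(t)\to C^{\gamma}_{J}\Phi(t)$ is not literally pointwise (it requires continuity of the section function, which is an element of $L_{p}$, not a genuine function of $\theta$). The clean way around this is to first prove the extension for $\Phi(\cdot)\in L_{p}(a,b;C([-\tau,0]^{m};\mathbb{F}))$, where the Riemann–Stieltjes convergence $C^{\gamma_{k}}_{J}\Phi(t)\to C^{\gamma}_{J}\Phi(t)$ is uniform in $(\theta_{1},\ldots,\widehat{\theta_{J}},\ldots,\theta_{m})$ for each fixed $t$ (and uniformly bounded), so Dominated Convergence applies directly; then observe that $L_{p}(a,b;C([-\tau,0]^{m};\mathbb{F}))$ is dense in $\mathcal{E}_{p}(a,b;L_{p})$ because $C([-\tau,0]^{m};\mathbb{F})$ is dense in $\mathbb{E}^{p}_{m}(\mathbb{F})$ (as noted before Theorem \ref{TH: OperatorCExntesionOntoWDiagonal}), together with the embedding Lemma \ref{LEM: EbracingSpaceEmbeddingLp} to control cross-terms, and extend by continuity once more. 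A small extra remark: the uniqueness of the extension guarantees that the two ways of arriving at $\mathcal{I}_{C^{\gamma}_{J}}$ — via $\delta$-functional approximation or via continuous-valued approximants — give the same operator, so \eqref{EQ: OperatorIGammaIdentityContEmbr} is unambiguous.
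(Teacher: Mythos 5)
Your proposal is correct and follows essentially the same route as the paper: the delta-functional approximation \eqref{EQ: ApproximationByDeltaFunctionalsGamma}, the Minkowski-type estimate of $C^{\gamma_{k}}_{J}\Phi$ against the embracing norm yielding the $\operatorname{Var}_{[-\tau,0]}(\gamma)$ bound, a passage to the limit in $k$, and extension by density. The paper closes the limit passage with Fatou's lemma rather than dominated convergence, and your worry about pointwise convergence is moot: for $\Phi(t) \in \mathbb{E}^{p}_{m}(\mathbb{F})$ the section function $\theta \mapsto \Phi(t)^{b}_{J}(\theta)$ is by definition a continuous $L_{p}((-\tau,0)^{m-1};\mathbb{F})$-valued function, so the Riemann--Stieltjes sums $C^{\gamma_{k}}_{J}\Phi(t)$ converge to $C^{\gamma}_{J}\Phi(t)$ in $L_{p}((-\tau,0)^{m-1};\mathbb{M}_{\gamma})$ for each fixed $t$ (with the pointwise-in-$t$ dominating bound $\operatorname{Var}_{[-\tau,0]}(\gamma)\,\|\Phi(t)\|_{\mathbb{E}^{p}_{m}(\mathbb{F})}$ coming from Theorem \ref{TH: OperatorCExntesionOntoWDiagonal}, not from the uniform-in-$k$ norm estimate), which makes your detour through $L_{p}(a,b;C([-\tau,0]^{m};\mathbb{F}))$ unnecessary, though harmless.
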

\begin{proof}
	Consider the approximation of $\gamma$ by $\gamma_{k}$ as in \eqref{EQ: ApproximationByDeltaFunctionalsGamma}. Using the Fatou lemma and the Minkowski inequality, for each $\Phi \in L_{p}(a,b;\mathbb{E}_{m}(\mathbb{F}))$ we obtain
	\begin{equation}
		\begin{split}
			\left(\int_{a}^{b}\| C^{\gamma}_{J} \Phi(t) \|^{p}_{L_{p}((-\tau,0)^{m-1};\mathbb{M}_{\gamma})}dt\right)^{1/p} \leq \\ \leq \liminf_{k \to \infty}  \left(\int_{a}^{b}\| C^{\gamma_{k}}_{J} \Phi(t) \|^{p}_{L_{p}((-\tau,0)^{m-1};\mathbb{M}_{\gamma})}dt\right)^{1/p} \leq \\ \leq \operatorname{Var}_{[-\tau,0]}(\gamma) \cdot \| \Phi(\cdot) \|_{\mathcal{E}_{p}(a,b;L_{p})}.
		\end{split}
	\end{equation}
\end{proof}

Now consider two intervals $[a,b] \subset [c,d]$, where $-\infty \leq c \leq a \leq b \leq d \leq +\infty$, and the operators  $R^{1}_{T} \colon \mathcal{E}_{p}(c,d;L_{p}) \to \mathcal{E}_{p}(a,b;L_{p})$ and $R^{2}_{T} \colon L_{p}(c,d;\mathbb{M}_{\gamma}) \to L_{p}(a,b;\mathbb{M}_{\gamma})$, which act by restricting functions from $[c,d]$ to $[a,b]$. Using \eqref{EQ: OperatorIGammaIdentityContEmbr}, we immediately have the following.
\begin{lemma}
	\label{LEM: CommutativeOperatorIGammaEmbr}
	In the above notations, the diagram
	\begin{equation}
		\label{EQ: CommDiagramICgammaAdorned}
		\begin{tikzcd}
			\mathcal{E}_{p}(c,d;L_{p})\ar[d, "R^{1}_{T}"] \ar[r,"\mathcal{I}_{C^{\gamma}_{J}}"] 
			& L_{p}(c,d;\mathbb{M}_{\gamma}) \arrow[d,"R^{2}_{T}"] 
			\\
			\mathcal{E}_{p}(a,b;L_{p})  \arrow[r,"\mathcal{I}_{C^{\gamma}_{J}}"] 
			& L_{p}(a,b;\mathbb{M}_{\gamma})
		\end{tikzcd}
	\end{equation}
	is commutative. Here, the operators $\mathcal{I}_{C^{\gamma}_{J}}$ are given by Theorem \ref{TH: PointwiseMeasurementOperatorOnEmbracingSpace}.
\end{lemma}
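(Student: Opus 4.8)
The plan is to reduce the commutativity of \eqref{EQ: CommDiagramICgammaAdorned} to an identity on a convenient dense subspace and then invoke boundedness of all four arrows. First I would check that $R^{1}_{T}$ is a well-defined bounded operator between the embracing spaces. On the subspace $L_{p}(c,d;\mathbb{E}^{p}_{m}(\mathbb{F}))$ it is just the restriction in time of $\mathbb{E}^{p}_{m}(\mathbb{F})$-valued functions, and from the definition of the norm \eqref{EQ: NormEmbracingSpaceCube} one has $\|R^{1}_{T}\Phi\|_{\mathcal{E}_{p}(a,b;L_{p})} \le \|\Phi\|_{\mathcal{E}_{p}(c,d;L_{p})}$, since shrinking the time interval from $(c,d)$ to $(a,b)$ can only decrease each of the $L_{p}\big((\cdot);L_{p}((-\tau,0)^{m-1};\mathbb{F})\big)$-norms appearing in the supremum defining the embracing norm. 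Hence $R^{1}_{T}$ extends by continuity to all of $\mathcal{E}_{p}(c,d;L_{p})$. The operator $R^{2}_{T}$ is bounded (of norm $\le 1$) on the Lebesgue--Bochner spaces, and the two copies of $\mathcal{I}_{C^{\gamma}_{J}}$ are bounded by Theorem \ref{TH: PointwiseMeasurementOperatorOnEmbracingSpace}. Therefore both composites $R^{2}_{T}\circ\mathcal{I}_{C^{\gamma}_{J}}$ and $\mathcal{I}_{C^{\gamma}_{J}}\circ R^{1}_{T}$ are bounded linear operators from $\mathcal{E}_{p}(c,d;L_{p})$ to $L_{p}(a,b;L_{p}((-\tau,0)^{m-1};\mathbb{M}_{\gamma}))$.

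Next I would verify equality of these two composites on the dense subspace $L_{p}(c,d;\mathbb{E}^{p}_{m}(\mathbb{F}))$. For $\Phi\in L_{p}(c,d;\mathbb{E}^{p}_{m}(\mathbb{F}))$ one has $R^{1}_{T}\Phi\in L_{p}(a,b;\mathbb{E}^{p}_{m}(\mathbb{F}))$, so the defining identity \eqref{EQ: OperatorIGammaIdentityContEmbr} applies on both intervals: both $(R^{2}_{T}\mathcal{I}_{C^{\gamma}_{J}}\Phi)(t)$ and $(\mathcal{I}_{C^{\gamma}_{J}}R^{1}_{T}\Phi)(t)$ equal $C^{\gamma}_{J}\Phi(t)$ for almost every $t\in(a,b)$. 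Thus the two composites agree on this subspace. Since $L_{p}(c,d;\mathbb{E}^{p}_{m}(\mathbb{F}))$ is dense in $\mathcal{E}_{p}(c,d;L_{p})$ by the very definition of the embracing space as a completion, two bounded operators agreeing on it coincide, which is precisely commutativity of \eqref{EQ: CommDiagramICgammaAdorned}.

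There is no serious obstacle here; the only points requiring a line of care are that $R^{1}_{T}$ does map the dense subspace of $\mathcal{E}_{p}(c,d;L_{p})$ into the dense subspace of $\mathcal{E}_{p}(a,b;L_{p})$ (immediate from the definitions) and that it is bounded with respect to the completion norms (immediate from \eqref{EQ: NormEmbracingSpaceCube}). If one prefers to avoid explicitly extending $R^{1}_{T}$, one may instead work with the isometric description of the embracing space by $\mathbb{E}^{p}_{m}(a,b;\mathbb{F})$ from Lemma \ref{LEM: CharacterizationEmbacingSpace}, on which restriction in time and the pointwise measurement operators are both manifestly well-defined, and then the diagram chase is entirely elementary.
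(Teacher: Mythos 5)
Your proposal is correct and follows essentially the same route as the paper, which simply observes that commutativity is immediate from the identity \eqref{EQ: OperatorIGammaIdentityContEmbr} on the dense subspace $L_{p}(c,d;\mathbb{E}^{p}_{m}(\mathbb{F}))$ together with boundedness of all the operators involved. You have merely spelled out the routine details (boundedness of the restriction operators with respect to the embracing norms and the density argument) that the paper leaves implicit.
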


Using this lemma and the fact that $\mathcal{E}_{p}(a,b;L_{p}) \subset \mathcal{E}_{1}(a,b;L_{1})$ for finite $a$ and $b$, we obtain the following relaxed version of \eqref{EQ: OperatorIGammaIdentityContEmbr}.
\begin{corollary}
	\label{COR: RelaxedComputationMeasurementOperatorsOnEmbracing}
	Let $\mathcal{I}_{C^{\gamma}_{J}}$ be given by Theorem \ref{TH: PointwiseMeasurementOperatorOnEmbracingSpace}.  Then \begin{equation}
		\label{EQ: OperatorIGammaIdentityContEmbrRelaxed}
		(\mathcal{I}_{C^{\gamma}_{J}}\Phi)(t) = C^{\gamma}_{J} \Phi(t) \qquad \text{for almost all} \quad t \in (a,b)
	\end{equation}
    is satisfied for any $\Phi \in \mathcal{E}_{p}(a,b;L_{p}) \cap L_{1,loc}(a,b;\mathbb{E}^{1}_{m}(\mathbb{F}) )$.
\end{corollary}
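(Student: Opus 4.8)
\textbf{Proof proposal for Corollary \ref{COR: RelaxedComputationMeasurementOperatorsOnEmbracing}.}

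The plan is to exploit the locality of the statement together with the commutative diagram of Lemma \ref{LEM: CommutativeOperatorIGammaEmbr}, reducing the general case to the already-established identity \eqref{EQ: OperatorIGammaIdentityContEmbr} from Theorem \ref{TH: PointwiseMeasurementOperatorOnEmbracingSpace}. Fix $\Phi \in \mathcal{E}_{p}(a,b;L_{p}) \cap L_{1,loc}(a,b;\mathbb{E}^{1}_{m}(\mathbb{F}))$. Since the conclusion \eqref{EQ: OperatorIGammaIdentityContEmbrRelaxed} is a statement about almost every $t \in (a,b)$, it suffices to verify it on an arbitrary compact subinterval $[c,d] \subset (a,b)$; indeed, $(a,b)$ is a countable union of such intervals, and a countable union of null sets is null. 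On such a compact interval, the restriction $\Phi|_{[c,d]}$ lies in $\mathcal{E}_{p}(c,d;L_{p})$ by the restriction operator $R^{1}_{T}$, and by hypothesis it also lies in $L_{1}(c,d;\mathbb{E}^{1}_{m}(\mathbb{F}))$, since $[c,d]$ is compact and $\Phi \in L_{1,loc}(a,b;\mathbb{E}^{1}_{m}(\mathbb{F}))$.

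Next I would invoke the chain of embeddings. For finite $c,d$ we have $\mathcal{E}_{p}(c,d;L_{p}) \subset \mathcal{E}_{1}(c,d;L_{1})$ (as noted just before the corollary, this follows from H\"older's inequality on the finite interval applied inside the defining norm \eqref{EQ: NormEmbracingSpaceCube}). Thus $\Phi|_{[c,d]}$ belongs to $\mathcal{E}_{1}(c,d;L_{1})$ and simultaneously to $L_{1}(c,d;\mathbb{E}^{1}_{m}(\mathbb{F}))$, the latter being the dense subspace on which Theorem \ref{TH: PointwiseMeasurementOperatorOnEmbracingSpace} (applied with exponent $1$ on the interval $[c,d]$) guarantees the pointwise identity $(\mathcal{I}_{C^{\gamma}_{J}}\Phi)(t) = C^{\gamma}_{J}\Phi(t)$ for almost all $t \in (c,d)$. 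Here one must be slightly careful that the operator $\mathcal{I}_{C^{\gamma}_{J}}$ built on $\mathcal{E}_{1}(c,d;L_{1})$ agrees, after restriction, with the one built on $\mathcal{E}_{p}(a,b;L_{p})$: this is precisely what the commutative diagram \eqref{EQ: CommDiagramICgammaAdorned} of Lemma \ref{LEM: CommutativeOperatorIGammaEmbr} provides, since restriction commutes with $\mathcal{I}_{C^{\gamma}_{J}}$, and since on the common dense subspace $L_{p}(a,b;\mathbb{E}^{p}_{m}(\mathbb{F})) \subset L_{1,loc}(a,b;\mathbb{E}^{1}_{m}(\mathbb{F}))$ both operators are defined by the literal formula $\Phi(\cdot) \mapsto C^{\gamma}_{J}\Phi(\cdot)$, whence they coincide on all of $\mathcal{E}_{p}(a,b;L_{p})$ by continuity and density.

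Combining these observations: $(R^{2}_{T}\mathcal{I}_{C^{\gamma}_{J}}\Phi)(t) = (\mathcal{I}_{C^{\gamma}_{J}} R^{1}_{T}\Phi)(t) = C^{\gamma}_{J}(R^{1}_{T}\Phi)(t) = C^{\gamma}_{J}\Phi(t)$ for almost all $t \in (c,d)$, where the first equality is the diagram, the second is Theorem \ref{TH: PointwiseMeasurementOperatorOnEmbracingSpace} with $p=1$ on $[c,d]$ applied to $R^{1}_{T}\Phi \in L_{1}(c,d;\mathbb{E}^{1}_{m}(\mathbb{F}))$, and the third is just that $R^{1}_{T}\Phi$ agrees with $\Phi$ on $[c,d]$. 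Exhausting $(a,b)$ by such intervals finishes the argument. The only delicate point — and the main thing to get right — is the consistency of the various copies of the operator $\mathcal{I}_{C^{\gamma}_{J}}$ across different exponents $p$ and different intervals; this is entirely handled by Lemma \ref{LEM: CommutativeOperatorIGammaEmbr} together with the uniqueness of continuous extension from the dense subspace, so no genuinely new estimate is needed.
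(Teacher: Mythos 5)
Your argument is correct and is essentially the paper's own: the paper proves the corollary exactly by combining Lemma \ref{LEM: CommutativeOperatorIGammaEmbr} with the inclusion $\mathcal{E}_{p}\subset\mathcal{E}_{1}$ on finite intervals to reduce to the $p=1$ case of \eqref{EQ: OperatorIGammaIdentityContEmbr}, which is what you do, merely spelling out the exhaustion by compact subintervals and the density/continuity consistency of the various copies of $\mathcal{I}_{C^{\gamma}_{J}}$.
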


Next, we discuss the differentiability properties of $\mathcal{I}_{C^{\gamma}_{J}}\Phi$. Although they are not used in this paper, such results may be useful for developing a similar theory for neutral delay equations, see \cite{Anikushin2020FreqDelay} for $m=1$.

Let $\mathcal{E}_{p}(a,b;W^{1,p})$ be the subspace consisting of all $\Phi \in \mathcal{E}_{p}(a,b;L_{p})$ such that for any $J \in \{1,\ldots,m\}$ in terms of Lemma \ref{LEM: CharacterizationEmbacingSpace} we have\footnote{Note that in the definition of $\mathcal{E}_{p}(a,b;W^{1,p})$, the symbol $W^{1,p}$ reflects not the space of values for $\Phi(\cdot) \in \mathcal{E}_{p}(a,b;W^{1,p})$, but rather for the corresponding to it functions $\mathcal{R}^{J}_{\Phi}(\cdot)$ of the $J$th sections.}
\begin{equation}
	\mathcal{R}^{J}_{\Phi}(\cdot) \in C([-\tau,0]; W^{1,p}(a,b;L_{p}((-\tau,0)^{m-1};\mathbb{F}))).
\end{equation} 
For such $\Phi$, by $\Phi'$ we denote the element of $\mathcal{E}_{p}(a,b;L_{p})$ satisfying $\mathcal{R}^{J}_{\Phi'}(\theta) = \frac{d}{dt} \mathcal{R}^{J}_{\Phi}(\theta)$ for any $\theta \in [-\tau,0]$ and $J \in \{1,\ldots,m\}$, where $\frac{d}{dt}$ denotes the derivative in the space $W^{1,p}(a,b;L_{p}((-\tau,0)^{m-1};\mathbb{F}))$. There is a natural norm in $\mathcal{E}_{p}(a,b;W^{1,p})$ given by
\begin{equation}
	\label{EQ: NormEmbracingSobolev}
	\| \Phi(\cdot) \|^{p}_{\mathcal{E}_{p}(a,b;W^{1,p})} \coloneq \|\Phi(\cdot)\|^{p}_{\mathcal{E}_{p}(a,b;L_{p})} + \|\Phi'(\cdot)\|^{p}_{\mathcal{E}_{p}(a,b;L_{p})},
\end{equation}
which obviously makes it a Banach space.

\begin{theorem}
	\label{TH: DifferentiabilityMeasurementOperatorsOnEmbracingSobolev}
	Let $\gamma$ and $C^{\gamma}_{J}$ with $J \in \{1,\ldots,m\}$ be as in Theorem \ref{TH: OperatorCExntesionOntoWDiagonal} with $p \geq 1$. Then for any $\Phi \in \mathcal{E}_{p}(a,b;W^{1,p})$ we have that $\mathcal{I}_{C^{\gamma}_{J}}\Phi$ belongs to $ W^{1,p}(a,b;L_{p}((-\tau,0)^{m-1};\mathbb{M}_{\gamma}))$ and satisfies
	\begin{equation}
		\label{EQ: DerivativeOfPointwiseMeasurementOperatorFormula}
		\frac{d}{dt}(\mathcal{I}_{C^{\gamma}_{J}}\Phi)(t) = (\mathcal{I}_{C^{\gamma}_{J}}\Phi')(t) \qquad \text{for almost all} \quad t \in (a,b),
	\end{equation}
    where $\Phi'$ is as in \eqref{EQ: NormEmbracingSobolev}. In particular, the operator
    \begin{equation}
    	\mathcal{I}_{C^{\gamma}_{J}} \colon \mathcal{E}_{p}(a,b;W^{1,p}) \to W^{1,p}(a,b;L_{p}((-\tau,0)^{m-1};\mathbb{M}_{\gamma}))
    \end{equation}
    is bounded, and its norm does not exceed the total variation $\operatorname{Var}_{[-\tau,0]}(\gamma)$ of $\gamma$ on $[-\tau,0]$.
\end{theorem}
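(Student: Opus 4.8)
The plan is to reduce the statement to the case where $\gamma$ is a single $\delta$-functional, where it is essentially a tautology, then extend by linearity to polygonal $\gamma$, and finally pass to the limit in the approximation \eqref{EQ: ApproximationByDeltaFunctionalsGamma} using the closedness of the distributional time-derivative. First I would record the base case. Fix $\theta \in [-\tau,0]$ and $J \in \{1,\dots,m\}$ and let $\delta^{J}_{\theta}$ be the operator appearing in the norm \eqref{EQ: NormEmbracingSpaceCube}. On the dense subspace $L_{p}(a,b;\mathbb{E}^{p}_{m}(\mathbb{F}))$ one has $(\mathcal{I}_{\delta^{J}_{\theta}}\Phi)(t) = (\Phi(t))^{b}_{J}(\theta) = \mathcal{R}^{J}_{\Phi}(\theta)(t)$, and since $\Phi \mapsto \mathcal{I}_{\delta^{J}_{\theta}}\Phi$ and $\Phi \mapsto \mathcal{R}^{J}_{\Phi}(\theta)$ are bounded on $\mathcal{E}_{p}(a,b;L_{p})$ (cf. Lemma \ref{LEM: CharacterizationEmbacingSpace}), the identity $\mathcal{I}_{\delta^{J}_{\theta}}\Phi = \mathcal{R}^{J}_{\Phi}(\theta)$ persists for all $\Phi \in \mathcal{E}_{p}(a,b;L_{p})$. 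Thus, if $\Phi \in \mathcal{E}_{p}(a,b;W^{1,p})$, then by the very definition of this space $\mathcal{R}^{J}_{\Phi}(\theta) \in W^{1,p}(a,b;L_{p}((-\tau,0)^{m-1};\mathbb{F}))$ with $\tfrac{d}{dt}\mathcal{R}^{J}_{\Phi}(\theta) = \mathcal{R}^{J}_{\Phi'}(\theta) = \mathcal{I}_{\delta^{J}_{\theta}}\Phi'$, which is precisely \eqref{EQ: DerivativeOfPointwiseMeasurementOperatorFormula} for a single $\delta$-functional.

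Next, by linearity of $\gamma \mapsto \mathcal{I}_{C^{\gamma}_{J}}$, the same conclusion holds for each polygonal $\gamma_{k} = \sum_{l=1}^{N_{k}} \alpha^{(k)}_{l}\delta_{\theta^{(k)}_{l}}$ from \eqref{EQ: ApproximationByDeltaFunctionalsGamma}: on $\mathcal{E}_{p}(a,b;W^{1,p})$ one has $\mathcal{I}_{C^{\gamma_{k}}_{J}}\Phi = \sum_{l}\alpha^{(k)}_{l}\mathcal{R}^{J}_{\Phi}(\theta^{(k)}_{l}) \in W^{1,p}$ with $\tfrac{d}{dt}\mathcal{I}_{C^{\gamma_{k}}_{J}}\Phi = \sum_{l}\alpha^{(k)}_{l}\mathcal{R}^{J}_{\Phi'}(\theta^{(k)}_{l}) = \mathcal{I}_{C^{\gamma_{k}}_{J}}\Phi'$; note also that $\operatorname{Var}_{[-\tau,0]}(\gamma_{k}) = \sum_{l}\|\alpha^{(k)}_{l}\| \leq \operatorname{Var}_{[-\tau,0]}(\gamma)$, a uniform bound. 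To pass to the limit I would argue that $\mathcal{I}_{C^{\gamma_{k}}_{J}}\Phi \to \mathcal{I}_{C^{\gamma}_{J}}\Phi$ in $L_{p}(a,b;L_{p}((-\tau,0)^{m-1};\mathbb{M}_{\gamma}))$ for every $\Phi \in \mathcal{E}_{p}(a,b;L_{p})$: on the dense subspace $L_{p}(a,b;\mathbb{E}^{p}_{m}(\mathbb{F}))$ this follows from dominated convergence, since for a.e.\ $t$ the values $C^{\gamma_{k}}_{J}\Phi(t)$ are Riemann--Stieltjes sums of the continuous $L_{p}((-\tau,0)^{m-1};\mathbb{F})$-valued function $\theta \mapsto (\Phi(t))^{b}_{J}(\theta)$ against $d\gamma_{k}$ (hence converge pointwise to $C^{\gamma}_{J}\Phi(t)$), with the integrable majorant $\operatorname{Var}_{[-\tau,0]}(\gamma)\,\|\Phi(t)\|_{\mathbb{E}^{p}_{m}(\mathbb{F})}$; combining this with the uniform operator bound from Theorem \ref{TH: PointwiseMeasurementOperatorOnEmbracingSpace} extends the convergence to all of $\mathcal{E}_{p}(a,b;L_{p})$. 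Applying this once to $\Phi$ and once to $\Phi'$ (both in $\mathcal{E}_{p}(a,b;L_{p})$), we get $\mathcal{I}_{C^{\gamma_{k}}_{J}}\Phi \to \mathcal{I}_{C^{\gamma}_{J}}\Phi$ and $\tfrac{d}{dt}\mathcal{I}_{C^{\gamma_{k}}_{J}}\Phi = \mathcal{I}_{C^{\gamma_{k}}_{J}}\Phi' \to \mathcal{I}_{C^{\gamma}_{J}}\Phi'$ in $L_{p}$, and closedness of the distributional time-derivative forces $\mathcal{I}_{C^{\gamma}_{J}}\Phi \in W^{1,p}(a,b;L_{p}((-\tau,0)^{m-1};\mathbb{M}_{\gamma}))$ together with \eqref{EQ: DerivativeOfPointwiseMeasurementOperatorFormula}.

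The norm estimate then falls out by combining $\|\mathcal{I}_{C^{\gamma}_{J}}\Phi\|_{L_{p}} \leq \operatorname{Var}_{[-\tau,0]}(\gamma)\,\|\Phi\|_{\mathcal{E}_{p}(a,b;L_{p})}$ from Theorem \ref{TH: PointwiseMeasurementOperatorOnEmbracingSpace} with $\|\tfrac{d}{dt}\mathcal{I}_{C^{\gamma}_{J}}\Phi\|_{L_{p}} = \|\mathcal{I}_{C^{\gamma}_{J}}\Phi'\|_{L_{p}} \leq \operatorname{Var}_{[-\tau,0]}(\gamma)\,\|\Phi'\|_{\mathcal{E}_{p}(a,b;L_{p})}$ and the definition \eqref{EQ: NormEmbracingSobolev}, i.e.\ $\|\mathcal{I}_{C^{\gamma}_{J}}\Phi\|_{W^{1,p}}^{p} \leq \operatorname{Var}_{[-\tau,0]}(\gamma)^{p}\big(\|\Phi\|_{\mathcal{E}_{p}(a,b;L_{p})}^{p} + \|\Phi'\|_{\mathcal{E}_{p}(a,b;L_{p})}^{p}\big)$. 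The main obstacle is the limit passage: because the polygonal approximants do not converge to $\gamma$ in the bounded-variation norm, $\operatorname{Var}_{[-\tau,0]}(\gamma_{k}-\gamma)$ need not tend to $0$ and $\mathcal{I}_{C^{\gamma_{k}}_{J}}$ does not converge to $\mathcal{I}_{C^{\gamma}_{J}}$ in operator norm; only the strong convergence above is available, and it must be extracted from the uniform operator bound, the density of $L_{p}(a,b;\mathbb{E}^{p}_{m}(\mathbb{F}))$, and the dominated-convergence estimate on that dense subspace. A minor additional point is the case $a = -\infty$ or $b = +\infty$, where the $W^{1,p}$-valued continuity of $\mathcal{R}^{J}_{\Phi}$ and the majorants live on an unbounded interval; since the majorants nonetheless stay in $L_{p}(a,b)$, this causes no real difficulty.
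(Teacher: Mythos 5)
Your proposal is correct and follows essentially the same route as the paper's proof: the base case for a single $\delta$-functional holds by the definition of $\mathcal{E}_{p}(a,b;W^{1,p})$ and of $\Phi'$, and the general case is obtained from the approximation \eqref{EQ: ApproximationByDeltaFunctionalsGamma} together with the strong (pointwise) convergence $\mathcal{I}_{C^{\gamma_{k}}_{J}} \to \mathcal{I}_{C^{\gamma}_{J}}$ and closedness of the time derivative. You merely spell out the details (uniform variation bound, dominated convergence on the dense subspace, density argument) that the paper leaves implicit.
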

\begin{proof}
	By definition, \eqref{EQ: DerivativeOfPointwiseMeasurementOperatorFormula} is satisfied for $C^{\gamma}_{J} = \delta^{J}_{\theta}$ and any $\theta \in [-\tau,0]$. For general $C^{\gamma}_{J}$, one can use the approximations of $\gamma$ by $\gamma_{k}$ as in \eqref{EQ: ApproximationByDeltaFunctionalsGamma} and the pointwise convergence of $\mathcal{I}_{C^{\gamma_{k}}_{J}}$ to $\mathcal{I}_{C^{\gamma}_{J}}$ in $\mathcal{E}_{p}(a,b;L_{p})$ as $k \to \infty$.
\end{proof}

We now establish a key property of embracing spaces and pointwise measurement operators concerned with the Fourier transform. For the following theorem, $\mathbb{F}$ and $\mathbb{M}_{\gamma}$ are complex Hilbert spaces.
\begin{theorem}
	\label{TH: EmbracingFourierCommutesIGamma}
	Let $\mathfrak{F}_{1}$ be the Fourier transform  in $L_{2}(\mathbb{R};L_{2}((-\tau,0)^{m};\mathbb{F}))$. Then $\mathfrak{F}_{1}$ provides an isometric automorphism of $\mathcal{E}_{2}(\mathbb{R};L_{2}((-\tau,0)^{m};\mathbb{F}))$.
	
	Furthermore, let $\gamma$ and $C^{\gamma}_{J}$ be as in Theorem \ref{TH: OperatorCExntesionOntoWDiagonal}. Then the diagram
	\begin{equation}
		\label{EQ: EmbracingSpaceIGammaFourierIdentity}
		\begin{tikzcd}
			\mathcal{E}_{2}(\mathbb{R};L_{2}((-\tau,0)^{m};\mathbb{F}))\ar[d, "\mathfrak{F}_{1}"] \ar[r,"\mathcal{I}_{C^{\gamma}_{J}}"] 
			& L_{2}(\mathbb{R};L_{2}((-\tau,0)^{m-1};\mathbb{M}_{\gamma})) \arrow[d,"\mathfrak{F}^{\gamma}_{2}"] 
			\\
			\mathcal{E}_{2}(\mathbb{R};L_{2}((-\tau,0)^{m};\mathbb{F}))  \arrow[r,"\mathcal{I}_{C^{\gamma}_{J}}"] 
			& L_{2}(\mathbb{R};L_{2}((-\tau,0)^{m-1};\mathbb{M}_{\gamma}))
		\end{tikzcd}
	\end{equation}
	is commutative. Here, $\mathcal{I}_{C^{\gamma}_{J}}$ is given by Theorem \ref{TH: PointwiseMeasurementOperatorOnEmbracingSpace}, and $\mathfrak{F}^{\gamma}_{2}$ denotes the Fourier transform in $L_{2}(\mathbb{R};L_{2}((-\tau,0)^{m-1};\mathbb{M}_{\gamma}))$.
\end{theorem}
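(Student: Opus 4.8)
The plan is to reduce the statement to the case of $\delta$-functionals and then to the case $m=1$, which was essentially handled in \cite{Anikushin2020FreqDelay}, exploiting that the Fourier transform $\mathfrak{F}_{1}$ acts only on the time variable while all the pointwise measurement operators act only on the "spatial" arguments $\overline{\theta} \in (-\tau,0)^{m}$, so the two should commute once everything is set up correctly. First I would prove that $\mathfrak{F}_{1}$ restricts to an isometric automorphism of $\mathcal{E}_{2}(\mathbb{R};L_{2})$. By Lemma \ref{LEM: CharacterizationEmbacingSpace}, $\mathcal{E}_{2}(\mathbb{R};L_{2}) \cong \mathbb{E}^{2}_{m}(\mathbb{R};\mathbb{F})$, and the norm on the latter is the supremum over $J$ and $\theta$ of the $L_{2}(\mathbb{R};L_{2}((-\tau,0)^{m-1};\mathbb{F}))$-norms of the functions of $J$-th section $\mathcal{R}^{J}_{\Phi}(\theta)$. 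The key observation is that for $\Phi \in L_{2}(\mathbb{R};\mathbb{E}^{2}_{m}(\mathbb{F}))$ one has, for each fixed $\theta$ and $J$, the identity $\mathcal{R}^{J}_{\mathfrak{F}_{1}\Phi}(\theta) = \mathfrak{F}(\mathcal{R}^{J}_{\Phi}(\theta))$, where $\mathfrak{F}$ denotes the Fourier transform in $L_{2}(\mathbb{R};L_{2}((-\tau,0)^{m-1};\mathbb{F}))$; this is because the operation $\Phi \mapsto \mathcal{R}^{J}_{\Phi}(\theta)$ (pointwise restriction to the hyperplane $\theta_{J}=\theta$ applied to each time slice) commutes with the time-only operator $\mathfrak{F}_{1}$ on the dense subspace where restrictions are classical, hence everywhere by continuity. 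Since $\mathfrak{F}$ is an isometry of $L_{2}(\mathbb{R};L_{2}((-\tau,0)^{m-1};\mathbb{F}))$, taking the supremum over $J$ and $\theta$ shows that $\mathfrak{F}_{1}$ is isometric for the embracing norm on the dense subspace $L_{2}(\mathbb{R};\mathbb{E}^{2}_{m}(\mathbb{F}))$; since $\mathfrak{F}_{1}$ is bijective on $L_{2}(\mathbb{R};L_{2}((-\tau,0)^{m};\mathbb{F}))$ and preserves this dense subspace (as $\mathfrak{F}_{1}^{\pm 1}$ both satisfy the above), it extends to an isometric automorphism of $\mathcal{E}_{2}(\mathbb{R};L_{2})$.

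Next I would establish the commutativity of \eqref{EQ: EmbracingSpaceIGammaFourierIdentity}. By Theorem \ref{TH: PointwiseMeasurementOperatorOnEmbracingSpace}, $\mathcal{I}_{C^{\gamma}_{J}}$ is bounded and is the continuous extension of $\Phi(\cdot) \mapsto C^{\gamma}_{J}\Phi(\cdot)$ defined on the dense subspace $L_{2}(\mathbb{R};\mathbb{E}^{2}_{m}(\mathbb{F}))$. Also, by the approximation \eqref{EQ: ApproximationByDeltaFunctionalsGamma} of $\gamma$ by finite combinations $\gamma_{k}$ of $\delta$-functionals $\delta^{J}_{\theta}$ (together with the uniform bound by $\operatorname{Var}_{[-\tau,0]}(\gamma)$ from Theorem \ref{TH: PointwiseMeasurementOperatorOnEmbracingSpace}), the operator $\mathcal{I}_{C^{\gamma}_{J}}$ is the limit in the strong operator topology of $\mathcal{I}_{C^{\gamma_{k}}_{J}}$, which in turn are finite linear combinations of the operators $\mathcal{I}_{\delta^{J}_{\theta}}$. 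Therefore, since $\mathfrak{F}_{1}$ and $\mathfrak{F}^{\gamma}_{2}$ are bounded and continuous, it suffices to prove the identity $\mathfrak{F}^{\gamma}_{2} \circ \mathcal{I}_{\delta^{J}_{\theta}} = \mathcal{I}_{\delta^{J}_{\theta}} \circ \mathfrak{F}_{1}$ (with $\mathbb{M}_{\gamma}=\mathbb{F}$) on the dense subspace $L_{2}(\mathbb{R};\mathbb{E}^{2}_{m}(\mathbb{F}))$; there, by \eqref{EQ: OperatorIGammaIdentityContEmbr}, the left-hand side is $\mathfrak{F}^{\gamma}_{2}$ applied to $t \mapsto \delta^{J}_{\theta}\Phi(t) = \Phi(t)|_{\mathcal{B}_{\widehat{J}}+\theta e_{J}}$, while the right-hand side is $t \mapsto (\mathfrak{F}_{1}\Phi)(t)|_{\mathcal{B}_{\widehat{J}}+\theta e_{J}}$, and these coincide precisely because (as in the previous paragraph) pointwise spatial restriction to a fixed hyperplane commutes with the time-only Fourier transform. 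This last commutation is exactly the identity $\mathcal{R}^{J}_{\mathfrak{F}_{1}\Phi}(\theta) = \mathfrak{F}(\mathcal{R}^{J}_{\Phi}(\theta))$ already isolated above, so no new work is needed. One then passes from $\delta$-functionals to general $\gamma$ by the density and continuity argument, and from the dense subspace to all of $\mathcal{E}_{2}(\mathbb{R};L_{2})$ again by density, using the first part of the theorem to guarantee $\mathfrak{F}_{1}$ stays inside the embracing space.

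The main obstacle I anticipate is the careful justification of the "commutation on the dense subspace" step, i.e.\ that for $\Phi \in L_{2}(\mathbb{R};\mathbb{E}^{2}_{m}(\mathbb{F}))$ the function-of-$J$-th-section of $\mathfrak{F}_{1}\Phi$ really equals $\mathfrak{F}$ applied slicewise to that of $\Phi$. The subtlety is that the restriction $\Phi(t)|_{\mathcal{B}_{\widehat{J}}+\theta e_{J}}$ is a priori only defined for a.e.\ $\theta$, and one must interchange this a.e.-restriction with an $L_{2}$-Fourier integral over $t$; the cleanest route is to first verify it for $\Phi$ in a genuinely dense and regular subclass (e.g.\ finite sums of $f(t)\Psi$ with $f \in L_{2}(\mathbb{R})$ and $\Psi \in C([-\tau,0]^{m};\mathbb{F})$, where both sides are literal continuous functions and the identity is the scalar statement $\widehat{f(t)}\Psi|_{\ldots} = \widehat{f(t)\Psi|_{\ldots}}$), then extend by continuity using that $\Phi \mapsto \mathcal{R}^{J}_{\Phi}(\theta)$ and $\mathfrak{F}$ are bounded on the relevant $L_{2}$-spaces and that the embracing norm controls all the $\mathcal{R}^{J}_{\Phi}(\theta)$ uniformly. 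Once this bookkeeping is done, everything else is a routine density-and-boundedness chase.
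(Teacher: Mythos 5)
Your proposal is correct and follows essentially the same strategy as the paper: prove the slicewise identity $\mathcal{R}^{J}_{\mathfrak{F}_{1}\Phi}(\theta)=\mathfrak{F}(\mathcal{R}^{J}_{\Phi}(\theta))$ (i.e. the $\delta$-functional case) on a regular dense subclass, deduce from it both the isometric-automorphism property of $\mathfrak{F}_{1}$ on the embracing space and the commutativity of the diagram for $\delta^{J}_{\theta}$, and then pass to general $C^{\gamma}_{J}$ via the approximation of $\gamma$ by the combinations $\gamma_{k}$ of $\delta$-functionals with the uniform bound by $\operatorname{Var}_{[-\tau,0]}(\gamma)$.

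The only point where you should be careful is the intermediate claim in your first paragraph that $\mathfrak{F}_{1}$ "preserves" the dense subspace $L_{2}(\mathbb{R};\mathbb{E}^{2}_{m}(\mathbb{F}))$: this is not available for free, since $\mathbb{E}^{2}_{m}(\mathbb{F})$ is only a Banach space with a sup-type norm and the vector-valued Plancherel theorem does not apply to it. The paper avoids this precisely by taking the smaller dense subclass $L_{2}(\mathbb{R};\mathcal{W}^{2}_{D}((-\tau,0)^{m};\mathbb{F}))$, which $\mathfrak{F}_{1}$ does map into itself because $\mathcal{W}^{2}_{D}$ is a Hilbert space continuously embedded into $\mathbb{E}^{2}_{m}(\mathbb{F})$, so that $\delta^{J}_{\theta}$ can be pulled through the Fourier integral. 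Your own fallback in the last paragraph (finite sums $f(t)\Psi$ with $\Psi\in C([-\tau,0]^{m};\mathbb{F})$, which are dense in $L_{2}(\mathbb{R};\mathbb{E}^{2}_{m}(\mathbb{F}))$ and hence in $\mathcal{E}_{2}$, and are manifestly preserved by $\mathfrak{F}_{1}^{\pm 1}$) serves exactly the same purpose, so once you phrase the argument through that subclass rather than through $L_{2}(\mathbb{R};\mathbb{E}^{2}_{m}(\mathbb{F}))$ itself, the density-and-boundedness chase closes without a gap.
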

\begin{proof}
	First, we show that $\mathfrak{F}_{1} \Phi \in \mathcal{E}_{2}(\mathbb{R};L_{2})$ for any $\Phi \in \mathcal{E}_{2}(\mathbb{R};L_{2})$. Let $\mathcal{W}^{2}_{D}((-\tau,0)^{m};\mathbb{F})$ be the diagonal Sobolev space from \eqref{EQ: W2DiagonalDefinition}, see Proposition \ref{PROP: DiagonalSobolevCubeDescription} for its characterization. Using the definition of $\mathcal{E}_{2}(\mathbb{R};L_{2})$ and the fact that $L_{2}(\mathbb{R}; \mathcal{W}^{2}_{D}((-\tau,0)^{m};\mathbb{F}))$ is dense in $L_{2}(\mathbb{R}; \mathbb{E}^{2}_{m}(\mathbb{F}))$, we obtain a sequence $\Phi_{k} \in L_{2}(\mathbb{R}; \mathcal{W}^{2}_{D}((-\tau,0)^{m};\mathbb{F}))$, where $k=1,2,\ldots$, which tends to $\Phi$ in $\mathcal{E}_{2}(\mathbb{R};L_{2})$ as $k \to \infty$. By Lemma \ref{LEM: CharacterizationEmbacingSpace}, for any $J \in \{1,\ldots,m\}$ we have as $k \to \infty$:
	\begin{equation}
		\label{EQ: UniformConvergenceFourierPropertyEmbracing}
		\mathcal{R}^{J}_{\Phi_{k}}(\cdot) \to \mathcal{R}^{J}_{\Phi}(\cdot) \text{ in } C([-\tau,0]; L_{2}(\mathbb{R}; L_{2}((-\tau,0)^{m-1};\mathbb{F}))).
	\end{equation}
    Since $\mathcal{W}^{2}_{D}((-\tau,0)^{m};\mathbb{F})$ is continuously embedded into $L_{2}( (-\tau,0)^{m};\mathbb{F} )$ and it is a Hilbert space, we have that $\mathfrak{F}_{1}\Phi_{k} \in L_{2}(\mathbb{R}; \mathcal{W}^{2}_{D}((-\tau,0)^{m};\mathbb{F}))$.
    
    Let $\mathfrak{F}_{2}$ be the Fourier transform in $L_{2}(\mathbb{R};L_{2}((-\tau,0)^{m-1};\mathbb{F}))$. Then for each $\theta \in [-\tau,0]$, $J \in \{1,\ldots,m\}$, and $k$, we have the following identities in $L_{2}(\mathbb{R}; L_{2}((-\tau,0)^{m-1};\mathbb{F}))$ with respect to $\omega \in \mathbb{R}$:
    \begin{equation}
    	\label{EQ: FirstFourierIdentityEmbracingFourierProperty}
    	\begin{split}
    		(\mathfrak{F}_{2}\mathcal{R}^{J}_{\Phi_{k}}(\theta))(\omega) = \lim_{T \to +\infty} \frac{1}{\sqrt{2\pi}}\int_{-T}^{T}e^{-i \omega t}\delta^{J}_{\theta}\Phi_{k}(t)dt =\\= \lim_{T \to +\infty}\delta^{J}_{\theta} \frac{1}{\sqrt{2\pi}}\int_{-T}^{T}e^{-i \omega t}\Phi_{k}(t)dt = \delta^{J}_{\theta} (\mathfrak{F}_{1}\Phi_{k})(\omega),
    	\end{split}
    \end{equation}
    where we used that $\mathcal{W}^{2}_{D}((-\tau,0)^{m};\mathbb{F})$ is continuously embedded into $\mathbb{E}^{2}_{m}(\mathbb{F})$, thanks to Proposition \ref{PROP: EmbeddingWDiagToEmDelay}.
    
    Using \eqref{EQ: FirstFourierIdentityEmbracingFourierProperty} and \eqref{EQ: UniformConvergenceFourierPropertyEmbracing}, we obtain the following relations in $L_{2}(\mathbb{R};L_{2}((-\tau,0)^{m-1};\mathbb{F}))$:
    \begin{equation}
    	\label{EQ: EmbracingFourierPropertySecondFourierIdentity}
    	\mathfrak{F}_{2}\mathcal{R}^{J}_{\Phi}(\theta) = \lim_{k \to \infty}\mathfrak{F}_{2}\mathcal{R}^{J}_{\Phi_{k}}(\theta)=\lim_{k \to \infty} \mathcal{I}_{\delta^{J}_{\theta}}\mathfrak{F}_{1}\Phi_{k},
    \end{equation}
    where the convergence is uniform in $\theta \in [-\tau,0]$. In other words, $\mathfrak{F}_{1}\Phi_{k}$ converges in $\mathcal{E}_{2}(\mathbb{R};L_{2})$ as $k \to \infty$. Since the embracing space can be continuously embedded into $L_{2}(\mathbb{R};L_{2}((-\tau,0)^{m};\mathbb{F}))$ due to Lemma \ref{LEM: EbracingSpaceEmbeddingLp}, and $\mathfrak{F}_{1}\Phi_{k}$ converges to $\mathfrak{F}_{1}\Phi$ as $k \to \infty$ in the latter space, we get that $\mathfrak{F}_{1}\Phi$ must belong to $\mathcal{E}_{2}(\mathbb{R};L_{2})$.
    
    From \eqref{EQ: EmbracingFourierPropertySecondFourierIdentity}, we obtain 
    \begin{equation}
    	\label{EQ: FourierPropertyEmbracingCommutativityDeltaFunc}
    	\mathcal{R}^{J}_{\mathfrak{F}_{1}\Phi}(\theta) \coloneq \mathcal{I}_{\delta^{J}_{\theta}}\mathfrak{F}_{1}\Phi = \mathfrak{F}_{2}\mathcal{I}_{\delta^{J}_{\theta}}\Phi \qquad \text{for any} \quad \Phi \in \mathcal{E}_{2}(\mathbb{R};L_{2}).
    \end{equation}
    This yields that $\mathfrak{F}_{1}$ is an isometry of $\mathcal{E}_{2}(\mathbb{R};L_{2})$. Since it bijectively takes the dense subspace $L_{2}(\mathbb{R}; \mathcal{W}^{2}_{D}((-\tau,0)^{m};\mathbb{F}))$ into itself, it must be an isometric automorphism of the embracing space. 

    Finally, we also note that \eqref{EQ: FourierPropertyEmbracingCommutativityDeltaFunc} gives commutativity of the diagram from \eqref{EQ: EmbracingSpaceIGammaFourierIdentity} for $C^{\gamma}_{J} = \delta^{J}_{\theta}$ and any $\theta \in [-\tau,0]$. For general $C^{\gamma}_{J}$, one may use the approximations of $\gamma$ by $\gamma_{k}$ as in \eqref{EQ: ApproximationByDeltaFunctionalsGamma} and the pointwise convergence argument.
\end{proof}

In the forthcoming subsections, we introduce special spaces that can be continuously embedded into appropriate embracing spaces. In the present paper, these spaces are involved into the structural Cauchy formula, see Theorem \ref{TH: StructuralCauchyFormulaCompoundDelay}.

\subsection{Spaces of adorned functions}
Recall that $\underline{t}$ denotes the diagonal vector $(t,\ldots,t)$ in $\mathbb{R}^{m}$ for any $t \in \mathbb{R}$. For any reals $\tau > 0$ and $T>0$, we consider the subset $\mathcal{C}^{m}_{T}$ of $\mathbb{R}^{m}$ defines as follows:
\begin{equation}
	\label{EQ: TheSetDiagonalDomainDefinition}
	\mathcal{C}^{m}_{T} = \bigcup_{t \in [0,T]} \left( [-\tau,0]^{m} + \underline{t} \right).
\end{equation}
We will also consider the case $T = \infty$, where the interval $[0,T]$ is understood as $[0,\infty)$.

Consider a continuous function $\rho \colon [0,+\infty) \to \mathbb{R}$ that have a constant sign and for some $\rho_{0} = \rho_{0}(\rho,\tau) > 0$ satisfies the inequality:
\begin{equation}
	\label{EQ: WeightFunctionProperty}
	|\rho(t+s)| \leq \rho_{0} \cdot |\rho(t)| \qquad \text{for all} \ t \geq 0 \ \text{and} \ s \in [0, \tau].
\end{equation}
Any such $\rho(\cdot)$ is called a \textit{weight function}. For the purposes of this paper, it is sufficient to consider $\rho(t) = \rho_{\nu}(t) = e^{\nu t}$ for some $\nu \in \mathbb{R}$.

Recall the Hilbert space $\mathbb{F}$ and let $p \geq 1$. For any $T > 0$ and $X \in L_{p}(\mathcal{C}^{m}_{T}; \mathbb{F})$, we define a function $\Phi(t)$ of $t \in [0,T]$ as follows:
\begin{equation}
	\label{EQ: WindowFunctionDefinition}
	\Phi(t) = \Phi_{X,\rho}(t) := \rho(t) X_{t} \in L_{2}((-\tau,0)^{m};\mathbb{F}),
\end{equation}
where $X_{t}(\bar{\theta}) := X(\bar{\theta} + \underline{t})$ for almost all $\bar{\theta} = (\theta_{1},\ldots,\theta_{m}) \in (-\tau,0)^{m}$. Any such $\Phi$ is called a \textit{$\rho$-adorned $L_{p}((-\tau,0)^{m};\mathbb{F})$-valued  function} on $[0,T]$ or, more simply, \textit{$\rho$-adorned}, if the spaces are understood from the context. It is also convenient to say that $\Phi$ is the $\rho$-\textit{adornment of $X$} over $\mathcal{C}^{m}_{T}$.

For any $\rho$-adorned function $\Phi$, the mapping
\begin{equation}
	\label{EQ: AgalmanatedValueAtTMappingContinuous}
	[0,T] \ni t \mapsto \Phi(t) \in L_{p}((-\tau,0)^{m}; \mathbb{F})
\end{equation}
is continuous, since the action of $\mathbb{R}^{m}$ by translates in $L_{p}(\mathbb{R}^{m};\mathbb{F})$ is strongly continuous.

For each $j \in \{1,\ldots, m\}$, we consider the $(m-1)$-face $\mathcal{B}_{\hat{j}}$ defined by
\begin{equation}
	\label{EQ: FaceBjHatNearAdorned}
	\mathcal{B}_{\hat{j}}:= \{ \bar{\theta} = (\theta_{1},\ldots,\theta_{m} ) \in [-\tau,0]^{m} \mid \theta_{j} = 0 \},
\end{equation}
which is consistent with \eqref{EQ: DefinitionOfBoundaryFace}. Recall that $\mu^{m-1}_{L}$ denotes the $(m-1)$-dimensional Lebesgue measure.

For what follows, $L_{p}$ stands for $L_{p}((-\tau,0)^{m};\mathbb{F})$. For any $T>0$, we introduce the space $\mathcal{Y}^{p}_{\rho}(0,T;L_{p})$ of all $\rho$-adorned $L_{p}$-valued functions $\Phi$ on $[0,T]$ and endow it with the norm defined as follows:
\begin{equation}
	\label{EQ: NormInWindowsSpaces}
	\begin{split}
		\| \Phi(\cdot) \|^{p}_{\mathcal{Y}^{p}_{\rho}(0,T;L_{p})} := \\ = \int_{(-\tau,0)^{m}}\left|X(\bar{\theta})\right|^{p}_{\mathbb{F}}d\bar{\theta} + \sum_{j=1}^{m}\int_{\mathcal{B}_{\hat{j}}}d\mu^{m-1}_{L}(\bar{\theta}) \int_{0}^{T} \left|\rho(t)X(\bar{\theta}+\underline{t})\right|^{p}_{\mathbb{F}} dt,
	\end{split}
\end{equation}
where $\Phi = \Phi_{X,\rho}$ is as in \eqref{EQ: WindowFunctionDefinition}. For $T = \infty$, instead of $X \in L_{p}(\mathcal{C}^{m}_{T};\mathbb{F})$, we require that the restriction of $X$ to $\mathcal{C}^{m}_{T_{0}}$ lies in $L_{p}(\mathcal{C}^{m}_{T_{0}};\mathbb{F})$ for any $T_{0} > 0$ and the corresponding norm in \eqref{EQ: NormInWindowsSpaces} with $T=\infty$ is finite. Since $\rho(t) \not=0$ for any $t \geq 0$, and hence any $\Phi$ uniquely determines $X$ via \eqref{EQ: WindowFunctionDefinition}, the norm is well defined, and $\mathcal{Y}^{p}_{\rho}(0,T;L_{p})$, equipped with this norm, becomes a Banach space.

\begin{lemma}
	\label{LEM: DeltaFuncEstimateForContAdornedFunc}
	For $T>0$ and $p \geq 1$, let $\Phi_{X,\rho}$ be associated with $X \in C(\mathcal{C}^{m}_{T};\mathbb{F})$ via \eqref{EQ: WindowFunctionDefinition}. Then the estimate
	\begin{equation}
		\left(\int_{0}^{T}\| \delta^{J}_{\tau_{0}} \Phi_{X,\rho}(t) \|^{p}_{L_{p}((-\tau,0)^{m-1};\mathbb{F})}dt\right)^{1/p} \leq \kappa(\rho) \cdot \| \Phi_{X,\rho}(\cdot) \|_{\mathcal{Y}^{p}_{\rho}(0,T;L_{p})}
	\end{equation}
	is satisfied for any $\tau_{0} \in [-\tau,0]$ and $J \in \{1,\ldots, m\}$. Here $\kappa(\rho)$ is defined in \eqref{EQ: AdornedConstantKappa}.
\end{lemma}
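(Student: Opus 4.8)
The plan is to prove the pointwise bound $\|\delta^{J}_{\tau_{0}}\Phi_{X,\rho}(t)\|_{L_{p}((-\tau,0)^{m-1};\mathbb{F})}$ at almost every $t$ by relating the value $\delta^{J}_{\tau_{0}}\Phi_{X,\rho}(t)$, i.e. the restriction of $X_{t}$ to the hyperplane $\{\theta_{J}=\tau_{0}\}$, to boundary data of $X$ on the faces $\mathcal{B}_{\widehat{j}}$ and to the interior values of $X$ on $(-\tau,0)^{m}$. Because $\Phi_{X,\rho}(t)(\overline\theta)=\rho(t)X(\overline\theta+\underline t)$, evaluating at $\theta_{J}=\tau_{0}$ gives a function of the remaining $m-1$ variables equal to $\rho(t)$ times the value of $X$ at a point whose $J$-th coordinate is $\tau_{0}+t$. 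First I would split $[0,T]$ according to whether $\tau_{0}+t\le 0$ or $\tau_{0}+t>0$: in the first regime the relevant point of $X$ lies in the initial cube $[-\tau,0]^{m}$ (shifted along the diagonal by an amount $\le |\tau_{0}|\le\tau$), and in the second regime it lies on one of the cylinders $\mathcal{B}_{\widehat J}+\underline s$ for $s=\tau_{0}+t>0$ up to a diagonal shift.

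The core of the estimate is then a change of variables. In the regime $\tau_{0}+t>0$, writing the point as $\underline{(\tau_{0}+t)} + (\text{transverse part})$, the transverse part ranges over (a shifted copy of) $\mathcal{B}_{\widehat J}$, and $X$ restricted there is $X|_{\mathcal{B}_{\widehat J}+\underline{(\tau_{0}+t)}}$; integrating $|X|^{p}$ over this face and over $t$ with weight $|\rho(t)|^{p}$ and comparing with the term $\int_{\mathcal{B}_{\widehat j}}\int_{0}^{T}|\rho(t)X(\overline\theta+\underline t)|^{p}\,dt$ appearing in \eqref{EQ: NormInWindowsSpaces}, the only discrepancy is the shift $t\mapsto t+\tau_{0}$ in the argument of $\rho$; this is exactly controlled by \eqref{EQ: WeightFunctionProperty} since $|\tau_{0}|\le\tau$, producing a factor $\rho_{0}$. (Here I would use the Fubini theorem, which is legitimate because $X\in C(\mathcal{C}^{m}_{T};\mathbb{F})$.) In the regime $\tau_{0}+t\le 0$ one lands inside $(-\tau,0)^{m}$; after a diagonal translation by an amount in $[0,\tau]$ one bounds the contribution by $\rho_{0}^{p}$ times the sum of the interior term $\int_{(-\tau,0)^{m}}|X|^{p}$ and, near the corner, the face terms, again via Fubini and \eqref{EQ: WeightFunctionProperty}. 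Summing over $J$ and the two regimes, each piece of $\bigl(\int_{0}^{T}\|\delta^{J}_{\tau_{0}}\Phi_{X,\rho}(t)\|^{p}_{L_{p}}dt\bigr)$ is dominated by a constant depending only on $\rho_{0}$, $\tau$ and $m$ times the corresponding piece of $\|\Phi_{X,\rho}\|^{p}_{\mathcal{Y}^{p}_{\rho}(0,T;L_{p})}$, giving the claim with $\kappa(\rho)$ as in \eqref{EQ: AdornedConstantKappa}.

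The main obstacle I anticipate is bookkeeping the geometry of the diagonal slices cleanly: the set $\mathcal{C}^{m}_{T}$ is the union of the initial cube and its diagonal translates, and the hyperplane $\{\theta_{J}=\tau_{0}\}\cap((-\tau,0)^{m}+\underline t)$ must be correctly identified, for each $t$, either with (a subset of) a translate of the face $\mathcal{B}_{\widehat J}$ or with an interior slice, so that the change of variables $(t,\text{transverse})\mapsto$ (point of $X$) is measure-preserving up to the harmless weight distortion. Once the correct parametrization is written down, the rest is a routine Fubini-plus-\eqref{EQ: WeightFunctionProperty} computation; the continuity hypothesis on $X$ is used only to make all pointwise restrictions and Fubini applications unambiguous, and the general $L_{p}$ case will follow later by density. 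I would also note that $J$ enters only through which coordinate is frozen, so by symmetry it suffices to treat, say, $J=1$ and then sum the $m$ identical bounds.
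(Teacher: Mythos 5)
Your overall mechanism (Fubini, the measure-preserving diagonal shear $(\overline{\theta},t)\mapsto\overline{\theta}+\underline{t}$, and the weight comparison via \eqref{EQ: WeightFunctionProperty}) is the same as the paper's, but the identification on which your main regime rests is false, so the estimate as you set it up does not close. For fixed $t$ with $\tau_{0}+t>0$, the set on which you evaluate $X$ is $(\mathcal{B}_{\widehat J}+\tau_{0}e_{J})+\underline{t}=\{\overline{s}\,:\,s_{J}=\tau_{0}+t,\ s_{j}\in(t-\tau,t)\ \text{for } j\ne J\}$, whereas $\mathcal{B}_{\widehat J}+\underline{(\tau_{0}+t)}=\{\overline{s}\,:\,s_{J}=\tau_{0}+t,\ s_{j}\in(\tau_{0}+t-\tau,\tau_{0}+t)\}$; these two $(m-1)$-cubes are offset by $|\tau_{0}|$ in every coordinate $j\ne J$, so the claim that ``$X$ restricted there is $X|_{\mathcal{B}_{\widehat J}+\underline{(\tau_{0}+t)}}$'' is wrong, and the discrepancy is not only the shift $t\mapsto t+\tau_{0}$ inside $\rho$. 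Concretely, slice points with $\theta_{j}>\tau_{0}$ for some $j\ne J$ (equivalently $s_{j}>s_{J}$) are never of the form $\overline{\theta}'+\underline{s}$ with $\overline{\theta}'\in\mathcal{B}_{\widehat J}$ and $s\ge 0$, so they are not covered by the $j=J$ face term of \eqref{EQ: NormInWindowsSpaces} at all; they must be charged to the face $\mathcal{B}_{\widehat{j^{*}}}$ of the maximal coordinate $j^{*}$, with time $s=\max_{j}(\theta_{j}+t)$ and $t-s=-\max_{j}\theta_{j}\in[0,\tau]$. For the same reason your split by the sign of $\tau_{0}+t$ is not the right one: even when $\tau_{0}+t\le 0$ the image point need not lie in $(-\tau,0)^{m}$.

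The repair is exactly the paper's accounting: after Fubini, classify each image point $\overline{\theta}+\underline{t}$ by whether it lies in $(-\tau,0)^{m}$ (then $t\le-\tau_{0}\le\tau$, so $|\rho(t)|\le\rho_{0}|\rho(0)|$, and compare with the unweighted first term of \eqref{EQ: NormInWindowsSpaces}) or not (then write it as $\overline{\theta}'+\underline{s}$ with $\overline{\theta}'$ on the face of its maximal coordinate and $s\in[0,T]$, use $t-s\in[0,\tau]$ and \eqref{EQ: WeightFunctionProperty} to get $|\rho(t)|\le\rho_{0}|\rho(s)|$, and compare with that face's term); the regions attributed to distinct faces are essentially disjoint and each attribution is a measure-preserving shear, so nothing beyond $\kappa(\rho)=\max\{\rho_{0},\rho_{0}|\rho(0)|\}$ appears. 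Two smaller corrections: the interior contribution requires the factor $\rho_{0}|\rho(0)|$, not $\rho_{0}$ (your ``$\rho_{0}^{p}$ times the interior term'' fails when $|\rho(0)|>1$), and there is no summation over $J$ and hence no $m$-dependent constant — the lemma is a bound for each fixed $J$ and $\tau_{0}$ separately, with all $m$ face terms of the norm already available on the right-hand side.
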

\begin{proof}
	Let $e_{J}$ be the $J$-th vector in the standard basis of $\mathbb{R}^{m}$. Then
	\begin{equation}
		\label{EQ: WeightedFunctionsEstimateDeltaFunc}
		\begin{split}
			\int_{0}^{T}\| \delta^{J}_{\tau_{0}}\Phi_{X,\rho}(t) \|^{p}_{L_{p}((-\tau,0)^{m-1}; \mathbb{F})}dt =\\= \int_{\mathcal{B}_{\hat{J}}+\tau_{0} e_{J}}\int_{0}^{T}d\mu^{m-1}_{L}(\bar{\theta}) \left|\rho(t) X(\bar{\theta}+\underline{t}) \right|^{p}_{\mathbb{F}}dt \leq \kappa(\rho)^{p} \cdot \left\| \Phi_{X,\rho} (\cdot) \right\|^{p}_{\mathcal{Y}^{p}_{\rho}(0,T;L_{p})},
		\end{split}
	\end{equation}
	where the last inequality follows from \eqref{EQ: NormInWindowsSpaces} and \eqref{EQ: WeightFunctionProperty} with $\kappa(\rho)$ defined in \eqref{EQ: AdornedConstantKappa}. Indeed, in the integral over $[0,T]$ from \eqref{EQ: NormInWindowsSpaces}, the value of $X$ at $(\bar{\theta}+\underline{t}) \in \mathcal{C}^{m}_{T} \setminus (-\tau,0)^{m}$, where $t \in [0,T]$ and $\bar{\theta} \in (\mathcal{B}_{\hat{J}}+\tau_{0} e_{J})$, is weighted by $\rho(s)$ for some $s=s(\bar{\theta},t)$ such that $t - s \in [0,\tau]$ and hence $|\rho(t)| \leq \rho_{0} |\rho(s)|$. For $\bar{\theta}+\underline{t} \in (-\tau,0)^{m}$, we use the inequality $|\rho(t)| \leq \rho_{0} |\rho(0)|$, since $t \in [0,\tau]$ in this case, and estimate the corresponding part of the integral from \eqref{EQ: WeightedFunctionsEstimateDeltaFunc} through the first term in \eqref{EQ: NormInWindowsSpaces}. Thus, for 
	\begin{equation}
		\label{EQ: AdornedConstantKappa}
		\kappa(\rho) := \operatorname{max}\{ \rho_{0}, \rho_{0} |\rho(0)| \},
	\end{equation}
    the estimate in \eqref{EQ: WeightedFunctionsEstimateDeltaFunc} is valid.
\end{proof}

Since the subspace of all $\Phi_{X,\rho}$ with $X \in C(\mathcal{C}^{m}_{T};\mathbb{F})$ is dense in $\mathcal{Y}^{p}_{\rho}(0,T;L_{p})$, Lemma \ref{LEM: DeltaFuncEstimateForContAdornedFunc} yields the following.

\begin{lemma}
	\label{LEM: EmbeddingAdornedIntoEmbracing}
	Let $T > 0$ or $T=\infty$ and $p \geq 1$. Then there is a natural embedding of the space $\mathcal{Y}^{p}_{\rho}(0,T;L_{p})$ into $\mathcal{E}_{p}(0,T;L_{p})$ whose embedding constant does not exceed $\kappa(\rho)$, defined in \eqref{EQ: AdornedConstantKappa}.
\end{lemma}

Combining Lemma \ref{LEM: EmbeddingAdornedIntoEmbracing}, Theorem \ref{TH: PointwiseMeasurementOperatorOnEmbracingSpace}, and Corollary \ref{COR: RelaxedComputationMeasurementOperatorsOnEmbracing} yields the following.
\begin{theorem}
	\label{TH: PointwiseMeasurementOperatorOnAdornedSpace}
	Let $\gamma$ and $C^{\gamma}_{J}$ with $J \in \{1,\ldots,m\}$ be as in Theorem \ref{TH: OperatorCExntesionOntoWDiagonal}, and consider $T > 0$ or $T=\infty$ and $p \geq 1$. Then there exists a bounded linear operator
	\begin{equation}
		\mathcal{I}_{C^{\gamma}_{J}} \colon \mathcal{Y}^{p}_{\rho}(0,T;L_{p}((-\tau,0)^{m};\mathbb{F})) \to L_{p}(0,T; L_{p}((-\tau,0)^{m-1};\mathbb{M}_{\gamma})),
	\end{equation}
	whose norm does not exceed $\kappa(\rho) \cdot \operatorname{Var}_{[-\tau,0]}(\gamma)$, where $\kappa(\rho)$ is defined in \eqref{EQ: AdornedConstantKappa}, and 
	\begin{equation}
		\label{EQ: OperatorIGammaIdentityContAdorned}
		(\mathcal{I}_{C^{\gamma}_{J}}\Phi)(t) = C^{\gamma}_{J} \Phi(t) \qquad \text{for almost all} \quad t \in (0,T)
	\end{equation}
	is satisfied for any $\Phi(\cdot) \in  \mathcal{Y}^{p}_{\rho}(0,T;L_{p}) \cap L_{1, loc}(0,T;\mathbb{E}^{1}_{m}(\mathbb{F}))$.
\end{theorem}

Next, we describe conditions for the differentiability of $\mathcal{I}_{C^{\gamma}_{J}}\Phi_{X,\rho}$ in terms of $X$. For this, we assume that the weight function $\rho(\cdot)$ is $C^{1}$-differentiable and that its derivative $\dot{\rho}(\cdot)$ is either identically zero or is also a weight function. In this case, we say that $\rho(\cdot)$ is a \textit{proper $C^{1}$-weight function}.

For $T>0$ or $T=\infty$, let $\mathcal{Y}^{p}_{\rho}(0,T;\mathcal{W}^{p}_{D})$ be the subspace of all $\Phi_{X,\rho} \in \mathcal{Y}^{p}_{\rho}(0,T;L_{p})$ such that the restriction of $X$ to the interior $\mathring{\mathcal{C}}_{T_{0}}^{m}$ of $\mathcal{C}_{T_{0}}^{m}$ belongs to the diagonal Sobolev space $\mathcal{W}^{p}_{D}(\mathring{\mathcal{C}}_{T_{0}}^{m};\mathbb{F})$, defined in \eqref{EQ: W2DiagonalDefinition}, for any finite $T_{0} \leq T$, and the norm
\begin{equation}
	\label{EQ: NormAdornedSobolev}
	\begin{split}
		\|\Phi_{X,\rho}(\cdot)\|^{p}_{\mathcal{Y}^{p}_{\rho}(0,T;\mathcal{W}^{p}_{D})} := \\
		=\| \Phi_{X,\rho}(\cdot) \|^{p}_{\mathcal{Y}^{p}_{\rho}(0,T;L_{p})} + \|\Phi_{\dot{X},\rho}(\cdot)\|^{p}_{\mathcal{Y}^{p}_{\rho}(0,T;L_{p})} + \| \Phi_{X,\dot{\rho}}(\cdot) \|^{p}_{\mathcal{Y}^{p}_{\rho}(0,T;L_{p})},
	\end{split}
\end{equation}
where $\dot{X}$ is the diagonal derivative of $X$, is finite. For $\dot{\rho}(\cdot) \equiv 0$, the last term in \eqref{EQ: NormAdornedSobolev} is asssumed to be zero. Clearly, $\mathcal{Y}^{p}_{\rho}(0,T;\mathcal{W}^{p}_{D})$, equipped with the above norm, is a Banach space.

As a byproduct of the following theorem, we obtain that $\mathcal{Y}^{p}_{\rho}(0,T;\mathcal{W}^{p}_{D})$ is continuously embedded into the space $\mathcal{E}_{p}(0,T;W^{1,p})$, defined above \eqref{EQ: NormEmbracingSobolev}. This places the result into the context of Theorem \ref{TH: DifferentiabilityMeasurementOperatorsOnEmbracingSobolev}.
\begin{theorem}
	In the context of Theorem \ref{TH: PointwiseMeasurementOperatorOnAdornedSpace}, assume that $\rho(\cdot)$ is a proper $C^{1}$-weight function. Then the operator
	\begin{equation}
		\label{EQ: AdornedFunctionsPMODifferMap}
		\mathcal{I}_{C^{\gamma}_{J}} \colon \mathcal{Y}^{p}_{\rho}(0,T;\mathcal{W}^{p}_{D}) \to W^{1,p}(0,T;L_{p}((-\tau,0)^{m-1};\mathbb{M}_{\gamma}))
	\end{equation}
	is well defined and bounded, and its norm admits an upper estimate only in terms of  $\operatorname{Var}_{[-\tau,0]}(\gamma)$, $\rho$, $\dot{\rho}$, and $\tau$. Furthermore, for any $\Phi_{X,\rho} \in \mathcal{Y}^{p}_{\rho}(0,T;\mathcal{W}^{p}_{D})$, the function $\mathcal{I}_{C^{\gamma}_{J}}\Phi_{X,\rho}$, as an element of the $W^{1,p}$-space from \eqref{EQ: AdornedFunctionsPMODifferMap}, satisfies
	\begin{equation}
		\frac{d}{dt}(\mathcal{I}_{C^{\gamma}_{J}}\Phi_{X,\rho})(t) = (\mathcal{I}_{C^{\gamma}_{J}}\Phi_{\dot{X},\rho})(t) + (\mathcal{I}_{C^{\gamma}_{J}}\Phi_{X,\dot{\rho}})(t)
	\end{equation}
	for almost all $t \in (0,T)$, where $\dot{X}$ is as in \eqref{EQ: NormAdornedSobolev}.
\end{theorem}
\begin{proof}
	First, we suppose that $C^{\gamma}_{J} = \delta^{J}_{\theta}$ for some $\theta \in [-\tau,0]$. Then, by the Leibniz rule, we obtain for almost all $t \in (0,T)$ that
	\begin{equation}
		\begin{split}
			\frac{d}{dt}\left(\mathcal{I}_{\delta^{J}_{\theta}}\Phi_{X,\rho}\right)(t) = \frac{d}{dt}( \rho(t) \delta^{J}_{\theta} X_{t} ) =\\ = \dot{\rho}(t)\delta^{J}_{\theta}X_{t} + \rho(t) \delta^{J}_{\theta}\dot{X}_{t} = (\mathcal{I}_{\delta^{J}_{\theta}}\Phi_{X,\dot{\rho}})(t) + (\mathcal{I}_{\delta^{J}_{\theta}}\Phi_{\dot{X},\rho})(t).
		\end{split}
	\end{equation}
    This shows the statement for $C^{\gamma}_{J} = \delta^{J}_{\theta}$ and proves that $\mathcal{Y}^{p}_{\rho}(0,T;\mathcal{W}^{p}_{D})$ is continuously embedded into $\mathcal{E}_{p}(0,T;W^{1,p})$. For general $C^{\gamma}_{J}$, one can use the approximations of $\gamma$ by $\gamma_{k}$ as in \eqref{EQ: ApproximationByDeltaFunctionalsGamma} or simply appeal to Theorem \ref{TH: DifferentiabilityMeasurementOperatorsOnEmbracingSobolev}.
\end{proof}
\subsection{Spaces of twisted functions}
Recall the diagonal translation semigroup $T_{m}$ in $L_{p}((-\tau,0)^{m};\mathbb{F})$ defined by \eqref{EQ: SemigroupDiagonalTranslateFiniteCubeDef}, and let $L_{p}$ stand for $L_{p}((-\tau,0)^{m};\mathbb{F})$. Consider a weight function $\rho(\cdot)$, as in \eqref{EQ: WeightFunctionProperty}, and any $T>0$. We introduce the space $\mathcal{T}^{p}_{\rho}(0,T;L_{p})$ of functions $\Psi(\cdot)$ on $[0,T]$ with values in $L_{p}$ that satisfy
\begin{equation}
	\label{EQ: TwistedFunctionDefinition}
	\Psi(t) = \Psi_{Y,\rho}(t):= \rho(t) \int_{0}^{t}T_{m}(t-s)Y(s)ds \qquad \text{for all} \quad t \in [0,T]
\end{equation}
for some $Y(\cdot) \in L_{p}(0,T;L_{p})$. 

Any such $\Psi$, as in \eqref{EQ: TwistedFunctionDefinition}, is called the \textit{$\rho$-twisting} of $Y$. We will also say that $\Psi$ is an \textit{$L_{p}$-valued $\rho$-twisted function} on $[0,T]$ or, more briefly, that $\Psi$ is \textit{$\rho$-twisted}, if the spaces are understood from the context. As in \eqref{EQ: AgalmanatedValueAtTMappingContinuous}, we have that the mapping
\begin{equation}
	\label{EQ: TwistedValueAtMappingContinuous}
	[0,T] \ni t \mapsto \Psi(t) \in L_{p}((-\tau,0)^{m};\mathbb{F})
\end{equation}
is continuous, since $T_{m}$ is a $C_{0}$-semigroup.

In the following lemma, we establish that $Y$ is uniquely determined by $\Psi$ via \eqref{EQ: TwistedFunctionDefinition}.
\begin{lemma}
	\label{LEM: TwistedFunctionUniquenessY}
	Let $T>0$, $p \geq 1$, and $Y \in L_{p}(0,T;L_{p})$ be given. Suppose that
	\begin{equation}
		\label{EQ: LemUniqTwistedIdentity}
		\int_{0}^{t}T_{m}(t-s)Y(s)ds = 0 \qquad \text{for all} \quad t \in [0,T].
	\end{equation}
    Then $Y = 0$.
\end{lemma}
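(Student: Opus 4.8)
The plan is to unwind the definition of the diagonal translation semigroup $T_m$ given by \eqref{EQ: SemigroupDiagonalTranslateFiniteCubeDef} and reduce \eqref{EQ: LemUniqTwistedIdentity} to a statement about ordinary translates along the diagonal line, where injectivity of the convolution with the translation semigroup is classical. The key observation is that when $Y$ is sufficiently regular, the function $\Psi_{Y,\rho}(t)/\rho(t) = \int_0^t T_m(t-s)Y(s)\,ds$ is (for the appropriate Cauchy problem) a classical solution of $\dot\Phi(t) = A_{T_m}\Phi(t) + Y(t)$, $\Phi(0)=0$, where $A_{T_m}$ is the generator from Theorem \ref{TH: DiagonalTranslatesSquareDelay}; differentiating a vanishing solution then forces $Y\equiv 0$. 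For general $Y\in L_p$ one approximates and uses continuity of the convolution operator.

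First I would reduce to the case of continuous $Y$. Pick a sequence $Y_l \in C([0,T];L_p((-\tau,0)^m;\mathbb{F}))$ with $Y_l \to Y$ in $L_p(0,T;L_p((-\tau,0)^m;\mathbb{F}))$; since $\|\int_0^t T_m(t-s)(Y_l(s)-Y(s))\,ds\|_{L_p} \le \int_0^T \|Y_l(s)-Y(s)\|_{L_p}\,ds \to 0$ uniformly in $t$ (using $\|T_m(t)\|\le 1$), it suffices to show that continuous $Y$ satisfying \eqref{EQ: LemUniqTwistedIdentity} vanishes. Actually the cleanest route bypasses regularity entirely: fix $t_0 \in (0,T]$ and evaluate the identity on the line section $\Omega(\overline s) := (\mathcal{L}_0+\overline s)\cap(-\tau,0)^m$ for $\overline s \in \mathcal{L}_0^\perp$. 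By the explicit form \eqref{EQ: SemigroupDiagonalTranslateFiniteCubeDef}, for $\mu^{m-1}_L$-almost all $\overline\theta \in (-\tau,0)^m$ and all $t\le T$ one has $\big(\int_0^t T_m(t-s)Y(s)\,ds\big)(\overline\theta) = \int_{\max\{0,\,t-\zeta(\overline\theta)\}}^{t} Y(s)(\overline\theta+\underline{t-s})\,ds$, where $\zeta(\overline\theta)$ measures how far $\overline\theta$ is from the boundary face along the diagonal direction. Changing variables $r = t-s$, this becomes $\int_0^{\min\{t,\zeta(\overline\theta)\}} Y(t-r)(\overline\theta+\underline r)\,dr$. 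Along each diagonal line this is exactly the one-dimensional convolution of the translation semigroup on an $L_p$ interval with $Y$ restricted to that line, and the classical injectivity of $t\mapsto \int_0^t S(t-s)g(s)\,ds$ for the $C_0$-semigroup $S$ of (nilpotent) translations on $L_p$ of a bounded interval forces $Y$ to vanish there for almost all such lines.

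The concrete mechanism for the one-dimensional fact: on a fixed diagonal line section identified with an interval $(0,\ell)$, the map $g \mapsto \big(t \mapsto \int_0^t (S(t-s)g)(\,\cdot\,)\,ds\big)$ with $S$ the left-translation semigroup (truncated at the endpoint) satisfies, after differentiating in $t$ in the sense of distributions, $\frac{d}{dt}\int_0^t S(t-s)g(s)\,ds = S(0)g(t) + \int_0^t A_S S(t-s)g(s)\,ds$ wherever the right side makes sense; but more elementarily, for scalar-valued $g$ one has $\int_0^t g(s)\varphi(t-s)\,ds$ pointwise for the relevant characteristic-function kernels, and vanishing of all such for $t\in[0,T]$ gives $g=0$ a.e. by the uniqueness theorem for the Laplace transform or by directly differentiating. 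I would invoke this for $\mathbb{F}$-valued $g$ by testing against elements of a countable dense subset of $\mathbb{F}^*$, reducing to the scalar case.

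The main obstacle — and the step deserving the most care — is the measurability and Fubini bookkeeping: one must justify that the line-section decomposition of \eqref{EQ: LemUniqTwistedIdentity} holds for $\mu^{m-1}_L$-almost every $\overline s\in\mathcal{L}_0^\perp$ simultaneously for all (or a dense set of) $t\in[0,T]$, and that $Y(s)$ restricted to almost every diagonal line is a well-defined $L_p$ function of the line parameter. This is handled exactly as in the characterization of $\widehat{\mathcal{W}}^p_D$ via \eqref{EQ: DiagonalSobolevPropEquivalent} and the Fubini-type arguments in Theorem \ref{TH: DiagonalTranslationInRm}: since $t\mapsto\Psi_{Y,\rho}(t)$ is continuous into $L_p$ by \eqref{EQ: TwistedValueAtMappingContinuous}, the identity \eqref{EQ: LemUniqTwistedIdentity} for all $t$ in a countable dense subset already implies it for all $t$, and Fubini applied on $[0,T]\times(-\tau,0)^m$ (foliated by diagonal lines) transfers the vanishing to almost every line. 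Once the reduction to one dimension is legitimate, the conclusion $Y=0$ a.e. on $[0,T]$ follows, and I would finish by remarking that the case $T=\infty$ reduces to finite $T$ since \eqref{EQ: LemUniqTwistedIdentity} restricted to each $[0,T_0]$ gives $Y=0$ on $[0,T_0]$ for all $T_0$.
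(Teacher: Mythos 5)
Your argument is correct, but it is not the paper's argument: the paper simply observes that $\Psi_{Y,\rho}$ with $\rho\equiv 1$ is the mild solution of $\dot\Psi = A_{T_m}\Psi + Y$, $\Psi(0)=0$, and cites an abstract injectivity result for the map $Y\mapsto\Psi$ (Lemma 3.5 of \cite{Anikushin2020FreqDelay}), flagging in a footnote that the cited proof needs reflexivity. You instead exploit the explicit nilpotent structure of $T_m$ from \eqref{EQ: SemigroupDiagonalTranslateFiniteCubeDef}: slicing $(-\tau,0)^m$ by diagonal lines, \eqref{EQ: LemUniqTwistedIdentity} becomes, along each characteristic $c=t+u$, the vanishing of the absolutely continuous function $t\mapsto\int_{\max(0,c)}^{t}Y(\sigma)(\cdot)\,d\sigma$, and differentiating (Bochner--Lebesgue differentiation, or testing against a separating sequence in $\mathbb{F}^{*}$ as you suggest) gives $Y=0$ along almost every characteristic, hence a.e.\ after Fubini; the null-set bookkeeping is handled as you indicate, since $L_p(0,T;L_p)$ admits a jointly measurable representative and $t\mapsto\Psi(t)$ is continuous. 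What this buys is a self-contained, elementary proof valid for all $p\ge 1$ that sidesteps the reflexivity caveat the paper itself raises (relevant since $p=1$ is allowed and $L_1$ is not reflexive); what it costs is length and some care with measurability, which the paper avoids by quoting one abstract lemma. One caveat: your opening reduction to continuous $Y$ is not valid as stated (the approximants $Y_l$ do not satisfy \eqref{EQ: LemUniqTwistedIdentity}, and injectivity alone gives no quantitative control to pass to the limit), but since you explicitly discard it in favour of the slicing argument, nothing in your actual proof depends on it; I would simply delete that passage, and also state the one-dimensional uniqueness step in the characteristic form above rather than appealing vaguely to ``classical injectivity''.
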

\begin{proof}
	Let $\Psi=\Psi_{Y,\rho}$ be as in \eqref{EQ: TwistedFunctionDefinition} with $\rho \equiv 1$. Then $\Psi$ is a mild solution to the inhomogeneous problem $\dot{\Psi}(t) = A_{T_{m}}\Psi(t) + Y(t)$ on $[0,T]$ with $\Psi(0) = 0$, associated with the generator $A_{T_{m}}$ of $T_{m}$. By \cite[Lemma 3.5]{Anikushin2020FreqDelay}\footnote{The formulation of the cited lemma should be clarified by the requirement of reflexivity of the Banach space, since its proof uses the density of the domain of the adjoint operator.}, the solution operator $Y \mapsto \Psi$ is injective, and hence \eqref{EQ: LemUniqTwistedIdentity} yields that $Y = 0$.
\end{proof}

We endow the space $\mathcal{T}^{p}_{\rho}(0,T;L_{p})$ with the norm defined as follows:
\begin{equation}
	\label{EQ: NormOnTwistedFunctionsDef}
	\| \Psi(\cdot) \|^{p}_{\mathcal{T}^{p}_{\rho}(0,T;L_{p})} := \int_{0}^{T} \| \rho(t)Y(t) \|^{p}_{L_{p}} dt,
\end{equation}
where $\Psi$ and $Y$ are related by \eqref{EQ: TwistedFunctionDefinition}. By Lemma \ref{LEM: TwistedFunctionUniquenessY}, the norm is well defined. We also introduce the corresponding space for $T=\infty$ by requiring that $Y \in L_{p}(0,T_{0};L_{p})$ for any $T_{0}>0$, and the norm in \eqref{EQ: NormOnTwistedFunctionsDef} with $T=\infty$ is finite. Clearly, $\mathcal{T}^{p}_{\rho}(0,T;L_{p})$, equipped with this norm, becomes a Banach space.

Next, we aim to show that $\mathcal{T}^{p}_{\rho}(0,T;L_{p})$ naturally embeds into the embracing space $\mathcal{E}_{p}(0,T;L_{p})$. For this, let $C_{0+}([-\tau,0]^{m};\mathbb{F})$ be the closed subspace of $C([-\tau,0]^{m};\mathbb{F})$ that consists of functions vanishing on the $(m-1)$-faces $\mathcal{B}_{\hat{j}}$, as in \eqref{EQ: FaceBjHatNearAdorned}, for each $j \in \{1,\ldots, m\}$. For short, this space is denoted by $C_{0+}$.

Clearly, $C_{0+}$ is an invariant subspace for $T_{m}$, and the restriction of $T_{m}$ to it is a $C_{0}$-semigroup in $C_{0+}$. In particular, for $T>0$ and $Y \in L_{p}(0,T;C_{0+})$, the function $\Psi_{Y,\rho}$ associated with $Y$ via \eqref{EQ: TwistedFunctionDefinition} belongs to $C([0,T];C_{0+})$.

\begin{lemma}
	\label{LEM: TwistedFunctionsEmbeddingIntoEmbracing}
	For $T>0$, $p \geq 1$, and $Y(\cdot) \in C([0,T];C_{0+})$, let $\Psi_{Y,\rho}$ be associated with $Y$ via \eqref{EQ: TwistedFunctionDefinition}. Then the estimate
	\begin{equation}
		\left(\int_{0}^{T}\|\delta^{J}_{\tau_{0}}\Psi_{Y,\rho}(t) \|^{p}_{L_{p}((-\tau,0)^{m-1};\mathbb{F})} dt\right)^{1/p} \leq \rho_{0} \tau^{1-1/p} \cdot \| \Psi_{Y,\rho}(\cdot) \|_{\mathcal{T}^{p}_{\rho}(0,T;L_{p})}.
	\end{equation}
    is satisfied for any $\tau_{0} \in [-\tau,0]$ and $J \in \{1,\ldots, m\}$. Here, $\rho_{0}$ is defined in \eqref{EQ: WeightFunctionProperty}.
\end{lemma}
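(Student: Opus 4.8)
The plan is to reduce the twisted-function estimate to a direct computation on the explicit formula \eqref{EQ: TwistedFunctionDefinition}, exactly in the spirit of the proof of Lemma \ref{LEM: DeltaFuncEstimateForContAdornedFunc} for the adorned case. First I would unwind the definition: for $Y(\cdot) \in C([0,T];C_{0+}([-\tau,0]^{m};\mathbb{F}))$ the twisting $\Psi_{Y,\rho}(t) = \rho(t) \int_{0}^{t} T_{m}(t-s) Y(s)\,ds$ is a continuous $C_{0+}([-\tau,0]^{m};\mathbb{F})$-valued function, so its pointwise value at any $\overline{\theta}$, and in particular $\delta^{J}_{\tau_{0}}\Psi_{Y,\rho}(t)$ (the restriction of $\Psi_{Y,\rho}(t)$ to $\mathcal{B}^{(m)}_{\widehat{J}} + \tau_{0}e_{J}$, identified with a function on $(-\tau,0)^{m-1}$), makes literal sense. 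Using the formula \eqref{EQ: SemigroupDiagonalTranslateFiniteCubeDef} for $T_{m}$, I would write out $(T_{m}(t-s)Y(s))(\overline{\theta})$ as $Y(s)(\overline{\theta} + \underline{t-s})$ when $\overline{\theta} + \underline{t-s} \in (-\tau,0)^{m}$ and $0$ otherwise; evaluating at $\overline{\theta} \in \mathcal{B}^{(m)}_{\widehat{J}} + \tau_{0}e_{J}$, the integrand is nonzero only for $s$ in a subinterval of $[0,t]$ of length at most $\tau$ (namely those $s$ with $t - s \le \tau - \tau_0$, roughly), because once $t-s$ exceeds $\tau$ the shifted point leaves the cube.

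The key estimate is then: for fixed $t$ and fixed $\overline{\theta}$ in the face,
\[
|\delta^{J}_{\tau_{0}}\Psi_{Y,\rho}(t)(\overline{\theta})|_{\mathbb{F}} = |\rho(t)| \left| \int_{t-\tau}^{t} (T_{m}(t-s)Y(s))(\overline{\theta})\,ds \right|_{\mathbb{F}} \le |\rho(t)| \int_{t-\tau}^{t} |Y(s)(\cdot)|_{\mathbb{F},\,\text{shifted point}}\,ds,
\]
and then bound $|\rho(t)| \le \rho_{0} |\rho(s)|$ for $s \in [t-\tau,t]$ by the weight property \eqref{EQ: WeightFunctionProperty}. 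Applying Hölder's inequality in $s$ over an interval of length $\le \tau$ produces the factor $\tau^{1-1/p}$. Then I would take the $L_p$-norm in $\overline{\theta}$ over the face and integrate in $t$ over $[0,T]$; after changing variables $\overline{\theta} \mapsto \overline{\theta} + \underline{t-s}$ (a measure-preserving diagonal shift that moves a point of the $(m-1)$-face into $(-\tau,0)^{m}$) and applying Fubini to interchange the $t$- and $s$-integrations, the whole thing collapses to $\rho_0^p \tau^{p-1}$ times $\int_0^T |\rho(s) Y(s)|^p_{L_p((-\tau,0)^m;\mathbb{F})}\,ds = \rho_0^p \tau^{p-1} \|\Psi_{Y,\rho}(\cdot)\|^p_{\mathcal{T}^p_\rho(0,T;L_p)}$, which is exactly the claimed bound after taking $p$-th roots.

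The main obstacle I anticipate is purely bookkeeping rather than conceptual: tracking precisely which range of $s$ contributes (the "support in $s$ has length $\le \tau$" claim), and making the change of variables on the face rigorous so that the Fubini interchange is justified — one must be careful that, after the shift, distinct $(s,\overline{\theta})$ with $\overline{\theta}$ ranging over the face and $s$ over its contributing interval map into the cube $(-\tau,0)^m$ without overcounting, which is why only the factor $\tau^{p-1}$ (not $T^{p-1}$) appears. Once $C(\mathcal{C}^m_T;\mathbb{F})$-type density is invoked — the subspace of twistings of $C([0,T];C_{0+}([-\tau,0]^m;\mathbb{F}))$-functions is dense in $\mathcal{T}^p_\rho(0,T;L_p)$ by the usual approximation of $Y$ — the estimate extends by continuity, and (combined with Theorem \ref{TH: PointwiseMeasurementOperatorOnEmbracingSpace}) this yields the continuous embedding $\mathcal{T}^p_\rho(0,T;L_p) \hookrightarrow \mathcal{E}_p(0,T;L_p)$ together with a well-defined pointwise measurement operator $\mathcal{I}_{C^\gamma_J}$ on twisted functions, paralleling Lemma \ref{LEM: EmbeddingAdornedIntoEmbracing} and Theorem \ref{TH: PointwiseMeasurementOperatorOnAdornedSpace} in the adorned case. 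The case $T = \infty$ follows by the restriction/commutative-diagram argument as in Lemma \ref{LEM: CommutativeOperatorIGammaEmbr}, since the constant $\rho_0 \tau^{1-1/p}$ is independent of $T$.
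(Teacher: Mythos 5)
Your proposal is correct and follows essentially the same route as the paper's proof: the pointwise formula $\delta^{J}_{\tau_{0}}\Psi_{Y,\rho}(t)(\overline{\theta}_{\widehat{J}}) = \rho(t)\int_{t_{0}(t,\overline{\theta})}^{t}Y(s)(\overline{\theta}+\underline{t}-\underline{s})\,ds$ with the $s$-integration supported on an interval of length at most $\tau$, the weight bound $|\rho(t)|\leq\rho_{0}|\rho(s)|$, H\"{o}lder producing the factor $\tau^{p-1}$, and the unit-Jacobian change of variables $(t,\overline{\theta},s)\mapsto(s,\overline{\theta}+\underline{t}-\underline{s})$ together with Fubini and monotonicity. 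Your only imprecision is the rough description of the contributing range of $s$ (the correct lower endpoint is $t_{0}(t,\overline{\theta})=\max\{0,\max_{j}(\theta_{j}+t)\}$, so $t-s\leq -\max_{j}\theta_{j}\leq\tau$), which does not affect the estimate.
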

\begin{proof}
	We set $\widetilde{Y}(s,\bar{\theta}):=Y(s)(\bar{\theta})$ for $s \in [0,T]$ and $\bar{\theta} \in [-\tau,0]^{m}$. Recall that $\bar{\theta}_{\hat{J}}$ denotes the $(m-1)$-vector obtained from $\bar{\theta}$ after eliminating the $J$-th component. Then for all $\bar{\theta}=(\theta_{1},\ldots,\theta_{m}) \in [-\tau,0]^{m}$ with $\theta_{J} = \tau_{0}$, we have
	\begin{equation}
		\label{EQ: DeltaFunctionActionOnTwistedFunctionCont}
		\begin{split}
		    \delta^{J}_{\tau_{0}}\Psi_{Y,\rho}(t)(\bar{\theta}_{\hat{J}}) = \rho(t)\Psi_{Y,\rho}(t)(\bar{\theta}) =
			\rho(t)\int_{t_{0}(t,\bar{\theta})}^{t} \widetilde{Y}(s, \bar{\theta} + \underline{t}-\underline{s})ds,
		\end{split}
	\end{equation}
	where $t_{0}(t,\bar{\theta})$ is the maximum among $0$ and $\theta_{j}+t$ for $j \in \{1,\ldots, m\}$. Note that we always have $t - t_{0}(t,\bar{\theta}) \in [0, \tau]$. Thus, from \eqref{EQ: WeightFunctionProperty} we obtain $|\rho(t)| \leq \rho_{0} |\rho(s)|$ for all $t \in [0,T]$ and $s \in [t_{0}(t,\bar{\theta}), t]$.
	
	Recall the Lebesgue measure $\mu^{m-1}_{L}$, which can be considered on the subsets $\mathcal{B}_{\hat{J}} - \tau_{0} e_{J}$, where $e_{J}$ is the $J$-th vector in the standard basis of $\mathbb{R}^{m}$. Using the H\"{o}lder inequality and monotonicity of the integral, we obtain
	\begin{equation}
		\label{EQ:  TwistedFunctionsDeltaFunctionalEstimate}
		\begin{split}
			\int_{0}^{T}\|\delta^{J}_{\tau_{0}}\Psi_{Y,\rho}(t) \|^{p}_{L_{p}((-\tau,0)^{m-1};\mathbb{F})} dt = \\ = \int_{0}^{T} dt|\rho(t)|^{p} \int_{\mathcal{B}_{\hat{J}} - \tau_{0} e_{J} } d\mu^{m-1}_{L}(\bar{\theta})\left|\int_{t_{0}(\bar{\theta},t)}^{t} \widetilde{Y}(s, \bar{\theta} + \underline{t}-\underline{s})ds \right|^{p}_{\mathbb{F}} \leq \\ \leq
			\tau^{p-1} \rho^{p}_{0} \int_{0}^{T} dt \int_{\mathcal{B}_{\hat{J}} - \tau_{0} e_{J} } d\mu^{m-1}_{L}(\bar{\theta})\int_{t_{0}(\bar{\theta},t)}^{t} \left|\rho(s)\widetilde{Y}(s, \bar{\theta} + \underline{t}-\underline{s})\right|^{p}_{\mathbb{F}} ds \leq \\ \leq
			\tau^{p-1} \rho^{p}_{0} \int_{[0,T] \times [-\tau,0]^{m} } \left| \rho(s)\widetilde{Y}(s,\bar{\theta}) \right|^{p}_{\mathbb{F}}dsd\bar{\theta} =\\= \tau^{p-1} \rho^{p}_{0}\int_{0}^{T}\|\rho(t) Y(t)\|^{p}_{L_{p}( (-\tau,0)^{m};\mathbb{F} )} dt,
		\end{split}
	\end{equation}
    where for the last inequality we applied the change of variables $(t, \bar{\theta}, s) \mapsto (s, \bar{\theta} + \underline{t}-\underline{s}) \in [0,T] \times [-\tau,0]^{m}$, whose determinant is $\pm 1$, and then used the monotonicity.
\end{proof}

Since the subspace of $\Psi_{Y,\rho}$ with $Y(\cdot) \in C([0,T];C_{0+})$ is dense in $\mathcal{T}^{p}_{\rho}(0,T;L_{p})$, Lemma \ref{LEM: TwistedFunctionsEmbeddingIntoEmbracing} yields the following.
\begin{lemma}
	\label{LEM: EmbeddinTwistedntoEmbracing}
	Let $T > 0$ or $T=\infty$ and $p \geq 1$. Then there is a natural embedding of the space $\mathcal{T}^{p}_{\rho}(0,T;L_{p})$ into $\mathcal{E}_{p}(0,T;L_{p})$ whose constant does not exceed $\rho_{0} \tau^{1-1/p}$.
\end{lemma}

Combining Lemma \ref{LEM: EmbeddinTwistedntoEmbracing}, Theorem \ref{TH: PointwiseMeasurementOperatorOnEmbracingSpace}, and Corollary \ref{COR: RelaxedComputationMeasurementOperatorsOnEmbracing} yields the following.
\begin{theorem}
	\label{TH: PointwiseMeasurementOperatorOnTwistedSpace}
	Let $\gamma$ and $C^{\gamma}_{J}$ with $J \in \{1,\ldots,m\}$ be as in Theorem \ref{TH: OperatorCExntesionOntoWDiagonal}, and consider $T > 0$ or $T=\infty$ and $p \geq 1$. Then there exists a bounded linear operator
	\begin{equation}
		\mathcal{I}_{C^{\gamma}_{J}} \colon \mathcal{T}^{p}_{\rho}(0,T;L_{p}((-\tau,0)^{m};\mathbb{F})) \to L_{p}(0,T; L_{p}((-\tau,0)^{m-1};\mathbb{M}_{\gamma})),
	\end{equation}
	whose norm does not exceed the total variation $\rho_{0} \tau^{1-1/p} \cdot \operatorname{Var}_{[-\tau,0]}(\gamma)$, and 
	\begin{equation}
		\label{EQ: OperatorIGammaIdentityContTwisted}
		(\mathcal{I}_{C^{\gamma}_{J}}\Phi)(t) = C^{\gamma}_{J} \Phi(t) \qquad \text{for almost all} \quad t \in (0,T)
	\end{equation}
	is satisfied for any $\Phi(\cdot) \in \mathcal{T}^{p}_{\rho}(0,T;L_{p}) \cap L_{1, loc}(0,T;\mathbb{E}^{1}_{m}(\mathbb{F}))$.
\end{theorem}

Next, we describe conditions for the differentiability $\mathcal{I}_{C^{\gamma}_{J}}\Psi_{Y,\rho}$ in terms of $Y$. For this, recall the generator $A_{T_{m}}$ of $T_{m}$ in $L_{p}$ and its domain $\mathcal{D}(A_{T_{m}}) = \mathcal{W}^{p}_{D_{0}}((-\tau,0)^{m};\mathbb{F})$ defined in \eqref{EQ: DomainNilponentDiagonalTranslates}.%, which consists of the elements from $\mathcal{W}^{p}_{D}((-\tau,0)^{m};\mathbb{F})$ having zero traces on the $(m-1)$-faces $\mathcal{B}_{\hat{j}}$ for each $j \in \{1,\ldots,m\}$ (see Theorem \ref{TH: DiagonalTranslatesSquareDelay}).

As in \eqref{EQ: NormAdornedSobolev}, we assume that $\rho(\cdot)$ is a proper $C^{1}$-weight function. Then, for $T>0$ or $T = \infty$, we define the space $\mathcal{T}^{p}_{\rho}(0,T;\mathcal{W}^{p}_{D})$ to be the subspace of all $\Psi_{Y,\rho} \in \mathcal{T}^{p}_{\rho}(0,T;L_{p})$ such that $Y \in L_{p}(0,T_{0}; \mathcal{D}(A_{T_{m}}))$ for any finite $T_{0} \leq T$ and the norm 
\begin{equation}
	\label{EQ: NormTwistedSobolev}
	\begin{split}
		\| \Psi_{Y,\rho}(\cdot) \|^{p}_{\mathcal{T}^{p}_{\rho}(0,T;\mathcal{W}^{p}_{D})} :=\\= \| \Psi_{Y,\rho}(\cdot) \|^{p}_{\mathcal{T}^{p}_{\rho}(0,T;L_{p})} + \| \Psi_{Y',\rho}(\cdot) \|^{p}_{\mathcal{T}^{p}_{\rho}(0,T;L_{p})} + \| \Psi_{Y,\dot{\rho}}(\cdot) \|_{\mathcal{T}^{p}_{\rho}(0,T;L_{p})},
	\end{split}
\end{equation}
is finite. Here $Y'(t) := (\sum_{j=1}^{m}\frac{\partial}{\partial\theta_{j}})Y(t)$ is the diagonal derivative of $Y(t)$ for almost all $t \in [0,T]$. Clearly, $\mathcal{T}^{p}_{\rho}(0,T;\mathcal{W}^{p}_{D})$, equipped with the norm \eqref{EQ: NormTwistedSobolev}, is a Banach space.

As a byproduct of the following theorem, we obtain that $\mathcal{T}^{p}_{\rho}(0,T;\mathcal{W}^{p}_{D})$ is continuously embedded into the space $\mathcal{E}_{p}(0,T;W^{1,p})$, defined above \eqref{EQ: NormEmbracingSobolev}. This places the result into the context of Theorem \ref{TH: DifferentiabilityMeasurementOperatorsOnEmbracingSobolev}.
\begin{theorem}
	In the context of Theorem \ref{TH: PointwiseMeasurementOperatorOnTwistedSpace}, let $\rho(\cdot)$ be a proper $C^{1}$-weight function. Then the operator
	\begin{equation}
		\label{EQ: TwistedPMODifferOpDef}
		\mathcal{I}_{C^{\gamma}_{J}} \colon \mathcal{Y}^{p}_{\rho}(0,T;\mathcal{W}^{p}_{D}) \to W^{1,p}(0,T;L_{p}((-\tau,0)^{m-1};\mathbb{M}_{\gamma}))
	\end{equation}
	is bounded, and its norm admits an upper estimate only in terms of $\operatorname{Var}_{[-\tau,0]}(\gamma)$, $\rho$, $\dot{\rho}$, and $\tau$. Furthermore, for any $\Psi_{Y,\rho} \in \mathcal{T}^{p}_{\rho}(0,T;\mathcal{W}^{p}_{D})$, the function $\mathcal{I}_{C^{\gamma}_{J}}\Psi_{Y,\rho}$, as an element of the $W^{1,p}$-space from \eqref{EQ: TwistedPMODifferOpDef}, satisfies
	\begin{equation}
		\label{EQ: DerivativeMeasurementOperatorTwostedFormula}
		\frac{d}{dt}(\mathcal{I}_{C^{\gamma}_{J}}\Psi_{Y,\rho})(t) = (\mathcal{I}_{C^{\gamma}_{J}}\Psi_{Y',\rho})(t) + (\mathcal{I}_{C^{\gamma}_{J}}\Psi_{Y,\dot{\rho}})(t) + \rho(t)C^{\gamma}_{J}Y(t),
	\end{equation}
	for almost all $t \in (0,T)$, where $Y'$ is as in \eqref{EQ: NormTwistedSobolev}.
\end{theorem}
\begin{proof}
	It suffices to show the statement for finite $T$. Moreover, we can consider only $\Psi_{Y,\rho}$ for which $Y$ lies in $C^{1}([0,T];C^{1}([-\tau,0]^{m};\mathbb{F}))$ and $Y(t)$ vanishes on $\mathcal{B}_{\hat{j}}$ for any $j \in \{1,\ldots,m\}$ and $t \in [0,T]$, since the subspace of such $\Psi_{Y,\rho}$ is dense in $\mathcal{T}^{p}_{\rho}(0,T;\mathcal{W}^{p}_{D})$. First, we show \eqref{EQ: DerivativeMeasurementOperatorTwostedFormula} for such $\Psi_{Y,\rho}$ and $C^{\gamma}_{J} = \delta^{J}_{\tau_{0}}$ with $\tau_{0} \in [-\tau,0]$. Indeed, differentiating \eqref{EQ: DeltaFunctionActionOnTwistedFunctionCont}, we obtain
	\begin{equation}
		\frac{d}{dt}\left(\delta^{J}_{\tau_{0}}\Psi_{Y,\rho}(t)\right)(\bar{\theta}_{\hat{J}}) = \delta^{J}_{\tau_{0}}\Psi_{Y,\dot{\rho}}(t) + \rho(t)\frac{d}{dt}\int_{t_{0}(t,\bar{\theta})}^{t} \widetilde{Y}(s, \bar{\theta} + \underline{t}-\underline{s})ds.
	\end{equation}
    for all $t \in [0,T]$ and $\bar{\theta} \in [-\tau,0]^{m}$ with $\theta_{J} = \tau_{0}$.
    
    Since $Y(t)$ vanish on any $\mathcal{B}_{\hat{j}}$, we have $\widetilde{Y}(s,\bar{\theta} + \underline{t} - \underline{s}) = 0$ for $s = t_{0}(t,\bar{\theta})$. This gives
    \begin{equation}
    	\begin{split}
    		\rho(t)\frac{d}{dt}\int_{t_{0}(t,\bar{\theta})}^{t} \widetilde{Y}(s, \bar{\theta} + \underline{t}-\underline{s})ds =\\= \rho(t)\widetilde{Y}(t,\bar{\theta}) +  \rho(t)\int_{0}^{t}\frac{d}{dt}T_{m}(t-s)Y(s)(\bar{\theta})ds =\\= \rho(t) (\delta^{J}_{\tau_{0}}Y(t))(\bar{\theta}_{\hat{J}}) + \delta^{J}_{\tau_{0}}\Psi_{Y',\rho}(t),
    	 \end{split}
    \end{equation} 
    where for the last term we used that 
    \begin{equation}
    	\begin{split}
    		 \frac{d}{dt}T_{m}(t-s)Y(s) = A_{T_{m}}T_{m}(t-s)Y(s) =\\= T_{m}(t-s)A_{T_{m}}Y(s) = T_{m}(t-s)Y'(s).
    	\end{split}
    \end{equation}
    
     Thanks to the density of such $\Psi_{Y,\rho}$ and Theorems \ref{TH: PointwiseMeasurementOperatorOnTwistedSpace} and \ref{TH: OperatorCExntesionOntoWDiagonal}, this proves the statement for $C^{\gamma}_{J} = \delta^{J}_{\tau_{0}}$ and hence establishes a continuous embedding into $\mathcal{E}_{p}(0,T;W^{1,p})$. For general $C^{\gamma}_{J}$, one can use the approximations of $\gamma$ by $\gamma_{k}$ as in \eqref{EQ: ApproximationByDeltaFunctionalsGamma} or simply refer to Theorem \ref{TH: DifferentiabilityMeasurementOperatorsOnEmbracingSobolev}.
\end{proof}
\subsection{Spaces of agalmanated functions}

As above, let $L_{p}$ stand for $L_{p}(0,T;L_{p}((-\tau,0)^{m};\mathbb{F}))$. We begin this subsection by showing that the spaces $\mathcal{Y}^{p}_{\rho}(0,T;L_{p})$ of $\rho$-adorned and $\mathcal{T}^{p}_{\rho}(0,T;L_{p})$ of $\rho$-twisted functions defined in \eqref{EQ: NormInWindowsSpaces} and \eqref{EQ: NormOnTwistedFunctionsDef}, respectively, are linearly independent for $p > 1$. This is explained by the fact that $\Psi_{Y,\rho}(t)$ according to \eqref{EQ: TwistedFunctionDefinition} must have a small $L_{p}$-norm near each boundary face $\mathcal{B}_{\hat{j}}$, and this smallness is uniform in $t$. The correct development of this reasoning gives the following.
\begin{proposition}
	\label{PROP: UniquenessOfSumAdornedTwisted}
	Let $T > 0$ and $p > 1$. Suppose that for some $X \in L_{p}(\mathcal{C}^{m}_{T};\mathbb{F})$ and $Y \in L_{p}(0,T;L_{p})$ we have that\footnote{Recall that both $\Phi_{X,\rho}(t)$ and $\Psi_{Y,\rho}(t)$ depend continuously on $t$, see \eqref{EQ: AgalmanatedValueAtTMappingContinuous} and \eqref{EQ: TwistedValueAtMappingContinuous}.}
	\begin{equation}
		\label{EQ: UniquenessOfSumAdornedTwistedZeroIdentity}
		0 = \Phi_{X,\rho}(t) + \Psi_{Y,\rho}(t) \qquad \text{for all} \quad t \in [0,T],
	\end{equation}
    where $\Phi_{X,\rho}$ is the $\rho$-adornment of $X$ and $\Psi_{Y,\rho}$ is the $\rho$-twisting of $Y$ defined in \eqref{EQ: WindowFunctionDefinition} and \eqref{EQ: TwistedFunctionDefinition}, respectively. Then $\Phi_{X,\rho}(t) = \Psi_{Y,\rho}(t) = 0$ for all $t \in [0,T]$.
\end{proposition}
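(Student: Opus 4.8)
The plan is to reduce to the case $\rho \equiv 1$ and then to prove that $Y=0$; once this is known, $\Psi_{Y,\rho}\equiv 0$ and hence $\Phi_{X,\rho}=-\Psi_{Y,\rho}\equiv 0$ by \eqref{EQ: UniquenessOfSumAdornedTwistedZeroIdentity}. Since $\rho(t)\neq 0$ for every $t$, dividing \eqref{EQ: UniquenessOfSumAdornedTwistedZeroIdentity} by $\rho(t)$ turns it into the identity
\[
	X_t = -\int_0^t T_m(t-s)Y(s)\,ds \quad\text{in } L_p((-\tau,0)^m;\mathbb{F}), \qquad t\in[0,T],
\]
where $X_t(\overline{\theta})=X(\overline{\theta}+\underline{t})$ and $T_m$ is the diagonal translation semigroup from \eqref{EQ: SemigroupDiagonalTranslateFiniteCubeDef}. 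Conceptually, the statement reflects the fact that the twisting $\int_0^t T_m(t-s)Y(s)\,ds$ is ``small'' near each face $\mathcal{B}_{\widehat{j}}$ uniformly in $t$: arguing as in the proof of Lemma \ref{LEM: TwistedFunctionsEmbeddingIntoEmbracing} (here the Hölder inequality, hence $p>1$, is used) one gets $\big\|\int_0^t T_m(t-s)Y(s)\,ds\big\|_{L_p(\mathcal{S}_h)}\le h^{1-1/p}\|Y\|_{L_p(0,T;L_p)}$ for the slab $\mathcal{S}_h:=\{\overline{\theta}\in(-\tau,0)^m : \max_j\theta_j>-h\}$, whereas an adorned function $X_t$ carries a genuine boundary datum along $\mathcal{B}_{\widehat{j}}$. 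I will, however, extract the conclusion more directly from the rigidity coming from $X$ being one fixed function on $\mathcal{C}^m_T$.

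The key step is to compare two identities on the set $\mathcal{N}_h:=\{\overline{\theta}\in(-\tau,0)^m : \overline{\theta}+\underline{h}\in(-\tau,0)^m\}=\{\overline{\theta}\in(-\tau,0)^m : \max_j\theta_j<-h\}$. First, applying the semigroup law $T_m(h)T_m(t-s)=T_m(t+h-s)$ to $X_t=-\int_0^t T_m(t-s)Y(s)\,ds$ and $X_{t+h}=-\int_0^{t+h}T_m(t+h-s)Y(s)\,ds$ gives, for $0\le t\le t+h\le T$,
\[
	X_{t+h}-T_m(h)X_t = -\int_0^h T_m(h-u)Y(t+u)\,du .
\]
Second, since $X_t(\overline{\theta})=X(\overline{\theta}+\underline{t})$ and $X_{t+h}(\overline{\theta})=X(\overline{\theta}+\underline{t}+\underline{h})$ with the same $X$, comparing values shows $X_{t+h}(\overline{\theta})=(T_m(h)X_t)(\overline{\theta})$ for $\mu^m_L$-almost all $\overline{\theta}\in\mathcal{N}_h$. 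Restricting the first identity to $\mathcal{N}_h$ and using that on $\mathcal{N}_h$ one has $(T_m(h-u)Y(t+u))(\overline{\theta})=Y(t+u)(\overline{\theta}+\underline{h-u})$ for $u\in[0,h]$, we arrive at
\[
	\int_0^h Y(t+u)(\overline{\theta}+\underline{h-u})\,du = 0 \qquad\text{for a.e. } \overline{\theta}\in\mathcal{N}_h,
\]
for all admissible $t$ and all small $h>0$.

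To finish, substitute $v=h-u$ and set $r=t+h$, rewriting the last display as $\int_0^h Y(r-v)(\overline{\theta}+\underline{v})\,dv=0$ for all $r\le T$, all $h\in(0,\min\{r,-\max_j\theta_j\})$ and a.e.\ $\overline{\theta}$. For fixed $r$ and $\overline{\theta}$, a function of $h$ of the form $h\mapsto\int_0^h g(v)\,dv$ that vanishes identically has $g=0$ a.e., so $Y(r-v)(\overline{\theta}+\underline{v})=0$ for a.e.\ $v$ in the corresponding interval. A change of variables $(s,\overline{\eta}):=(r-v,\overline{\theta}+\underline{v})$ (Jacobian $\pm1$), together with the observation that every $(s,\overline{\eta})\in(0,T)\times(-\tau,0)^m$ is reached by admissible parameters — take $v$ small, e.g.\ $v<\min\{T-s,\ \tau+\min_j\eta_j\}$ — then yields $Y(s)(\overline{\eta})=0$ for a.e.\ $(s,\overline{\eta})$, i.e.\ $Y=0$ (for $T=\infty$ one argues on each finite $[0,T_0]$). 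I expect the main obstacle to be the measure-theoretic bookkeeping in this last step: one must keep track of the dependence of the admissible ranges of $t$ and $h$ on the point $\overline{\theta}$ (and on $r$) and perform a Fubini argument over the parameter space $(r,h,\overline{\theta})$ in order to upgrade the ``for each $r$, a.e.\ $h$'' conclusion to an honest almost-everywhere statement in $(s,\overline{\eta})$; the remaining manipulations are routine.
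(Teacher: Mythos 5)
Your argument is correct, but it is a genuinely different proof from the one in the paper. The paper's proof works directly on $X$: it uses $\Psi_{Y,\rho}(0)=0$ to kill $X$ on $(-\tau,0)^m$, then covers $\mathcal{C}^m_T$ (up to a set of small measure) by the translates $\mathcal{D}_h+\underline{kh}$ of a boundary collar and estimates $\int_{\mathcal{D}_h^T}|X|^p$ by observing that on $\mathcal{D}_h$ the value of the twisting at time $kh$ only involves $Y$ on the last window $[(k-1)h,kh]$; the H\"older inequality then gives the bound $h^{p-1}\int_0^T\|Y\|_{L_p}^p$, which forces $X\equiv 0$ as $h\to 0+$ (this is exactly where $p>1$ enters). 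You instead exploit the rigidity of adornments through the exact cocycle identity $X_{t+h}-T_m(h)X_t=-\int_0^h T_m(h-u)Y(t+u)\,du$ together with $X_{t+h}=T_m(h)X_t$ a.e.\ on $\mathcal{N}_h$, and conclude $Y\equiv 0$ (hence $\Psi_{Y,\rho}\equiv 0$ and then $\Phi_{X,\rho}\equiv 0$) by differentiating $h\mapsto\int_0^h Y(r-v)(\overline{\theta}+\underline{v})\,dv$ and changing variables along diagonals. What each buys: the paper's route is shorter and stays entirely at the level of norm estimates, with no pointwise representatives, but is tied to $p>1$; your route is exact rather than asymptotic, does not use H\"older at all (so it would also cover $p=1$), and yields the stronger conclusion $Y=0$, essentially reproving the injectivity statement of Lemma \ref{LEM: TwistedFunctionUniquenessY} along the way. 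The price is precisely the bookkeeping you flag: you need a jointly measurable representative of $(s,\overline{\eta})\mapsto Y(s)(\overline{\eta})$ so that the Bochner integral $\int_0^h T_m(h-u)Y(t+u)\,du$ can be evaluated pointwise a.e., and a Fubini argument over the parameter region in $(r,h,\overline{\theta})$ to pass from ``for each $(t,h)$, a.e.\ $\overline{\theta}\in\mathcal{N}_h$'' to ``for a.e.\ $(r,\overline{\theta})$, a.e.\ $h$,'' after which continuity of $h\mapsto\int_0^h g(v)\,dv$ and the unit-Jacobian change of variables finish the job; these steps are routine and do go through, so I regard your proposal as complete in substance, though a written-up version should carry them out explicitly.
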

\begin{proof}
	It suffices to consider the case $\rho \equiv 1$ and show that $X = 0$. Let $h \in (0,\tau)$ be fixed, and let $\mathcal{D}_{h}$ be the subset of $(-\tau,0)^{m}$ consisting of all $(\theta_{1},\ldots,\theta_{m}) \in (-\tau,0)^{m}$ such that $\theta_{j} \geq -h$ holds at least for one $j \in \{1,\ldots, m\}$. Set also
	\begin{equation}
		\mathcal{D}^{T}_{h} := \bigcup_{k=1}^{\lfloor \frac{T}{h} \rfloor} \left(\mathcal{D}_{h} + \underline{kh}\right)
	\end{equation}
	and note that the Lebesgue measure of $\mathcal{C}^{m}_{T} \setminus (\mathcal{D}^{T}_{h} \cup (-\tau,0)^{m} )$ tends to zero as $h \to 0+$. Since $\Psi_{Y,\rho}(0) = 0$, from \eqref{EQ: UniquenessOfSumAdornedTwistedZeroIdentity} we have that $X(\bar{s}) = 0$ for almost all $\bar{s} \in (-\tau,0)^{m}$. Thus,
	\begin{equation}
		\label{EQ: UniquenessOfSumAdornedTwistedApproximationDomain}
		\int_{\mathcal{C}^{m}_{T}}|X(\bar{s})|^{p}_{\mathbb{F}}d\bar{s} = \lim\limits_{h \to 0+} \int_{\mathcal{D}^{T}_{h}}|X(\bar{s})|^{p}_{\mathbb{F}}d\bar{s}
	\end{equation}
	For $t \in [h,T]$, the values of $\Psi_{Y,\rho}(t)$ on $\mathcal{D}_{h}$ are determined by the last term in the formula
	\begin{equation}
		\Psi_{Y,\rho}(t) = \int_{0}^{t-h}T_{m}(t-s)Y(s)ds + \int_{t-h}^{t} T_{m}(t-s)Y(s)ds.
	\end{equation}
	Using this, \eqref{EQ: UniquenessOfSumAdornedTwistedZeroIdentity}, \eqref{EQ: WindowFunctionDefinition}, and the H\"{o}lder inequality yields
	\begin{equation}
		\label{EQ: UniquenessOfSumAdornedTwistedEstimationIntegral}
		\begin{split}
			\int_{\mathcal{D}^{T}_{h}}|X(\bar{s})|^{p}_{\mathbb{F}}d\bar{s} = \sum_{k=1}^{\lfloor \frac{T}{h} \rfloor} \int_{\mathcal{D}_{h}}|X(\bar{\theta}+\underline{kh})|^{p}_{\mathbb{F}}d\bar{\theta} =\\= \sum_{k=1}^{\lfloor \frac{T}{h} \rfloor} \int_{\mathcal{D}_{h}}|\Phi_{X,\rho}(kh)(\bar{\theta})|^{p}_{\mathbb{F}}d\bar{\theta} \leq \sum_{k=1}^{\lfloor \frac{T}{h} \rfloor} \left\|\int_{(k-1) h}^{kh}T_{m}(t-s)Y(s)ds\right\|^{p}_{L_{p}} \leq \\ \leq  h^{p-1}\sum_{k=1}^{\lfloor \frac{T}{h} \rfloor} \int_{(k-1) h}^{kh}\|Y(s)\|^{p}_{L_{p}}ds \leq h^{p-1} \int_{0}^{T}\|Y(s)\|^{p}_{L_{p}}ds.
		\end{split}
	\end{equation}
	By combining \eqref{EQ: UniquenessOfSumAdornedTwistedApproximationDomain} with \eqref{EQ: UniquenessOfSumAdornedTwistedEstimationIntegral}, we obtain that $X = 0$.
\end{proof}

For $T>0$ or $T = \infty $ and $p \geq 1$, we introduce the space of $\rho$-agalmanated functions as the outer orthogonal sum
\begin{equation}
	\label{EQ: AgalmanatedSpaceDefinition}
	\mathcal{A}^{p}_{\rho}(0,T;L_{p}) := \mathcal{Y}^{p}_{\rho}(0,T;L_{p}) \oplus \mathcal{T}^{p}_{\rho}(0,T;L_{p}),
\end{equation}
which is naturally endowed with the norm
\begin{equation}
	\label{EQ: NormAgalmanatedFunctions}
	\| (\Phi_{X,\rho}(\cdot),\Psi_{Y,\rho}(\cdot)) \|^{p}_{	\mathcal{A}^{p}_{\rho}(0,T;L_{p})} := \| \Phi_{X,\rho}(\cdot)\|^{p}_{	\mathcal{Y}^{p}_{\rho}(0,T;L_{p})} + \| \Psi_{Y,\rho}(\cdot)\|^{p}_{	\mathcal{T}^{p}_{\rho}(0,T;L_{p})}
\end{equation}
that makes it a Banach space.

Combining Proposition \ref{PROP: UniquenessOfSumAdornedTwisted}, Lemma \ref{LEM: EmbeddingAdornedIntoEmbracing}, and Lemma \ref{LEM: EmbeddinTwistedntoEmbracing} yields the following.
\begin{theorem}
	\label{TH: EmbeddingAgalmanatedIntoEmbracing}
	Let $T>0$ or $T = \infty$ and $p \geq 1$. Then the operator
	\begin{equation}
		\label{EQ: EmbeddingAgalmanatedIntoEmbracing}
		\mathcal{A}^{p}_{\rho}(0,T;L_{p}) \ni (\Phi_{X,\rho}, \Psi_{Y,\rho}) \mapsto \Phi_{X,\rho} + \Psi_{Y,\rho} \in \mathcal{E}_{p}(0,T;L_{p}),
	\end{equation}
    is bounded, and its norm admits an upper estimate in terms of $\rho_{0}$ from \eqref{EQ: WeightFunctionProperty} and $\tau$. For $p > 1$, it is an embedding. 
\end{theorem}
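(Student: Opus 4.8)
The plan is to combine three already-established ingredients. First, by Lemma \ref{LEM: EmbeddingAdornedIntoEmbracing} the summand $\mathcal{Y}^{p}_{\rho}(0,T;L_{p})$ embeds into $\mathcal{E}_{p}(0,T;L_{p})$ continuously with norm bound $\kappa(\rho)$ (see \eqref{EQ: AdornedConstantKappa}), and by Lemma \ref{LEM: EmbeddinTwistedntoEmbracing} the summand $\mathcal{T}^{p}_{\rho}(0,T;L_{p})$ embeds continuously with norm bound $\rho_{0}\tau^{1-1/p}$. Hence the sum map in \eqref{EQ: EmbeddingAgalmanatedIntoEmbracing} is a bounded linear operator from the outer orthogonal sum $\mathcal{A}^{p}_{\rho}(0,T;L_{p})$ into $\mathcal{E}_{p}(0,T;L_{p})$, with norm estimated from above by, say, $\sqrt{\kappa(\rho)^{2} + \rho_{0}^{2}\tau^{2-2/p}}$ (using the outer orthogonal sum norm and the triangle inequality in $\mathcal{E}_{p}$); this quantity depends only on $\rho_{0}$ and $\tau$ (with $\kappa(\rho)$ itself depending only on $\rho_{0}$ and $|\rho(0)|$, i.e.\ on $\rho$ and $\tau$). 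This already gives the continuity claim and the stated dependence of the norm, so the only real content left is injectivity for $p>1$.

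For injectivity when $p>1$, I would invoke Proposition \ref{PROP: UniquenessOfSumAdornedTwisted}. Suppose $(\Phi_{X,\rho},\Psi_{Y,\rho}) \in \mathcal{A}^{p}_{\rho}(0,T;L_{p})$ lies in the kernel of the sum map, i.e.\ $\Phi_{X,\rho} + \Psi_{Y,\rho} = 0$ in $\mathcal{E}_{p}(0,T;L_{p})$. Since $\mathcal{E}_{p}(0,T;L_{p})$ embeds into $L_{p}(0,T;L_{p}((-\tau,0)^{m};\mathbb{F}))$ by Lemma \ref{LEM: EbracingSpaceEmbeddingLp}, and since both $\Phi_{X,\rho}(\cdot)$ and $\Psi_{Y,\rho}(\cdot)$ are genuinely continuous $L_{p}((-\tau,0)^{m};\mathbb{F})$-valued functions on $[0,T]$ (see \eqref{EQ: AgalmanatedValueAtTMappingContinuous} and \eqref{EQ: TwistedValueAtMappingContinuous}), the identity $\Phi_{X,\rho}(t) + \Psi_{Y,\rho}(t) = 0$ holds in $L_{p}((-\tau,0)^{m};\mathbb{F})$ for all $t \in [0,T]$ (for $T=\infty$, apply this on each $[0,T_{0}]$). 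Proposition \ref{PROP: UniquenessOfSumAdornedTwisted} then forces $\Phi_{X,\rho}(t) = \Psi_{Y,\rho}(t) = 0$ for all $t$; by the well-definedness of the adorned and twisted norms (uniqueness of $X$ from $\Phi_{X,\rho}$ via \eqref{EQ: WindowFunctionDefinition} and of $Y$ from $\Psi_{Y,\rho}$ via Lemma \ref{LEM: TwistedFunctionUniquenessY}), the pair $(\Phi_{X,\rho},\Psi_{Y,\rho})$ is the zero element of $\mathcal{A}^{p}_{\rho}(0,T;L_{p})$. Therefore the map is injective.

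The one point requiring a little care — and the closest thing to an obstacle — is the reduction from "equality in $\mathcal{E}_{p}$" (or in $L_{p}(0,T;L_{p})$, which is an equivalence class of functions of $t$) to "equality pointwise in $t$ for every $t\in[0,T]$" as required by the hypothesis of Proposition \ref{PROP: UniquenessOfSumAdornedTwisted}. This is precisely where the continuity of $t \mapsto \Phi_{X,\rho}(t)$ and $t \mapsto \Psi_{Y,\rho}(t)$ (hence of their sum) is used: two continuous representatives agreeing almost everywhere agree everywhere. The remaining verifications — that the outer orthogonal sum norm and the $\mathcal{E}_{p}$-triangle inequality combine to give a norm bound depending only on $\rho_{0}$ and $\tau$, and the handling of the case $T=\infty$ by exhaustion — are routine. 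I would also remark that for $p=1$ one only claims continuity, which is what the two embedding lemmas already provide, so no injectivity argument is attempted there.
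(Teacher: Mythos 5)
Your proposal is correct and follows essentially the same route as the paper, which proves the theorem exactly by combining Lemma \ref{LEM: EmbeddingAdornedIntoEmbracing} and Lemma \ref{LEM: EmbeddinTwistedntoEmbracing} for continuity with Proposition \ref{PROP: UniquenessOfSumAdornedTwisted} for injectivity when $p>1$. Your extra care in passing from almost-everywhere equality to equality for all $t$ via the continuity of $t \mapsto \Phi_{X,\rho}(t)$ and $t \mapsto \Psi_{Y,\rho}(t)$ is precisely the (implicit) step the paper relies on, so nothing is missing.
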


For $p > 1$, it is convenient to identify $\mathcal{A}^{p}_{\rho}(0,T;L_{p})$ with its image under \eqref{EQ: EmbeddingAgalmanatedIntoEmbracing}.

Combining Theorem \ref{TH: EmbeddingAgalmanatedIntoEmbracing}, Theorem \ref{TH: PointwiseMeasurementOperatorOnEmbracingSpace}, and Corollary \ref{COR: RelaxedComputationMeasurementOperatorsOnEmbracing} yields the following.
\begin{theorem}
	\label{TH: PointwiseMeasurementOperatorOnAgalmanatedSpace}
	Let $\gamma$ and $C^{\gamma}_{J}$ with $J \in \{1,\ldots,m\}$ be as in Theorem \ref{TH: OperatorCExntesionOntoWDiagonal}, and consider $T > 0$ or $T=\infty$ and $p \geq 1$. Then there exists a bounded linear operator
	\begin{equation}
		\mathcal{I}_{C^{\gamma}_{J}} \colon \mathcal{A}^{p}_{\rho}(0,T;L_{p}((-\tau,0)^{m};\mathbb{F})) \to L_{p}(0,T; L_{p}((-\tau,0)^{m-1};\mathbb{M}_{\gamma})),
	\end{equation}
	whose norm admits an upper estimate in terms of $\operatorname{Var}_{[-\tau,0]}(\gamma)$, $\rho_{0}$ from \eqref{EQ: WeightFunctionProperty}, and $\tau$, and such that for $\Phi = (\Phi_{X,\rho}, \Psi_{Y,\rho})$, where $\Phi_{X,\rho} \in \mathcal{Y}^{p}_{\rho}(0,T;L_{p})$ and $\Psi_{Y,\rho} \in \mathcal{T}^{p}_{\rho}(0,T;L_{p})$, it is defined by
	\begin{equation}
		\mathcal{I}_{C^{\gamma}_{J}}\Phi := \mathcal{I}_{C^{\gamma}_{J}}\Phi_{X,\rho} + \mathcal{I}_{C^{\gamma}_{J}}\Psi_{Y,\rho},
	\end{equation}
	where the action on $\Phi_{X,\rho}$ and $\Psi_{Y,\rho}$ may be understood according to Theorems \ref{TH: PointwiseMeasurementOperatorOnAdornedSpace} and \ref{TH: PointwiseMeasurementOperatorOnTwistedSpace}, respectively, or Theorem \ref{TH: PointwiseMeasurementOperatorOnEmbracingSpace}. Moreover, if $\Phi_{X,\rho}+\Psi_{Y,\rho} \in L_{1, loc}(0,T;\mathbb{E}^{1}_{m}(\mathbb{F}))$, then
	\begin{equation}
		\label{EQ: OperatorIGammaIdentityContAgalmanated}
		(\mathcal{I}_{C^{\gamma}_{J}}\Phi)(t) = C^{\gamma}_{J}(\Phi_{X,\rho}(t)+\Psi_{Y,\rho}(t)) \qquad \text{for almost all} \quad t \in (0,T).
	\end{equation}
\end{theorem}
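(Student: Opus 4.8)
The plan is to obtain the statement by composing three facts already in hand: the continuous embedding of $\mathcal{A}^{p}_{\rho}(0,T;L_{p})$ into the embracing space $\mathcal{E}_{p}(0,T;L_{p})$ from Theorem \ref{TH: EmbeddingAgalmanatedIntoEmbracing}, the bounded pointwise measurement operator $\mathcal{I}_{C^{\gamma}_{J}}$ on $\mathcal{E}_{p}(0,T;L_{p})$ from Theorem \ref{TH: PointwiseMeasurementOperatorOnEmbracingSpace}, and the relaxed computation rule from Corollary \ref{COR: RelaxedComputationMeasurementOperatorsOnEmbracing}. Concretely, I would first \emph{define} $\mathcal{I}_{C^{\gamma}_{J}}$ on $\mathcal{A}^{p}_{\rho}(0,T;L_{p})$ as the composition of the mapping \eqref{EQ: EmbeddingAgalmanatedIntoEmbracing}, sending $(\Phi_{X,\rho},\Psi_{Y,\rho})$ to $\Phi_{X,\rho}+\Psi_{Y,\rho}\in\mathcal{E}_{p}(0,T;L_{p})$, with the operator $\mathcal{I}_{C^{\gamma}_{J}}$ of Theorem \ref{TH: PointwiseMeasurementOperatorOnEmbracingSpace}. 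Boundedness and the claimed norm bound then follow at once: by Theorem \ref{TH: PointwiseMeasurementOperatorOnEmbracingSpace} one has $\|\mathcal{I}_{C^{\gamma}_{J}}\Phi\|\leq\operatorname{Var}_{[-\tau,0]}(\gamma)\cdot\|\Phi_{X,\rho}+\Psi_{Y,\rho}\|_{\mathcal{E}_{p}(0,T;L_{p})}$, and by Theorem \ref{TH: EmbeddingAgalmanatedIntoEmbracing} the latter norm is bounded by $\|(\Phi_{X,\rho},\Psi_{Y,\rho})\|_{\mathcal{A}^{p}_{\rho}(0,T;L_{p})}$ times a constant depending only on $\rho_{0}$ from \eqref{EQ: WeightFunctionProperty} and $\tau$.

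Next I would check the decomposition formula. Since the embedding \eqref{EQ: EmbeddingAgalmanatedIntoEmbracing} is linear and the operator $\mathcal{I}_{C^{\gamma}_{J}}$ on the embracing space is linear, we immediately get $\mathcal{I}_{C^{\gamma}_{J}}\Phi=\mathcal{I}_{C^{\gamma}_{J}}\Phi_{X,\rho}+\mathcal{I}_{C^{\gamma}_{J}}\Psi_{Y,\rho}$, where the two terms on the right are the values of the embracing-space operator on the images of $\Phi_{X,\rho}$ and $\Psi_{Y,\rho}$ under the embeddings of Lemmas \ref{LEM: EmbeddingAdornedIntoEmbracing} and \ref{LEM: EmbeddinTwistedntoEmbracing}. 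It then remains only to observe that, by construction, this restriction of $\mathcal{I}_{C^{\gamma}_{J}}$ to the adorned (resp. twisted) summand coincides with the operator on adorned functions from Theorem \ref{TH: PointwiseMeasurementOperatorOnAdornedSpace} (resp. the operator on twisted functions from Theorem \ref{TH: PointwiseMeasurementOperatorOnTwistedSpace}), since each of those was itself defined as exactly such a composition; hence the two readings offered in the statement agree.

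Finally, for the relaxed identity \eqref{EQ: OperatorIGammaIdentityContAgalmanated} I would apply Corollary \ref{COR: RelaxedComputationMeasurementOperatorsOnEmbracing} to the element $\Phi:=\Phi_{X,\rho}+\Psi_{Y,\rho}$ of $\mathcal{E}_{p}(0,T;L_{p})$: the extra hypothesis $\Phi_{X,\rho}+\Psi_{Y,\rho}\in L_{1,loc}(0,T;\mathbb{E}^{1}_{m}(\mathbb{F}))$ is precisely what that corollary asks for, and it delivers $(\mathcal{I}_{C^{\gamma}_{J}}\Phi)(t)=C^{\gamma}_{J}(\Phi_{X,\rho}(t)+\Psi_{Y,\rho}(t))$ for almost all $t\in(0,T)$. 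In the case $T=\infty$ I would first restrict to finite sub-intervals $[0,T_{0}]$, using the commutativity of the diagram in Lemma \ref{LEM: CommutativeOperatorIGammaEmbr} and the inclusion $\mathcal{E}_{p}(0,T_{0};L_{p})\subset\mathcal{E}_{1}(0,T_{0};L_{1})$, and then let $T_{0}\to\infty$. I do not expect a genuine obstacle: the substance of the result sits in the preceding Theorems \ref{TH: PointwiseMeasurementOperatorOnEmbracingSpace} and \ref{TH: EmbeddingAgalmanatedIntoEmbracing} and Corollary \ref{COR: RelaxedComputationMeasurementOperatorsOnEmbracing}. The one point deserving care is the consistency bookkeeping — that the operator produced via the embracing space really restricts to the previously constructed adorned and twisted operators — which for $p>1$ is underpinned by uniqueness of the adorned–twisted decomposition (Proposition \ref{PROP: UniquenessOfSumAdornedTwisted}) and for every $p\geq1$ by the fact that all operators involved extend, by density and continuity, the single elementary prescription \eqref{EQ: WeigthedFunctionsOperatorCgammaDef} on continuous functions.
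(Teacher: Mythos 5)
Your argument is correct and is exactly the route the paper takes: the theorem is obtained there by composing the embedding of Theorem \ref{TH: EmbeddingAgalmanatedIntoEmbracing} with the embracing-space operator of Theorem \ref{TH: PointwiseMeasurementOperatorOnEmbracingSpace} and invoking Corollary \ref{COR: RelaxedComputationMeasurementOperatorsOnEmbracing} for the identity \eqref{EQ: OperatorIGammaIdentityContAgalmanated}. Your additional consistency check with the adorned and twisted operators of Theorems \ref{TH: PointwiseMeasurementOperatorOnAdornedSpace} and \ref{TH: PointwiseMeasurementOperatorOnTwistedSpace} is sound, since those were themselves defined through the same embracing-space construction.
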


%------
% Insert acknowledgments and information
% regarding funding at the end of the last
% section, i.e., right before the bibliography.
%------

%\begin{ack}
%We thank X.
%\end{ack}

\begin{funding}
The reported study was funded by the Russian Science Foundation (Project 25-11-00147).
\end{funding}

\section*{Data availability}
Data sharing not applicable to this article as no datasets were generated or analyzed during the current study.

\section*{Conflict of interest}
The author has no conflicts of interest to declare that are relevant
to the content of this article.

%------
% Insert the bibliography.
%------

\end{document}